\pdfoutput=1

\documentclass[12pt]{amsart}
\usepackage[usenames,dvipsnames]{xcolor}
\usepackage{mathtools,amssymb,amsthm,tikz,tikz-cd,multirow}
\usepackage{fullpage}
\usepackage{etoolbox}
\usepackage{extarrows}
\usepackage{enumitem}
\usepackage{makecell}
\usepackage{tensor}

\usepackage[font=footnotesize,labelfont=bf, width=.9\textwidth]{caption}

\usepackage[protrusion=true,expansion=true]{microtype}
\hyphenpenalty=5000
\tolerance=1000

\newcommand\GL{\operatorname{\mathbf{GL}}}
\newcommand\Gn{\widetilde{G}}
\newcommand\Bn{\widetilde{B}}
\newcommand\Tn{\widetilde{T}}

\newcommand\Hom{\operatorname{Hom}}
\newcommand\z{\mathbf{z}}
\newcommand\x{\mathbf{x}}

\newcommand\A{\mathcal{A}}
\newcommand\W{\mathcal{W}}
\newcommand\C{\mathbb{C}}

\newcommand\iso{\cong}

\DeclarePairedDelimiter{\ceiln}{\lceil}{\rceil_{\!n}}
\DeclarePairedDelimiter{\floorn}{\lfloor}{\rfloor_{\!n}}
\DeclarePairedDelimiter{\ceil}{\lceil}{\rceil}
\DeclarePairedDelimiter{\floor}{\lfloor}{\rfloor}

\usetikzlibrary{decorations,decorations.pathreplacing,positioning,calc,tikzmark}


\usepackage[protrusion=true,expansion=true]{microtype}
\usepackage{hyperref}

\hypersetup{
    pdftitle={ },
    pdfauthor={Ben Brubaker, Valentin Buciumas, Daniel Bump, Henrik P. A. Gustafsson},
    linktocpage=true,           
    colorlinks=true,            
    linkcolor=blue!75!black,    
    citecolor=green!50!black,   
    filecolor=magenta,          
    urlcolor=blue!75!black      
}

\setcounter{tocdepth}{3}
\let\oldtocsubsection=\tocsubsection
\let\oldtocsubsubsection=\tocsubsubsection
\renewcommand{\tocsubsection}[2]{\hspace{1.8em}\oldtocsubsection{#1}{#2}}
\renewcommand{\tocsubsubsection}[2]{\hspace{3.6em}\oldtocsubsubsection{#1}{#2}}

\begin{document}

\title{Metaplectic Iwahori Whittaker functions and supersymmetric lattice models}
\author{Ben Brubaker}
\address{School of Mathematics, University of Minnesota, Minneapolis, MN 55455}
\email{brubaker@math.umn.edu}
\author{Valentin Buciumas}
\address{Department of Mathematics, 
	Pohang University of Science and Technology, Pohang, Republic of Korea 37673}
\email{valentin.buciumas@gmail.com}
\author{Daniel Bump}
\address{Department of Mathematics, Stanford University, Stanford, CA 94305-2125}
\email{bump@math.stanford.edu}
\author{Henrik P. A. Gustafsson}
\address{Department of Mathematics and Mathematical Statistics, Umeå University, SE-901 87 Umeå, Sweden}
\email{henrik.gustafsson@umu.se}

\pgfdeclarelayer{foreground}
\pgfsetlayers{main,foreground}

\tikzset{
  circles/.style={
    spin, draw=none,
    append after command={
      \pgfextra
      \pgfinterruptpath
      \begin{pgfonlayer}{foreground}
        \begin{scope}[reset cm]
        \foreach \options [count=\i] in {#1}{
          \expandafter\draw\expandafter[\options] let \p1=($(\tikzlastnode.east)-(\tikzlastnode.west)$), \n1 = {veclen(\p1)} in (\tikzlastnode) circle ({\n1/2-(\i-1)*1.45*\the\pgflinewidth}); 
        }
      \end{scope}
      \end{pgfonlayer}
      \endpgfinterruptpath
      \endpgfextra
    }
  }
}

\newcommand\eps{0.075}
\tikzstyle{scaled}=[scale=.8, every node/.append style={scale=0.8}]
\tikzstyle{Rscaled}=[scale=0.9/sqrt(2), every node/.append style={scale=0.8}]
\tikzstyle{spin}=[circle, draw, fill=white, minimum size=18pt, inner sep=1pt, outer sep=0pt, text=black]
\tikzstyle{spin-path}=[every node/.append style={spin}]
\tikzstyle{colored}=[very thick, solid]
\tikzstyle{colored-path}=[very thick, every node/.append style={colored, spin}]
\tikzstyle{scolored}=[very thick, densely dotted]
\tikzstyle{scolored-path}=[densely dotted, very thick, every node/.append style={scolored, spin}]
\tikzstyle{both}=[circles={{very thick, solid}, {very thick, densely dotted}}]
\tikzstyle{both-path}=[draw=none, preaction={draw, white, line width=1.28mm}, postaction={transform canvas={shift={(\eps/2,0)}}, draw, colored-path, preaction={transform canvas={shift={(-\eps/2,0)}}, draw, black, scolored-path}}, every node/.append style={both}]

\tikzstyle{halo}=[circle, fill=white, inner sep=0pt]
\tikzstyle{halo-rect}=[rectangle, fill=white, inner sep=0pt]

\newcommand{\state}[3][auto]{%
  \tikz[baseline=(text.base), every node/.style = {}, inner sep=0pt, minimum size=0pt, rectangle, color=black]{
\begin{scope}[every node/.style={font=\ifstrequal{#1}{auto}{\ifblank{#2}{\normalsize}{\ifblank{#3}{\normalsize}{\scriptsize}}}{#1}}]%
  \node(text) {$#3$};%
  \node[left=0cm of text] {$#2$};%
  \draw[densely dotted, thick, transform canvas={shift={(0,-.05)}}] (text.south west) -- (text.south east);%
\end{scope}%
}%
}

\definecolor{oceanmist}{RGB}{50,120,120}
\definecolor{green}{RGB}{0,180,0}
\definecolor{brown}{RGB}{120,0,120}

\colorlet{sgreen}{green!50!black}
\colorlet{sblue}{blue!50!black}
\colorlet{sred}{red!50!black}

\newtheorem{theorem}{Theorem}[section]
\newtheorem{lemma}[theorem]{Lemma}
\newtheorem{proposition}[theorem]{Proposition}
\newtheorem{assumption}[theorem]{Assumption}
\newtheorem{corollary}[theorem]{Corollary}

\newtheorem{maintheorem}{Theorem}
\renewcommand{\themaintheorem}{\Alph{maintheorem}}

\theoremstyle{definition}
\newtheorem{remark}[theorem]{Remark}
\newtheorem{definition}[theorem]{Definition}

\newtheorem{example}{Example}            

\numberwithin{equation}{section}

\definecolor{green}{RGB}{0,180,0}

\newcommand{\icegrid}[2]{

    \foreach \i in {0,...,#1}{
        \draw (2*\i+1,0) -- (2*\i+1,2*#2) node[spin] {$+$};
    }


    \foreach \i in {0,...,#1}{
        \foreach \j in {1,...,#2}{
            \node[spin] at (2*\i+1, 2*\j-2) {$+$};
        }
    }
}

\tikzstyle{state}=[circle, draw, fill=white, minimum size=12pt, inner sep=0pt, outer sep=0pt]
\tikzstyle{selection}=[thick, color=gray, fill=lightgray!30, rounded corners, text=black]
\tikzstyle{selection-line}=[selection, rounded corners=0pt, fill=none, shorten <=2pt, shorten >=2pt]

\subjclass[2020]{Primary: 22E50; Secondary: 82B23, 16T25, 05E05, 17B37, 11F70}

\begin{abstract}
In this paper we compute new values of Iwahori Whittaker functions on $n$-fold metaplectic covers $\widetilde{G}$ of $\mathbf{G}(F)$ with $\mathbf{G}$ a split reductive group over a non-archimedean local field $F$. 
For \emph{every} Iwahori Whittaker function $\phi$, and for \emph{every} $g\in\widetilde{G}$, we evaluate $\phi(g)$ by recurrence relations over the Weyl group using novel ``vector Demazure-Whittaker operators.'' The general formula and strategy of proof are inspired by ideas appearing in the theory of integrable systems.

Specializing to the case of $\mathbf{G} = \mathbf{GL}_r$, we construct a solvable lattice model of a new type associated with the quantum affine super group $U_q(\widehat{\mathfrak{gl}}(r|n))$ and prove that its partition function equals $\phi(g)$.
To prove this equality we match the recurrence relations on the lattice model side (obtained from the Yang-Baxter equation) to the recurrence relations for $\phi(g)$ derived by using the representation theory of $\widetilde{G}$.
Remarkably, there is a bijection between the boundary data specifying the partition function and the data determining all values of the Whittaker functions.
\end{abstract}

\maketitle

\tableofcontents

\section{Introduction}
This work is the culmination of a series of papers~\cite{mice,BBB,BBBGVertex,BBBF,BBBGIwahori} concerning a remarkable parallel between the theory of non-archimedean Whittaker functions and certain solvable lattice models.
Our results are naturally divided into three parts represented by the subsequent sections of the paper: 

\begin{itemize}[label=$\boldsymbol{\cdot}$, leftmargin=*]
  \item The computation of \emph{all} values of metaplectic Iwahori Whittaker functions for split reductive groups via a new representation theoretic construct: vector Demazure-Whittaker operators.
  \item The construction of novel \emph{supersymmetric} solvable lattice models connected to the quantum group $U_q\bigl(\hat{\mathfrak{gl}}(r|n)\bigr)$ and associated recursive relations for computing its partition functions.
  \item Connecting the above results in the special case of a metaplectic $n$-cover of $\GL_r$ through a natural bijection between the data that specify the values for the metaplectic Iwahori Whittaker function and the boundary data for the lattice model.
\end{itemize}
Together, these results show that quantum superalgebras and their associated lattice 
models are able to capture deep aspects of metaplectic group representations at the 
Iwahori level. Moreover, the lattice models naturally suggest the Cartan type independent 
explicit formulas we use to represent all values of metaplectic Iwahori Whittaker 
functions for any split reductive group, leading to the most general known version of a Casselman-Shalika formula.

The Casselman-Shalika formula~\cite{CasselmanShalika} is a remarkable lodestar in local 
representation theory. It states that Whittaker functions for {\it infinite dimensional} 
unramified principal series representations $V$ of an algebraic group $G$ over a non-archimedean local 
field $F$ are expressible in terms of characters of {\it finite dimensional} 
representations on the complex Langlands dual group $G^\vee(\mathbb{C})$. In the local 
representation theory, $V$ is indexed by continuous parameters 
$\mathbf{z} = (z_2, \ldots, z_r)$ in $(\mathbb{C}^\times)^r$ and the Whittaker function is 
evaluated at torus elements corresponding to dominant integral coweights $\lambda$. On 
the dual group side, these roles are reversed.  Thus $\lambda$ is a weight for 
$G^\vee(\mathbb{C})$ indexing the corresponding highest-weight representation, whose 
character is evaluated at the continuous parameters $\mathbf{z}$.

Originally conjectured by Langlands, and shown for general linear groups by Shintani, the Casselman-Shalika formula (as it is commonly known) was proved in \cite{CasselmanShalika} for any unramified reductive group in the case of {\it spherical} Whittaker functions. Recall that ``spherical'' here refers to the unique-up-to-scalar vectors in the principal series representation fixed by a maximal compact subgroup. The formula is both a tool in the local theory of automorphic forms~\cite{Shalika} and a signpost for the Langlands program in expressing local representation theory in terms of invariants of the complex dual group~\cite{FGKV}. 

There are two natural modes of generalization the Casselman-Shalika formula. First, 
we may 
shrink the compact subgroup whose fixed vectors are used in the Whittaker model. Second, 
we may enlarge the family of groups we consider. Both directions have consequences for 
the formula as a tool {\it and} as a signpost. In what follows, we briefly survey prior 
results to motivate these two directions and to contextualize the contributions of the 
present work. 

In the first direction, we may consider Whittaker functions fixed by the Iwahori 
subgroup. 
From the representation theoretic point of view it is better to work at the Iwahori 
level, because the category of smooth representations generated by their Iwahori-fixed 
vectors forms a block in the category of smooth representations. 
In particular, it is closed under passage to subquotients.
This fails for representations generated by their $K$-fixed vectors.
The Casselman-Shalika formula for the spherical case was proved in \cite{CasselmanSpherical, CasselmanShalika} making essential use of Iwahori-fixed vectors.

A study of Iwahori Whittaker functions was initiated by Reeder~\cite{ReederCompositio} and results in~\cite{BBL} highlighted the role of Demazure-like divided difference operators. 
Definitive results completely characterizing the functions were given by the authors in \cite{BBBGIwahori}. 
In particular, these latter results were critically inspired by lattice models where such divided difference operators arise naturally. 
So the Iwahori Whittaker functions serve as a tool in obtaining spherical formulas for use in automorphic forms, but also as a signpost to a deeper theory in revealing Demazure-like operators that connect them to specializations of non-symmetric Macdonald polynomials. 

In the second direction of generalization, we may consider certain central extensions of our algebraic groups --- the so-called metaplectic groups. 
The resulting principal series no longer have unique Whittaker models, as shown in Kazhdan and Patterson's ground breaking work \cite{KazhdanPatterson}. 
Motivated by the theory of Weyl group multiple Dirichlet series, explicit formulas for one special choice of spherical Whittaker function were given in various guises in~\cite{ChintaGunnellsPuskas, McNamara:Duke, ChintaOffen, wmd5book, McNamaraCS}. 
Each is analogous to a way of expressing the classical case of highest weight characters as (i) generating functions on combinatorial data, (ii) via the Weyl character formula, and (iii) via the Demazure character formula. These involve the arithmetic of the cover, introducing Gauss sums into the formulas. 

In~\cite{BBB}, Whittaker models for a particular cover of the general linear group are expressed in terms of solvable lattice models, and this was extended to a more general class of covers in~\cite{FrechetteGeneral}.
This can be used alongside standard tools in the theory of lattice models to recover the results cited in the last paragraph, as well as prove new ones (see for example~\cite{StatementB}). 
As we will detail in subsequent sections, solvable lattice models produce generating functions that naturally give rise to Demazure-like recursions, so we see two of the three aforementioned guises, namely (i) and (iii), for metaplectic Whittaker functions incorporated in the lattice. 

As a tool, formulas for special choices of spherical metaplectic Whittaker functions have been crucial in a host of number theoretic applications in the context of multiple Dirichlet series~\cite{bbf:Eisenstein:crystals,wmd5book,McNamara:Duke}, and one may expect additional such applications from the added flexibility of varying the choice of Whittaker model and cover. 

As a signpost, we may seek an algebraic structure in this context that allows one to express the spherical metaplectic Whittaker function. Versions of ``dual groups'' that capture the representation theory of metaplectic groups appear in~\cite{SavinCrelle, McNamaraEdinburgh, Weissman}, where root datum for a dual group is proposed in terms of the degree of the cover and the bilinear form defining the metaplectic two-cocycle, as well as in \cite{gaitsgory, GaitsgoryLysenko,Finkelberg-Lysenko} from a geometric perspective. 
In particular, it is a conjecture of Gaitsgory and Lurie~\cite{gaitsgory}, proved in different settings in \cite{campbell:dhillon:raskin, BuciumasPatnaik}, that Lusztig's quantum group at a root of unity acts as a dual group controlling the Whittaker functions on the metaplectic group. 
The results in~\cite{BBB} suggest, via the connection to solvable lattice models, that for Cartan type A, a different quantum group 
(a twist of $U_q(\hat{\mathfrak{gl}}(1|n))$ whose coalgebra structure is appropriately modified to introduce Gauss sums) 
captures the algebraic difficulties of writing down expressions for spherical, metaplectic Whittaker functions. 
This observation has been made more precise for
$U_q(\hat{\mathfrak{gl}}(n))$ in work by Buciumas (unpublished) and 
Gao-Gurevich-Karasiewicz~\cite{GGK2}, where it is shown how the quantum affine 
group acts on the 
Whittaker models (and implicitly on the category of representations of Lusztig's quantum 
group at a root of unity). We note that the representation theoretic meaning of the full 
quantum group $U_q(\hat{\mathfrak{gl}}(1|n))$ is not yet understood in this setting.

The present paper treats {\it both} generalizations simultaneously, the Iwahori and metaplectic, to provide a new Casselman-Shalika formula in the broadest possible context. Accomplishing this requires much more than just aggregating the previously studied pieces. Our starting point is a solvable lattice model which will be described in \S\ref{sub:intro:latticemodels}. We formulate a solvable lattice model to express {\it all} values of {\it all} Iwahori fixed vectors under {\it all} metaplectic Whittaker functionals on a cover of the general linear group. Demazure-like operators naturally arise from this construction, but are now {\it vector-valued} to account for each possible Whittaker model. Those same formulas may be used to give type-free recursions for metaplectic, Iwahori Whittaker functions independent of (but critically inspired by) lattice models, which we then prove for all Cartan types using solely the representation theory of the metaplectic group (Theorem~\ref{thm:dem_recurse}). Those vector-valued Demazure operators give rise to a representation of the metaplectic Iwahori Hecke algebra on the space of polynomial functions on the dual torus (see Theorem~\ref{thm:hecke}). 
This representation is isomorphic to the one appearing in~\cite{SSV,SSV2}; the difference is that our representation arises naturally from the non-archimedean representation theory, while theirs follows solely from considering the affine Hecke algebra and the double affine Hecke algebra. This representation is also an important ingredient in $p$-adic versions~\cite{BuciumasPatnaik} of the fundamental local equivalence of Gaitsgory-Lurie~\cite{gaitsgory}. 

So what have we gained in doing this? 
This project is the culmination of a long quest to determine whether lattice models are flexible enough to encode the most general versions of Casselman-Shalika-type formulas despite the numerous constraints imposed by solvability. We present a positive answer which pushes beyond the limits of what has appeared previously in the lattice model literature, while using every bit of available data from the lattice models to express all values of Whittaker functions (see Figure~\ref{fig:translation}).
Solvability allows us to produce formulas for metaplectic Iwahori Whittaker functions as generating functions, as well as to use old and new techniques in the theory of integrable systems to study them.
For example, classic techniques can now be used to prove Cauchy identities and branching rules. Novel techniques developed in~\cite{AggarwalBorodinWheelerColored} can be used to study structure coefficients; more on this in \S\ref{subsec:connections}. 

Moreover, these results highlight the role of Demazure-like operators in the theory of non-archimedean Whittaker functions for all types, which has inspired further interesting work on algebraic structures that produce them, including recent work by Sahi-Stokman-Venkateswaran~\cite{SSV, SSV2, VenkateswaranDuality}, Gao-Gurevich-Karasiewicz~\cite{ggk} and Buciumas-Patnaik~\cite{BuciumasPatnaik}. 
As a signpost, the lattice model suggests that the quantum group $U_{q}(\widehat{\mathfrak{gl}}(r|n))$ is the right algebraic object for expressing metaplectic Whittaker functions at the Iwahori level. 
It also reveals a surprising duality between metaplectic, spherical and nonmetaplectic, Iwahori Whittaker functions that allows one to transfer facts and structure from one context to the other \cite{BBBGDuality}.
This duality is very unexpected from the point of view of representation theory, but very natural in both the lattice model framework and its associated quantum groups.
More specifically, the duality in~\cite{BBBGDuality} manifests as a supersymmetry transformation relating quantum deformations of  $\widehat{\mathfrak{gl}}(1|n)$ and $\widehat{\mathfrak{gl}}(n|1)$.
The results of the present paper suggest that a much richer duality going far beyond \cite{BBBGDuality} is possible, making full use of the quantum group $U_{q}(\widehat{\mathfrak{gl}}(r|n))$.

Let us now go into more detail regarding the results and techniques appearing in this paper.

\subsection{Supersymmetric lattice models}\label{sub:intro:latticemodels}

The lattice models developed in this paper consist of admissible states, each of which is described by sets of colored paths traveling through the edges of a square lattice in a fixed finite rectangle.
A novelty of our approach is that those paths may travel in \emph{two different directions} --- one set of paths traveling down and to the right through the lattice and another set traveling down and to the left. We distinguish the two directions by referring to rightward moving paths as \emph{colored} paths and leftward moving paths as \emph{supercolored} paths (or sometimes ``scolored'' paths for brevity).

Earlier works (e.g.~\cite{BorodinWheelerColored, BBBGIwahori}) featured colored paths moving in just one direction (and whether that is rightward or leftward is of little difference, up to symmetry considerations). In particular, the work of Borodin and Wheeler~\cite{BorodinWheelerColored} and their subsequent collaborations has been influential on our present program in illustrating the utility of colored models to refine uncolored ones. For example, the original six-vertex model is known to have admissible states described by paths of a single color moving in one direction \cite[Ch.~8]{Baxter}, and the colored paths may be understood as refinements of these models. 
If we reverse the roles on \emph{horizontal} edges in the six-vertex model, exchanging the presence of a ``path'' with ``no path,'' then we form new paths --- a dual set of paths --- through the model that travel in the opposite direction. 
Supercolored paths refine these dual paths. In this way, the models of this paper go beyond the prior literature to give a doubly-refined version of the six-vertex model through the use of color and supercolor. Figure~\ref{fig:refinement} illustrates this refinement.

\begin{figure}[htpb]
  \centering
\begin{tikzpicture}[baseline={(0,1.5)}, scaled, scale=0.6, every node/.append style={font=\normalsize}]
  \begin{scope}[scale = 2]
  \foreach \i in {1,2,3}{
    \node at (-0.5,3-\i)  [label={[label distance=1em]left:$z_\i$}] {};
  }
  
  \foreach \j[evaluate=\j as \col using {int(6-\j)}, evaluate=\j as \w using {int(2-mod((\j-1),3))}] in {1,...,6}{
    \draw (\j-1, -0.5) -- (\j-1,2.5) node [label={[align=center, label distance=1em]above:{$\col$}}] {};
   }

  \foreach \i in {0,...,3}{
    \foreach \j in {1,...,6}{
      \node[spin, font=\normalsize] at (\j-1,\i-0.5) {$+$};
    }
  }

  \end{scope}

  \draw[scolored-path, black] (10-\eps,5) -- (10-\eps,4) -- 
      (9, 4) node {\state{+}{}} --
      (7, 4) node {\state{+}{}} --
      (5, 4) node {\state{+}{}} -- (4-\eps,4) -- (4-\eps,2) --
      (3, 2) node {\state{+}{}} --
      (1, 2) node {\state{+}{}} --
      (-1, 2) node {\state{+}{}};

  \draw[scolored-path, black] (6-\eps,5) -- (6-\eps,0) --
      (5,0) node {\state{+}{}} --
      (3,0) node {\state{+}{}} --
      (1,0) node {\state{+}{}} --
      (-1,0) node {\state{+}{}};

  \draw[scolored-path, black] (2-\eps,5) -- (2-\eps,4) --
      (1,4) node {\state{+}{}} --
      (-1,4) node {\state{+}{}};

  \draw[colored-path, black] (2,5) node[circles={{colored, black}}] {\state{-}{}} -- (2,4) -- 
      (3,4) node {\state{-}{}} -- (4,4) --
      (4,3) node[circles={{colored, black}}] {\state{-}{}} -- (4,2) --
      (5,2) node {\state{-}{}} --
      (7,2) node {\state{-}{}} --
      (9,2) node {\state{-}{}} --
      (11,2) node {\state{-}{}};

  \draw[colored-path, black] (6, 5) node[circles={{colored, black}}] {\state{-}{}} --
      (6,3) node[circles={{colored, black}}] {\state{-}{}} --
      (6,1) node[circles={{colored, black}}] {\state{-}{}} --
      (6,0) --
      (7,0) node {\state{-}{}} --
      (9,0) node {\state{-}{}} --
      (11,0) node {\state{-}{}};

  \draw[colored-path, black] (10,5) node[circles={{colored, black}}] {\state{-}{}} -- (10,4) --
      (11,4) node {\state{-}{}};
  
\end{tikzpicture}  
  \quad
\begin{tikzpicture}[baseline={(0,1.5)}, scaled, scale=0.6, every node/.append style={font=\normalsize}]
  \begin{scope}[scale = 2]
  \foreach \i in {1,2,3}{
    \node at (-0.5,3-\i)  [label={[label distance=1em]left:$z_\i$}] {};
  }
  
  \foreach \j[evaluate=\j as \col using {int(6-\j)}, evaluate=\j as \w using {int(2-mod((\j-1),3))}] in {1,...,6}{
    \draw (\j-1, -0.5) -- (\j-1,2.5) node [label={[align=center, label distance=1em]above:{$\col$}}] {};
   }

  \foreach \i in {0,...,3}{
    \foreach \j in {1,...,6}{
      \node[spin, font=\normalsize] at (\j-1,\i-0.5) {$+$};
    }
  }

  \end{scope}

  \draw[scolored-path, sred] (10-\eps,5) -- (10-\eps,4) -- 
      (9, 4) node {\state{}{0}} --
      (7, 4) node {\state{}{0}} --
      (5, 4) node {\state{}{0}} -- (4-\eps,4) -- (4-\eps,2) --
      (3, 2) node {\state{}{0}} --
      (1, 2) node {\state{}{0}} --
      (-1, 2) node {\state{}{0}};

  \draw[scolored-path, sblue] (6-\eps,5) -- (6-\eps,0) --
      (5,0) node {\state{}{2}} --
      (3,0) node {\state{}{2}} --
      (1,0) node {\state{}{2}} --
      (-1,0) node {\state{}{2}};

  \draw[scolored-path, sgreen] (2-\eps,5) -- (2-\eps,4) --
      (1,4) node {\state{}{1}} --
      (-1,4) node {\state{}{1}};

  \draw[colored-path, red] (2,5) node[circles={{colored, red}, {scolored, sgreen}}] {\state{3}{1}} -- (2,4) -- 
      (3,4) node {\state{3}{}} -- (4,4) --
      (4,3) node[circles={{colored, red}, {scolored, sred}}] {\state{3}{0}} -- (4,2) --
      (5,2) node {\state{3}{}} --
      (7,2) node {\state{3}{}} --
      (9,2) node {\state{3}{}} --
      (11,2) node {\state{3}{}};

  \draw[colored-path, blue] (6, 5) node[circles={{colored, blue}, {scolored, sblue}}] {\state{2}{2}} --
      (6,3) node[circles={{colored, blue}, {scolored, sblue}}] {\state{2}{2}} --
      (6,1) node[circles={{colored, blue}, {scolored, sblue}}] {\state{2}{2}} --
      (6,0) --
      (7,0) node {\state{2}{}} --
      (9,0) node {\state{2}{}} --
      (11,0) node {\state{2}{}};

  \draw[colored-path, green] (10,5) node[circles={{colored, green}, {scolored, sred}}] {\state{1}{0}} -- (10,4) --
      (11,4) node {\state{1}{}};
  
\end{tikzpicture}
\caption{Left: An admissible state from a six-vertex model, depicting both the path interpretation and the edge labels from the set $\{+,-\}$. Right: A doubly-refined version of the admissible state with $n=r=3$ where the set of $r$ colors are shown with solid lines and labeled from $\{1,2,3\}$ and the set of $n$ supercolors has dotted lines labeled from $\{ 0,1,2\}$. Here $+$ denotes the absence of color and scolor on the vertical edges where it appears.}
  \label{fig:refinement}
\end{figure}

The key feature we prove about these doubly refined lattice models is that they are \emph{solvable}. This means that there are so-called Boltzmann weights attached to each vertex, determined by the colors and scolors on adjacent edges, satisfying \emph{Yang-Baxter equations}. 
As the adjective suggests, the existence of Yang-Baxter equations typically allows one to ``solve'' for (i.e., give explicit expressions for) the weighted sum of admissible states made from taking the product of weights at each vertex. The resulting generating function is known as the \emph{partition function}. 

Solutions to Yang-Baxter equations are rare but a common algebraic source are the R-matrices of modules of quantum groups. As we mentioned above, we will ultimately show that the R-matrices for our models come from quantum affine superalgebras $U_q(\widehat{\mathfrak{gl}}(r|n))$ when representing Whittaker functions on $n$-fold covers of $\GL_r$. To connect this algebraic point of view to the lattice, one associates a basis vector in the quantum group module to every allowable set of colors on a lattice edge. Horizontal edges in our model carry a color or supercolor, and we show that these edges correspond to basis vectors in the $(n+r)$-dimensional evaluation representations of the superalgebra leading to the R-matrix in question. In the literature, these are known as ``Perk-Schultz'' R-matrices and appear in~{\cite{PerkSchultz,BazhanovShadrikov,ZhangRBSupergroup,ZhangFundamental,Kojima}}. Unfortunately, such a connection does not immediately lead to the necessary Yang-Baxter equations because we do not have a quantum superalgebra module interpretation of the vertical edges in our model. The resulting Boltzmann weights appear in Figure~\ref{fig:boltzmann_weights} and are completely new. In order to prove solvability, we present a combinatorial analogue of the fusion process~\cite{KulishReshetikhinSklyanin} that replaces each vertical edge with a family of vertical edges for each possible color and allows for a reduction argument to succeed.

What is yet more remarkable is that this doubly refined model of (s)colored paths from quantum superalgebra modules gives precisely the flexibility necessary to represent all metaplectic Iwahori Whittaker functions. Stated more precisely, the resulting partition function of the model is designed to give all values of these Whittaker functions. We highlight several details of this connection in the next subsection.

\subsection{Connecting integrable systems to metaplectic groups}
The lattice models of this paper are of Cartan type~A, and as such the associated partition functions may be used to represent metaplectic Iwahori-Whittaker functions on covers of $\mathbf{GL}_r(F)$. 
To determine a partition function for a lattice model, we must specify the boundary data; that is, the colors and supercolors associated to each boundary edge.
Figure~\ref{fig:translation} gives an overview of how each component of the metaplectic Iwahori-Whittaker function value determines a different aspect of the boundary data which will be described in detail below.
\begin{figure}[htpb]
  \centering
  \vspace{1em}
  \begin{equation*}
    \mathbf{z}^\rho\phi_{\tikzmarknode{theta1}{\theta}, \tikzmarknode{w1}{w}}(\mathbf{z}; \tikzmarknode{g1}{g}) = Z \left(\hspace{.5em}
\begin{tikzpicture}[remember picture, baseline=0.5cm, scale=0.5, every node/.append style={scale=0.8}]
  
  \draw[selection] (-.5,-.5) rectangle (.5,3.2) coordinate[pos=0] (theta2) {};
  \draw[selection] (6,-.5) rectangle (7,3.2) coordinate[pos=0] (w2) {};
  \draw[selection] (.75,4.5) rectangle (5.75,2.75) coordinate[pos=0] (g2) {};

  \draw[very thick] (0,0) node[very thick, densely dotted, sred, state] {} -- (0.75,0) node[label=right:$z_3$] {};
  \draw[very thick] (0,1) node[very thick, densely dotted, sblue, state] {} -- (0.75,1) node[label=right:$z_2$] {};
  \draw[very thick] (0,2) node[very thick, densely dotted, sred, state, label={above:$\theta$}] {} -- (0.75,2) node[label=right:$z_1$] {};
  \foreach \x in {0,...,4}{
    \draw[very thick] (\x+1.25, -1.25) node[state] {$+$} -- (\x+1.25,-.5);
    \draw[very thick] (\x+1.25, 3.25) node[state] {} -- (\x+1.25, 2.5);
  }
  \node at (3.25, 4) {$(w', \lambda)$};
  \draw[very thick] (6.5, 0) node[very thick, green, state] {} -- (5.75, 0);
  \draw[very thick] (6.5, 1) node[very thick, blue, state] {} -- (5.75, 1);
  \draw[very thick] (6.5, 2) node[very thick, red, state, label={above:$w$}] {} -- (5.75, 2);
  \draw[very thick] (0.75,-.5) rectangle (5.75, 2.5);
\end{tikzpicture}
\hspace{.5em}\right)
\begin{tikzpicture}[overlay, remember picture]
  \draw[selection-line] ($(g2) + (1.25,0.1)$) -| ++(0,0.2) -| node[pos=0.25,halo-rect, inner sep=1pt] {\tiny $g \in \GL_r(F)$} (g1);
  \draw[selection-line] ($(w1.south) - (0,0.2)$) -| ++(0,-1.5) -| node[pos=0.375, halo-rect, inner sep=1pt] {\tiny Iwahori vector} ($(w2) + (0.25,0)$);
  \draw[selection-line] ($(theta1.south) - (0,0.2)$) -| ++(0,-1.8) -| node[pos=0.25, halo-rect, inner sep=1pt] {\tiny Whittaker functional} ($(theta2) + (0.25,0)$);
\end{tikzpicture}
\end{equation*}
\vspace{1em}
\caption{An illustration of the way the boundary data encodes each of the various defining objects for an evaluation of a metaplectic Iwahori-Whittaker function.}
  \label{fig:translation}
\end{figure}

The value of the metaplectic Iwahori Whittaker function $\phi$ appearing on the left-hand side of the equality in the figure is shown to be equal to the partition function $Z$ up to the prefactor $\mathbf{z}^\rho$ where $\rho = (r-1, r-2, \ldots, 0)$ in Theorem~\ref{thm:main}. That equality makes use of the following dictionary. 
The Langlands-Satake parameters $\mathbf{z} = (z_1, \ldots, z_r) \in (\mathbb{C}^\times)^r$ determine the unramified principal series for $\phi$ and the weights for vertices at row $i$ in the lattice model.
The $\theta \in (\mathbb{Z}/n\mathbb{Z})^r$ labels the metaplectic Whittaker functional for $\phi$ as well as the scolors on the left boundary of $Z$.
The Weyl group element $w$ labels the Iwahori fixed basis vector in the unramified principal series for $\phi$ and the order of the colors on the right boundary of $Z$.
Lastly, $\phi$ is determined by its values on particular group elements $g$ which are specified by certain pairs of weights $\lambda$ and Weyl group elements $w'$.
On the lattice model side $\lambda$ and $w'$ determine the top boundary conditions.

Theorem~\ref{thm:main} is proved using the solvability of the lattice model. 
More precisely, we show that the same recursive relations govern both the partition functions of our lattice models and the metaplectic Iwahori-Whittaker functions. 
Figure~\ref{fig:intertwiners-R-matrices-simplified} illustrates how this connection arises. For the lattice models, those recursive relations come from partition function identities using the Yang-Baxter equation. 
More precisely, one attaches a new, rotated ``R-vertex'' (whose Boltzmann weights are taken from the corresponding R-matrix) to boundary edges in a pair of adjacent rows 
and uses the familiar ``train argument'' by repeatedly applying Yang-Baxter equations to obtain a recursive relation on partition functions. As mentioned before, the R-matrix that solves the system corresponds to the quantum group $U_q(\hat{\mathfrak{gl}}(r|n))$. In the $p$-adic representation theory, there is a parallel story where the recursion on Whittaker functions arises from two different ways of computing the effect of standard intertwining operators for principal series. 
If one expresses the composition of a given Whittaker functional with the intertwining operator in the basis of metaplectic Whittaker functionals, the resulting coefficients are known as the ``Kazhdan-Patterson scattering matrix'' which are seen to {\it exactly} match the R-matrix for the lattice model involving pairs of supercolors. 
Alternatively, one may act by intertwining operators on the Iwahori-fixed vectors in the representation and expand in the basis of Iwahori-fixed vectors. 
The resulting ``intertwiner structure constants'' {\it exactly} match the R-matrix entries, now formed with pairs of colors.  
The interaction of colors and supercolors is only witnessed in the middle of the train argument as seen in the center of Figure~\ref{fig:intertwiners-R-matrices-simplified}.
Here the mixed R-vertex appears in the midst of a pair of rows, but is never seen on the $p$-adic side, which only sees the subalgebras $U_{q^{-1}}(\hat{\mathfrak{gl}}(n))$ and $U_q(\hat{\mathfrak{gl}}(r))$ of the larger quantum group $U_q(\hat{\mathfrak{gl}}(r|n))$. 
Full details on this connection are provided in Section~\ref{sec:conclusions}.

\begin{figure}[htpb]
  \centering

\begin{equation*}
  \begin{gathered}
    \tikzmarknode[selection, draw, inner sep=2pt]{A1}{\Omega_\theta^{s_i\mathbf{z}^{}} \circ \mathcal{A}_{s_i}^{\mathbf{z}^{}}}\hspace{1pt} \bigl( \pi(g) \Phi_w^{\mathbf{z}^{}} \bigr) = \Omega_\theta^{s_i\mathbf{z}^{}}\bigl( \pi(g) \tikzmarknode[selection, draw, inner sep=2pt]{A2}{\mathcal{A}_{s_i}^{\mathbf{z}^{}}\Phi_w^{\mathbf{z}^{}}}\hspace{1pt} \bigr) \\[1em]
\begin{tikzpicture}[remember picture, baseline=0.5cm, scale=0.5, every node/.append style={scale=0.8}]
  \begin{scope}

    \draw[selection] (.5,2.5) rectangle (-1.5,.5) coordinate[pos=0] (R1) {}; 
    
    \draw[very thick] (0,0) node[very thick, densely dotted, sred, state] {} -- (0.75,0);
    \draw[very thick] (0,1) node[very thick, densely dotted, sblue, state] (A) {} -- (0.75,1);
    \draw[very thick] (0,2) node[very thick, densely dotted, sred, state] (B) {} -- (0.75,2);
    \foreach \x in {0,...,3}{
      \draw[very thick] (\x+1.25, -1.25) node[state] {$+$} -- (\x+1.25,-.5);
      \draw[very thick] (\x+1.25, 3.25) node[state] {} -- (\x+1.25, 2.5);
    }
    \draw[very thick] (5.5, 0) node[very thick, green, state] {} -- (4.75, 0);
    \draw[very thick] (5.5, 1) node[very thick, blue, state] {} -- (4.75, 1);
    \draw[very thick] (5.5, 2) node[very thick, red, state] {} -- (4.75, 2);
    \draw[very thick] (0.75,-.5) rectangle (4.75, 2.5);

    \draw[very thick] (-1,2) node[very thick, densely dotted, sred, state] {} -- (A);
    \draw[very thick] (-1,1) node[very thick, densely dotted, sblue, state] {} -- (B);
    \node[scale=1/0.8] at (7.25+0.5,1) {$=\ldots=$};
  \end{scope}

  \begin{scope}[xshift=10cm]

    \draw[selection] (3,2.5) rectangle (5,.5) coordinate[pos=0] (R2) {};

    \draw[selection-line] (4,2.5) -- (4,4.5) node[halo-rect, inner sep=1pt, scale=1/0.8] {\footnotesize supersymmetric R-matrix};
    
    \draw[very thick] (0,0) node[very thick, densely dotted, sred, state] {} -- (0.75,0);
    \draw[very thick] (0,1) node[very thick, densely dotted, sblue, state] {} -- (0.75,1);
    \draw[very thick] (0,2) node[very thick, densely dotted, sred, state] {} -- (0.75,2);
    \foreach \x in {0,...,1}{
      \draw[very thick] (\x+1.25, -1.25) node[state] {$+$} -- (\x+1.25,-.5);
      \draw[very thick] (\x+1.25, 3.25) node[state] {} -- (\x+1.25, 2.5);
    }
    \draw[very thick] (5.25, 0) -- (2.75, 0);
    \draw[very thick] (3.5, 1) node[very thick, red, state] (G) {} -- (2.75, 1);
    \draw[very thick] (3.5, 2) node[very thick, densely dotted, sgreen, state] (H) {} -- (2.75, 2);
    \draw[very thick] (0.75,-.5) rectangle (2.75, 2.5);

    \draw[very thick] (5.25,2) -- (4.5,2) node[very thick, red, state] {} -- (G);
    \draw[very thick] (5.25,1) -- (4.5,1) node[very thick, densely dotted, sgreen, state] {} -- (H);
    
    \draw[very thick] (5.25,-.5) rectangle (7.25, 2.5);
    \foreach \x in {0,...,1}{
      \draw[very thick] (\x+1.25+4.5, -1.25) node[state] {$+$} -- (\x+1.25+4.5,-.5);
      \draw[very thick] (\x+1.25+4.5, 3.25) node[state] {} -- (\x+1.25+4.5, 2.5);
    }

    \draw[very thick] (8, 0) node[very thick, green, state] {} -- (7.25, 0);
    \draw[very thick] (8, 1) node[very thick, blue, state] {} -- (7.25, 1);
    \draw[very thick] (8, 2) node[very thick, red, state] {} -- (7.25, 2);
    \node[scale=1/0.8] at (9.5+0.5,1) {$=\ldots=$};
  \end{scope}
  \begin{scope}[xshift=22cm]

    \draw[selection] (5,2.5) rectangle (7,.5) coordinate[pos=0] (R2) {};
    
    \draw[very thick] (0,0) node[very thick, densely dotted, sred, state] {} -- (0.75,0);
    \draw[very thick] (0,1) node[very thick, densely dotted, sblue, state] {} -- (0.75,1);
    \draw[very thick] (0,2) node[very thick, densely dotted, sred, state] {} -- (0.75,2);
    \foreach \x in {0,...,3}{
      \draw[very thick] (\x+1.25, -1.25) node[state] {$+$} -- (\x+1.25,-.5);
      \draw[very thick] (\x+1.25, 3.25) node[state] {} -- (\x+1.25, 2.5);
    }
    \draw[very thick] (5.5, 0) node[very thick, green, state] {} -- (4.75, 0);
    \draw[very thick] (5.5, 1) node[very thick, red, state] (G) {} -- (4.75, 1);
    \draw[very thick] (5.5, 2) node[very thick, blue, state] (H) {} -- (4.75, 2);
    \draw[very thick] (0.75,-.5) rectangle (4.75, 2.5);

    \draw[very thick] (6.5,2) node[very thick, red, state] {} -- (G);
    \draw[very thick] (6.5,1) node[very thick, blue, state] {} -- (H);
  \end{scope}
\end{tikzpicture}
\end{gathered}
\begin{tikzpicture}[overlay, remember picture]
  \draw[selection-line] ($(R1) + (-0.5,0)$) .. controls ($(R1) + (0,1.5)$) and ($(A1) + (-1,-1)$) .. (A1) node[midway, halo-rect, inner sep=1pt] {\footnotesize Kazhdan-Patterson scattering matrix};
  \draw[selection-line] ($(R2) + (0.5,0)$) .. controls ($(R2) + (0,1.5)$) and ($(A2) + (1,-1)$) .. (A2) node[midway, halo-rect, inner sep=1pt] {\footnotesize intertwiner structure constants};
\end{tikzpicture}
\end{equation*}
\caption{The action of the intertwining operators $\mathcal{A}_{s_i}$ on the Whittaker functionals $\Omega_\theta$ and on the Iwahori basis vectors $\Phi_w$ correspond to the Kazhdan–Patterson scattering matrix and interwiner structure constants, respectively.
These are equal to the supercolor and color restrictions of the quantum group R-matrix which act on the partition function giving functional equations for the latter.}
  \label{fig:intertwiners-R-matrices-simplified}
\end{figure}

Figure~\ref{fig:intertwiners-R-matrices-simplified} shows one example in pictures of how the $R$-vertex may attach to the original rectangular lattice of interest. For any pair of (s)colors in adjacent rows, there are actually two such $R$-vertices with non-zero Boltzmann weights corresponding to preserving or swapping the (s)colors involved. From the two possibilities on the left- and right-hand boundaries we obtain relations involving four different partition functions. Prior lattice models with just one colored boundary and one uncolored boundary resulted in three-term relations expressible as Demazure-like divided difference operators. We show that our new four term relations are expressible as vector-valued Demazure-like operators (called ``vector metaplectic Demazure-Whittaker operators'' in Section~\ref{sec:DW-operators}).

We have made this connection between lattice models and $p$-adic representation theory in type A. By nature, lattice model geometries must strongly depend on the Cartan type; compare for example the more elaborate geometries in~\cite{Kuperberg:roof, Ivanov,gray:thesis, buciumas:scrimshaw:BC, zhong}. However, we emphasize that the connection between lattice models and representation theory in type A motivated us to consider the {\it type-independent} treatment of metaplectic Iwahori Whittaker functions by vector-valued divided difference operators in Section~\ref{sec:whittaker}.
The proofs are based solely on representation theory.

Having established the equality in Theorem~\ref{thm:main}, one immediately obtains a combinatorial formula for metaplectic Whittaker functions on covers of general linear groups as a weighted sum over admissible states of our model. These states can in turn be shown to be in bijection to colored Gelfand-Tsetlin patterns, leading to a refinement of McNamara's result~\cite[Theorem 8.6]{McNamara:Duke}. Naprienko~\cite{Naprienko} shows how this expression arises naturally from certain double coset decompositions in $p$-adic groups.

\subsection{Connections to other works}\label{subsec:connections}
At around the same time the present work initially appeared, Aggarwal, Borodin and Wheeler~\cite{AggarwalBorodinWheelerColored} independently introduced supersymmetric models that make use of modules for the quantum group $U_q(\widehat{\mathfrak{sl}}(m|n))$. 
The Boltzmann weights in the respective papers were produced by very different means resulting in very different fusion procedures for constructing solvable lattice models.
We were recently informed by Aggarwal that both the lattice models in~\cite{BBB} and~\cite{BBBGIwahori}, which are special cases of the present paper, can be obtained as a specialization of their most general model (their functions $\tilde{F}_\lambda$ appearing in \cite[Definition 11.2.2]{AggarwalBorodinWheelerColored} should correspond after some renormalization and introduction of Gauss sums to the metaplectic spherical Whittaker functions of~\cite{BBB}). 

The papers~\cite{AggarwalBorodinPetrovWheelerColored,AggarwalBorodinWheelerColored} introduce many innovations to the study of special functions, one example being the study of certain transition and structure coefficients for the functions $\tilde{F}_{\lambda}$ mentioned above. It has been observed by Aggarwal and Buciumas that combining results of~\cite{AggarwalBorodinWheelerColored, BBB} one can reinterpret and reprove certain results in~\cite{BuciumasPatnaik} combinatorially. In particular, one may use the results of~\cite{AggarwalBorodinWheelerColored,BBB} to show that the structure coefficients appearing in the action of the metaplectic spherical Hecke algebra on its Gelfand-Graev representation are quantum Littlewood-Richardson coefficients (at least in Cartan type A). 
This is a result at the spherical level, motivated by a conjecture of Lurie and Gaitsgory~\cite{gaitsgory} and proved using different methods in~\cite[Corollary 1.0.4]{BuciumasPatnaik}. 
In the future, we would like to understand how to use the techniques in~\cite{AggarwalBorodinWheelerColored} and the ones developed in this paper to study the Gelfand-Graev module combinatorially at the Iwahori level in view of Gaitsgory's work~\cite{gaitsgory:iwahori}.

\medskip
\noindent
\textbf{Contents.}
The paper is outlined as follows.
In Section~\ref{sec:lattice} we introduce our new class of supersymmetric lattice models which are constructed through a fusion process similar to that of~\cite{KulishReshetikhinSklyanin}.
The model that is used as an input in this fusion process is called \emph{monochrome} and is detailed in Section~\ref{sec:monochrome}, while the fusion process itself is explained in Section~\ref{sec:fusion}.
In preparation for the proof of Theorem~\ref{thm:main} we evaluate the partition function of a certain ground state system in Section~\ref{sec:ground-state}.
In Section~\ref{sec:R-matrix} we prove that our lattice model is solvable; that is, we prove an associated Yang-Baxter equation in Corollary~\ref{cor:YBE}. To show this for every $n$ and $r$, we first prove an auxiliary Yang-Baxter equation for the monochrome model in Theorem~\ref{thm:ybemc}
by reducing the problem to the verification of a finite number of equations. 
In the same section we also show that the R-matrix is a Drinfeld twist of the $U_q(\widehat{\mathfrak{gl}}(r|n))$ R-matrix in Proposition~\ref{prop:Drinfeldtwisting}.
From the Yang-Baxter equation we obtain a recursion relation for the partition function in Proposition~\ref{prop:lattice-recursion} of Section~\ref{sec:lattice-recursion} that we later compare with the recursion relations for the metaplectic Iwahori Whittaker functions. 

In Section~\ref{sec:whittaker} we turn our attention to metaplectic Iwahori Whittaker functions for general reductive groups, with some introductory theory and notation introduced in Section~\ref{sec:metaplectic-prel}.
Here, we also compute a base case for the Whittaker functions in Proposition~\ref{prop:rep-base-case}.
The recursion relations for the Whittaker functions are obtained in Proposition~\ref{prop:rep-recursion} in Section~\ref{sec:DW-operators} by studying intertwining operators.
These Whittaker functions can be conveniently described using the (vector) metaplectic Demazure-Whittaker operators as shown in Theorem~\ref{thm:dem_recurse} in Section~\ref{sec:DW-operators}.
These operators are shown to produce a representation of the affine metaplectic Hecke algebra in Theorem~\ref{thm:hecke}.
An expression for all values of the metaplectic Iwahori Whittaker functions in terms of these operators is deduced in Corollary~\ref{cor:evaluate}.
In Section~\ref{sec:scalar-Demazure} we match our vector metaplectic Demazure-Whittaker operators with the averaged metaplectic Demazure-Whittaker operators of~\cite{ChintaGunnellsPuskas,PatnaikPuskasIwahori} in Theorem~\ref{thm:T-average}, which also recovers the Chinta-Gunnells Weyl group action from non-archimedean representation theory.

We conclude the paper with Section~\ref{sec:conclusions} where we state and prove the main theorem, Theorem~\ref{thm:main} in Section~\ref{sec:main-thm-and-proof}, after detailing the computation of the metaplectic Iwahori Whittaker functions for $\GL_r$ and a particular convenient cocycle in Section~\ref{sec:GLr}.
By comparing the partition function for the ground state system with the above base case for the Whittaker function and then using the matching recurrence relations, we can conclude the equality for all cases.
Then in Section~\ref{sec:intertwiners-R-matrices}, we give an interpretation of the intertwining operators in the non-archimedean representation theory in terms of different R-matrices on the lattice model side as illustrated by equation~\eqref{eq:commutativediagrams} and Figure~\ref{fig:intertwiners-R-matrices}.

\medskip
\noindent\textbf{Acknowledgements.}
The authors thank Slava Naprienko, Manish Patnaik, Anna Pusk\'as, Siddhartha
Sahi, Jason Saied and Chenyang Zhong for useful discussions and comments.
The authors would like to thank Amol Aggarwal, Alexei Borodin and Michael Wheeler for illuminating discussion regarding their work~\cite{AggarwalBorodinWheelerColored}. 

This work was supported by NSF grants DMS-1801527 (Brubaker) and DMS-1601026 (Bump).
Buciumas was supported by the Australian Research Council DP180103150 and DP17010264, NSERC Discovery RGPIN-2019-06112 and the endowment of the M.V. Subbarao Professorship in Number Theory.
Gustafsson was supported by the Swedish Research Council (Vetenskapsr\aa det) grant 2018-06774.

\section{Supersymmetric lattice models}
\label{sec:lattice}

\subsection{The monochrome model}
\label{sec:monochrome}

Our model consists of {\it systems}~$\mathfrak{S}$, certain
finite collections of states and associated \textit{Boltzmann weights} for each state in the system, to be described
more precisely in a moment. The sum $Z(\mathfrak{S})$ of these Boltzmann weights over all states in the system $\mathfrak{S}$ is 
the \textit{partition function} of the system.
The solvability of the model amounts to the existence of \textit{Yang-Baxter equations}
that allow us to study these partition functions through the commutation
properties of the \textit{row transfer matrices}, following the method
by which Baxter~\cite{Baxter} studied such models.

This paper treats three equivalent versions of the model, which
we will call \textit{monochrome}, \textit{color-fused} and
\textit{fully-fused}. 
Collectively, we call these models \emph{metaplectic Iwahori ice} in reference to six-vertex models often being called ice models.
In this section we describe the
monochrome model. Its admissible states are described by labelings of
a rectangular grid with $Nnr$
columns and $r$ rows, where $n$ and $r$ are fixed integers (later, the cover degree and the rank of
the associated metaplectic general linear group, respectively) and $N$ is a sufficiently large integer. 
An example of such a grid with $n=r=3$ and $N=2$ is given in Figure~\ref{fig:thegrid}. In particular,
there is a vertex at each crossing of a row and column in the grid, and each vertex has 4 adjacent edges.
Edges that are adjacent to just one vertex are called \textit{boundary edges} while edges adjacent to two vertices
are called \textit{internal edges}. A fixed labeling of all boundary edges is referred to as a \textit{boundary condition}
and, given a boundary condition, an \textit{(admissible) state} is a labeling of internal edges satisfying certain given restrictions. 
We will dictate the restrictions for our monochrome models momentarily. Finally, a \textit{system} $\mathfrak{S}$ is defined 
as the set of admissible states for a fixed boundary condition,
together with a set of \textit{Boltzmann weights} $\beta(\mathfrak{s})$ for every admissible state $\mathfrak{s} \in \mathfrak{S}$.  

Before we describe monochrome systems formally, we give an intuitive
description of its admissible states. An (admissible) state of the system assigns labels (or ``attributes'')
called \textit{color} and \textit{scolor} to the edges.
Borrowing the naming convention from physics, the prefix \emph{s} is used to imply that scolor is the supersymmetry partner of color.
A \textit{color} $c$ is an attribute indexed by an
integer modulo~$r$. The set of possible colors will be denoted
$\{c_1,\cdots,c_r\}$ and ordered so that $c_i<c_j$ if and only if
$i<j$. A \textit{scolor} $w$ is an attribute indexed by
an integer modulo $n$. The possible scolors are denoted
$\{w_0,\cdots,w_{n-1}\}$ and are ordered so that
$w_i<w_j$ if and only if $i<j$. 

In any state, the edges
that are labeled with a color $c$ or scolor $w$ will also be said to
\textit{carry} the color or scolor. According to the restrictions we will impose, the edges carrying any
color $c$ will form a connected path through our rectangular grid, traveling from the top boundary
and exiting at the right boundary. Similarly the edges
that carry the scolor $w$ form a path beginning at the top boundary of the grid
and end up at the left boundary. To separate the two, we will draw color-paths with thick solid lines and scolor-paths with dotted lines; see Figure~\ref{fig:monochrome}
for an example. More than one scolor-path may have the same scolor $w$,
but for the current application we do not need systems in which
different color-paths have the same color.

Edges may be classified as \textit{horizontal} (occurring in a row of the grid) or \textit{vertical} (occuring in a column).
In any (admissible) state $\mathfrak{s}$, every horizontal edge carries a unique
attribute, either a color or scolor. Every vertical edge carries
either no attribute, or two: a color and a scolor. In Figure~\ref{fig:monochrome},
the vertical edges that carry the color $c_i$ and the scolor $w_j$ are
labeled $c_iw_j$, or simply $ij$ when no confusion may arise. So roughly,
admissible states are paths of a given color or scolor traveling through the grid as described
above and the boundary conditions serve to dictate the beginning and ending location of each path.

We now give a more formal description of the systems. First, 
we label each column of the model with a pair $(c, w)$ consisting
of a color $c$ and scolor $w$ according to the following cyclic ordering.
Let $(c,w)=(c_i,w_j)$ be such a pair
with $1\leqslant i\leqslant r$ and $0\leqslant j\leqslant n-1$.
Then the \textit{successor} $(c',w')$ to $(c,w)$ in this cyclic
ordering is defined by
\begin{equation}
\label{ccsuccessor}
  (c', w') = \left\{\begin{array}{ll}
     (c_{i+1}, w_j) & \text{if $i<r$,}\\
     (c_1, w_{j-1}) & \text{if $i = r, j > 0$}\\
     (c_1,w_{n-1}) & \text{if $i = r, j = 0$} .
   \end{array}\right.
\end{equation}
Note that the colors $c_i$ first increase to $r$, then the accompanying scolor $w_j$ decreases by one and color is reset to $c_1$,
and then cycle repeats. The leftmost column is labeled $(c_1, w_{n-1})$ and each column to the right of any $(c,w)$ is
labeled by its successor $(c',w')$ in the cyclic ordering. According to the size of our grid, each pair appears $N$ times in
the labeling. Next we label each row with a formal parameter $z_i$ where
the index $i$ increases from $1$ to $r$ from top row to bottom row; when we specialize these $r$ parameters 
$(z_1, \ldots, z_r)$ to non-zero complex numbers, we 
call them \textit{spectral parameters}. See Figure~\ref{fig:thegrid} for an example of the labels
on columns and rows.
In the figure we have grouped each sequence of $r$ columns of the same scolor into blocks enumerated from right to left.
These blocks will become the columns of the color-fused system described in Section~\ref{sec:fusion}.
Finally, we label each vertex $T$ in the grid with the labels from its row and column, writing $T_z^{cw}$ for
the vertex with column label $(c,w)$ and row label $z$.  

\begin{figure}[htbp]
\centering
\begin{tikzpicture}[scale=0.75, every node/.style={font=\scriptsize}]
  \foreach \i in {1,2,3}{
    \draw (-0.5,3-\i) node [label=left:$z_\i$] {} -- (17.5,3-\i);
  }
  
  \foreach \j[evaluate=\j as \c using {int(mod(\j-1,3)+1)}, evaluate=\j as \w using {int(2-mod(int((\j-1)/3),3))}] in {1,...,18}{
    \draw (\j-1, -0.5) -- (\j-1,2.5) node [label={[align=center]above:$c_{\c}$ \\ $w_{\w}$}] {};
  }

  \foreach \i in {1,2,3}{
    \foreach \j in {1,...,18}{
      \draw[fill=black] (\j-1,\i-1) circle (.05);
    }
  }

  \foreach \j[evaluate=\j as \col using {int(5-\j)}] in {0,...,5}{
    \draw[line width=1pt, decoration={brace},decorate] (3*\j-0.3,4) -- node[above=6pt] {block $\col$} (3*\j+2+0.3,4);
  }

  \node[label={[align=right, xshift=-1em]above:$c = \phantom{w_2}$ \\ $w = \phantom{w_2}$}] at (0,2.5) {};
  \node[label={left:$z = \phantom{z_1}$}] at (-0.5,2) {};

\end{tikzpicture}
\caption{The monochrome grid. Every vertex (represented here by a dot) in each row is assigned the same spectral parameter
$z_i$, and every vertex in each column is assigned the same scolor and color;
the colors $c$ and scolors $w$ rotate cyclicly through all the possibilities.
The columns with the same scolors are combined into blocks numbered from right to left, and these blocks will later become the columns of the color-fused model.
In this example, $n=r=3$ and $N=2$.}
\label{fig:thegrid}
\end{figure}

\begin{figure}[htbp]
{\scriptsize
\[\begin{array}{|c|c|c|c|c|c|}
\hline
\text{\normalsize $\texttt{a}_1$} & \text{\normalsize $\texttt{a}_2$} & \text{\normalsize $\texttt{b}_1$} & \text{\normalsize $\texttt{b}_2$} & \text{\normalsize $\texttt{c}_1$} & \text{\normalsize $\texttt{c}_2$} \\
\hline
\begin{tikzpicture}[scaled, font=\normalsize]
  \draw[spin-path] (0,1) node {$+$} -- (0,-1) node {$+$};
  \draw[scolored-path] (-1,0) node {$y$} -- (1,0) node {$y$};
  \node[halo] at (0,0) {$T_z^{cw}$};
\end{tikzpicture}
&
\begin{tikzpicture}[scaled, font=\normalsize]
  \draw[both-path] (0,1) node {$\scriptstyle cw$} -- (0,-1) node {$\scriptstyle cw$};
  \draw[colored-path] (-1,0) node {$a$} -- (1,0) node {$a$};
  \node[halo] at (0,0) {$T_z^{cw}$};
\end{tikzpicture}
&
\begin{tikzpicture}[scaled, font=\normalsize]
  \draw[both-path] (0,1) node {$\scriptstyle cw$} -- (0,-1) node {$\scriptstyle cw$};
  \draw[scolored-path] (-1,0) node {$y$} -- (1,0) node {$y$};
  \node[halo] at (0,0) {$T_z^{cw}$};
\end{tikzpicture}
&
\begin{tikzpicture}[scaled, font=\normalsize]
  \draw[spin-path] (0,1) node {$+$} -- (0,-1) node {$+$};
  \draw[colored-path] (-1,0) node {$a$} -- (1,0) node {$a$};
  \node[halo] at (0,0) {$T_z^{cw}$};
\end{tikzpicture}
&
\begin{tikzpicture}[scaled, font=\normalsize]
  \draw[spin-path] (0,1) node {$+$} -- (0,0);
  \draw[colored-path] (-1,0) node {$c$} -- (0,0);
  \draw[both-path] (0,0) -- (0,-1) node {$\scriptstyle cw$};
  \draw[scolored-path] (0,0) -- (1,0) node {$w$};
  \node[halo] at (0,0) {$T_z^{cw}$};
\end{tikzpicture}
&
\begin{tikzpicture}[scaled, font=\normalsize]
  \draw[spin-path] (0,-1) node {$+$} -- (0,0);
  \draw[colored-path] (1,0) node {$c$} -- (0,0);
  \draw[both-path] (0,0) -- (0,1) node {$\scriptstyle cw$};
  \draw[scolored-path] (0,0) -- (-1,0) node {$w$};
  \node[halo] at (0,0) {$T_z^{cw}$};
\end{tikzpicture}
\\ \hline
1
&
\begin{array}{ll} v & \text{if } c>a\\ z & \text{if } c=a\\ 1 & \text{if } c<a\end{array} 
&
g(y-w) 
&
\begin{array}{ll} z&\text{if $a=c$,}\\ 1 &\text{otherwise.}\end{array} 
&
(1-v)z 
&
1
\\ \hline
\end{array}
\]}
\caption{The Boltzmann weight at a vertex of color $c$
and scolor $w$ labeled by $cw$. Here $a$ is a color (which may be
the same or different from $c$) and $y$ is a scolor (which may be the
same or different from $w$). We label each vertex configuration according to the first row of the table.
If the vertex configuration of edges does not
appear in this table, the Boltzmann weight is zero.
For the Boltzmann weight $g(y-w)$ we identify each scolor $w_j$ with the integer $j$.}
\label{fig:boltzmann_weights}
\end{figure}

In decorating the edges of our model with attributes color and scolor, we only allow for the
configurations in Figure~\ref{fig:boltzmann_weights} on the edges adjacent to any vertex.
In particular, in the column labeled $(c,w)$, the vertical edges are only
allowed to carry \textit{both} attributes $c$ and $w$ or
\textit{neither}. 
This explains the name \emph{monochrome} for the system, since each column can only contain a specific color and scolor.
A horizontal edge must carry a single color~$c_i$ or a single scolor~$w_j$ (independent of column labels) but not both. It is
sometimes convenient to refer to the collection of all possible sets of attributes on a given edge. We call this collection the
\textit{spinset} for an edge. So the spinset for every horizontal edge is
\begin{equation}
\label{horizontalspinset}
\{c_1,\cdots,c_r,w_0,\cdots w_{n-1} \}
\end{equation}
while the spinset for a vertical edge adjacent to vertex $T_z^{cw}$ is the two-element set $\{ +, (c,w) \}$, where $+$ is a placeholder for no attribute. 

We will now describe the boundary conditions which assign to each boundary edge an element of its spinset.
These conditions are parametrized by $\mu\in\mathbb{Z}^r$ with non-negative parts, an element $\theta$ in $(\mathbb{Z}/n\mathbb{Z})^r$ and a Weyl group element $w$ in $S_r$ as summarized in Figure~\ref{fig:conventions}.
At row $i$, the left boundary edge is assigned the scolor $\theta_i$ and the right boundary edge the color $c_{r+1-w^{-1}(i)}$, that is, the color $(w P)_i$ where $P = (c_r, c_{r-1}, \ldots, c_1)$ is the palette of colors in decreasing order.
Recall that vertical edges have either no attribute, which we denote by $+$ and will call \emph{empty}, or the color and scolor of the corresponding column, which we will called \emph{filled in}.
Thus we only need to specify which columns are filled in and which are empty.
All edges at the bottom boundary are empty, that is, assigned $+$.
For each $j$ we fill in the column at the top boundary which is in block~$\mu_j$ with color~$c_{r+1-j}$, that is, color~$(P)_j$.

\begin{figure}[htpb]
  \centering
  \begin{tikzpicture}
    \draw (0,0) -| (5,3)
      node[pos=0.75, right, align=center] {$\quad\begin{aligned}\text{row} & \quad i \\ \text{color} & \quad (w P)_i \end{aligned}$}
      -| (0,0)
      node[pos=0.25, above, align=center] {$\begin{aligned}\text{block number} & \quad \mu_j \\ \text{color} & \quad (P)_j\end{aligned}$}
      node[pos=0.75, right, align=center] {$\quad\begin{gathered} z_1 \\ z_2 \\ \vdots \\ z_r \end{gathered}$}
      node[pos=0.75, left, align=center] {$\begin{aligned}\text{row} & \quad i \\ \text{scolor} & \quad \theta_i \end{aligned}\quad$};
  \end{tikzpicture}
  \caption{Boundary conditions as determined by $\mu\in\mathbb{Z}^r$ with non-negative parts, $w \in S_r$ and $\theta \in (\mathbb{Z}/n\mathbb{Z})^r$.
    The block number refers to the blocks shown in Figure~\ref{fig:thegrid} for the monochrome system, or equivalently to the column numbers of the color-fused system discussed in Section~\ref{sec:fusion}.
  Here $P = (c_r, c_{r-1}, \ldots, c_1)$ is a tuple of the colors ordered from largest to smallest.}%
  \label{fig:conventions}
\end{figure}

We recall the following definition from~\cite{BBBGIwahori} for a root datum with weight lattice $\Lambda$, root system $\Delta$ with positive and negative roots $\Delta^\pm$ and Weyl group $W$ as usual.

\begin{definition}
  \label{def:almost_dominant}
  Let $w'\in W$, let $\alpha_i$ be a simple root and $\alpha_i^\vee$ the
  corresponding coroot. A weight $\lambda\in\Lambda$ is \textit{$w'$-almost dominant} if
  \begin{equation}
    \label{eq:almost-dominant}
    \langle \alpha^\vee_i, \lambda \rangle \geqslant
    \begin{cases}
      0 & \text{if } (w')^{-1}\alpha_i \in\Delta^+\,, \\
      -1 & \text{if } (w')^{-1} \alpha_i \in\Delta^-\,.
    \end{cases}
    \qquad \text{for all simple roots $\alpha_i$.}
  \end{equation}
\end{definition}

In this section we use the $\GL_r$ weight lattice $\Lambda$ which we identify with~$\mathbb{Z}^r$.

\begin{remark}
  \label{rem:almost-dominant}
  We showed in Proposition~7.2 of~\cite{BBBGIwahori} that for each weight
  $\mu$ there exists a unique pair $(w',\lambda)$ with $\lambda \in \Lambda$ being
  $w'$-almost dominant and $w'\mu = \lambda + \rho$ where $\rho =
  (r-1,r-2, \ldots, 0)$.  Concretely, this means that $w'\mu$ is a dominant
  weight, but if $\mu$ contains repeated entries, there are several
  permutations $w'$ that would make $w'\mu$ dominant. The weight $\lambda = w'\mu-\rho$
  may not be dominant, but the Weyl group element $w'$ that makes $\lambda$
  $w'$-almost dominant is unique in the sense that the order of the colors at
  the top boundary read from left to right is $w'P$.  Indeed, for each~$j$ we
  fill in the column in block $(w'\mu)_j$ with color $(w'P)_j$, and since
  $(w'\mu)_j \geqslant (w'\mu)_{j+1}$ we only need to check the case when
  $(w'\mu)_j = (w'\mu)_{j+1}$ which means that
  $\langle \alpha_i, \lambda \rangle = -1$ and $w'^{-1} \alpha_i < 0$.  In
  that case $w'^{-1}(j) > w'^{-1}(j+1)$ which is equivalent to $(w'P)_j =
  c_{r+1-(w')^{-1}(j)} < c_{r+1-(w')^{-1}(j+1)} = (w'P)_{j+1}$.
\end{remark}

With the boundary spins fixed, a \textit{state} $\mathfrak{s}$
of the system assigns a \textit{spin} --- that is, an element of the spinset --- to every edge of the grid. See Figure~\ref{fig:monochrome} for an example.
We call the set of states with given boundary spins a system, denoted by $\mathfrak{S}_{\mu, \theta, w}$.
Given a state $\mathfrak{s} \in \mathfrak{S}_{\mu, \theta, w}$, each vertex $T^{cw}_z$ is assigned
a \textit{Boltzmann weight}, according to the values in Figure~\ref{fig:boltzmann_weights} where we identify each scolor $w_j$ with the integer $j$.
These depend on $c,w$ and $z$ for the vertex, as well as a parameter $v$ and a function $g$ of the integers modulo $n$ that satisfies
the following condition.
\begin{assumption}\label{assumptionga}
  The function $g$ satisfies $g(0)=-v$, and if $a$ is
  not congruent to $0$ modulo $n$, then $g(a)g(-a)=v$.
\end{assumption}

Note that this assumption is satisfied when $g$ is a certain non-archimedean Gauss sum and the parameter $v$ is associated
to the cardinality of the residue field. This will be important when connecting these lattice models to metaplectic
Whittaker functions in a later section. 

The \textit{Boltzmann weight} $\beta(\mathfrak{s})$ of the state $\mathfrak{s}$ is the product
of the Boltzmann weights at all vertices of $\mathfrak{s}$. The \textit{partition function}
$Z(\mathfrak{S})$ of a system $\mathfrak{S}$ is the sum of the Boltzmann weight $\beta(\mathfrak{s})$ over all
states $\mathfrak{s}$ in the system $\mathfrak{S}$ and will be considered as a function of the row parameters $\mathbf{z} = (z_1, \ldots, z_r)$.

\subsection{Equivalent model via fusion}
\label{sec:fusion}
In~\cite{BBBGIwahori} we employed a procedure we termed
\textit{fusion} to introduce additional equivalent versions of a lattice model. The terminology
was chosen owing to the resemblance with lattice model incarnations of the fusion procedure for tensor products
of quantum group modules and their subquotients as described by Kulish, Reshetikhin, and Sklyanin \cite{KulishReshetikhinSklyanin}, though
the precise algebraic explanation of the fusion process in~\cite{BBBGIwahori} remains a mystery. Nevertheless the foundations presented in \cite{BBBGIwahori}
allow us to conclude Yang-Baxter equations for ``fused'' models from similar equations for ``unfused'' models. Here, we will employ a similar procedure to
 the monochrome systems of the previous section.

\begin{figure}[ht]
\[\begin{array}{|c|c|}\hline
  \text{Block of unfused (monochrome) vertices} & \text{Equivalent fused vertex} \\
  \hline
\begin{tikzpicture}
\draw (0,1) to (4,1);
\draw (1,0) to (1,2);
\draw (3,0) to (3,2);
\node at (5,2) {$\cdots$};
\node at (5,1) {$\cdots$};
\node at (5,0) {$\cdots$};
\draw (6,1) to (8,1);
\draw (7,0) to (7,2);
\draw[fill=black] (1,1) circle (.1);
\draw[fill=black] (3,1) circle (.1);
\draw[fill=black] (7,1) circle (.1);
\node at (-.5,1) {$A$};
\node[scale=.9] at (1.5,0.6) {$T_z^{c_1w}$};
\node[scale=.9] at (3.5,0.6) {$T_z^{c_2w}$};
\node[scale=.9] at (7.5,0.6) {$T_z^{c_rw}$};
\node at (1,2.25) {$B_1$};
\node at (3,2.25) {$B_2$};
\node at (7,2.25) {$B_r$};
\node at (1,-.25) {$D_1$};
\node at (3,-.25) {$D_2$};
\node at (7,-.25) {$D_r$};
\node at (8.25,1) {$C$};
\node at (2,1.25) {$E_1$};
\node at (3.5,1.25) {$E_2$};
\node at (6.25,1.25) {$E_{r-1}$};
\end{tikzpicture}&
\begin{tikzpicture}
\draw (0,1) to (2,1);
\draw (1,0) to (1,2);
\draw[fill=black] (1,1) circle (.1);
\node at (-.5,1) {$A$};
\node[scale=.9] at (1.45,0.6) {$T_z^w$};
\node at (2.5,1) {$C$};
\node at (1,2.25) {$\mathbf{B}$};
\node at (1,-.25) {$\mathbf{D}$};
\end{tikzpicture}\\\hline\end{array}\]
\caption{Fusion. This procedure replaces a sequence of vertices by a single vertex. Left:
vertices in the unfused system. Right: a single replacement vertex in the fused system,
with replacement edges $\mathbf{B}$ and $\mathbf{D}$. The spinset for the fused vertical
edge $\mathbf{B}$ is the Cartesian product of the spinsets of the edges $B_i$, and similarly
for $\mathbf{D}$.}
\label{fig:fusion}
\end{figure}
 
The fusion procedure replaces a sequence of adjacent vertices in a model by a single vertex. We may preserve the weight of the state in the process by 
taking the Boltzmann weight of the fused vertex to be the partition function of the adjacent vertices that were fused together. In our case, we 
fuse a block of adjacent vertices $T_z^{c_1w},\ldots,T_z^{c_rw}$ in a given row of the grid over a complete set of $r$ colors $\{ c_i \}$ into a single
fused vertex which is labeled $T_z^w$. 
The fusion process is illustrated in Figure~\ref{fig:fusion}. In the notation of the figure, the spinset for the fused vertical
edge $\mathbf{B}$ is the Cartesian product of the spinsets of the edges $B_i$, and similarly
for $\mathbf{D}$. Thus specifying spins for the edges $A$, $\mathbf{B}$, $C$ and
$\mathbf{D}$ determines the spins for $B_i$ and $D_i$ for the (unfused) monochrome
configuration. In particular, the spin for $B_i$ is either of the form $c_iw$ or $+$ so the spinset for $\mathbf{B}$ is either $\mathbf{c} w$, where $\mathbf{c}$ is some subset of colors, or $+$. If there are spins for the edges $E_i$ that make the Boltzmann weights nonzero using the monochrome vertex configurations of Figure~\ref{fig:boltzmann_weights}, it is
easy to see that they are uniquely determined. So the partition function for these adjacent vertices is just the product of the Boltzmann weights at the unfused
vertices. Hence the Boltzmann weight of the fused vertex $T_z^w$ is just defined to be the product of these monochrome weights at $T_z^{c_1w},\ldots, T_z^{c_rw}$.

In Figure~\ref{fig:monochrome} we have a state of a monochrome system, and in Figure~\ref{fig:colorfused} we
have the corresponding state of the corresponding \textit{color-fused system}. Since the (unfused) monochrome grid has
$Nnr$ columns, the corresponding color-fused grid has $Nn$ columns.  Again the rows are numbered
from $1$ to $r$ in increasing order from top to bottom, and the columns are
labeled in descending order from $Nn-1$ to $0$ moving left to right.
Another example of fusion is shown in Figure~\ref{fig:colorfused-block}. 

\begin{figure}[htbp]
  \centering
\begin{tikzpicture}[scaled, xscale=0.5, every node/.append style={font=\normalsize}]
  \renewcommand\eps{0.075/0.5}
  \begin{scope}[scale = 2]
  \foreach \i in {1,2,3}{
    \node at (-0.5,3-\i)  [label={[label distance=1em]left:$z_\i$}] {};
  }
  
  \foreach \j[evaluate=\j as \c using {int(mod(\j-1,3)+1)}, evaluate=\j as \w using {int(2-mod(int((\j-1)/3),3))}] in {1,...,18}{
    \draw (\j-1, -0.5) -- (\j-1,2.5) node [label={[align=center, label distance=1em]above:$c_{\c}$ \\ $w_{\w}$}] {};
  }

  \foreach \j[evaluate=\j as \col using {int(5-\j)}] in {0,...,5}{
    \draw[line width=1pt, decoration={brace},decorate] (3*\j-0.3,3.5) -- node[above=6pt] {block $\col$} (3*\j+2+0.3,3.5);
  }

  \foreach \i in {0,...,3}{
    \foreach \j in {1,...,18}{
      \node[spin, font=\normalsize] at (\j-1,\i-0.5) {$+$};
    }
  }

  \end{scope}

  \draw[scolored-path, sred] (30-\eps,5) -- (30-\eps,4) -- 
      (29, 4) node {\state{}{0}} --
      (27, 4) node {\state{}{0}} --
      (25, 4) node {\state{}{0}} --
      (23, 4) node {\state{}{0}} --
      (21, 4) node {\state{}{0}} --
      (19, 4) node {\state{}{0}} --
      (17, 4) node {\state{}{0}} -- (16-\eps,4) -- (16-\eps,2) --
      (15, 2) node {\state{}{0}} --
      (13, 2) node {\state{}{0}} --
      (11, 2) node {\state{}{0}} --
      (9, 2) node {\state{}{0}} --
      (7, 2) node {\state{}{0}} --
      (5, 2) node {\state{}{0}} --
      (3, 2) node {\state{}{0}} --
      (1, 2) node {\state{}{0}} --
      (-1, 2) node {\state{}{0}};

  \draw[scolored-path, sblue] (20-\eps,5) -- (20-\eps,0) --
      (19,0) node {\state{}{2}} --
      (17,0) node {\state{}{2}} --
      (15,0) node {\state{}{2}} --
      (13,0) node {\state{}{2}} --
      (11,0) node {\state{}{2}} --
      (9,0) node {\state{}{2}} --
      (7,0) node {\state{}{2}} --
      (5,0) node {\state{}{2}} --
      (3,0) node {\state{}{2}} --
      (1,0) node {\state{}{2}} --
      (-1,0) node {\state{}{2}};

  \draw[scolored-path, sgreen] (10-\eps,5) -- (10-\eps,4) --
      (9,4) node {\state{}{1}} --
      (7,4) node {\state{}{1}} --
      (5,4) node {\state{}{1}} --
      (3,4) node {\state{}{1}} --
      (1,4) node {\state{}{1}} --
      (-1,4) node {\state{}{1}};

  \draw[colored-path, red] (10,5) node[circles={{colored, red}, {scolored, sgreen}}] {\state{3}{1}} -- (10,4) -- 
      (11,4) node {\state{3}{}} --
      (13,4) node {\state{3}{}} --
      (15,4) node {\state{3}{}} -- (16,4) --
      (16,3) node[circles={{colored, red}, {scolored, sred}}] {\state{3}{0}} -- (16,2) --
      (17,2) node {\state{3}{}} --
      (17,2) node {\state{3}{}} --
      (19,2) node {\state{3}{}} --
      (21,2) node {\state{3}{}} --
      (23,2) node {\state{3}{}} --
      (25,2) node {\state{3}{}} --
      (27,2) node {\state{3}{}} --
      (29,2) node {\state{3}{}} --
      (31,2) node {\state{3}{}} --
      (33,2) node {\state{3}{}} --
      (35,2) node {\state{3}{}};

  \draw[colored-path, blue] (20, 5) node[circles={{colored, blue}, {scolored, sblue}}] {\state{2}{2}} --
      (20,3) node[circles={{colored, blue}, {scolored, sblue}}] {\state{2}{2}} --
      (20,1) node[circles={{colored, blue}, {scolored, sblue}}] {\state{2}{2}} --
      (20,0) --
      (21,0) node {\state{2}{}} --
      (23,0) node {\state{2}{}} --
      (25,0) node {\state{2}{}} --
      (27,0) node {\state{2}{}} --
      (29,0) node {\state{2}{}} --
      (31,0) node {\state{2}{}} --
      (33,0) node {\state{2}{}} --
      (35,0) node {\state{2}{}};

  \draw[colored-path, green] (30,5) node[circles={{colored, green}, {scolored, sred}}] {\state{1}{0}} -- (30,4) --
      (31,4) node {\state{1}{}} --
      (33,4) node {\state{1}{}} --
      (35,4) node {\state{1}{}};
  
\end{tikzpicture}

\caption{A state of a system $\mathfrak{S}_{\mu,w,\theta}$ using the expanded grid and monochrome weights with $\mu = (4,2,0)$, $\theta = (1,0,2)$, $w = s_1 s_2$, and $n = r = 3$.
  For readability we have labeled a vertical edge with the attribute $c_kw_l$ by $\protect\state[\protect\normalsize]{k}{l}$ where the scolor label is underlined by a dotted line in recognition of how the scolor-paths are styled. 
  The vertex in the column labeled $c_iw_j$ and row labeled $z_k$ is of type $T^{c_iw_j}_{z_k}$,
  with Boltzmann weights as in Figure~\ref{fig:boltzmann_weights}.
  Note how the color and scolor paths are mirrored.}
  \label{fig:monochrome}
\end{figure}

\begin{figure}[htbp]
\begin{tikzpicture}[scaled, every node/.append style={font=\normalsize}]
  \begin{scope}[scale = 2]
  \foreach \i in {1,2,3}{
    \node at (-0.5,3-\i)  [label={[label distance=1em]left:$z_\i$}] {};
  }
  
  \foreach \j[evaluate=\j as \col using {int(6-\j)}, evaluate=\j as \w using {int(2-mod((\j-1),3))}] in {1,...,6}{
    \draw (\j-1, -0.5) -- (\j-1,2.5) node [label={[align=center, label distance=1em]above:{$\col$ \\[1em] $w_{\w}$}}] {};
  }

  \foreach \i in {0,...,3}{
    \foreach \j in {1,...,6}{
      \node[spin, font=\normalsize] at (\j-1,\i-0.5) {$+$};
    }
  }

  \end{scope}

  \draw[scolored-path, sred] (10-\eps,5) -- (10-\eps,4) -- 
      (9, 4) node {\state{}{0}} --
      (7, 4) node {\state{}{0}} --
      (5, 4) node {\state{}{0}} -- (4-\eps,4) -- (4-\eps,2) --
      (3, 2) node {\state{}{0}} --
      (1, 2) node {\state{}{0}} --
      (-1, 2) node {\state{}{0}};

  \draw[scolored-path, sblue] (6-\eps,5) -- (6-\eps,0) --
      (5,0) node {\state{}{2}} --
      (3,0) node {\state{}{2}} --
      (1,0) node {\state{}{2}} --
      (-1,0) node {\state{}{2}};

  \draw[scolored-path, sgreen] (2-\eps,5) -- (2-\eps,4) --
      (1,4) node {\state{}{1}} --
      (-1,4) node {\state{}{1}};

  \draw[colored-path, red] (2,5) node[circles={{colored, red}, {scolored, sgreen}}] {\state{3}{1}} -- (2,4) -- 
      (3,4) node {\state{3}{}} -- (4,4) --
      (4,3) node[circles={{colored, red}, {scolored, sred}}] {\state{3}{0}} -- (4,2) --
      (5,2) node {\state{3}{}} --
      (7,2) node {\state{3}{}} --
      (9,2) node {\state{3}{}} --
      (11,2) node {\state{3}{}};

  \draw[colored-path, blue] (6, 5) node[circles={{colored, blue}, {scolored, sblue}}] {\state{2}{2}} --
      (6,3) node[circles={{colored, blue}, {scolored, sblue}}] {\state{2}{2}} --
      (6,1) node[circles={{colored, blue}, {scolored, sblue}}] {\state{2}{2}} --
      (6,0) --
      (7,0) node {\state{2}{}} --
      (9,0) node {\state{2}{}} --
      (11,0) node {\state{2}{}};

  \draw[colored-path, green] (10,5) node[circles={{colored, green}, {scolored, sred}}] {\state{1}{0}} -- (10,4) --
      (11,4) node {\state{1}{}};
  
\end{tikzpicture}
   \caption{The color-fused system equivalent to
   Figure~\ref{fig:monochrome}. For reference we have added row and
   column numbers. Note that column numbers decrease from
   left to right, and row numbers increase from top to bottom. This
   system is $\mathfrak{S}_{\mu,\theta,w}$ where $\mu=(4,2,0)$,
   $\theta =(1,0,2)$, and $w=s_1s_2$.}
   \label{fig:colorfused}
\end{figure}

\begin{figure}[htpb]
  \centering
\begin{tikzpicture}[baseline={(0,0.7)}, scaled, xscale=0.6, every node/.append style={font=\normalsize}]
  \renewcommand\eps{0.075/0.6}
  \begin{scope}[scale = 2]
  \foreach \i in {1,2}{
    \node at (-0.5,2-\i)  [label={[label distance=1em]left:$z_\i$}] {};
  }
  
  \foreach \j[evaluate=\j as \c using {int(mod(\j-1,3)+1)}, evaluate=\j as \w using {int(2-mod(int((\j-1)/3),3))}] in {1,...,3}{
    \draw (\j-1, -0.5) -- (\j-1,1.5) node [label={[align=center, label distance=1em]above:$c_{\c}$ \\ $w_{1}$}] {};
  }

  \foreach \j[evaluate=\j as \col using {int(5-\j)}] in {0}{
    \draw[line width=1pt, decoration={brace},decorate] (3*\j-0.3,2.5) -- node[above=6pt] {block $4$} (3*\j+2+0.3,2.5);
  }

  \foreach \i in {0,...,2}{
    \foreach \j in {1,...,3}{
      \node[spin, font=\normalsize] at (\j-1,\i-0.5) {$+$};
    }
  }

  \end{scope}

  \draw[scolored-path, sgreen] (4-\eps,3) -- (4-\eps,0) --
      (3,0) node {\state{}{1}} -- (2-\eps,0) -- (2-\eps,0) --
      (1,0) node {\state{}{1}} --
      (-1,0) node {\state{}{1}};

  \draw[scolored-path, sgreen] (5,2) node {\state{}{1}} -- 
      (3,2) node {\state{}{1}} -- (2-\eps,2) -- (2-\eps,-1);

  \draw[colored-path, red] (4,3) node[circles={{colored, red}, {scolored, sgreen}}] {\state{3}{1}} --
      (4,1) node[circles={{colored, red}, {scolored, sgreen}}] {\state{3}{1}} -- (4,0) -- 
      (5,0) node {\state{3}{}};

  \draw[colored-path, blue] (-1, 2) node {\state{2}{}} --
      (1,2) node{\state{2}{}} -- (2,2) --
      (2,1) node[circles={{colored, blue}, {scolored, sgreen}}] {\state{2}{1}} --
      (2,-1) node[circles={{colored, blue}, {scolored, sgreen}}] {\state{2}{1}};
  
\end{tikzpicture} \hspace{2em} $=$ \hspace{1.5em}
\begin{tikzpicture}[baseline={(0,0.7)}, scaled, every node/.append style={font=\normalsize}]
  \begin{scope}[scale = 2]

  \foreach \i in {1,2}{
    \node at (-0.5,2-\i)  [label={[label distance=1em]left:$z_\i$}] {};
  }
  
  \foreach \j[evaluate=\j as \c using {int(mod(\j-1,3)+1)}, evaluate=\j as \w using {int(2-mod(int((\j-1)/3),3))}] in {1}{
    \node at (\j-1,1.5) [label={[align=center, label distance=1em]above:$w_{1}$}] {};
  }

  \foreach \j[evaluate=\j as \col using {int(5-\j)}] in {0}{
    \draw[line width=1pt, decoration={brace},decorate] (0,2.5) -- node[above=6pt] {column $4$} (0,2.5);
  }

  \end{scope}

  \draw[scolored-path, sgreen] (0,3) -- (0,1) -- (-2*\eps,1) -- (-2*\eps,0) -- (-1,0) node {\state{}{1}};

  \draw[scolored-path, sgreen] (1,2) node {\state{}{1}} -- (2*\eps,2) -- (2*\eps,1) -- (-\eps,1) -- (-\eps,-1);

  \draw[colored-path, red] (0,3) node[circles={{colored, red}, {scolored, sgreen}}] {\state{3}{1}} (\eps,3) -- (\eps,0) -- 
      (1,0) node {\state{3}{}};

  \draw[colored-path, blue]
      (-1,2) node{\state{2}{}} -- (-\eps,2) -- (-\eps,1) --
      (0,1) node[circles={{colored, red}, {colored, blue}, {scolored, sgreen}, {scolored, sgreen}}, label={[label distance=0.25em, font=\scriptsize]right:$(2\otimes\protect\state[\scriptsize]{}{1})\wedge (3\otimes\protect\state[\scriptsize]{}{1})$}] {} --
      (0,-1) node[circles={{colored, blue}, {scolored, sgreen}}] {\state{2}{1}};
  
\end{tikzpicture}
  \caption{An example of color-fused block where a resulting vertical state contains more than one color.
  We denote by $(2\otimes\protect\state[\footnotesize]{}{1})\wedge (3\otimes\protect\state[\footnotesize]{}{1})$ the fused edge carrying the colors $c_2$ and
  $c_3$ together with the scolor $w_1$. There are $2^{rn}$ possible states
  for the vertical fused edges, and these are in bijection with the exterior
  algebra over the free vector space space spanned by pairs consisting of one
  color and one supercolor, hence the notation. Note that the number of color
  paths and the number of scolor paths going through a vertical edge have to be
  equal.}\label{fig:colorfused-block}
\end{figure}

In Figure~\ref{fig:fully_color_fused_boltzmann_weights} we write the Boltzmann weights of the color-fused model where a vertical edge is described by a set $\Sigma$ of colors.
At a color-fused vertex $T^w_z$ the set determines which columns $(c,w)$ with $c \in \Sigma$ that are occupied in the corresponding monochrome block.
If the set $\Sigma$ is empty, we denote the vertical edge by a plus sign.
Note that these weights precisely match the weights in Figure~12 of~\cite{BBBGIwahori}, when one allows the existence of only one scolor $w$. 
The appearance of the term $g(w-y)^{|\Sigma|}$ in the bottom right entry of the table is surprising, 
but the exponent on the Gauss sum indeed checks 
out in the matching with Figure~12 of \cite{BBBGIwahori} because
$g(w-y) = g(0) = -v$ in that degenerate setting.

\tikzstyle{fused-path}=[line width=5pt, shorten >=-3pt, shorten <=-3pt, every node/.append style={label distance=-5pt}]
\begin{figure}[ht]
{\scriptsize
\[\begin{array}{|c|c|c|c|c|c|}
\hline
\begin{tikzpicture}[scaled, font=\normalsize]
  \draw[fused-path] (0,1) node[label=above:{$\Sigma$}] {} -- (0,-1) node[label=below:{$ \Sigma$}]{};
  \draw[colored-path] (-1,0) node {$c_i$} -- (1,0) node {$c_i$};
  \node[halo-rect, inner sep=2pt] at (0,0) {$T_z^{w}$};
\end{tikzpicture}
 &
\begin{tikzpicture}[scaled, font=\normalsize]
  \draw[fused-path] (0,1) node[label=above:{$\tensor*{\Sigma}{*^-_i^+_j}$}] {} -- (0,-1) node[label=below:{$\Sigma$}] {};
  \draw[colored-path] (-1,0) node {$c_i$} -- (1,0) node {$c_j$};
  \node[halo-rect, inner sep=2pt] at (0,0) {$T_z^{w}$};
\end{tikzpicture}
  &
\begin{tikzpicture}[scaled, font=\normalsize]
  \draw[fused-path] (0,1) node[label=above:{$ \tensor*{\Sigma}{*^-_j^+_i} $}] {} -- (0,-1) node[label=below:{$ \Sigma $}] {};
  \draw[colored-path] (-1,0) node {$c_j$} -- (1,0) node {$c_i$};
  \node[halo-rect, inner sep=2pt] at (0,0) {$T_z^{w}$};
\end{tikzpicture}
\\
\hline
z v^{|\Sigma_{[i+1,r]}|}
& 
(1-v)z (-v)^{|\Sigma_{[i+1,j-1]}|} v^{|\Sigma_{[j+1,r]}|}
& 
0 \\
\hline\hline
\begin{tikzpicture}[scaled, font=\normalsize]
  \draw[fused-path] (0,-1) node[label=below:{$ \Sigma $}] {} -- (0,1) node[label=above:{$ \Sigma_{i}^+ $}] {};
  \draw[colored-path] (1,0) node {$c_i$} -- (0,0);
  \draw[scolored-path] (0,0) -- (-1,0) node {$w$};
  \node[halo-rect, inner sep=2pt] at (0,0) {$T_z^{w}$};
\end{tikzpicture}
&
\begin{tikzpicture}[scaled, font=\normalsize]
  \draw[fused-path] (0,1) node[label=above:{$\Sigma_i^- $}] {} -- (0,-1) node[label=below:{$ \Sigma $}] {};
  \draw[colored-path] (-1,0) node {$c_i$} -- (0,0);
  \draw[scolored-path] (0,0) -- (1,0) node {$w$};
  \node[halo-rect, inner sep=2pt] at (0,0) {$T_z^{w}$};
\end{tikzpicture}
 &
 \begin{tikzpicture}[scaled, font=\normalsize]
   \draw[fused-path] (0,1) node[label=above:{$ \Sigma $}] {} -- (0,-1) node[label=below:{$ \Sigma $}] {};
  \draw[scolored-path] (-1,0) node {$y$} -- (1,0) node {$y$};
  \node[halo-rect, inner sep=2pt] at (0,0) {$T_z^{w}$};
\end{tikzpicture}
\\\hline
(-v)^{|\Sigma_{[1,i-1]}|} v^{|\Sigma_{[i+1,r]}|} &
(1-v)z(-v)^{|\Sigma_{[i+1,r]}|} & 
g(y-w)^{|\Sigma_{[1,r]}|} 
\\
\hline
\end{array}
\]}
\caption{The Boltzmann weights for the color-fused model with scolor $w$, with vertex labeled $T_z^w$. 
Here $\Sigma$ is an arbitrary set of colors (which may be empty), and $| \Sigma |$ denotes the cardinality of $\Sigma$. Moreover, 
$\Sigma_{[i,j]}$ denotes the colors in $\Sigma$ with indices in the interval $[i,j]$.
We introduce the operation $\Sigma_i^+ := \Sigma \cup \{ c_i \}$ to be used only if $c_i \not\in \Sigma$ and similarly for $\Sigma_i^{-} \cup \{ c_i \} = \Sigma$ and $\tensor*{\Sigma}{*^-_i^+_j} \cup \{ c_i \} = \Sigma^+_j$.
We denote by $c_i, c_j$ colors with $i < j$ and $y$ is a scolor (which may be the
same or different from $w$).} 
\label{fig:fully_color_fused_boltzmann_weights}
\end{figure}

We will now translate the boundary conditions from the monochrome system to the color-fused system where a block of columns in the former becomes a single column in the latter.
Since they are equivalent, the system will still be denoted $\mathfrak{S}_{\mu,\theta,w}$ where $\mu\in\mathbb{Z}^r$ with non-negative parts, $\theta$ is an element in $(\mathbb{Z} / n \mathbb{Z})^r$ and $w$ a Weyl group element in $S_r$.
Here $r$ is the number of rows in the grid with $Nn$ color-fused columns.
As before, along the bottom boundary of the color-fused model, all column vertices are assigned $+$, meaning the empty attribute.
Along the right-hand border, the horizontal boundary edge in row~$i$ receives color~$c_{r+1-w^{-1}(i)}$, and along the left-hand border, the horizontal boundary edge in row~$i$ receives scolor~$\theta_i$.

This leaves the top boundary.
As before, the vertical boundary edges receive attributes according to the parts of $\mu$ so that each color appears exactly once along the top boundary edges.
However, more than one color may appear in a single edge by gathering the colors in a block of columns in the monochrome system into a single column in the color-fused system.
Specifically, column $i$ gets attributes $\mathbf{c} w_j$ where $i \equiv j$ (mod $n$) and $\mathbf{c}$ is the subset of colors $\{ c_{r+1-j_1}, \ldots, c_{r+1-j_\ell} \}$ such that the parts $\mu_{j_1} = \cdots = \mu_{j_\ell} = i$, and if this subset is empty we denote the edge by $+$.

Recall that according to Remark~\ref{rem:almost-dominant}, we may either parametrize the top
boundary by $\mu\in\mathbb{Z}^r$ or by a pair of elements $(w', \lambda)$ where
$w'$ is in $S_r$ and $\lambda$ is a $w'$-almost-dominant weight
(Definition~\ref{def:almost_dominant}) such that
$w'(\mu) = \lambda + \rho$ with $\rho = (r-1, \ldots, 1, 0)$.

We have seen that color-fused systems are equivalent to monochrome systems by using a fusion process to collapse $r$ consecutive columns over a full set of colors. It is natural to make a further collapse of the color-fused systems by means of a fusion process over a full set of $n$ scolors, resulting in a grid with $N$ columns and $r$ rows. We refer to this third, equivalent collection of systems as the \emph{fully-fused} model. See Figure~\ref{fig:fullyfused} for the result of performing this collapse on our running example. Again, none of the partition functions is altered by the fusion process, as the Boltzmann weights are taken to be the product of Boltzmann weights in the un-fused model. The fully-fused model is best for making connections to associated quantum group modules.

\begin{remark}
\label{colordet}
Once a state of the system is fixed, colored and scolored lines emerge.
The colored lines begin at the top edge and end at the right edge;
the scolored lines begin at the top edge and end at the left edge. 
In the monochrome and color-fused systems, if the colored paths are specified,
then the scolored paths are determined, so (if an allowable configuration results) the state is uniquely determined. 
This follows from
the facts (apparent from the Boltzmann weights) that every
vertical edge carries a color and a scolor, and every
horizontal edge carries a color or a scolor (but not both);
that the number of edges at a vertex carrying a
given scolor (or color) is even; and that (in the monochrome
and color-fused systems) the scolor carried by any vertical
edge is predetermined. From these properties of the Boltzmann
weights, it is obvious that each scolored line beginning at the top edge (and
specified by boundary conditions) can be traced uniquely if the colored lines
are known. It is similarly true in the monochrome systems
that the scolored lines determine the state.
\end{remark}

\begin{figure}[ht]
  \begin{tikzpicture}[scaled, large/.style={minimum size=20pt}]
  \draw (3,-5) -- (3,-6) node[spin, large] {$+$};
  \draw[colored, green] (3+\eps, 0) -- (3+\eps,-1) -- (4,-1) node [spin] {$\state{1}{}$};
  \draw[scolored, sblue] (3-\eps,0) -- (3-\eps,-5) -- (2,-5) node [spin] {$\state{}{2}$} -- (0, -5) node [spin] {$\state{}{2}$};
  \draw[scolored, sred] (3-2*\eps,0) -- (3-2*\eps,-1) -- (2,-1) node [spin] {$\state{}{0}$} -- (1+\eps,-1) -- (1+\eps,-2) -- (1-\eps,-2) -- (1-\eps,-3) -- (0,-3) node [spin] {$\state{}{0}$};
  \draw[colored, blue] (3, 0) node [spin, large, label={[black]right:$\scriptstyle \state{2}{2} \,\wedge\, \state{1}{0}$}, circles={{colored, blue}, {colored, green}, {scolored, sblue}, {scolored, sred}}] {} --
    (3,-2) node [spin, large, circles={{colored, blue}, {scolored, sblue}}] {$\state{2}{2}$} --
    (3,-4) node [spin, large, circles={{colored, blue}, {scolored, sblue}}] {$\state{2}{2}$} --
    (3,-5) -- (4,-5) node[spin] {$\state{2}{}$};

  \draw (1,-3) -- (1,-4) node[spin, large] {$+$} -- (1,-6) node [spin, large] {$+$};
  \draw[scolored, sgreen] (1-\eps,0) -- (1-\eps,-1) -- (0,-1) node [spin] {$\state{}{1}$};
  \draw[colored, red] (1,0) node[spin, large, circles={{colored, red}, {scolored, sgreen}}] {$\state{3}{1}$} -- 
    (1,-2) node[spin, large, circles={{colored, red}, {scolored, sred}}] {$\state{3}{0}$} --
    (1,-3) -- (2,-3) node[spin] {$\state{3}{}$} -- (4,-3) node[spin] {$\state{3}{}$};
\end{tikzpicture}
   \caption{The state of the fully-fused system that corresponds to the
   states in Figures~\ref{fig:monochrome} and~\ref{fig:colorfused}.
 \label{fig:fullyfused}}
\end{figure}

\subsection{The partition function for the ground state system}
\label{sec:ground-state}
\begin{lemma}
  \label{lem:lattice-ground-state}
  Let $\theta \in (\mathbb{Z}/n\mathbb{Z})^r$ and let $\mu\in\mathbb{Z}_{\geqslant0}^r$.
  Then there is a unique $w' \in W$ such that the colors for the top boundary edges for the system $\mathfrak{S}_{\mu, \theta, w'}$ in the monochrome picture is $w'P$ from left to right where $P = (c_r, c_{r-1}, \ldots, c_1)$ which is equivalent to $\lambda :=
w' \mu - \rho$ being $w'$-almost dominant (Definition~\ref{def:almost_dominant}).
  Furthermore,
  \begin{equation}
    Z\bigl(\mathfrak{S}_{\mu, \theta, w'}\bigr)(\mathbf{z}) = 
    \begin{cases}
      v^{\ell(w')} \mathbf{z}^{\lambda + \rho} & \text{if } \theta \equiv \lambda + \rho \bmod{n} \\
      0 & \text{otherwise.}
    \end{cases} 
  \end{equation}
\end{lemma}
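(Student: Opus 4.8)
The plan is to analyze the unique state of the system $\mathfrak{S}_{\mu,\theta,w'}$ directly in the monochrome picture, using the rigidity observed in Remark~\ref{colordet}: once the colored paths are fixed, the scolored paths and hence the entire state are determined. First I would establish the existence and uniqueness of $w'$ by invoking Remark~\ref{rem:almost-dominant} (equivalently Proposition~7.2 of~\cite{BBBGIwahori}): for each $\mu\in\mathbb{Z}_{\geqslant 0}^r$ there is a unique pair $(w',\lambda)$ with $\lambda$ being $w'$-almost dominant and $w'\mu = \lambda+\rho$, and the characterization that the colors on the top boundary read left to right are exactly $w'P$ is precisely the content of that remark. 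So this part is essentially a citation.

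Next I would identify the states in $\mathfrak{S}_{\mu,\theta,w'}$. The key point is that $w'\mu = \lambda+\rho$ is a dominant weight, so the filled-in columns at the top occur in blocks $(w'\mu)_1 \geqslant (w'\mu)_2 \geqslant \cdots \geqslant (w'\mu)_r \geqslant 0$ with colors $c_r, c_{r-1}, \ldots, c_1$ (i.e.\ in increasing block-index order the colors decrease), and crucially the scolors attached to these top columns are determined by the cyclic column labeling. Since the color-paths must travel from the top boundary to the right boundary and cannot cross or repeat colors, and since the blocks are already in the "sorted" dominant order matching the right-boundary color order $c_{r+1-w'^{-1}(i)}$, each colored path goes straight down its starting column and then straight right along a row — there is a unique such non-crossing assignment, and it is the only admissible state. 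I would verify that the row on which color $(w'P)_j$ exits is exactly row $(w')^{-1}$-th appropriate index so that the right-boundary condition is met, and that the "$w'$-almost dominant" hypothesis is exactly what guarantees no forced crossings even when entries of $w'\mu$ coincide (this is the subtle case, handled as in Remark~\ref{rem:almost-dominant}).

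Then I would compute the Boltzmann weight of this unique state by reading off Figure~\ref{fig:boltzmann_weights} vertex by vertex. Along each colored path: the vertical segment descending from the top uses the $\texttt{c}_1$-type vertex at the turning point and $\texttt{b}_2$-type vertices (weight $z$ when $a=c$) going down, contributing one factor of $z_i$ per row the path passes through vertically; the horizontal segment going right uses $\texttt{b}_1$-type vertices of two colors (weight $v$ when $c>a$, $1$ when $c<a$), and the turning corner contributes $(1-v)z$. A careful bookkeeping of how many $v$'s appear — this counts inversions, i.e.\ pairs of colored paths that "cross" in the weak Bruhat sense — should yield the factor $v^{\ell(w')}$, while the accumulated $z$'s along with the $(1-v)z$ corners and the remaining $\texttt{a}_1$/$\texttt{c}_2$ weights (all equal to $1$) combine to $\mathbf{z}^{\lambda+\rho}$, matching the exponent $w'\mu = \lambda+\rho$. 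The scolored paths contribute factors $g(y-w)$ at every $\texttt{b}_1$-scolor vertex and $g(0) = -v$ type contributions; here the constraint from Assumption~\ref{assumptionga} enters. The scolor $\theta_i$ entering on the left of row $i$ must be carried consistently across the row until it meets its column; the product of Gauss-sum factors telescopes using $g(a)g(-a) = v$ unless a scolor is "stuck" against a mismatch, forcing the partition function to vanish. The condition for non-vanishing is precisely $\theta \equiv \lambda+\rho \pmod n$: the scolor that must appear on the left boundary of the row exiting in block $(w'\mu)_j$ is congruent to that block number mod $n$ by the cyclic labeling, and $(\lambda+\rho)_{} $ matched against $\theta$ is exactly this congruence.

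The main obstacle I anticipate is the weight computation — specifically, tracking the exact power of $v$ and showing it equals $\ell(w')$, and simultaneously showing the scolored-path Gauss sums collapse to either $1$ (giving the clean monomial $\mathbf{z}^{\lambda+\rho}$) or force a zero. The $v$-counting requires identifying crossings of colored paths with the inversions of $w'$, which is delicate when $\mu$ has repeated entries since then several rows' paths run parallel and one must use the $w'$-almost-dominance to pin down which ordering actually occurs (this is the same subtlety resolved at the end of Remark~\ref{rem:almost-dominant}). The scolor congruence condition requires carefully matching the cyclic column-labeling convention $(c',w')$ in~\eqref{ccsuccessor} against the indexing of $\theta$ and $\lambda+\rho$, which is bookkeeping-heavy but not conceptually hard. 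I would organize the weight computation by processing the color-fused picture instead (Figure~\ref{fig:fully_color_fused_boltzmann_weights}), where each colored path passes through fewer vertices and the powers of $v$ are already collected into the exponents $|\Sigma_{[i+1,r]}|$ etc., making the inversion count more transparent; since the fused and monochrome partition functions agree, this is legitimate.
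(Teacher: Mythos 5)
Your overall plan --- identify the unique $w'$ via Remark~\ref{rem:almost-dominant}, show the system has a single L-shaped ``ground state,'' and read off its weight --- is the same route the paper takes, but the execution as you describe it contains errors that would not produce $v^{\ell(w')}\mathbf{z}^{\lambda+\rho}$. First, the turning vertex of each colored path (vertical attribute above, color exiting right, scolor exiting left, $+$ below) is of type $\texttt{c}_2$ with weight $1$, not type $\texttt{c}_1$ with weight $(1-v)z$ as you assert; your corners would introduce a spurious factor $(1-v)^r$. Second, the $z_i$ exponents do not come from ``one factor of $z_i$ per row the path passes through vertically'' (that would give exponents like $i-1$, independent of $\mu$); they come from the horizontal segment in row $i$ crossing, to the right of its corner, exactly one unoccupied column labeled with its own color per block, i.e.\ $(\lambda+\rho)_i$ vertices of type $\texttt{b}_2$ with $a=c$, giving $z_i^{(\lambda+\rho)_i}$. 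Third, your uniqueness argument rests on the premise that colored paths ``cannot cross,'' which is false --- crossings are admissible (type $\texttt{a}_2$ with $a\neq c$ in Figure~\ref{fig:boltzmann_weights}) and are precisely what produce $v^{\ell(w')}$, so your proposal is internally inconsistent on this point. The correct uniqueness argument uses that paths travel only rightward/downward, that the color orders on the top and right boundaries coincide (both equal $w'P$), and that each horizontal edge carries a single attribute, which forces the L-shapes row by row.

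The treatment of the scolors is also off in a way that matters. In the ground state no Gauss-sum factors appear at all: the leftward scolored segment in row $i$ never meets an occupied column, because every column occupied at row $i$ (the vertical of some path $k>i$) sits weakly to the right of the row-$i$ corner --- strictly smaller block by dominance of $\lambda+\rho$, or, in the equal-block case, to the right within the block by the color ordering forced by $w'$-almost dominance (Remark~\ref{rem:almost-dominant}). Hence there are no $\texttt{b}_1$ vertices, and the vanishing for $\theta\not\equiv\lambda+\rho\bmod n$ is not a cancellation of Gauss sums ``telescoping'' via $g(a)g(-a)=v$ (note this product is $v$, not $1$, so that mechanism could not yield the clean monomial anyway). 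It is simply that the scolor forced onto the left boundary of row $i$ is the scolor of the corner column, which is congruent to $(\lambda+\rho)_i$ modulo $n$ by the cyclic column labeling; if $\theta_i$ differs, there is no admissible state and the partition function is an empty sum. Fixing these three points --- the corner weight, the bookkeeping of the $z_i$'s and $v$'s (the $v$'s count pairs $j<i$ with $(w'P)_j<(w'P)_i$, which equals $\ell(w')$), and the correct mechanism for the $\theta$-congruence --- brings your argument in line with the paper's proof.
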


\begin{proof}
  We will use the monochrome version of the system and prove that the system $\mathfrak{S}_{\mu,\theta, w'}$ contains only at most a single state, which we call a \emph{ground state}.

  Since the colors in $P$ are unique and the colors of the top boundary are determined by $\mu$ there is a unique $w'$ such that the order of colors is $w'P$.
  That this is equivalent to $\lambda$ being $w'$-almost dominant follows from Remark~\ref{rem:almost-dominant} which is based on Proposition~7.2 of~\cite{BBBGIwahori}.
  Note that $\lambda + \rho = w' \mu$ is a dominant weight which means that we fill in top boundary columns at the blocks $(\lambda + \rho)_i$ with color $(w'P)_i$ in the order from left to right increasing with $i$. 

  From Remark~\ref{colordet} we have that a state is determined by its colored paths (or, alternatively its scolored paths).
  By the assumptions, the colors on the top boundary from left to right is the same as the colors on the right boundary from top to bottom.
  We will now follow the possible colored paths from the top boundary to the right boundary and argue that there is at most one admissible state for $\mathfrak{S}_{\mu, \theta, w'}$.
  From the admissible vertex configurations in Figure~\ref{fig:boltzmann_weights} we see that the colored paths can only go rightwards and downwards.
  This means that the path starting at the left-most color on the top boundary must immediately go straight to the right boundary.
  Now the path starting at the second left-most color needs to go down and right to the right boundary on the second row.
  However, all the horizontal edges on the first row to the right of the second left-most color on the top boundary are occupied by the previous colored path.
  Thus it must first go straight down to the second row and then straight right to the right boundary.
  Continuing in this fashion we see that all colored paths are L-shaped in this way, and that there is at most one admissible state since an admissible state is determined by its colored paths.
  
  We will now argue that we get an admissible state if and only if $\theta_i \equiv (\lambda + \rho)_i \bmod{n}$ at all rows $i$.
  An example of such a state, which we call a ground state, is shown in Figure~\ref{fig:ground-state}.
  As seen in the vertex configurations of Figure~\ref{fig:conventions}, the scolor paths, starting from the top boundary and ending at the left boundary, can only go leftwards and downwards, and the latter only when together with a color path.
  If we consider the left-most scolor path starting from the top boundary we see the it has to follow the first color path to the first row and immediately go straight to the left boundary.
  The second scolor path needs to follow the second color path until the second row and then go straight to the left boundary.
  Continuing in this fashion we see that the scolored paths all have a vertically mirrored L-shape and that we clearly get an admissible state of the system provided it satisfies the boundary conditions.
  The left boundary scolor at row $i$ matches the scolor of the $i$-th left-most scolor on the top boundary which is $(\lambda + \rho)_i \bmod n$ by construction and thus the boundary conditions are satisfied if and only if $\theta \equiv \lambda + \rho \bmod n$.

  It remains to compute the weight of this admissible state.
  The involved vertices are of types $\texttt{a}_1$, $\texttt{c}_2$, $\texttt{a}_2$ and $\texttt{b}_2$ shown in Figure~\ref{fig:boltzmann_weights} where only the latter two make any non-trivial contributions.
  For both type $\texttt{a}_2$ and $\texttt{b}_2$ we obtain, at row $i$, a factor of $z_i$ whenever the vertex color equals the horizontal edge color.
  This happens once for every color block of $r$ columns in the fully expanded system, which means that we get a contribution of $z_i^{(\lambda+\rho)_i}$.
  The only remaining non-trivial contributions come from the $c>a$ case in type $\texttt{a}_2$ each with a factor of~$v$.
  These occur whenever we have a crossing of two colored paths where the color $a$ of the horizontal edge is less than the color $c$ of the vertical edge.
  The ordering of the colors on the top and right boundaries is given by $w'P$, and thus the number of such crossings is $\ell(w')$.
  Hence, the weight of the unique admissible state, called the ground state, is $v^{\ell(w')}\mathbf{z}^{\lambda + \rho}$ which concludes the proof.
\end{proof}

\begin{figure}[htbp]
  \centering  
\begin{tikzpicture}[scaled, xscale=0.5, every node/.append style={font=\normalsize}]
  \renewcommand\eps{0.075/0.5}
  \begin{scope}[scale = 2]
  \foreach \i in {1,2,3}{
    \node at (-0.5,3-\i)  [label={[label distance=1em]left:$z_\i$}] {};
  }
  
  \foreach \j[evaluate=\j as \c using {int(mod(\j-1,3)+1)}, evaluate=\j as \w using {int(1-mod(int((\j-1)/3),2))}] in {1,...,12}{
    \draw (\j-1, -0.5) -- (\j-1,2.5) node [label={[align=center, label distance=1em]above:$c_{\c}$ \\ $w_{\w}$}] {};
  }

  \foreach \j[evaluate=\j as \col using {int(3-\j)}] in {0,...,3}{
    \draw[line width=1pt, decoration={brace},decorate] (3*\j-0.3,3.5) -- node[above=6pt] {block $\col$} (3*\j+2+0.3,3.5);
  }

  \foreach \i in {0,...,3}{
    \foreach \j in {1,...,12}{
      \node[spin, font=\normalsize] at (\j-1,\i-0.5) {$+$};
    }
  }

  \end{scope}

  \draw[scolored-path, sred] (10-\eps,5) -- (10-\eps,2) -- 
      (9, 2) node {\state{}{0}} --
      (7, 2) node {\state{}{0}} --
      (5, 2) node {\state{}{0}} --
      (3, 2) node {\state{}{0}} --
      (1, 2) node {\state{}{0}} --
      (-1, 2) node {\state{}{0}};

  \draw[scolored-path, sred] (18-\eps,5) -- (18-\eps,0) --
      (17,0) node {\state{}{0}} --
      (15,0) node {\state{}{0}} --
      (13,0) node {\state{}{0}} --
      (11,0) node {\state{}{0}} --
      (9,0) node {\state{}{0}} --
      (7,0) node {\state{}{0}} --
      (5,0) node {\state{}{0}} --
      (3,0) node {\state{}{0}} --
      (1,0) node {\state{}{0}} --
      (-1,0) node {\state{}{0}};

  \draw[scolored-path, sgreen] (2-\eps,5) -- (2-\eps,4) --
      (1,4) node {\state{}{1}} --
      (-1,4) node {\state{}{1}};

  \draw[colored-path, blue] (2,5) node[circles={{colored, blue}, {scolored, sgreen}}] {\state{2}{1}} -- (2,4) -- 
      (3,4) node {\state{2}{}} --
      (5,4) node {\state{2}{}} --
      (7,4) node {\state{2}{}} --
      (9,4) node {\state{2}{}} --
      (11,4) node {\state{2}{}} --
      (13,4) node {\state{2}{}} --
      (15,4) node {\state{2}{}} --
      (17,4) node {\state{2}{}} --
      (19,4) node {\state{2}{}} --
      (21,4) node {\state{2}{}} --
      (23,4) node {\state{2}{}};

  \draw[colored-path, red] (10, 5) node[circles={{colored, red}, {scolored, sred}}] {\state{3}{0}} --
      (10,3) node[circles={{colored, red}, {scolored, sred}}] {\state{3}{0}} --
      (10,2) --
      (11,2) node {\state{3}{}} --
      (13,2) node {\state{3}{}} --
      (15,2) node {\state{3}{}} --
      (17,2) node {\state{3}{}} --
      (19,2) node {\state{3}{}} --
      (21,2) node {\state{3}{}} --
      (23,2) node {\state{3}{}};

  \draw[colored-path, green] (18,5) node[circles={{colored, green}, {scolored, sred}}] {\state{1}{0}} --
      (18,3) node[circles={{colored, green}, {scolored, sred}}] {\state{1}{0}} -- 
      (18,1) node[circles={{colored, green}, {scolored, sred}}] {\state{1}{0}} --
      (18,0) --
      (19,0) node {\state{1}{}} --
      (21,0) node {\state{1}{}} --
      (23,0) node {\state{1}{}};
  
\end{tikzpicture}  
\caption{The unique state, the so called ground state, for the system $\mathfrak{S}_{\mu, \theta, w'}$ with $\mu = (2,3,0)$, $w' = s_1$, $\theta = (1,0,0)$ with $n=2$ and $r = 3$.
Note that $w'$ is such that the colors of the top boundary is of the order $w'P$ from left to right.}
\label{fig:ground-state}
\end{figure}

\subsection{The R-matrix and the Yang-Baxter equation}
\label{sec:R-matrix}

The Yang-Baxter equation may be expressed as an identity
of partition functions consisting of three vertices, two of which are of the kind discussed in the previous section denoted $T$, and one of a different kind denoted $R$ which involves only horizontal edges as illustrated in Figure~\ref{fig:ybelr}.
A solution to the (RTT) Yang-Baxter equation given sets of weights for the $T$-vertices, is a third set of weights for the $R$-vertices so that the partition functions agree. The Boltzmann weights at
each vertex encode an endomorphism of a tensor product of vector spaces, so we
may speak of a solution to the Yang-Baxter equation as a set of Boltzmann weights or
equivalently as encoded in an ``R-matrix'' describing the associated endomorphism, and we use these terms interchangeably.
Our ultimate goal in this section is a solution to the Yang-Baxter equation for the fully-fused models of
the previous section using the weights at vertices $T_{z_i}$ and $T_{z_j}$.

We accomplish this by describing solutions to auxiliary Yang-Baxter-like equations for the monochrome vertices labeled 
$T_{z_i}^{cw}$ and $T_{z_j}^{cw}$, described in the following theorem. By a similar argument as in Lemma~5.4 of \cite{BBBGIwahori}, such
auxiliary solutions guarantee a solution of the Yang-Baxter equation for the color-fused and fully-fused models.

\begin{figure}[htbp]
\[\begin{tikzpicture}[scaled, baseline=(current bounding box.center),scale=1.1,every node/.style={scale=.9}]
  \draw (0,1) to [out = 0, in = 180] (2,3) to (4,3);
  \draw (0,3) to [out = 0, in = 180] (2,1) to (4,1);
  \draw (3,0) to (3,4);
  \draw[fill=white] (0,1) circle (.3);
  \draw[fill=white] (0,3) circle (.3);
  \draw[fill=white] (3,4) circle (.3);
  \draw[fill=white] (4,3) circle (.3);
  \draw[fill=white] (4,1) circle (.3);
  \draw[fill=white] (3,0) circle (.3);
  \node at (0,1) {$\sigma$};
  \node at (0,3) {$\tau$};
  \node at (3,4) {$\beta$};
  \node at (4,3) {$\theta$};
  \node at (4,1) {$\rho$};
  \node at (3,0) {$\alpha$};
  \draw[fill=white] (2,1) circle (.3);
  \node at (2,1) {$\mu$};
  \draw[fill=white] (2,3) circle (.3);
  \node at (2,3) {$\nu$};
  \draw[fill=white] (3,2) circle (.3);
  \node at (3,2) {$\gamma$};
  \path[fill=white] (1,2) circle (.35);
  \node at (1,2) {$R_{z_i,z_j}^{cw}$};
  \path[fill=white] (3,3) circle (.35);
  \node at (3,3) {$T_{z_i}^{cw}$};
  \path[fill=white] (3,1) circle (.35);
  \node at (3,1) {$T^{cw}_{z_j}$};
\end{tikzpicture}
\qquad\qquad
\begin{tikzpicture}[scaled, baseline=(current bounding box.center),scale=1.1,every node/.style={scale=.9}]
  \draw (0,1) to (2,1) to [out = 0, in = 180] (4,3);
  \draw (0,3) to (2,3) to [out = 0, in = 180] (4,1);
  \draw (1,0) to (1,4);
  \draw[fill=white] (0,1) circle (.3);
  \draw[fill=white] (0,3) circle (.3);
  \draw[fill=white] (1,4) circle (.3);
  \draw[fill=white] (4,3) circle (.3);
  \draw[fill=white] (4,1) circle (.3);
  \draw[fill=white] (1,0) circle (.3);
  \node at (0,1) {$\sigma$};
  \node at (0,3) {$\tau$};
  \node at (1,4) {$\beta$};
  \node at (4,3) {$\theta$};
  \node at (4,1) {$\rho$};
  \node at (1,0) {$\alpha$};
  \path[fill=white] (3,2) circle (.35);
  \node at (3,2) {$R_{z_i,z_j}^{c'w'}$};
  \path[fill=white] (1,1) circle (.35);
  \node at (1,1) {$T^{cw}_{z_i}$};
  \path[fill=white] (1,3) circle (.35);
  \node at (1,3) {$T^{cw}_{z_j}$};
  \draw[fill=white] (1,2) circle (.3);
  \draw[fill=white] (2,3) circle (.3);
  \draw[fill=white] (2,1) circle (.3);
  \node at (1,2) {$\delta$};
  \node at (2,3) {$\psi$};
  \node at (2,1) {$\phi$};
  \end{tikzpicture}\]
  \caption{The auxiliary Yang-Baxter equations. These are miniature systems with
  fixed boundary spins $\alpha,\sigma,\tau,\beta,\theta,\rho$. The
  partition function is the sum over the interior spins $\mu,\nu,\gamma,\delta,\phi,\psi$ which are
  allowed to vary. Here $(c',w')$ is the successor to $(c,w)$ defined
  in (\ref{ccsuccessor}). Theorem~\ref{thm:ybemc} asserts that the
  partition functions of these two systems are equal.}
  \label{fig:ybelr}
\end{figure}

\begin{theorem}
    \label{thm:ybemc}
    Let $c$ and $w$ be a color and scolor, respectively. Let $\sigma,\tau,\theta,\rho$
    be elements of the horizontal spinset \eqref{horizontalspinset}, and let
    $\alpha,\beta$ be spins of vertical type $cw$, that is, elements of the
    two-element set $\{+,cw\}$. Let $(c',w')$ be the successor to $(c,w)$ according to
    \eqref{ccsuccessor}. Then using the Boltzmann weights for the vertices $T^{cw}_{z_i}$ from Figure~\ref{fig:boltzmann_weights} and $R_{z_i,z_j}^{cw}$ from Figure~\ref{fig:genrmatrix}, the partition functions of the two
    systems in Figure~\ref{fig:ybelr} are equal.
\end{theorem}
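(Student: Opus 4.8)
The plan is to reduce the identity to a finite, explicit verification, exploiting the rigidity of the monochrome model. The starting observation is a conservation law: at every monochrome vertex $T^{cw}$ and at every $R$-vertex the number of incident edges carrying a given color (respectively scolor) is even, as recorded in Remark~\ref{colordet}. Hence, for fixed outer spins, the colors and scolors carried by the incoming horizontal edges $\{\sigma,\tau\}$, together with the ``charge'' of the bottom vertical edge $\alpha$, must match those of $\{\theta,\rho\}$ together with the top vertical edge $\beta$; moreover, inspecting Figure~\ref{fig:boltzmann_weights}, a distinguished vertical edge (of type $\{+,cw\}$) changes its state only at a turning vertex of type $\texttt{c}_1$ or $\texttt{c}_2$, which in turn forces a color-$c$ horizontal edge and a scolor-$w$ horizontal edge at that vertex. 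These constraints cut the admissible boundary data down to a short list.

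First I would organize that list by the pair $(\alpha,\beta)\in\{+,cw\}^2$ and by the interior vertical spin ($\gamma$ in the left-hand configuration, $\delta$ in the right-hand one). When $\alpha=\beta$ and the interior vertical spin agrees with them there is no turning vertex: the two $T$-vertices act diagonally on the horizontal lines --- with the weights of types $\texttt{a}_1,\texttt{b}_2$ if the column is empty and of types $\texttt{a}_2,\texttt{b}_1$ if it is filled --- and the claim collapses to a Yang--Baxter relation purely among horizontal spins, which one checks against the $R$-weights of Figure~\ref{fig:genrmatrix} after sub-casing on whether $\sigma,\tau$ are colors or scolors and on the relative order of the colors that occur. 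In the remaining sub-cases turning vertices appear --- two when $\alpha=\beta$, one when $\alpha\neq\beta$ --- so each side has essentially a unique nonzero interior state, and one computes the two Boltzmann products and checks their equality. Assumption~\ref{assumptionga} is precisely what makes these cases close: $g(a)g(-a)=v$ reconciles the two sides whenever a scolor $y$ and its reversal both occur on internal edges, while $g(0)=-v$ handles the degenerate instances in which a scolor coincides with its own column.

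The point demanding the most care is the label shift: on the right-hand side the $R$-vertex carries the successor $(c',w')$ of $(c,w)$, not $(c,w)$ itself, so the three branches of \eqref{ccsuccessor} --- the color advanced inside a scolor block versus the two ``wrap-around'' branches with $i=r$ --- must be treated separately, since they govern whether moving the color past the column also displaces a scolor line. I expect the main obstacle to be exactly the bookkeeping in the turning-vertex cases combined with this successor shift: one must track how the weights of $\texttt{c}_1,\texttt{c}_2$ (and the factor $g(y-w)$ of $\texttt{b}_1$) are distributed between the two configurations and confirm that the resulting rational functions of $z_i,z_j,v$ agree after applying the relations on $g$. Finally, since specializing to a single scolor recovers the Iwahori ice of \cite{BBBGIwahori}, the color-only part of this verification is already contained in the proof of its Yang--Baxter equation; the genuinely new work is confined to the cases in which a scolor line interacts with a distinguished vertical edge, which again form a finite list to be checked by direct computation.
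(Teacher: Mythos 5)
Your overall strategy --- use conservation of (s)colors to limit the admissible boundary data, classify configurations by $(\alpha,\beta)$ and the turning vertices, and check each local identity directly using Assumption~\ref{assumptionga} --- is the same in spirit as the paper's (the proof there is also ultimately a finite verification), but as written it has a genuine gap: you never justify that the verification is \emph{finite uniformly in $n$, $r$ and the choice of $g$}. The number of boundary \emph{types} is indeed bounded, but the Boltzmann weights themselves depend on the actual values of the scolors, not just on which spins occur and their relative order: the vertex weight $\texttt{b}_1$ is $g(y-w)$, and the R-weights of Figure~\ref{fig:genrmatrix} contain $g(y-x)$ and powers $(z_j/z_i)^{y-x}$, $(z_j/z_i)^{x-w}$, $(z_j/z_i)^{x-w-1}$. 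So for each of your ``short list'' of cases the identity to be checked still involves parameters ranging over all residues mod $n$, and asserting that $g(0)=-v$ and $g(a)g(-a)=v$ ``reconcile the two sides'' is precisely the thing that needs proof, not an observation. This uniformity is what the paper identifies as the main task, and it is resolved by three ingredients you do not supply: (i) a lemma reducing an arbitrary admissible $g$ to the specific $g_0$ of \eqref{eq:gquantumgroup}, proved by tracking crossings of scolored lines and using that the parity of double crossings cancels the ratio $\gamma(a)=g(a)/g_0(a)$; (ii) a change of basis on horizontal edges (rescaling a scolor-$x$ edge attached to a vertex of data $(z,c,w)$ by $z^{w-x+1-n}$, and decoupling $\zeta=z^n$ from $z$) after which the weights of Figures~\ref{fig:proof-weights} and~\ref{fig:proof-R-matrix} depend only on the \emph{inequalities} among the (s)colors; and (iii) the count that at most five distinct colors and five distinct scolors can occur in one instance of Figure~\ref{fig:ybelr}, so that a single symbolic check at $n=r=5$ (done by computer) proves all cases.

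Your secondary claims are fine as far as they go: the conservation laws, the fact that a vertical $\{+,cw\}$ edge changes state only at $\texttt{c}_1/\texttt{c}_2$ vertices, the need to treat the three branches of \eqref{ccsuccessor} separately, and the remark that the pure-color sector degenerates to the Iwahori ice of \cite{BBBGIwahori}. But until you either carry out the case-by-case computation symbolically in the scolor differences (showing in each case that the $(z_j/z_i)$-exponents and Gauss-sum factors match identically on the two sides), or introduce a normalization such as the paper's change of basis that eliminates the dependence on actual scolor values, the reduction ``to a finite list to be checked by direct computation'' is not established, and the proposal does not yet constitute a proof.
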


The proof is postponed to later in this section.

\begin{figure}[htbp]
{\scriptsize
\[\begin{array}{|c|c|c|c|}
\hline
\begin{tikzpicture}[Rscaled, font=\normalsize]
  \draw[colored-path] (-1,-1) node {$a$} -- (0,0) -- (1,-1) node {$a$};
  \draw[colored-path] (-1,+1) node {$a$} -- (0,0) -- (1,+1) node {$a$};
  \node[halo] at (0,0) {$R_{z_i,z_j}^{cw}$}; 
\end{tikzpicture}
&
\begin{tikzpicture}[Rscaled, font=\normalsize]
  \draw[colored-path] (-1,-1) node {$a$} -- (0,0) -- (1,+1) node {$a$};
  \draw[colored-path] (-1,+1) node {$b$} -- (0,0) -- (1,-1) node {$b$};
  \node[halo] at (0,0) {$R_{z_i,z_j}^{cw}$}; 
\end{tikzpicture}
&
\begin{tikzpicture}[Rscaled, font=\normalsize]
  \draw[colored-path] (-1,-1) node {$a$} -- (0,0) -- (1,-1) node {$a$};
  \draw[colored-path] (-1,+1) node {$b$} -- (0,0) -- (1,+1) node {$b$};
  \node[halo] at (0,0) {$R_{z_i,z_j}^{cw}$}; 
\end{tikzpicture}
&
\begin{tikzpicture}[Rscaled, font=\normalsize]
  \draw[scolored-path] (-1,-1) node {$x$} -- (0,0) -- (1,+1) node {$x$};
  \draw[scolored-path] (-1,+1) node {$x$} -- (0,0) -- (1,-1) node {$x$};
  \node[halo] at (0,0) {$R_{z_i,z_j}^{cw}$}; 
\end{tikzpicture}
\\\hline
z^n_i-vz^n_j &
\begin{array}{ll} v(z^n_i-z^n_j) & a<b\\z^n_i-z^n_j & a>b\end{array} &
\begin{array}{ll}
       (1 - v) z^n_i & a < b < c\\
       (1 - v) z^n_i & c \leqslant a < b\\
       (1 - v) z^{n-1}_i z_j & a < c \leqslant b\\
       (1 - v) z^{n-1}_j z_i & a \geqslant c > b\\
       (1 - v) z^n_j & a > b \geqslant c\\
       (1 - v) z^n_j & c > a > b
     \end{array}
&
z^n_j-v z^n_i 
\\\hline\hline
\begin{tikzpicture}[Rscaled, font=\normalsize]
  \draw[scolored-path] (-1,-1) node {$x$} -- (0,0) -- (1,+1) node {$x$};
  \draw[scolored-path] (-1,+1) node {$y$} -- (0,0) -- (1,-1) node {$y$};
  \node[halo] at (0,0) {$R_{z_i,z_j}^{cw}$}; 
\end{tikzpicture}
&
\begin{tikzpicture}[Rscaled, font=\normalsize]
  \draw[scolored-path] (-1,-1) node {$x$} -- (0,0) -- (1,-1) node {$x$};
  \draw[scolored-path] (-1,+1) node {$y$} -- (0,0) -- (1,+1) node {$y$};
  \node[halo] at (0,0) {$R_{z_i,z_j}^{cw}$}; 
\end{tikzpicture}
&
\begin{tikzpicture}[Rscaled, font=\normalsize]
  \draw[scolored-path] (-1,-1) node {$x$} -- (0,0) -- (1,+1) node {$x$};
  \draw[colored-path] (-1,+1) node {$a$} -- (0,0) -- (1,-1) node {$a$};
  \node[halo] at (0,0) {$R_{z_i,z_j}^{cw}$}; 
\end{tikzpicture}
&
\begin{tikzpicture}[Rscaled, font=\normalsize]
  \draw[scolored-path] (-1,+1) node {$x$} -- (0,0) -- (1,-1) node {$x$};
  \draw[colored-path] (-1,-1) node {$a$} -- (0,0) -- (1,+1) node {$a$};
  \node[halo] at (0,0) {$R_{z_i,z_j}^{cw}$}; 
\end{tikzpicture}
\\ \hline
g(y-x)(z^n_i-z^n_j) &
\begin{array}{ll}(1-v)z^n_i {(\tfrac{z_j}{z_i})^{y-x}} & y>x\\(1-v)z^n_j {(\tfrac{z_j}{z_i})^{y-x}} & y<x\end{array} &
v(z^n_i-z^n_j) & 
z^n_i-z^n_j
\\\hline\hline
\multicolumn{2}{|c|}{
\begin{tikzpicture}[Rscaled, font=\normalsize]
  \draw[scolored-path] (-1,+1) node {$x$} -- (0,0) -- (1,+1) node {$x$};
  \draw[colored-path] (-1,-1) node {$a$} -- (0,0) -- (1,-1) node {$a$};
  \node[halo] at (0,0) {$R_{z_i,z_j}^{cw}$}; 
\end{tikzpicture}
}
& 
\multicolumn{2}{c|}{
\begin{tikzpicture}[Rscaled, font=\normalsize]
  \draw[scolored-path] (-1,-1) node {$x$} -- (0,0) -- (1,-1) node {$x$};
  \draw[colored-path] (-1,+1) node {$a$} -- (0,0) -- (1,+1) node {$a$};
  \node[halo] at (0,0) {$R_{z_i,z_j}^{cw}$}; 
\end{tikzpicture}
}
\\\hline
\multicolumn{2}{|c|}{\begin{array}{ll}
    (1-v)z^n_i{(\tfrac{z_j}{z_i})^{x-w}} & a<c, w\leqslant x\\
    (1-v)z^n_j{(\tfrac{z_j}{z_i})^{x-w}}& a<c, w>x\\
    (1-v)z^n_i {(\tfrac{z_j}{z_i})^{x-w-1}} & a\geqslant c,w<x\\
    (1-v)z^n_j {(\tfrac{z_j}{z_i})^{x-w-1}} & a\geqslant c,w\geqslant x\\
\end{array}} &
\multicolumn{2}{c|}{\begin{array}{ll}
    (1-v)z^n_j{(\tfrac{z_i}{z_j})^{x-w}} & a<c, w\leqslant x\\
(1-v)z^n_i{(\tfrac{z_i}{z_j})^{x-w}} & a<c, w>x\\
(1-v)z^n_j {(\tfrac{z_i}{z_j})^{x-w-1}} & a\geqslant c,w<x\\
(1-v)z^n_i {(\tfrac{z_i}{z_j})^{x-w-1}} & a\geqslant c,w\geqslant x\\\end{array}}
\\\hline
\end{array}
\]}
\caption{The R-matrix depending on a color $c$ and charge $w$, as well
as a pair of spectral parameters $z_i$ and $z_j$.
Here $a$ and $b$ are distinct colors and $x$ and $y$ are distinct scolors. We
allow $a$ or $b$ to equal $c$, and $w$ to equal $x$ or $y$.}
\label{fig:genrmatrix}
\end{figure}

Note that the R-matrix is realized as a vertex that is attached to four edges
of horizontal type, with the corresponding admissible spinsets consisting of a single attribute (one color or one scolor). 

Ultimately the proof of Theorem~\ref{thm:ybemc} may be reduced to a computation that
can be checked using a computer. But it is not \textit{a priori} clear how
this can be checked for all $n$ and $r$ and for all choices of the function $g$ in a finite manner.  Thus our main
task is to show how the proof can be reduced to a finite number of equations that need to be checked.  We
have taken $g$ to be an arbitrary function of the integers modulo $n$, subject
to Assumption~\ref{assumptionga}.  Now let us define a particular choice
for~$g$:

\begin{equation}\label{eq:gquantumgroup}
g_0(a)=\left\{\begin{array}{ll} -v & \text{if $a\equiv 0$ mod $n$,}\\
-\sqrt{v} &\text{otherwise.}\end{array}\right.
\end{equation}
Here $\sqrt{v}$ is a fixed square root of $v$. 

\begin{lemma}
    Suppose Theorem~\ref{thm:ybemc} is true for $g=g_0$. Then it is true for an
    arbitrary $g$ satisfying Assumption~\ref{assumptionga}.
\end{lemma}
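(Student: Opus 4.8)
The plan is to show that the truth of Theorem~\ref{thm:ybemc} is insensitive to the particular choice of $g$, as long as Assumption~\ref{assumptionga} holds, by tracking exactly where $g$ enters the two partition functions in Figure~\ref{fig:ybelr} and exhibiting a rescaling that converts an arbitrary admissible $g$ into $g_0$. First I would isolate the dependence on $g$. Inspecting the Boltzmann weights in Figure~\ref{fig:boltzmann_weights}, the function $g$ appears only in vertex type $\texttt{b}_1$ with weight $g(y-w)$, i.e.\ at a vertex whose column scolor is $w$ and whose (equal) left and right horizontal edges carry a scolor $y$. Likewise, in the R-matrix of Figure~\ref{fig:genrmatrix}, $g$ appears only in the single entry $g(y-x)(z_i^n - z_j^n)$, where the two crossing horizontal strands carry distinct scolors $x$ and $y$. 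In every other weight, and in particular in the combinatorial data describing which configurations are admissible, $g$ does not occur. So the claim is really a statement about how these finitely many "$g$-bearing" weights combine in the two systems.

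The key step is a \emph{gauge transformation} (diagonal conjugation). Given any $g$ satisfying Assumption~\ref{assumptionga}, I want a function $\chi$ on $\mathbb{Z}/n\mathbb{Z}$, valued in nonzero scalars, such that $g(a) = \chi(a)\,g_0(a)\,\chi(-a)^{-1}$... but note $g_0(a)=g_0(-a)$ for all $a$, and Assumption~\ref{assumptionga} forces $g(a)g(-a) = v = g_0(a)g_0(-a)$ for $a\not\equiv 0$ and $g(0)=-v=g_0(0)$; hence the ratio $\eta(a) := g(a)/g_0(a)$ satisfies $\eta(0)=1$ and $\eta(a)\eta(-a)=1$. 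Therefore there is a function $\psi$ with $\eta(a) = \psi(a)/\psi(-a)$ — concretely, pick representatives, set $\psi(0)=1$, and for each pair $\{a,-a\}$ with $a\neq -a$ choose $\psi(a)$ and $\psi(-a)$ with $\psi(a)/\psi(-a)=\eta(a)$, e.g.\ $\psi(a) = \eta(a)$, $\psi(-a)=1$ (when $2a\equiv 0$ one needs $\eta(a)=\pm1$, which holds since $\eta(a)^2=1$, so set $\psi(a)=\eta(a)$). Now, in each of the two systems of Figure~\ref{fig:ybelr}, conjugate by the diagonal operator that acts on a horizontal edge carrying scolor $y$ (with ambient column/vertex scolor $w$) by the scalar $\psi(y-w)$, and acts by $1$ on any horizontal edge carrying a color. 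Because every scolor-path runs monotonically and the boundary scolors $\sigma,\tau,\theta,\rho$ are fixed, the product of these gauge factors over all internal edges telescopes to a factor depending only on the fixed boundary data; hence it is the \emph{same} overall scalar for both systems and cancels in the asserted equality. Under this conjugation, the weight $g(y-w)$ at a $\texttt{b}_1$ vertex becomes $g_0(y-w)$, and the R-matrix entry $g(y-x)(z_i^n-z_j^n)$ becomes $g_0(y-x)(z_i^n - z_j^n)$ — using $\psi(y-w)/\psi(x-w)$ on one side versus $\psi(x-w)/\psi(y-w)$... one must check the bookkeeping so that the R-vertex factor is exactly $\psi(y-\cdot)\psi(-(x-\cdot))^{-1} = \eta$ composed correctly; this is where care is needed. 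In short, the gauge-transformed systems are precisely the systems with $g$ replaced by $g_0$, and their partition functions differ from the original ones only by a common boundary scalar, so equality for $g_0$ yields equality for $g$.

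I expect the main obstacle to be the precise edge-bookkeeping in the second system of Figure~\ref{fig:ybelr}, where the R-vertex sits among $T^{cw}$ and $T^{c'w'}$ vertices with $w'$ possibly different from $w$; one must verify that the telescoping of the $\psi$-factors still produces a single boundary-only scalar even though the ambient scolor shifts from $w$ to $w'$ between the two $T$-rows, and that the R-matrix entries not involving $g$ (the ones with $(z_j/z_i)^{x-w}$ exponents) are genuinely untouched by the gauge — which they are, since those are pure monomial weights carrying no $g$. Once this cancellation is confirmed, the lemma follows immediately from the hypothesis that Theorem~\ref{thm:ybemc} holds for $g=g_0$.
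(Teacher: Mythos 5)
You have correctly isolated every occurrence of $g$ (vertex type $\texttt{b}_1$ of Figure~\ref{fig:boltzmann_weights} and the scolor--scolor crossing entry of Figure~\ref{fig:genrmatrix}) and correctly noted that $\gamma(a):=g(a)/g_0(a)$ satisfies $\gamma(0)=1$ and $\gamma(a)\gamma(-a)=1$. The gap is in the central mechanism: a diagonal change of basis (gauge) on the horizontal edge spaces cannot turn the $g$-weights into the $g_0$-weights. Any edge rescaling multiplies a vertex weight by a ratio of factors attached to its adjacent edges. At a $\texttt{b}_1$ vertex both horizontal edges carry the \emph{same} scolor $y$, and the ambient column scolor is the same on both sides whenever $c\neq c_r$ (the successor of $(c,w)$ in \eqref{ccsuccessor} keeps $w$ unless $c=c_r$); so your factors $\psi(y-w)$ cancel identically and $g(y-w)$ is left untouched instead of becoming $g_0(y-w)$. (When $c=c_r$ the surviving ratio $\psi(y-w)/\psi(y-w')$ has no reason to equal $\gamma(y-w)^{-1}$, and it would also wrongly multiply the $\texttt{a}_1$ weight, since $\texttt{a}_1$ and $\texttt{b}_1$ have identical horizontal spins.) The same failure occurs at the R-vertex: all four of its horizontal edges share the ambient datum $(c,w)$, and the crossing configuration has the same multiset of incoming and outgoing spins $\{x,y\}$, so conjugation by any diagonal operator multiplies $g(y-x)(z_i^n-z_j^n)$ by $1$. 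Even allowing distinct diagonal factors $\lambda^{(1)},\lambda^{(2)}$ on the two rows, the correction available at the crossing is $\lambda^{(1)}(x)\lambda^{(2)}(y)/\bigl(\lambda^{(1)}(y)\lambda^{(2)}(x)\bigr)$, and requiring this to equal $\gamma(y-x)^{-1}$ forces $\gamma$ to be multiplicative in its argument, which Assumption~\ref{assumptionga} does not provide. In short, the discrepancy $\gamma$ is attached to a \emph{crossing of two lines}, not to a single edge, so no edge-local transformation can reduce an arbitrary admissible $g$ to $g_0$.

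The argument that does work, and is the one the paper uses, is global rather than local: writing $g=\gamma g_0$, the weight of each state of either mini-system in Figure~\ref{fig:ybelr} is the corresponding $g_0$-weight times a product of factors $\gamma(a)$, one for each crossing of two scolored lines with distinct scolors. Which pairs of scolored lines cross an odd number of times is determined by the order of the scolors among the boundary spins alone, and a double crossing contributes $\gamma(a)\gamma(-a)=1$. Hence every state on both sides of the auxiliary Yang--Baxter equation acquires the \emph{same} overall constant, both partition functions are rescaled by that constant, and the identity for $g_0$ implies the identity for $g$. Your telescoping observation cannot substitute for this: telescoping only shows that a gauge changes partition functions by boundary factors, but here the proposed gauge does not alter the $g$-dependence in the first place.
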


\begin{proof}
   Write $g(a)=\gamma(a)g_0(a)$ where $\gamma(a)=1$ if $a\equiv 0$ mod $n$
   and $\gamma(a)\gamma(-a)=1$. Consider the scolored lines in both figures of
   Figure~\ref{fig:ybelr} that begin in one of the three vertices $\beta$,
   $\theta$ and $\rho$, and terminate in one of $\tau$, $\sigma$, $\alpha$. We
   ask what possible contributions of $\gamma(a)$ with $a\not\equiv 0$ mod $n$
   can occur. From the Boltzmann weights in
   Figures~\ref{fig:boltzmann_weights} and~\ref{fig:genrmatrix}
   we see that such contributions occur when two
   scolored lines with distinct scolors cross. For both systems, such crossings
   can be predicted just from the order of the scolors of the boundary spins
   $\beta$, $\theta$, $\rho$ and $\tau$, $\sigma$, $\alpha$, with the
   following caveat: we cannot determine from this information whether two
   scolored lines cross twice or not at all.  But if they cross twice, they
   produce factors $\gamma(a)$ and $\gamma(-a)$ for some $a$, and hence these
   factors cancel. Since modulo this caveat, the crossings of the
   scolored lines (with distinct scolors) is determined by the order
   of the boundary spins, the product of the factors $\gamma$
   that can occur are the same for the left and right systems in
   Figure~\ref{fig:ybelr}.
\end{proof}

\begin{remark}
We made the choice $g=g_0$ in equation~\eqref{eq:gquantumgroup} in order to simplify computations.  
Ultimately, we will specialize $g$ to certain quantities that have number theoretic meaning called Gauss sums. The R-matrix under this specialization (or any other that satisfies Assumption~\ref{assumptionga}) will correspond to a \textit{Drinfeld twist} of the $U_{q}\bigl(\widehat{\mathfrak{gl}}(r|n)\bigr)$ R-matrix (see Proposition~\ref{prop:Drinfeldtwisting}). Drinfeld twisting is a procedure that changes the comultiplication, antipode and universal R-matrix of a given quantum group, and is particularly useful in constructing new solutions of the Yang-Baxter equation. It was introduced by Drinfeld~\cite{Drinfeldtwist} and further studied by Reshetikhin~\cite{ReshetikhinDrinfeldtwist}.
\end{remark}

\begin{proof}[Proof of Theorem~\ref{thm:ybemc}]
  To make the auxiliary Yang-Baxter equation of Figure~\ref{fig:ybelr} amenable to proof by checking a finite number of equations independent of $n$ and $r$ we make a change of basis for the vector spaces of horizontal edges used to the describe the various weights for $T^{cw}_{z}$ and $R^{cw}_{z_i,z_j}$ in Figures~\ref{fig:boltzmann_weights} and~\ref{fig:genrmatrix}.
  
  Starting from a basis $\{ v_A \}$ where $A$ is an element of the horizontal spinset~\eqref{horizontalspinset} this change of basis will perform a rescaling of each basis element that will depend not only on the spin~$A$ but also on the data $z_i$, $c$ and $w$ of a vertex to the left of which the horizontal edge may be attached.
  By choosing this rescaling to be $z_i^{w-x+1-n}$ if $A=x$ is a scolor and $1$ otherwise we obtain the Boltzmann weights of Figure~\ref{fig:proof-weights} and the R-matrix remains unchanged except for the weights shown in Figure~\ref{fig:proof-R-matrix}.
  Here we have replaced $z_i^n$ with $\zeta_i$ and similarly for $\zeta_j$, and we will be able to prove the equivalent corresponding Yang-Baxter equation for these weights while keeping $\zeta_i$ and $\zeta_j$ as independent parameters.
  Note that all horizontal edges for the R-matrix $R^{cw}_{z_i,z_j}$ all attach to the left of a vertex $(c,w)$ with color $c$ and scolor $w$, while for $T^{cw}_{z_i}$ the left and right horizontal edges attach to the left of vertices $(c,w)$ and $(c',w')$ respectively where $(c',w')$ is the successor of $(c,w)$ according to~\eqref{ccsuccessor}.
  
\begin{figure}[ht]
{\scriptsize
\[
  \begin{array}{|c|c|c|}
\hline
\text{\normalsize $\texttt{a}_1$} & \text{\normalsize $\texttt{a}_2$} & \text{\normalsize $\texttt{b}_1$} \\
\hline
\begin{tikzpicture}[scaled, font=\normalsize]
  \draw[spin-path] (0,1) node {$+$} -- (0,-1) node {$+$};
  \draw[scolored-path] (-1,0) node {$y$} -- (1,0) node {$y$};
  \node[halo] at (0,0) {$T_z^{cw}$};
\end{tikzpicture}
&
\begin{tikzpicture}[scaled, font=\normalsize]
  \draw[both-path] (0,1) node {$\scriptstyle cw$} -- (0,-1) node {$\scriptstyle cw$};
  \draw[colored-path] (-1,0) node {$a$} -- (1,0) node {$a$};
  \node[halo] at (0,0) {$T_z^{cw}$};
\end{tikzpicture}
&
\begin{tikzpicture}[scaled, font=\normalsize]
  \draw[both-path] (0,1) node {$\scriptstyle cw$} -- (0,-1) node {$\scriptstyle cw$};
  \draw[scolored-path] (-1,0) node {$y$} -- (1,0) node {$y$};
  \node[halo] at (0,0) {$T_z^{cw}$};
\end{tikzpicture}
\\ \hline
\begin{array}{ll} 
  1 & \text{if } c < c_r \\
  z & \text{if } c = c_r, w > w_0 \\
  z/\zeta & \text{if } c = c_r, w = w_0
\end{array}
&
\begin{array}{ll} v & c>a\\ z & c=a\\ 1 & c<a\end{array} 
&
g(y-w) 
\times
\begin{cases}
  1 & c < c_r \\
  z & c = c_r, w > w_0 \\
  z/\zeta & c = c_r, w = w_0
\end{cases}
\\ \hline \hline
\text{\normalsize $\texttt{b}_2$} & \text{\normalsize $\texttt{c}_1$} & \text{\normalsize $\texttt{c}_2$} 
\\ \hline
\begin{tikzpicture}[scaled, font=\normalsize]
  \draw[spin-path] (0,1) node {$+$} -- (0,-1) node {$+$};
  \draw[colored-path] (-1,0) node {$a$} -- (1,0) node {$a$};
  \node[halo] at (0,0) {$T_z^{cw}$};
\end{tikzpicture}
&
\begin{tikzpicture}[scaled, font=\normalsize]
  \draw[spin-path] (0,1) node {$+$} -- (0,0);
  \draw[colored-path] (-1,0) node {$c$} -- (0,0);
  \draw[both-path] (0,0) -- (0,-1) node {$\scriptstyle cw$};
  \draw[scolored-path] (0,0) -- (1,0) node {$w$};
  \node[halo] at (0,0) {$T_z^{cw}$};
\end{tikzpicture}
&
\begin{tikzpicture}[scaled, font=\normalsize]
  \draw[spin-path] (0,-1) node {$+$} -- (0,0);
  \draw[colored-path] (1,0) node {$c$} -- (0,0);
  \draw[both-path] (0,0) -- (0,1) node {$\scriptstyle cw$};
  \draw[scolored-path] (0,0) -- (-1,0) node {$w$};
  \node[halo] at (0,0) {$T_z^{cw}$};
\end{tikzpicture}
\\ \hline
\begin{array}{ll} z&\text{if $a=c$,}\\ 1 &\text{otherwise.}\end{array} 
&
(1-v)\zeta \times
\begin{cases}
  1 & c < c_r \\
  z & c = c_r, w > w_0 \\
  z/\zeta & c = c_r, w = w_0
\end{cases} 
&
z/\zeta
\\ \hline
\end{array}\]}
\caption{The Boltzmann weights after a change of basis from Figure~\ref{fig:boltzmann_weights}.
Here we have replaced $z^n$ with $\zeta$.
In the Yang-Baxter equation $(z, \zeta)$ will be replaced by either $(z_i, \zeta_i)$ or $(z_j, \zeta_j)$ where $\zeta_i$ and $\zeta_j$ will be regarded as a independent parameters in the proof separate from $z_i^n$ and $z_j^n$.} 
\label{fig:proof-weights}
\end{figure}

\begin{figure}[ht]
{\scriptsize
\[\begin{array}{|c|c|c|c|}
\hline
\begin{tikzpicture}[Rscaled, font=\normalsize]
  \draw[scolored-path] (-1,-1) node {$x$} -- (0,0) -- (1,-1) node {$x$};
  \draw[scolored-path] (-1,+1) node {$y$} -- (0,0) -- (1,+1) node {$y$};
  \node[halo] at (0,0) {$R_{z_i,z_j}^{cw}$}; 
\end{tikzpicture}
&
\begin{tikzpicture}[Rscaled, font=\normalsize]
  \draw[scolored-path] (-1,+1) node {$x$} -- (0,0) -- (1,+1) node {$x$};
  \draw[colored-path] (-1,-1) node {$a$} -- (0,0) -- (1,-1) node {$a$};
  \node[halo] at (0,0) {$R_{z_i,z_j}^{cw}$}; 
\end{tikzpicture}
& 
\begin{tikzpicture}[Rscaled, font=\normalsize]
  \draw[scolored-path] (-1,-1) node {$x$} -- (0,0) -- (1,-1) node {$x$};
  \draw[colored-path] (-1,+1) node {$a$} -- (0,0) -- (1,+1) node {$a$};
  \node[halo] at (0,0) {$R_{z_i,z_j}^{cw}$}; 
\end{tikzpicture}
\\\hline
\begin{array}{ll}(1-v)\zeta_i  & y>x\\(1-v)\zeta_j  & y<x\end{array} 
&
\begin{array}{ll}
  (1-v)\zeta_i^2 \zeta_j^{-1} z_i^{-1} z_j & a<c, w\leqslant x\\
  (1-v)\zeta_i z_i^{-1} z_j & a<c, w>x\\
  (1-v)\zeta_i^2 \zeta_j^{-1} & a\geqslant c,w<x\\
    (1-v)\zeta_i & a\geqslant c,w\geqslant x\\
\end{array} &
\begin{array}{ll}
  (1-v)\zeta_j^2 \zeta_i^{-1} z_j^{-1} z_i & a<c, w\leqslant x\\
  (1-v)\zeta_j z_j^{-1} z_i & a<c, w>x\\
  (1-v)\zeta_j^2 \zeta_i^{-1} & a\geqslant c,w<x\\
(1-v)\zeta_j & a\geqslant c,w\geqslant x\\\end{array}
\\\hline
\end{array}
\]}
\caption{The modified weights for the R-matrix after a change of basis.
  Remaining weights are unchanged from Figure~\ref{fig:genrmatrix} with the exception of replacing each occurrence of $z_i^n$ with $\zeta_i$ and similarly for $\zeta_j$ (such that the weights are all polynomial in $\zeta_i^{\pm1}$ and $\zeta_j^{\pm1}$).
  The parameters $z_i$ and $\zeta_i$, as well as $z_j$ and $\zeta_j$, will be treated as independent parameters in the proof.
\label{fig:proof-R-matrix}}
\end{figure}

Now there are two key observations.
The first observation is that at most three colors and at most three scolors can
appear on the boundary and interior edges of the systems in Figure~\ref{fig:ybelr}. Indeed, based on the conservation of colors and scolors in $T$ and $R$ we get that the colors of $(\beta, \sigma, \tau)$ are the same as the colors of $(\alpha, \rho, \theta)$ in some permutation (also counting non-colored edges). Similarly, we have that the scolors of $(\beta, \rho, \theta)$ are the same as those of $(\alpha, \sigma, \tau)$ in some permutation. Besides the colors and scolors of the boundary edges in Figure~\ref{fig:ybelr} we also have the colors and scolors of the vertices which are $(c, w)$ and $(c',w')$. Thus, we can have at most five different colors and at most five different scolors appearing in the Yang-Baxter equation.

The second observation is that (with $g=g_0$)
the Boltzmann weights for $R_{z_i,z_j}^{cw}$ and
$T^{cw}_{z_i}$ in Figures~\ref{fig:proof-weights} and~\ref{fig:proof-R-matrix} depend only on the \textit{inequalities}
between the colors and scolors and not their actual values.
This is the reason for the particular change of basis we made.

Therefore if we check the auxiliary Yang-Baxter equation displayed in Figure~\ref{fig:ybelr} for these weights with $r=5$ and $n=5$,
every possible relative ordering of color and scolor has occurred in these calculations. Note that
for this argument, it is crucial that $\zeta_i$ and $\zeta_j$ are not
linked to $z_i$ and $z_j$ by the equation $\zeta_i=z_i^n$, even
though this is what we later use to relate these weights to those of Figures~\ref{fig:boltzmann_weights} and~\ref{fig:genrmatrix}.

Thus if the auxiliary Yang-Baxter equation with the change of basis is valid for
$n=r=5$, then it is true for all $n$ and $r$, and so is the auxiliary Yang-Baxter equation in the statement of the theorem. The case $n = r = 5$ with the weights in Figures~\ref{fig:proof-weights} and~\ref{fig:proof-R-matrix} was
checked using a computer program (in Sage using symbolic manipulations), and so the theorem
is proved.
\end{proof}

After repeated use of these auxiliary Yang-Baxter equations, passing through a complete cycle of pairs of colors and scolors $(c,w)$,
we immediately arrive at the following:

\begin{corollary}
\label{cor:YBE}
The fully-fused lattice model satisfies the Yang-Baxter equation, written as an identity of endomorphisms on $V \otimes V \otimes W$:
\[(R_{z_i, z_j})_{12} (T_{z_i})_{13} (T_{z_j})_{23} = (T_{z_j})_{23} (T_{z_i})_{13} (R_{z_i, z_j})_{12}\]
where $V$ is a vector space with basis indexed by the set of colors and scolors and $W$ is a vector space with basis indexed by the power set of the set of pairs $(c,w)$ of colors and scolors. 
We use the R-matrix weights in Figure~\ref{fig:rmatrix} and the notation $(A)_{\ell k}$ means applying the endomorphism $A$ to the $\ell$- and $k$-th
factors in the tensor product.
\end{corollary}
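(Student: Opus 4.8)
The plan is to run a ``railroad'' argument, in the spirit of Lemma~5.4 of~\cite{BBBGIwahori}, that propagates the auxiliary Yang-Baxter equation of Theorem~\ref{thm:ybemc} through one full fused vertex. First I would recall from Section~\ref{sec:fusion} that a fully-fused vertex $T_z$ in a given row is, by definition, the partition function of the block of $nr$ consecutive monochrome vertices $T^{cw}_z$ in that row whose column labels run once through the cyclic ordering~\eqref{ccsuccessor}, starting at $(c_1, w_{n-1})$ and ending at $(c_r, w_0)$. Moreover the fusion procedure only merges vertical edges and the weights of the $T$-vertices; it does not touch the $R$-vertex, which is attached only to horizontal lines, so the $R$-matrix is literally the same object in the monochrome and fully-fused pictures. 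Hence it is enough to prove the displayed identity with $T_{z_i}$ and $T_{z_j}$ read as these monochrome blocks and with the $R$-vertex taken to be $R^{c_1 w_{n-1}}_{z_i,z_j}$.

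Next I would set up the miniature system on the left of Figure~\ref{fig:ybelr}, but with a whole block of $nr$ monochrome $T$-vertices in each of the two rows (spectral parameters $z_i$, $z_j$) in place of a single column, and an $R^{c_1 w_{n-1}}_{z_i,z_j}$-vertex to the left of both blocks. The engine of the proof is to apply Theorem~\ref{thm:ybemc} to the leftmost monochrome column pair, which carries label $(c_1, w_{n-1})$: this slides the $R$-vertex past that pair while relabeling it by the successor $(c_2, w_{n-1})$. One iterates, pushing the $R$-vertex one monochrome column at a time: at the $(k{+}1)$-st step the columns involved carry the $k$-th successor of $(c_1, w_{n-1})$, which is exactly the label the $R$-vertex has acquired by then, so Theorem~\ref{thm:ybemc} applies verbatim. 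After $nr$ steps the $R$-vertex has crossed both blocks, and since~\eqref{ccsuccessor} generates a single $nr$-cycle on the set of pairs $(c,w)$, its label has come back to $(c_1, w_{n-1})$. The resulting chain of equalities of partition functions says exactly that, after summing over all interior horizontal spins, the value is the same whether the $R$-vertex sits to the left or to the right of the two blocks; since partition functions are unchanged by fusion, reinterpreting each block as the single fully-fused vertex yields $(R_{z_i, z_j})_{12} (T_{z_i})_{13} (T_{z_j})_{23} = (T_{z_j})_{23} (T_{z_i})_{13} (R_{z_i, z_j})_{12}$ with the R-matrix weights of Figure~\ref{fig:rmatrix}.

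What is left is bookkeeping: identifying the tensor factors --- $V \otimes V$ for the two horizontal lines that meet at $R$, with spinset~\eqref{horizontalspinset}, and $W$ for the fused vertical edge, whose $2^{nr}$-element spinset is the power set of the pairs $(c,w)$ as discussed around Figure~\ref{fig:colorfused-block} --- and verifying that the column labeling of the block really is one full orbit of $(c_1, w_{n-1})$ under~\eqref{ccsuccessor}, so that each step of the train is a genuine instance of Theorem~\ref{thm:ybemc}. I expect the only delicate point to be that the $R$-label closes up after a complete period: it is the cyclic nature of the successor rule~\eqref{ccsuccessor} that makes the two ends of the train compatible, and it is precisely this that makes the fused R-matrix well defined. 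Everything else is the functoriality of the fusion procedure already established in Section~\ref{sec:fusion}.
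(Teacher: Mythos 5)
Your proposal is correct and is essentially the paper's own argument: the paper proves Corollary~\ref{cor:YBE} by exactly this train/railroad argument (modeled on Lemma~5.4 of~\cite{BBBGIwahori}), repeatedly applying the auxiliary Yang--Baxter equation of Theorem~\ref{thm:ybemc} so the $R$-vertex passes through one complete cycle of pairs $(c,w)$ under~\eqref{ccsuccessor}, returning to the label $(c_1,w_{n-1})$ that defines the fused R-matrix of Figure~\ref{fig:rmatrix}, and then invoking invariance of partition functions under fusion. Your bookkeeping of the tensor factors $V$ and $W$ and of the cyclic closure of the successor labeling matches the paper's conventions.
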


\begin{remark}
The set of color-scolor pairs $(c,w)$ is in bijection with the set of odd negative roots of $U_{1/\sqrt{v}}(\mathfrak{gl}(r|n))$. Therefore the power set that is the basis of $W$ in Corollary~\ref{cor:YBE} is a basis of the exterior algebra on the space spanned by the odd negative roots. This observation may be the key to a quantum group interpretation of~$W$.
\end{remark}

\begin{figure}[ht]
{\scriptsize
\[\begin{array}{|c|c|c|c|}
\hline
\begin{tikzpicture}[Rscaled, font=\normalsize]
  \draw[colored-path] (-1,-1) node {$a$} -- (0,0) -- (1,-1) node {$a$};
  \draw[colored-path] (-1,+1) node {$a$} -- (0,0) -- (1,+1) node {$a$};
  \node[halo] at (0,0) {$R_{z_i,z_j}$}; 
\end{tikzpicture}
&
\begin{tikzpicture}[Rscaled, font=\normalsize]
  \draw[colored-path] (-1,-1) node {$a$} -- (0,0) -- (1,+1) node {$a$};
  \draw[colored-path] (-1,+1) node {$b$} -- (0,0) -- (1,-1) node {$b$};
  \node[halo] at (0,0) {$R_{z_i,z_j}$}; 
\end{tikzpicture}
&
\begin{tikzpicture}[Rscaled, font=\normalsize]
  \draw[colored-path] (-1,-1) node {$a$} -- (0,0) -- (1,-1) node {$a$};
  \draw[colored-path] (-1,+1) node {$b$} -- (0,0) -- (1,+1) node {$b$};
  \node[halo] at (0,0) {$R_{z_i,z_j}$}; 
\end{tikzpicture}
&
\begin{tikzpicture}[Rscaled, font=\normalsize]
  \draw[scolored-path] (-1,-1) node {$x$} -- (0,0) -- (1,+1) node {$x$};
  \draw[scolored-path] (-1,+1) node {$x$} -- (0,0) -- (1,-1) node {$x$};
  \node[halo] at (0,0) {$R_{z_i,z_j}$}; 
\end{tikzpicture}
\\\hline
z^n_i-vz^n_j &
\begin{array}{ll} v(z^n_i-z^n_j) & a<b\\z^n_i-z^n_j & a>b\end{array} &
\begin{array}{ll}
  (1 - v) z^n_i & a < b\\
  (1 - v) z^n_j & a > b\\
\end{array}
&
z^n_j-v z^n_i 
\\\hline\hline
\begin{tikzpicture}[Rscaled, font=\normalsize]
  \draw[scolored-path] (-1,-1) node {$x$} -- (0,0) -- (1,+1) node {$x$};
  \draw[scolored-path] (-1,+1) node {$y$} -- (0,0) -- (1,-1) node {$y$};
  \node[halo] at (0,0) {$R_{z_i,z_j}$}; 
\end{tikzpicture}
&
\begin{tikzpicture}[Rscaled, font=\normalsize]
  \draw[scolored-path] (-1,-1) node {$x$} -- (0,0) -- (1,-1) node {$x$};
  \draw[scolored-path] (-1,+1) node {$y$} -- (0,0) -- (1,+1) node {$y$};
  \node[halo] at (0,0) {$R_{z_i,z_j}$}; 
\end{tikzpicture}
&
\begin{tikzpicture}[Rscaled, font=\normalsize]
  \draw[scolored-path] (-1,-1) node {$x$} -- (0,0) -- (1,+1) node {$x$};
  \draw[colored-path] (-1,+1) node {$a$} -- (0,0) -- (1,-1) node {$a$};
  \node[halo] at (0,0) {$R_{z_i,z_j}$}; 
\end{tikzpicture}
&
\begin{tikzpicture}[Rscaled, font=\normalsize]
  \draw[scolored-path] (-1,+1) node {$x$} -- (0,0) -- (1,-1) node {$x$};
  \draw[colored-path] (-1,-1) node {$a$} -- (0,0) -- (1,+1) node {$a$};
  \node[halo] at (0,0) {$R_{z_i,z_j}$}; 
\end{tikzpicture}
\\ \hline
g(y-x)(z^n_i-z^n_j) &
\begin{array}{ll}(1-v)z^n_i {(\tfrac{z_j}{z_i})^{y-x}} & y>x\\(1-v)z^n_j {(\tfrac{z_j}{z_i})^{y-x}} & y<x\end{array} &
v(z^n_i-z^n_j) & 
z^n_i-z^n_j
\\\hline\hline
\multicolumn{2}{|c|}{
\begin{tikzpicture}[Rscaled, font=\normalsize]
  \draw[scolored-path] (-1,+1) node {$x$} -- (0,0) -- (1,+1) node {$x$};
  \draw[colored-path] (-1,-1) node {$a$} -- (0,0) -- (1,-1) node {$a$};
  \node[halo] at (0,0) {$R_{z_i,z_j}$}; 
\end{tikzpicture}
}
& 
\multicolumn{2}{c|}{
\begin{tikzpicture}[Rscaled, font=\normalsize]
  \draw[scolored-path] (-1,-1) node {$x$} -- (0,0) -- (1,-1) node {$x$};
  \draw[colored-path] (-1,+1) node {$a$} -- (0,0) -- (1,+1) node {$a$};
  \node[halo] at (0,0) {$R_{z_i,z_j}$}; 
\end{tikzpicture}
}
\\\hline
\multicolumn{2}{|c|}{
  (1-v)z^n_i {(\tfrac{z_j}{z_i})^{x}}
} &
\multicolumn{2}{|c|}{
(1-v)z^n_j {(\tfrac{z_i}{z_j})^{x}}
}
\\\hline
\end{array}\]}
\caption{The R-matrix for the fully-fused model. Here $a$ and $b$ are
\textit{distinct} colors and $x$ and $y$ are \textit{distinct}
scolors. The weights are obtained from the special case of Figure~\ref{fig:genrmatrix}
where $c=c_1$ and $w=w_{n-1}$.}
\label{fig:rmatrix}
\end{figure}

The fully-fused R-matrix in Figure~\ref{fig:rmatrix} is obtained from the unfused R-matrix in Figure~\ref{fig:genrmatrix} by taking the vertex color and scolor to be $c = c_1$ and $w = w_{n-1}$.
We may similarly obtain a fully-fused R-matrix from the one in Figure~\ref{fig:proof-R-matrix}, which is related to the one in Figure~\ref{fig:rmatrix} by the change of basis introduced in the proof of Theorem~\ref{thm:ybemc}.
The extra factor of $z_i^{1-n}$ in this change of basis was made to facilitate the comparison with known R-matrices from quantum groups resulting in the following proposition.

\begin{proposition}\label{prop:Drinfeldtwisting}
The fully-fused R-matrix in Figure~\ref{fig:rmatrix} for the metaplectic Iwahori ice model is a Drinfeld twist of the $U_{1/\sqrt{v}}\bigl(\widehat{\mathfrak{gl}}(r|n)\bigr)$ R-matrix.
\end{proposition}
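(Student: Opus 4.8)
The plan is to exhibit an explicit Drinfeld twist realizing Figure~\ref{fig:rmatrix} as a deformation of the Perk--Schultz R-matrix, following the template of the $n=1$ statement proved in~\cite{BBBGIwahori} but now with genuine super and metaplectic (that is, $g$-dependent) ingredients. First I would record, in a normalization convenient for the comparison, the trigonometric R-matrix of $U_{q}\bigl(\widehat{\mathfrak{gl}}(r|n)\bigr)$ on the tensor square of the evaluation module built on the graded space $V=V_{\bar 0}\oplus V_{\bar 1}$ with $\dim V_{\bar 0}=r$ (the colors, even) and $\dim V_{\bar 1}=n$ (the scolors, odd); the parities are forced by the $z_i\leftrightarrow z_j$ asymmetry between the color--color diagonal weight $z_i^n-vz_j^n$ and the scolor--scolor diagonal weight $z_j^n-vz_i^n$ in Figure~\ref{fig:rmatrix}. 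This is the ``Perk--Schultz'' matrix of~\cite{PerkSchultz,BazhanovShadrikov,ZhangFundamental}; specializing $q=1/\sqrt v$, clearing denominators, and identifying the multiplicative spectral parameter with $z_i^n/z_j^n$, one checks that its color--color, color--scolor, and (with $g=g_0$ from~\eqref{eq:gquantumgroup}) scolor--scolor blocks agree with the corresponding entries of Figure~\ref{fig:rmatrix} up to an overall scalar normalization and up to the ``gauge'' factors $(z_j/z_i)^{y-x}$ and $(z_j/z_i)^{x}$ decorating the transmission and spanning weights of the scolor sector.

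\textbf{Producing the twist.} Recall that twisting a quasi-triangular Hopf superalgebra by a $2$-cocycle $\mathcal F$ sends the universal R-matrix $\mathcal R$ to $\mathcal F_{21}\mathcal R\mathcal F^{-1}$, and that any $\mathcal F$ lying in the completed tensor square of the enlarged Cartan $\widehat{\mathfrak h}$ --- including the homogeneous grading element --- is automatically a cocycle, because $\widehat{\mathfrak h}$ is abelian and the relevant exponentials $q^{h}$ are group-like. On $V_{z_i}\otimes V_{z_j}$ such an $\mathcal F$ acts as a diagonal operator $e_A\otimes e_B\mapsto \varphi(A;z_i)\,\varphi(B;z_j)\,e_A\otimes e_B$ with $\varphi$ monomial in $q$ and in $z_i$ (resp.\ $z_j$); choosing $\varphi$ to fix the colors and to scale $e_{w_k}$ by the appropriate power of the spectral parameter produces exactly the factors $(z_j/z_i)^{\bullet}$, and this pins down the twist uniquely. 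For a general $g$ satisfying Assumption~\ref{assumptionga}, write $g=\gamma g_0$ with $\gamma(0)=1$ and $\gamma(a)\gamma(-a)=1$. The passage from the $g_0$-R-matrix to the general one rescales only the single weight $g_0(y-x)(z_i^n-z_j^n)\mapsto g(y-x)(z_i^n-z_j^n)$, i.e.\ only the scolor--scolor crossing entries, and is absorbed by a further diagonal twist $\mathcal F_\gamma$ built from $\gamma$ --- this is precisely the book-keeping already carried out for $g$ versus $g_0$ in the proof of Theorem~\ref{thm:ybemc}. Since a composition of Drinfeld twists is a Drinfeld twist, $\mathcal F\mathcal F_\gamma$ is the one asserted in the proposition.

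\textbf{Verification and main obstacle.} The proof then reduces to a finite check: one confirms, configuration by configuration over the finitely many vertex types of Figure~\ref{fig:rmatrix}, that $(\mathcal F\mathcal F_\gamma)_{21}$ conjugates the Perk--Schultz matrix to the stated weights; because the twist only touches the transmission/spanning entries of the scolor sector and the lone $g_0$-crossing entry, this is a short comparison once the normalization is fixed. I expect the principal difficulty to be essentially bookkeeping: reconciling the conflicting conventions (which sublattice of $V$ is even, the braiding orientation of the two rapidity lines so that ``crossing'' and ``non-crossing'' vertices are matched with the correct R-matrix entries, the scalar normalizing the R-matrix, and the identification of the spectral parameter with $z_i^n/z_j^n$ rather than $z_i/z_j$) so that the two matrices become literally comparable. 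A secondary and more structural subtlety is to make the identification ``diagonal gauge $=$ Drinfeld twist'' rigorous: one must verify that the diagonal operator producing the $(z_j/z_i)^{\bullet}$ factors is genuinely the image of an honest element of $U_{q}(\widehat{\mathfrak h})^{\otimes 2}$ (equivalently, that the required exponents are compatible with the $\widehat{\mathfrak h}$-weights of $V$), and likewise that $\mathcal F_\gamma$ lifts to a $2$-cocycle in $U_q^{\otimes 2}$; both should be handled as in~\cite{BBBGIwahori,Drinfeldtwist,ReshetikhinDrinfeldtwist}, but stated carefully since, unlike the Cartan part, the lifting of $\mathcal F_\gamma$ uses Assumption~\ref{assumptionga} in an essential way.
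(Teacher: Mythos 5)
Your overall strategy is the same as the paper's: the proof there also matches the model's R-matrix against an explicit $U_{1/\sqrt{v}}\bigl(\widehat{\mathfrak{gl}}(r|n)\bigr)$ R-matrix (Kojima's, in \cite{Kojima}) with $q=1/\sqrt{v}$ and multiplicative spectral parameter $\zeta_i/\zeta_j=z_i^n/z_j^n$, rescales by an overall factor, adjusts signs for trading the graded Yang--Baxter equation for the ungraded one, and absorbs the remaining discrepancy into a single explicit diagonal twist function $\phi(a,b)$ with $\phi(a,b)\phi(b,a)=1$ acting on the crossing weights, invoking the $\phi$-twist procedure of \cite{BBB,BBBF}. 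But two of your intermediate claims are shaky. First, the assertion that \emph{any} element of the completed tensor square of the Cartan is automatically a $2$-cocycle is false: a factorized group-like element $t_1\otimes t_2$ fails the cocycle identity $(\mathcal F\otimes 1)(\Delta\otimes\mathrm{id})\mathcal F=(1\otimes\mathcal F)(\mathrm{id}\otimes\Delta)\mathcal F$ unless it is essentially trivial; only bicharacter-type exponentials of $\mathfrak h\otimes\mathfrak h$ (Reshetikhin twists) are automatic cocycles. Worse, an operator acting in the product form $\varphi(A;z_i)\,\varphi(B;z_j)$ is a gauge transformation (a change of basis on each tensor factor), and conjugation by such an operator is not of the form $\mathcal F_{21}\mathcal R\mathcal F^{-1}$ for a genuine twist, so it cannot be the mechanism that produces the asymmetric modifications you need. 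The paper handles exactly this point differently: the spectral-parameter factors $(z_j/z_i)^{\bullet}$ are removed by the change of basis already introduced in the proof of Theorem~\ref{thm:ybemc} (passing from Figure~\ref{fig:rmatrix} to the fully-fused version of Figure~\ref{fig:proof-R-matrix}), kept separate from the Drinfeld twist proper.

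Second, your claim that with $g=g_0$ the only remaining discrepancies are these gauge factors and an overall scalar underestimates which entries need twisting. In the paper's comparison the twist function is nontrivial on \emph{every} crossing sector: $\phi(a,b)=-v^{\mp 1/2}$ on color--color pairs, $v^{\pm 1/2}$ on mixed color/scolor pairs, and $g(b-a)/\sqrt{v}$ on distinct-scolor pairs (so the $g$-dependence is built into one twist rather than split off as your $\mathcal F_\gamma$, which by itself only accounts for the ratio $g/g_0$), and in addition there are the $\pm 1$ sign adjustments from the graded-versus-ungraded conventions. Some of this is convention-dependent (a different Perk--Schultz normalization shifts which factors appear), but it means your ``finite check'' must budget for twisting the color and mixed sectors and the signs as well, not just the scolor block. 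The fix is straightforward and brings you back to the paper's route: treat the $(z_j/z_i)^{\bullet}$ factors as a basis change, and verify a single non-factorized diagonal twist $\phi(a,b)$ with $\phi(a,b)\phi(b,a)=1$ (which is legitimate, e.g.\ by the argument in \cite{BBB,BBBF}, and uses Assumption~\ref{assumptionga} only through $g(a)g(-a)=v$) against the explicit super R-matrix entry by entry.
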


\begin{proof}
  After the change of basis made in the proof of Theorem~\ref{thm:ybemc} we consider the fully-fused version of the R-matrix in Figure~\ref{fig:proof-R-matrix} and compare it to Kojima's $U_{1/\sqrt{v}}\bigl(\widehat{\mathfrak{gl}}(r|n)\bigr)$ R-matrix in~\cite{Kojima} acting on a tensor product of evaluation representations. In fact, Kojima considers the $U_{1/\sqrt{v}}\bigl(\widehat{\mathfrak{sl}}(r|n)\bigr)$ R-matrix, but the two quantum groups have the same R-matrix up to some normalization factor.  
  We start from the R-matrix in \cite{Kojima} parametrized by $q$ and $z$ which we equate to $1/\sqrt{v}$ and $\zeta_i/\zeta_j$ respectively, and then rescale each weight by the factor $\zeta_i - v \zeta_j$.
  Note that Kojima considers a graded Yang-Baxter equation, while we are not, which means that weights where all edge configurations are colors are multiplied by $-1$.
  The resulting R-matrix then differs from the one in Figure~\ref{fig:proof-R-matrix} by a Drinfeld twist parametrized by a function $\phi(a,b)$ where $a$ and $b$ are horizontal edge states rescaling weights of the form 
  \[
    \begin{tikzpicture}[Rscaled]
      \draw[spin-path] (-1,-1) node {$a$} -- (1,1) node {$a$};
      \draw[spin-path] (-1,1) node {$b$} -- (1,-1) node {$b$};
      \node[halo] at (0,0) {$R_{z_i,z_j}$}; 
    \end{tikzpicture}
  \]
  The procedure of using such a function $\phi$ to perform a Drinfeld twist is
  explained in \cite[Section 4]{BBB} and \cite[Section~4]{BBBF}.
  The function satisfies $\phi(a,b) \phi(b,a) = 1$ and is here given by
  \begin{equation}
    \scriptsize
    \text{\normalsize $\phi(a,b) =$}
    \begin{cases}
      -1/\sqrt{v} & \text{if $a$ and $b$ are colors and $a > b$}, \\
      -\sqrt{v} & \text{if $a$ and $b$ are colors and $a < b$}, \\
      g(b-a)/\sqrt{v} & \text{if $a$ and $b$ are scolors and $a \neq b$}, \\
      \sqrt{v} & \text{if $a$ is a scolor and $b$ is a color}, \\
       1/\sqrt{v} & \text{if $a$ is a color and $b$ is a scolor}, \\
      1 & \text{otherwise.}
    \end{cases}
    \normalsize
  \end{equation}
\end{proof}

\subsection{Recurrence relations for the partition functions}
\label{sec:lattice-recursion}

The results of the previous section allow us to demonstrate that the partition functions satisfy the following recurrence.

For $x \in \mathbb{Z}/n\mathbb{Z}$, let $\floorn{x}$ denote the least nonnegative residue of $x \bmod{n}$, that is, the integer such that $0 \leqslant \floorn{x} < n$ and $x \equiv \floorn{x} \pmod{n}$.
Similarly, let $\ceiln{x}$ denote the integer such that $0 < \ceiln{x} \leqslant n$ and $x \equiv \ceiln{x} \pmod{n}$.
Then for $x, y \in \{0, 1, \ldots, n-1\}$, 
\begin{equation}
  \label{eq:ceiln}
  \ceiln{y-x} =
  \begin{cases*}
    y - x & if $y > x$ \\
    y - x + n & if $y \leqslant x$.
  \end{cases*}
\end{equation}

\begin{proposition} 
  \label{prop:lattice-recursion}
  Let $\mu\in\mathbb{Z}^r_{\geqslant0}$, $\theta \in (\mathbb{Z}/n\mathbb{Z})^r$, and $w \in S_r$ defining a system $\mathfrak{S}_{\mu, \theta, w}$.
  Then, for the simple reflection $s_i$, the following equality of partition functions hold 
  \begin{multline}
    \label{eq:coloredrecursion}
    (1-v) \mathbf{z}^{-\ceiln{\theta_i-\theta_{i+1}}\alpha_i} Z(\mathfrak{S}_{\mu, \theta, w})(\mathbf{z}) + g(\theta_i-\theta_{i+1}) (1 - \mathbf{z}^{-n\alpha_i}) Z(\mathfrak{S}_{\mu, s_i\theta, w})(\mathbf{z})  
    = \\
    = \begin{cases*}
      (1-v) Z(\mathfrak{S}_{\mu, \theta, w})(s_i\mathbf{z}) + (1 - \mathbf{z}^{-n\alpha_i}) Z(\mathfrak{S}_{\mu, \theta, s_iw})(s_i\mathbf{z}) & if $\ell(s_iw) > \ell(w)$ \\
      (1-v)\mathbf{z}^{-n\alpha_i} Z(\mathfrak{S}_{\mu, \theta, w})(s_i\mathbf{z}) + v (1 - \mathbf{z}^{-n\alpha_i}) Z(\mathfrak{S}_{\mu, \theta, s_i w})(s_i\mathbf{z}) & if $\ell(s_iw) < \ell(w)$ \\
    \end{cases*}
  \end{multline}
\end{proposition}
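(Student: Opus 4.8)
The plan is to derive \eqref{eq:coloredrecursion} by a \emph{train argument}: splice an extra $R$-vertex $R_{z_i,z_{i+1}}$ into rows $i$ and $i+1$ of the fully-fused system $\mathfrak{S}_{\mu,\theta,w}$ at the far left, and then push it rightward across the grid one column at a time using the Yang--Baxter equation of Corollary~\ref{cor:YBE}. Each such move is local and preserves the product of Boltzmann weights of a state, so the partition function of the spliced system is unchanged; evaluating it with the $R$-vertex at the left boundary and again with it at the right boundary, and equating the two expressions, will give \eqref{eq:coloredrecursion} after dividing by the common monomial $z_i^n$. Only rows $i$ and $i+1$ are ever touched, so the top boundary data $\mu$ is the same in all four partition functions that appear.

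First I would treat the \emph{left} end. Attach $R_{z_i,z_{i+1}}$ so that its two right legs feed into the left-boundary edges of rows $i$ and $i+1$, and fix its two left legs --- the new left boundary --- to carry the scolors $\theta_i$ at row $i$ and $\theta_{i+1}$ at row $i+1$. Since every horizontal edge carries a single attribute and both new boundary edges carry scolors, the $R$-vertex admits only the ``uncrossed'' configuration, for which the grid still sees left-boundary scolors $\theta$ and contributes $Z(\mathfrak{S}_{\mu,\theta,w})(\mathbf{z})$, and the ``crossed'' configuration, for which the grid sees $s_i\theta$ and contributes $Z(\mathfrak{S}_{\mu,s_i\theta,w})(\mathbf{z})$ (when $\theta_i=\theta_{i+1}$ these degenerate to a single configuration). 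Reading off the scolor entries of the $R$-matrix in Figure~\ref{fig:rmatrix} with $(x,y)=(\theta_{i+1},\theta_i)$, rewriting the exponent in the noncrossing weight $(1-v)(z_{i+1}/z_i)^{\,y-x}$ in terms of $\ceiln{\theta_i-\theta_{i+1}}$ via \eqref{eq:ceiln}, using $g(0)=-v$ from Assumption~\ref{assumptionga} to cover the case $\theta_i=\theta_{i+1}$, and dividing through by $z_i^n$, the left evaluation becomes precisely the left-hand side of \eqref{eq:coloredrecursion}.

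Next I would push $R_{z_i,z_{i+1}}$ across the grid. Applying Corollary~\ref{cor:YBE} column by column carries the $R$-vertex to the right boundary, and --- as in every such argument --- it interchanges the two rows: the line of spectral parameter $z_i$ ends up in row-$(i+1)$ position and vice versa, so the grid proper now contributes functions of $s_i\mathbf{z}$. At the \emph{right} end the $R$-vertex acts on the colors on the right boundary, which in rows $i$ and $i+1$ are $(wP)_i$ and $(wP)_{i+1}$ with $P=(c_r,\dots,c_1)$: the uncrossed configuration leaves these in place and contributes $Z(\mathfrak{S}_{\mu,\theta,w})(s_i\mathbf{z})$, while the crossed configuration swaps them, which is exactly the right boundary of $\mathfrak{S}_{\mu,\theta,s_iw}$, contributing $Z(\mathfrak{S}_{\mu,\theta,s_iw})(s_i\mathbf{z})$. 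Whether $(wP)_i>(wP)_{i+1}$ or $(wP)_i<(wP)_{i+1}$ is governed by the sign of $w^{-1}\alpha_i$, i.e.\ by whether $\ell(s_iw)>\ell(w)$ or $\ell(s_iw)<\ell(w)$; since the color entries of Figure~\ref{fig:rmatrix} depend on this order, reading them off in the two cases --- carefully noting that after the passage the line that the figure labels with parameter $z_i$ has moved to the lower position on the right --- and dividing by the same monomial $z_i^n$ yields the two branches of the right-hand side of \eqref{eq:coloredrecursion}. Equating the two evaluations finishes the proof.

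The conceptual skeleton (local Yang--Baxter moves plus a boundary analysis at each end) is identical to that of the partition-function recursions in the colored Iwahori model of \cite{BBBGIwahori}, and presents no difficulty. The hard part will be the bookkeeping in the two end analyses: orienting the $R$-vertex correctly at the left versus at the right (the two rows swap when the vertex passes through, so both their positions and their spectral parameters get interchanged), tracking which of the ``$a<b$''/``$a>b$'' and ``$y>x$''/``$y<x$'' cases of Figure~\ref{fig:rmatrix} applies to the uncrossed and crossed configurations at each end, handling the degenerate case $\theta_i=\theta_{i+1}$ uniformly, and verifying that the single common monomial divided out is the same ($z_i^n$) on both sides, so that the $R$-matrix weights collapse to exactly the coefficients $(1-v)\mathbf{z}^{-\ceiln{\theta_i-\theta_{i+1}}\alpha_i}$, $g(\theta_i-\theta_{i+1})(1-\mathbf{z}^{-n\alpha_i})$, $(1-v)$, $1-\mathbf{z}^{-n\alpha_i}$, $(1-v)\mathbf{z}^{-n\alpha_i}$, and $v(1-\mathbf{z}^{-n\alpha_i})$ appearing in \eqref{eq:coloredrecursion}.
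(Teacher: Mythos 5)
Your proposal is correct and is essentially the paper's own argument: the paper likewise attaches $R_{z_i,z_{i+1}}$ to rows $i$ and $i+1$, moves it across the grid with the fully-fused Yang--Baxter equation (Figure~\ref{fig:modpf}), and expands the two ends into the crossed/uncrossed scolor and color configurations multiplying $Z(\mathfrak{S}_{\mu,\theta,w})(\mathbf{z})$, $Z(\mathfrak{S}_{\mu,s_i\theta,w})(\mathbf{z})$, $Z(\mathfrak{S}_{\mu,\theta,w})(s_i\mathbf{z})$ and $Z(\mathfrak{S}_{\mu,\theta,s_iw})(s_i\mathbf{z})$. It then concludes exactly as you indicate, by inserting the weights of Figure~\ref{fig:rmatrix}, using \eqref{eq:ceiln}, and treating the degenerate case $\theta_i=\theta_{i+1}$ separately (where $g(0)=-v$ makes the two left-hand terms combine into the single admissible same-scolor configuration).
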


\begin{proof} 
  Let $y = \floorn{\theta_i}$ and $x = \floorn{\theta_{i+1}}$ be the left boundary scolors of $\mathfrak{S}_{\mu, \theta, w}$ on rows $i$ and $i+1$ and similarly let $d = (wP)_i$ and $c = (wP)_{i+1}$ be the right boundary colors.
  Applying the Yang-Baxter equation in the fully-fused model in rows $i$ and $i+1$, we have the following identity (as pictured in Figure~\ref{fig:modpf})
\begin{multline} 
\operatorname{wt}\left( 
\begin{tikzpicture}[baseline={(0,-0.1)}, Rscaled]
  \draw[scolored-path] (-1,-1) node {$x$} -- (0,0) -- (1,+1) node {$x$};
  \draw[scolored-path] (-1,+1) node {$y$} -- (0,0) -- (1,-1) node {$y$};
  \node[halo, inner sep=-3pt] at (0,0) {$R_{z_i,z_{i+1}}$}; 
\end{tikzpicture}  
\right) Z(\mathfrak{S}_{\mu, s_i\theta, w})(\mathbf{z}) + 
\operatorname{wt}
\left( 
\begin{tikzpicture}[baseline={(0,-0.1)}, Rscaled]
  \draw[scolored-path] (-1,-1) node {$x$} -- (0,0) -- (1,-1) node {$x$};
  \draw[scolored-path] (-1,+1) node {$y$} -- (0,0) -- (1,+1) node {$y$};
  \node[halo, inner sep=-3pt] at (0,0) {$R_{z_i,z_{i+1}}$}; 
\end{tikzpicture}
\right) Z(\mathfrak{S}_{\mu, \theta, w})(\mathbf{z}) = \\
\operatorname{wt} \left( 
\begin{tikzpicture}[baseline={(0,-0.1)}, Rscaled]
  \draw[colored-path] (-1,-1) node {$c$} -- (0,0) -- (1,+1) node {$d$};
  \draw[colored-path] (-1,+1) node {$d$} -- (0,0) -- (1,-1) node {$c$};
  \node[halo, inner sep=-3pt] at (0,0) {$R_{z_i,z_{i+1}}$}; 
\end{tikzpicture}
\right) Z(\mathfrak{S}_{\mu, \theta, w})(s_i\mathbf{z})
+
\operatorname{wt} \left( 
\begin{tikzpicture}[baseline={(0,-0.1)}, Rscaled]
  \draw[colored-path] (-1,-1) node {$d$} -- (0,0) -- (1,+1) node {$d$};
  \draw[colored-path] (-1,+1) node {$c$} -- (0,0) -- (1,-1) node {$c$};
  \node[halo, inner sep=-3pt] at (0,0) {$R_{z_i,z_{i+1}}$}; 
\end{tikzpicture}
\right) Z(\mathfrak{S}_{\mu, \theta, s_iw})(s_i\mathbf{z}).
\end{multline}
  Here $\operatorname{wt}(R)$ denotes the Boltzmann weight of the vertex $R$, and we assume $x \neq y$ and $c \neq d$.
  If $x = y$ the two terms on the left-hand side agree, but by the Yang-Baxter equation, only one of them should be included. 
  Note that the colors in our model are distinct, meaning that $c \neq d$. 
  
  Inserting the Boltzmann weights from Figure~\ref{fig:rmatrix}, being careful to distinguish between the cases when $x \ne y$ versus $x=y$, and using \eqref{eq:ceiln} proves the statement.
\end{proof}

\begin{figure}[htb]
\begin{tikzpicture}[scaled, every path/.append style={thick}]
\begin{scope}
  \draw (0,1) to [out = 0, in = 180] (2,3) to (4,3);
  \draw (0,3) to [out = 0, in = 180] (2,1) to (4,1);
  \draw (3,0.25) to (3,3.75);
  \draw (7,0.25) to (7,3.75);
  \draw (6,1) to (8,1);
  \draw (6,3) to (8,3);
  \node[scolored,spin] at (0,1) {$x$};
  \node[scolored,spin] at (0,3) {$y$};
  \node at (5,3) {$\cdots$};
  \node at (5,1) {$\cdots$};
  \draw[densely dashed] (3,3.75) to (3,4.25);
  \draw[densely dashed] (3,0.25) to (3,-0.25);
  \draw[densely dashed] (7,3.75) to (7,4.25);
  \draw[densely dashed] (7,0.25) to (7,-0.25);
  \node[colored,spin] at (8,1) {$c$};
  \node[colored,spin] at (8,3) {$d$};
  \node[halo] at (3,3) {$T_{z_{i}}$};
  \node[halo] at (3,1) {$T_{z_{i+1}}$};
  \node[halo] at (7,3) {$T_{z_{i}}$};
  \node[halo] at (7,1) {$T_{z_{i+1}}$};
  \node[halo] at (1,2) {$R_{z_{i},z_{i+1}}$};
\end{scope}
\begin{scope}[shift={(11,0)}]
  \node[scale=1/0.8] at (-1.5,2) {$=$};
  
  \draw (4,1) to (6,1) to [out = 0, in = 180] (8,3);
  \draw (4,3) to (6,3) to [out = 0, in = 180] (8,1);
  \draw (0,1) to (2,1);
  \draw (0,3) to (2,3);
  \draw (5,0.25) to (5,3.75);
  \draw (1,0.25) to (1,3.75);

  \node at (3,1) {$\cdots$};
  \node at (3,3) {$\cdots$};

  \draw[densely dashed] (1,3.75) to (1,4.25);
  \draw[densely dashed] (1,0.25) to (1,-0.25);
  \draw[densely dashed] (5,3.75) to (5,4.25);
  \draw[densely dashed] (5,0.25) to (5,-0.25);
  \node[halo] at (1,3) {$T_{z_{i+1}}$};
  \node[halo] at (1,1) {$T_{z_{i}}$};
  \node[halo] (a) at (5,3) {$T_{z_{i+1}}$};
  \node[halo] at (5,1) {$T_{z_{i}}$};
  \node[halo] at (7,2) {$R_{z_{i},z_{i+1}}$};
  \node[colored, spin] at (8,1) {$c$};
  \node[colored, spin] at (8,3) {$d$};
  \node[scolored, spin] at (0,1) {$x$};
  \node[scolored, spin] at (0,3) {$y$};
\end{scope}
\end{tikzpicture}
\caption{Left: the system $\mathfrak{S}_{\mu,\theta,w}$ with
  the R-matrix attached. Right: after using the Yang-Baxter equation.}
  \label{fig:modpf}
\end{figure}

Note that from the recursion relations from Proposition~\ref{prop:lattice-recursion} and the base case from Lemma~\ref{lem:lattice-ground-state} we can compute the partition function for all systems $\mathfrak{S}_{\mu, \theta, w}$.

\section{Metaplectic Iwahori Whittaker functions}
\label{sec:whittaker}

\subsection{Metaplectic preliminaries}
\label{sec:metaplectic-prel}
We begin with a brief review of the construction of metaplectic groups and
their unramified principal series representations. The discussion
follows~\cite{BBBF} whose treatment of metaplectic groups is based on the work
of Matsumoto~\cite{metaplecticMatsumoto}, Kazhdan and
Patterson~\cite{KazhdanPatterson}, Brylinski and
Deligne~\cite{BrylinksiDeligne}, and
McNamara~\cite{McNamaraEdinburgh,McNamaraCS}.  Because they may be stated
uniformly, the constructions in this section will be described for covers
associated to any split, reductive algebraic group $\mathbf{G}$. In Section~\ref{sec:conclusions}, 
where we perform explicit calculations for comparison with the lattice models of the prior section, 
we will restrict to a particular cover of the general linear group.

Let $F$ be a non-archimedean local field and let $\mathfrak{o}$ be its ring of integers with uniformizer $\varpi$ and maximal prime ideal $\mathfrak{p} = \varpi\mathfrak{o}$.
Denote the cardinality of the residue field $\mathfrak{o}/\mathfrak{p}$ by $v^{-1}$.
Fix a positive integer $n$. We shall work under the assumption that $v^{-1} \equiv 1 \bmod 2n$ which implies that $F$ contains the $2n$-th roots of unity. Let $\mu_n$ denote the group of $n$-th roots of unity in $F$ and fix an embedding $\iota : \mu_n \to \C^\times$. 

Let $\mathbf{G}$ be a connected reductive group defined and split
over $F$. Let $\Delta^\vee$ be the root system of $\mathbf{G}$, which we
assume to be irreducible.

\begin{remark}
The root system $\Delta^\vee$ is dual to the root system $\Delta$ of the
Langlands dual group~$\widehat{\mathbf{G}}$. The reason for this notational
choice is that in this paper it is the representation theory of the dual group
that will be more prevalent.
\end{remark}

Let $\mathbf{T}$ be a fixed split maximal torus of $\mathbf{G}$, and let $\widehat{\mathbf{T}}$
be the corresponding maximal torus of $\widehat{\mathbf{G}}$. Then
$\Delta^\vee$ is a subset of the character group $X^\ast(\mathbf{T})$.
The dual root system $\Delta$ is a subset of the cocharacter group
$X_\ast(\mathbf{T})$, which is identified with $X^\ast(\widehat{\mathbf{T}})$;
we will denote $\Lambda=X_\ast(\mathbf{T})=X^\ast(\widehat{\mathbf{T}})$.
If $\alpha^\vee\in\Delta^\vee$, the corresponding element of $\Delta$
is denoted $\alpha$.

We fix a pinning. This means that for every root
$\alpha^\vee\in\Delta^\vee$ there is a morphism
${i_{\alpha^\vee} : \mathbf{SL}_2 \rightarrow \mathbf{G}}$, whose image
we will denote $\mathbf{G_{\alpha^\vee}}$. Let
$x_{\alpha^\vee}(u)=i_{\alpha^\vee}\bigl(\begin{smallmatrix}1&u\\&1\end{smallmatrix}\bigr)$.
Then the image of~$x_{\alpha^\vee}$ is
the subgroup generated by one-parameter unipotent subgroups $\mathbf{N}_{\alpha^\vee}$ on which
$\mathbf{T}$ acts (through conjugation) by the character $\alpha^\vee\in X^\ast(\mathbf{T})$, and
\[\mathbf{G}_{\alpha^\vee}=\langle \mathbf{N}_{\alpha^\vee}, \mathbf{N}_{-{\alpha^\vee}} \rangle.\]
We require the maps $i_{\alpha^\vee}$ to satisfy commutation relations as in
Chevalley~\cite[Section III]{Chevalley}. 

The root system $\Delta^\vee$ may be partitioned into positive and negative
roots, $\Delta^\vee_+$ and $\Delta^\vee_-$. These choices correspond to positive
and negative Borel subgroups of $\mathbf{G}$, denoted $\mathbf{B}$ and $\mathbf{B}^-$ respectively. Then
$\mathbf{N}(F) \mathbf{T}(F)= \mathbf{B}(F)$ and
$\mathbf{N}^-(F) \mathbf{T}(F) = \mathbf{B}^-(F)$ where $\mathbf{N}
:= \prod_{{\alpha^\vee} \in \Delta^\vee_+} \mathbf{N}_{\alpha^\vee}$ and $\mathbf{N}^-
:= \prod_{{\alpha^\vee} \in \Delta^\vee_{-}} \mathbf{N}_{{\alpha^\vee}}$ 
is its opposite.

Let $K:=\mathbf{G}(\mathfrak{o})$ be the group generated by the
$x_{\alpha^\vee}(\mathfrak{o})$, which is a maximal compact subgroup of
$\mathbf{G}(F)$. Let
$W=N_{\mathbf{G}}(\mathbf{T})/\mathbf{T}$ denote the Weyl group of $\mathbf{G}$ with
simple reflections $s_i$ corresponding to each simple root $\alpha^\vee_i$. We may
choose Weyl group representatives in~$K$.

A \textit{metaplectic $n$-fold cover} of $\mathbf{G}(F)$ is a central extension of $\mathbf{G}(F)$ by $\mu_n$:
\begin{equation}\label{eq:metaplecticgroup} 
1 \longrightarrow \mu_n \longrightarrow \Gn \xlongrightarrow[]{p} \mathbf{G}(F) \longrightarrow 1.
\end{equation}
As a set $\Gn \equiv \mathbf{G}(F) \times \mu_n$, with the group
multiplication given by a cocycle $\sigma \in H^2(\mathbf{G}(F),\mu_n)$.
The group $K$ splits in $\Gn$, so we fix a splitting and
identify $K$ with its image in $\widetilde{G}$.
Let $\widetilde{B}=p^{-1}\bigl(\mathbf{B}(F)\bigr)$ and
$\widetilde{T}=p^{-1}\bigl(\mathbf{T}(F)\bigr)$.

As in \cite{BrylinksiDeligne,McNamaraEdinburgh,McNamaraCS}, the
cover \eqref{eq:metaplecticgroup} is associated with a $W$-invariant symmetric bilinear
form $B: \Lambda \times \Lambda \to \mathbb{Z}$ whose restriction to the
root lattice is required to be even. Thus
the quadratic form $Q(\mu):= B(\mu,\mu)/2$ is integer-valued. If
$x\in F^\times$ and $\lambda\in\Lambda\cong X_\ast(\mathbf{T})$
let $x^\lambda$ be a representative of $x$ under the
cocharacter of $\mathbf{T}$ corresponding to $\lambda$.
By abuse of notation, we will denote a representative
in $\widetilde{T}$ by the same notation $x^\lambda$.
The bilinear form $B$ is characterized by
\begin{equation}\label{bcommrel}
  [x^\lambda,y^\mu]=(x,y)_n^{B(\lambda,\mu)}, \qquad x,y \in F^\times, \quad \lambda,\mu \in \Lambda,
\end{equation}
where $[x^\lambda,y^\mu]$ is the group commutator and $(x,y)_n \in \mu_n$ is the $n$-th power Hilbert symbol. This
identity holds regardless of the representatives $x^\lambda$
and $y^\mu$. 
Since we are assuming that $F$ contains the $2n$-th roots of unity,
$(\varpi,\varpi)_n=1$. Thus, by \eqref{bcommrel}, $\varpi^\lambda$ and $\varpi^\mu$
commute and so we may choose the representatives
$\varpi^\lambda$ such that $\varpi^\lambda\varpi^\mu=\varpi^{\lambda+\mu}$.
See Section 5 in~\cite{BBBF} for a discussion on the bilinear form in relation to the metaplectic group.

Let $T(\mathfrak{o})=\Tn\cap K$.  Since the cover splits over $K$, the group
$T(\mathfrak{o})$ may (like $K$) be regarded as a subgroup of either
$\mathbf{G}(F)$ or~$\Gn$.  Denote the centralizer of $T(\mathfrak{o})$ in $\tilde T$ by $H:=
C_{\Tn}\big(T(\mathfrak{o})\big)$. It is a maximal abelian subgroup of
$\widetilde{T}$ by Lemma~1 of~\cite{McNamaraEdinburgh}.

The map $\lambda\mapsto\varpi^\lambda$ induces an isomorphism
$\Lambda \xlongrightarrow{\sim} \Tn / \mu_n T(\mathfrak{o})$. Let
\begin{equation}\label{eq:Lambdan} 
  \Lambda^{(n)} := \{ \mu \in \Lambda \mid B(y,\mu) \equiv 0 \bmod{n} \text{ for all } y\in \Lambda \}. 
\end{equation}
It follows from Lemma 1 in~\cite{McNamaraEdinburgh} (and its proof) that
the subgroup $H$ corresponds to $\Lambda^{(n)}$ under this map. In other words
the isomorphism $\Lambda \xlongrightarrow{\sim} \Tn / \mu_n T(\mathfrak{o})$
induces an isomorphism $\Lambda^{(n)} \iso H/ \mu_n
T(\mathfrak{o})$. Therefore
\begin{equation}\label{eq:LambdamodLambdan} 
\Lambda / \Lambda^{(n)} \iso \Tn / H.
\end{equation}
We will denote by $\Gamma$ a fixed choice of representatives of $\Lambda
/ \Lambda^{(n)}$.

We next turn to the definition of the \textit{metaplectic L-group}
given in McNamara~\cite{McNamaraEdinburgh}. In that paper, like
this one, it was assumed that $\mathbf{G}$ is split and
that the ground field $F$ contained the group $\mu_{2n}$ of $2n$-th roots of
unity. These assumptions simplify many technical issues.
See Weissman~\cite{Weissman} for a theory of the metaplectic L-group
without these limitations.

Identifying $\Lambda$ with $X^\ast(\widehat{\mathbf{T}})$, if $\z\in\widehat{\mathbf{T}}(\mathbb{C})$ and
$\lambda\in\Lambda$, denote by $\z^\lambda\in\mathbb{C}$ the image
of $\z$ under the rational character $\lambda$.
Let us denote by $\mathbb{C}[\Lambda]$ the Laurent polynomial
ring spanned over $\mathbb{C}$ by the polynomial functions $\z\mapsto\z^\lambda$ ($\lambda\in\Lambda$),
or by abuse of notation, by $\z^\lambda$. We will
denote by $\mathbb{C}(\Lambda)$ its field of fractions.
The rings $\mathbb{C}[\Lambda]$ and $\mathbb{C}(\Lambda)$
may be identified with the ring of regular and
rational functions, respectively, on $\widehat{\mathbf{T}}(\mathbb{C})$.

Similarly we define $\mathbb{C}[\Lambda^{(n)}]$ and
$\mathbb{C}(\Lambda^{(n)})$. Let 
$\widehat{\mathbf{T}}'$ be an algebraic torus
that is the quotient of $\widehat{\mathbf{T}}$ by
the finite subgroup of $\z$ such that $\z^\lambda=1$
for $\lambda\in\Lambda^{(n)}$. Thus 
$\mathbb{C}[\Lambda^{(n)}]$ and
$\mathbb{C}(\Lambda^{(n)})$ may be identified with
the rings of regular and rational functions on
$\widehat{\mathbf{T}}'(\mathbb{C})$. We will denote
by $\hat p:\widehat{\mathbf{T}}'(\mathbb{C})\to \widehat{\mathbf{T}}(\mathbb{C})$
the canonical surjection.

If $\alpha\in\Delta$, let 
\begin{equation}
  \label{eq:nidef}
  n_\alpha := \frac{n}{\operatorname{gcd}\bigl(n,Q(\alpha)\bigr)} \qquad n_i := n_{\alpha_i}
\end{equation}
as in \cite{McNamaraCS,McNamaraEdinburgh}. Then let
\begin{equation}
  \label{eq:Dn}
  \Delta_n=\{n_\alpha\alpha \mid \alpha\in\Delta\}.
\end{equation}
McNamara~\cite[Theorem~12]{McNamaraEdinburgh} proves
that $\Delta_n$ is a subset of $\Lambda^{(n)}=X^\ast(\widehat{\mathbf{T}}')$,
and that it is a root system.
Moreover let $\alpha^\vee\in
X_\ast(\widehat{\mathbf{T}})\iso\Hom(\Lambda,\mathbb{Z})$
be the coroot corresponding to $\alpha$. Then McNamara proves that the
image of $\alpha^\vee$ in $X_\ast(\widehat{\mathbf{T}}')\iso\Hom(\Lambda^{(n)},\mathbb{Z})$
is a multiple of $n_\alpha$. Let
\[\Delta_n^\vee=\{n_\alpha^{-1}\alpha^\vee|\alpha\in\Delta\}\subset X_\ast(\widehat{\mathbf{T}}').\]
Then $(X^\ast(\widehat{\mathbf{T}}'),\Delta_n,X_\ast(\widehat{\mathbf{T}}'),\Delta_n^\vee)$
form a root datum (Springer~\cite{SpringerReductive}) and by
\cite[Theorem~2.9]{SpringerReductive} they are associated with a reductive
group $\widehat{\mathbf{G}}'$. This is the \textit{metaplectic L-group}.

\begin{remark}
\label{rem:dichotomy}
If $\Delta$ is simply-laced, or if it is doubly-laced and $n$ is odd, or if
$\mathbf{G}=G_2$ and $n$ is not a multiple of $3$, then the Cartan types of
$\widehat{\mathbf{G}}$ and $\widehat{\mathbf{G}}'$ are the same. Otherwise, they are dual, since long
roots of $\Delta$ correspond to short roots of $\Delta_n$. This dichotomy
is reflected in the local Shimura correspondence~\cite{SavinHecke}.
\end{remark}

Let us define the \textit{unramified principal series} of $\Gn$.
A function $f$ on $H$, $\widetilde{T}$ or $\widetilde{G}$ is called \textit{genuine} if
$f(\varepsilon g)=\iota(\varepsilon)f(g)$ for $\varepsilon\in\mu_n$.
A quasicharacter of $H$ is called \textit{unramified} if it is trivial on
$T(\mathfrak{o})$. Unramified genuine quasicharacters of $H$ may be
parametrized by elements of $\widehat{\mathbf{T}}'(\C)$. If
$\widetilde{\z}\in\widehat{\mathbf{T}}'(\C)$, let us pick
$\z\in\widehat{\mathbf{T}}(\C)$ such that $\hat p(\z)=\widetilde{\z}$ under the
surjection
$\hat p:\widehat{\mathbf{T}}(\C)\longrightarrow\widehat{\mathbf{T}}'(\C)$.
There is a unique unramified
character $\chi=\chi_\z$ such that $\chi(\varpi^\lambda)=\z^\lambda$.
(The character $\chi_\z$ actually only depends on $\widetilde{\z}$.) Given an unramified genuine quasicharacter $\chi$ on $H$ let
\[i(\chi) = \operatorname{Ind}_H^{\Tn} \chi = \{ f: \Tn \to \C, f (h  t) = \chi(h)f(t) \text{ for all } t \in \Tn, h \in H \}. \] 
The group $\Tn$ acts on $i(\chi)$ by right translation. It
is a finite-dimensional $\Tn$-module that may be shown to be irreducible.

We inflate $i(\chi)$ from $\Tn$ to $\Bn$ and induce to $\Gn$ to obtain
$I(\chi) := \operatorname{Ind}_{\Bn}^{\Gn} i(\chi)$. Explicitly, we
have
\[ I(\chi) = \{ \text{smooth $f: \Gn \to i(\chi)$} \mid f(bg) = \bigl(\delta^{1/2} \chi\bigr)(b)f(g) \text{ for all } b \in \Bn, g \in \Gn \}, \]
where $\delta$ is the modular
quasicharacter.
The group $\Gn$ acts by right translation on $I(\chi)$ denoted by $\pi$.
For $\chi = \chi_\mathbf{z}$ defined above we denote $I(\chi_\mathbf{z})$ by
$I(\mathbf{z})$. Although it only depends on
$\widetilde{\z}\in\widetilde{\mathbf{T}}'$ we will not
put the tilde into the notation.

\begin{remark}
\label{rem:zassumptions}
To avoid complications we shall impose two conditions on the character
$\chi_\z$. We assume that $\z^{n_\alpha\alpha} \neq v^{\pm1}$ and
$\z^{n_\alpha\alpha}\neq 1$ for $\alpha\in\Delta$. The first condition assures that
the intertwining integrals (\ref{eq:intertwiner}) do not vanish, and that the
unramified principal series is irreducible. Although we impose this
assumption, the vanishing of the intertwining integral is precisely
the tool needed to understand the reducibility of the principal series
representations; see \cite[Theorem~I.2.9]{KazhdanPatterson} and its proof.
The second condition assures that the intertwining integrals do not
have poles. This condition can be removed later using
Proposition~\ref{prop:T-descent}, where it is shown that the Demazure-Whittaker
operators that define the recurrence formulas satisfied by Whittaker functions
do not have poles, due to cancellation of denominators.
\end{remark}

\begin{lemma}
\label{lemma:vo}
The module $i(\chi)=\operatorname{Ind}_H^{\Tn}(\chi)$ has a $T(\mathfrak{o})$-fixed vector
$v_0$ that is unique up to scalar multiple. If $t\in H$
then $t\cdot v_0=\chi(t)v_0$. The vectors $\varpi^{\lambda}\cdot v_0$
as $\lambda$ runs through a set of coset representatives for
$\Lambda/\Lambda^{(n)}$ are a basis of $i(\chi)$.
\end{lemma}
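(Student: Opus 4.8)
The plan is to exhibit $v_0$ by hand, obtain uniqueness from a commutator computation, and deduce the basis statement by a dimension count. First I would define $v_0 \in i(\chi)$ to be the function supported on $H$ with $v_0(h) = \chi(h)$ for $h \in H$; since $H$ is a subgroup of $\Tn$ and $\chi$ is a character, the relation $v_0(h' t) = \chi(h') v_0(t)$ holds for all $h' \in H$ and $t \in \Tn$ (both sides vanish unless $t \in H$), so $v_0 \in i(\chi)$. Because $T(\mathfrak o) \subseteq H$ and $\chi$ is unramified, right translation by $k \in T(\mathfrak o)$ fixes $v_0$: for $t \in H$ one has $v_0(tk) = \chi(tk) = \chi(t)\chi(k) = \chi(t) = v_0(t)$, and off $H$ both sides vanish. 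The eigenvector identity $t \cdot v_0 = \chi(t) v_0$ for $t \in H$ is the same computation: $(t\cdot v_0)(s) = v_0(st) = \chi(st) = \chi(t) v_0(s)$ when $s \in H$, and both sides are $0$ otherwise since $H$ is a subgroup.

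Next I would prove uniqueness. Let $f \in i(\chi)$ be $T(\mathfrak o)$-fixed and suppose $f(t) \neq 0$ for some $t \in \Tn$. For $k \in T(\mathfrak o)$ the commutator $[t,k]$ lies in $\mu_n$, since $p([t,k]) = [p(t),p(k)] = 1$ in the abelian group $\mathbf{T}(F)$; write $tk = \zeta\,kt$ with $\zeta = [t,k] \in \mu_n$. Now $f$ is genuine, since $f(\varepsilon t) = \chi(\varepsilon)f(t) = \iota(\varepsilon)f(t)$ for $\varepsilon \in \mu_n$, so $f(tk) = \iota(\zeta)f(kt) = \iota(\zeta)\chi(k)f(t) = \iota(\zeta)f(t)$, using $k \in H$ and that $\chi$ is unramified; on the other hand $f(tk) = f(t)$ because $f$ is $T(\mathfrak o)$-fixed. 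Hence $\iota(\zeta) = 1$, so $\zeta = 1$ by injectivity of $\iota$, i.e.\ $t$ commutes with $k$. As $k$ was arbitrary, $t \in C_{\Tn}(T(\mathfrak o)) = H$. Therefore $f$ is supported on $H$, where $f(h) = \chi(h)f(1)$, so $f = f(1)\,v_0$; this gives both existence and uniqueness up to scalar.

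Finally, for the basis statement, I would count dimensions. Since $i(\chi)$ is finite dimensional and $\dim i(\chi) = [\Tn : H] = |\Lambda/\Lambda^{(n)}|$ by \eqref{eq:LambdamodLambdan}, it suffices to show the $|\Lambda/\Lambda^{(n)}|$ vectors $\varpi^\lambda \cdot v_0$ with $\lambda \in \Gamma$ are linearly independent. But $(\varpi^\lambda \cdot v_0)(t) = v_0(t\varpi^\lambda)$ is nonzero precisely when $t \in H\varpi^{-\lambda}$, so $\varpi^\lambda \cdot v_0$ is a nonzero function supported on the single coset $H\varpi^{-\lambda}$; as $\lambda$ ranges over $\Gamma$ these are the $|\Lambda/\Lambda^{(n)}|$ distinct classes of $\Tn/H$, hence pairwise disjoint, so the $\varpi^\lambda \cdot v_0$ have disjoint supports and are linearly independent, thus a basis.

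The only step that requires care is the commutator computation in the uniqueness argument, where one must combine genuineness of elements of $i(\chi)$, unramifiedness of $\chi$, and the fact that $[\Tn,\Tn] \subseteq \mu_n$; everything else is bookkeeping in the induced-representation model, using repeatedly that $H$ is a (normal) subgroup of $\Tn$ of index $|\Lambda/\Lambda^{(n)}|$.
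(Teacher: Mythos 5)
Your proof is correct, but it takes a more explicit route than the paper. The paper invokes Mackey theory twice: restricting $i(\chi)$ to $T(\mathfrak{o})$ gives $\bigoplus_{\lambda}\xi_\lambda$ with $\xi_\lambda(t)=\chi(\varpi^\lambda t\varpi^{-\lambda})$, and genuineness of $\chi$ forces $\xi_\lambda$ to be trivial only when $\varpi^\lambda\in H$, giving uniqueness of $v_0$; a second application shows $i(\chi)|_H$ contains a copy of $\chi$, which must be the span of $v_0$, giving the eigenvector property. Your commutator computation $f(tk)=\iota([t,k])f(t)$ is exactly the explicit form of the character $\xi_\lambda$ (since $\chi(\varpi^\lambda t\varpi^{-\lambda})=\iota([\varpi^\lambda,t])\chi(t)$ for $t\in T(\mathfrak{o})$), so the two arguments rest on the same mechanism — centrality of $\mu_n$, genuineness of $\chi$, unramifiedness, and $H=C_{\Tn}(T(\mathfrak{o}))$ — but you work directly in the function model of $\operatorname{Ind}_H^{\Tn}\chi$ instead of quoting Mackey's decomposition. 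What your version buys is self-containedness and, notably, an explicit treatment of the basis assertion (disjoint supports of $\varpi^\lambda\cdot v_0$ on the cosets $H\varpi^{-\lambda}$ plus the dimension count $\dim i(\chi)=[\Tn:H]=|\Lambda/\Lambda^{(n)}|$), which the paper's proof leaves implicit; what the paper's version buys is brevity and a formulation that generalizes without writing out the induced model. Two small points of hygiene: the containments $T(\mathfrak{o})\subseteq H$ and $\mu_n\subseteq H$ (needed for ``$k\in H$'' and for genuineness of $f$) deserve a one-line justification — both follow since $T(\mathfrak{o})$ is abelian (image of $\mathbf{T}(\mathfrak{o})$ under the splitting) and $\mu_n$ is central — and the parenthetical normality of $H$ in $\Tn$, while true, is never actually used in your argument, so you could drop it.
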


\begin{proof}
A complete set of double coset representatives
for $H\backslash\Tn/T(\mathfrak{o})$ consists of $\varpi^\lambda$
as $\lambda$ runs through a set of coset representatives
for $\Lambda/\Lambda^{(n)}$. By Mackey theory,
\[i(\chi)|_{T(\mathfrak{o})}=\bigoplus_{\lambda\in\Lambda/\Lambda^{(n)}}\xi_\lambda,\]
where $\xi_\lambda$ is the character of $T(\mathfrak{o})$ defined by
$\xi_\lambda(t)=\chi(\varpi^{\lambda}t\varpi^{-\lambda})$.
Since $\chi$ is genuine, $\xi_\lambda$ is the
trivial character only if $\varpi^\lambda$ lies
in the centralizer $H$ of $T(\mathfrak{o})$. Hence there
is, up to scalar multiple, only one $T(\mathfrak{o})$-fixed vector $v_0$.
Now another application of Mackey theory shows that $i(\chi)|_H$
contains a copy of $\chi$. Since $\chi|_{T(\mathfrak{o})}$ is trivial,
this must be the span of $v_0$, proving that $t\cdot v_0=\chi(t)v_0$
for $t\in H$.
\end{proof}

Define $\Phi_\circ^\z$ to be the unique $K$-fixed vector in $I(\z)$
normalized so that $\Phi_\circ^\z(k)=v_0$ when $k\in K$. It
may be defined by
\[\Phi_\circ^\z(bk)=\delta^{1/2}(b)b\cdot v_0, \]
according to the metaplectic version of the Iwasawa
decomposition $\Gn = \widetilde{B} K$.
Using $v_0$ here guarantees that this is well-defined.

Denote by $J^+$ and $J^-$ the positive and negative Iwahori subgroups of
$\mathbf{G}(\mathfrak{o})$. Thus if $k=\mathfrak{o}/\varpi\mathfrak{o}$ is the residue
field, these are the preimages of $\mathbf{B}(k)$ and $\mathbf{B}^-(k)$ respectively under
reduction modulo $\varpi$. We will work mostly with $J^-$ and so we will
denote it as~$J$.

Let $I(\z)^{J}$ be the space of Iwahori fixed vectors in $I(\z)$. The dimension of this space is $|W|$ according to~\cite[Proposition 5.1]{McNamaraCS}. Moreover, we have the decomposition $\Gn = \sqcup_{w\in W} \Bn w J$, where we have chosen a representative for $w$ in $K$, which (by abuse of notation)
we will also denote as $w$. This allows us to define a basis $\Phi^\z_w \in I(\z)^{J}$ for $w\in W$ as follows:
\begin{equation}\label{def:Iwahoribasis}
\Phi_w^\z (b w' l) := 
\begin{cases}
       \Phi^\z_\circ (b) &\text{ if } w'=w \\
      0 & \text{ otherwise}, \\
     \end{cases} 
     \qquad b \in \Bn, w' \in W, l \in J.
\end{equation}
From this definition it is easy to see that $\sum_{w\in W} \Phi_w^\z = \Phi_\circ^\z$. 

We next describe basics about Whittaker functionals on metaplectic principal series.
Recall that $\mathbf{N}^-(F)$ may be embedded in $\Gn$ using the trivial section. In
fact, later results in this section will require the following slightly stronger condition given as
Proposition~(b) of Appendix 1 of~\cite{MWbook}, which is attributed to Deligne. We refer the
reader to \cite{MWbook} for the proof, which is stated for global fields but works similarly for local
fields as noted in the final sentence of the appendix.

\begin{proposition}[Deligne] There exists a unique $\mathbf{T}(F)$-equivariant section $s_F$
of $\mathbf{N}^-(F)$ into $\widetilde{G}$ lifting the natural projection map $p$.
\label{prop:deligne} \end{proposition}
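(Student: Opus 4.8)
The statement to prove is Deligne's Proposition: there exists a unique $\mathbf{T}(F)$-equivariant section $s_F$ of $\mathbf{N}^-(F)$ into $\widetilde{G}$ lifting $p$. Since the paper explicitly cites \cite{MWbook} (Appendix~1) for the proof and only restates it, the right move here is not to reconstruct Moeglin--Waldspurger's argument from scratch but to give the standard cohomological skeleton and defer the details to the reference. The plan is as follows. First I would note that \emph{some} set-theoretic section of $\mathbf{N}^-(F)$ exists because $\mathbf{N}^-$ is unipotent and the restriction of the cover to a unipotent subgroup is canonically split: the obstruction class lives in $H^2(\mathbf{N}^-(F),\mu_n)$, and for a unipotent group over a field containing enough roots of unity this group is trivial (equivalently, one builds the splitting one root subgroup at a time using the Steinberg/Matsumoto relations from the pinning, as recalled around equation~\eqref{bcommrel}). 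Call one such splitting $s_0$.

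\textbf{Existence of a $\mathbf{T}(F)$-equivariant splitting.} Given $s_0$, conjugation by $\mathbf{T}(F)$ produces for each $t\in\mathbf{T}(F)$ another splitting $n\mapsto \widetilde{t}\, s_0(t^{-1}nt)\,\widetilde{t}^{\,-1}$ of $\mathbf{N}^-(F)$, where $\widetilde{t}$ is any lift of $t$; this is independent of the lift because $\mu_n$ is central. Two splittings of $\mathbf{N}^-(F)$ differ by a homomorphism $\mathbf{N}^-(F)\to\mu_n$, i.e.\ by an element of $\Hom(\mathbf{N}^-(F),\mu_n)=\Hom(\mathbf{N}^-(F)^{\mathrm{ab}},\mu_n)$. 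So the failure of $s_0$ to be $\mathbf{T}(F)$-equivariant is measured by a cocycle $c\colon \mathbf{T}(F)\to \Hom(\mathbf{N}^-(F)^{\mathrm{ab}},\mu_n)$, and adjusting $s_0$ by a function $\mathbf{N}^-(F)\to\mu_n$ amounts to modifying $c$ by a coboundary. Thus a $\mathbf{T}(F)$-equivariant splitting exists iff the class $[c]\in H^1\bigl(\mathbf{T}(F),\Hom(\mathbf{N}^-(F)^{\mathrm{ab}},\mu_n)\bigr)$ vanishes, and the content of Deligne's argument (as written up in \cite{MWbook}) is that it does; concretely one exhibits the equivariant section directly on each simple negative root subgroup $\mathbf{N}_{-\alpha_i^\vee}$ via the pinning, where the $\mathbf{T}$-action is through the character $-\alpha_i^\vee$, and then checks compatibility across the root subgroups using the Chevalley commutator relations.

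\textbf{Uniqueness.} Two $\mathbf{T}(F)$-equivariant splittings differ by a $\mathbf{T}(F)$-\emph{invariant} homomorphism $\phi\colon\mathbf{N}^-(F)\to\mu_n$. Restricting $\phi$ to a simple negative root subgroup $\mathbf{N}_{-\alpha_i^\vee}(F)\cong F$, the $\mathbf{T}(F)$-invariance forces $\phi(x_{-\alpha_i^\vee}(u))=\phi\bigl(x_{-\alpha_i^\vee}(\alpha_i^\vee(t)u)\bigr)$ for all $t\in\mathbf{T}(F)$. Since $\alpha_i^\vee$ is a nontrivial character on the split torus, its image $\alpha_i^\vee(\mathbf{T}(F))$ is all of $F^\times$, so $\phi$ is constant on $F^\times\cdot u$ for every nonzero $u$; being a homomorphism from $(F,+)$ to a finite group $\mu_n$, $\phi$ then annihilates the subgroup generated by $F^\times u$, which is all of $F$, so $\phi$ is trivial on each $\mathbf{N}_{-\alpha_i^\vee}$. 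As these subgroups generate $\mathbf{N}^-$, we get $\phi\equiv 1$, hence uniqueness.

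\textbf{Main obstacle.} The only genuinely non-formal point is the vanishing of the existence obstruction $[c]$, i.e.\ constructing the $\mathbf{T}(F)$-equivariant section globally rather than just on each root subgroup; this is exactly where one leans on the Brylinski--Deligne structure of the cover and on \cite[Appendix~1]{MWbook}. Everything else---existence of the bare splitting of $\mathbf{N}^-$, and uniqueness---is the elementary argument sketched above, using only that $\mathbf{N}^-$ is unipotent with abelianization a product of one-dimensional root subgroups on which $\mathbf{T}$ acts by nontrivial characters.
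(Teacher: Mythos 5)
The paper itself offers no proof of this proposition: it is quoted verbatim from Proposition~(b) of Appendix~1 of \cite{MWbook}, attributed to Deligne, with only the remark that the global argument there adapts to the local case. So your decision to defer the essential content (existence of the equivariant splitting) to that reference is exactly what the paper does, and your obstruction-theoretic framing of what the reference supplies is a reasonable gloss rather than a competing proof.

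Two remarks on the material you added. First, your uniqueness argument contains a false step: the image $\alpha_i^\vee(\mathbf{T}(F))$ of a root character need not be all of $F^\times$; for $\mathbf{SL}_2$ the root is twice a generator of the character lattice and the image is $(F^\times)^2$. The conclusion survives — for any $c\neq 1$ in the image, invariance plus additivity give $\phi\bigl((c-1)u\bigr)=\phi(cu)\phi(u)^{-1}=1$, and $(c-1)F=F$ — but as written the step is wrong. Second, both the uniqueness detour and your $H^1$-obstruction for equivariance are unnecessary in this setting: each root subgroup of $\mathbf{N}^-(F)$ is a copy of $(F,+)$, which is divisible, so $\Hom\bigl(\mathbf{N}^-(F),\mu_n\bigr)=0$. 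Consequently a splitting of the cover over $\mathbf{N}^-(F)$, once it exists, is the \emph{only} splitting; conjugating it by any lift of $t\in\mathbf{T}(F)$ yields another splitting, hence the same one, so equivariance is automatic, and the coefficient group of your $H^1$ vanishes identically. The only genuine content is the existence of some splitting over the unipotent subgroup (for which "enough roots of unity" is not the relevant hypothesis — divisibility in characteristic zero is), and that, like the paper, you correctly leave to the cited literature.
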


Let $\psi$ be a non-degenerate character of $\mathbf{N}^-(F)$ viewed as a
subgroup of $\Gn$. We assume that if $\alpha^\vee$ is a simple root, then the
character $\psi\circ x_{-\alpha^\vee}$ of $F$ is trivial on $\mathfrak{o}$ but
no larger fractional ideal.
The space of complex valued Whittaker functionals
\begin{equation}\label{eq:Wz}
\W_\z :=\{ f : I(\z) \to \C, f(nv) = \psi(n) f(v) \text{ for all } n\in \mathbf{N}^-(F), v \in I(\z)\}
\end{equation}
has a natural basis $\Omega^\mathbf{z}_{\mu}$ indexed by coset representatives of the form $\varpi^\mu$ for $\Tn/H$.
We will now define this basis explicitly. 

We begin by considering the map $\Lambda\to i(\chi)$
defined by
\begin{equation}
\label{ichibasis}
\lambda\mapsto \z^{-\lambda}\varpi^\lambda\cdot v_0.
\end{equation}
If $\lambda\in\Lambda^{(n)}$ then $\varpi^\lambda\in H$
and so by Lemma~\ref{lemma:vo} this map is constant on
the cosets of $\Lambda^{(n)}$. 
The image of our set of representatives $\Gamma$ of $\Lambda / \Lambda^{(n)}$ under the map~\eqref{ichibasis} is a basis of~$i(\chi)$.
Let $\mathcal{L}_\gamma=\mathcal{L}_\gamma^\z$ enumerated by $\gamma \in \Gamma$ be the
dual basis of $i(\chi)^\ast$, so
\begin{equation}
  \label{eq:functional_norm}
  \mathcal{L}^\z_\gamma(\varpi^{\lambda}\cdot v_0) =
  \begin{cases*}
    \z^\lambda & if $\lambda - \gamma\in\Lambda^{(n)}$,\\
    0 & otherwise.
  \end{cases*}
\end{equation}
Note that $\mathcal{L}^\mathbf{z}_\gamma$ does not depend on the choice of
coset representative $\gamma$.
Thus we may enumerate the basis by $\mathcal{L}^\mathbf{z}_\mu$ with any $\mu \in \Lambda / \Lambda^{(n)}$.

Let $\mathbf{W}^\chi : I(\chi) \to i(\chi)$ be the $i(\chi)$-valued Whittaker function
\begin{equation}
  \label{eq:W-chi}
  \mathbf{W}^\chi(f) = \int_{\mathbf{N}^-(F)} f(n)\psi(n)^{-1} \, dn \, .
\end{equation}
We then define the basis $\{\Omega^\z_\mu \}$ of $\W_\z$ as
\begin{equation}\label{mWhittakerdefinition} 
  \Omega^\z_\mu := \mathcal{L}_\mu^\z \circ \mathbf{W}^\chi, \qquad \mu \in \Lambda / \Lambda^{(n)}.
\end{equation}

The metaplectic Iwahori-Whittaker functions we consider are with respect to the contragredient representation $I(\mathbf{z}^{-1})$ and defined as
\begin{equation}
  \label{eq:phi}
  \phi_{\mu, w}(\z; g) := \delta^{1/2}(g) \Omega_\mu^{\z^{-1}}\bigl(\pi(g) \Phi_w^{\z^{-1}}\bigr) \qquad g \in \widetilde G, \quad \mu \in \Lambda/\Lambda^{(n)}.
\end{equation}
\begin{remark}
  There are different normalizations of the Whittaker function in the literature.
  We use the same normalization for the Whittaker function as
  in~\cite{BBBF}.
  However, the normalization for $\mathcal{L}_\mu$ is different
  in~\cite{BBBF}, which compensates for the extra factor in equation~(48) of
  the same paper.
  The normalization used in both~\cite{BBB} and~\cite{McNamaraCS} is different from ours.
\end{remark}

These Whittaker functions are invariant with respect to $J$ acting on the right and are $(\mathbf{N}^-(F),\psi)$-invariant on the left. They are also genuine.
Since $\Gn = \sqcup_{w\in W} \Bn w J$ and $\Bn = \mathbf{N}(F)\Tn$ we then have that $\phi_{\mu,w}(\mathbf{z}; g)$ is determined by its values on $g = \varpi^{-\lambda} w'$ where $\lambda \in \Lambda$ and $w' \in W$.

Our main goal in this section is the determination of these metaplectic Iwahori-Whittaker functions $\phi_{\mu, w}(\z; g)$ on all such elements $g = \varpi^{-\lambda} w'$ with $\lambda \in \Lambda$ and $w' \in W$.  In the next subsection, we provide this answer in terms of recursively defined ``metaplectic Demazure operators'' --- certain divided difference operators made with data from our metaplectic cover. In the remainder of this subsection, we prove two preliminary results on the vanishing of these functions at certain elements of the above form and evaluate $\phi_{\mu, w}$ at such $g$ in the special case where $w' = w$. This latter result may be viewed as the base case for the recursive relations to follow.

The proofs of these initial results are similar to the special case where the cover is trivial, that is, the linear algebraic group case. The proofs in this special case may be found in~\cite{BBBGIwahori}, but require the section on $\mathbf{N}^{-}(F)$ to be $\mathbf{T}(F)$-equivariant as in Proposition~\ref{prop:deligne}.

\begin{lemma}
  \label{lem:almost-dominant}
  Let $\lambda \in \Lambda$ and $w' \in W$.
  Then $\phi_{\mu,w}(\mathbf{z}; \varpi^{-\lambda} w') = 0$ unless $\lambda$
  is $w'$-almost dominant (as in Definition~\ref{def:almost_dominant}).
\end{lemma}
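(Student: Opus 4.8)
The plan is to reduce the statement to a support computation for the intertwining integral defining the Whittaker functional, exactly as in the non-metaplectic case of~\cite{BBBGIwahori}, using the $\mathbf{T}(F)$-equivariant section of Proposition~\ref{prop:deligne} to handle the cover. First I would unwind the definition~\eqref{eq:phi}: since $\phi_{\mu,w}(\mathbf{z};g) = \delta^{1/2}(g)\,\mathcal{L}_\mu^{\mathbf{z}^{-1}}\bigl(\mathbf{W}^{\chi}(\pi(g)\Phi_w^{\mathbf{z}^{-1}})\bigr)$ and $\mathcal{L}_\mu$ is a fixed linear functional, it suffices to show that the $i(\chi)$-valued integral $\mathbf{W}^{\chi}\bigl(\pi(\varpi^{-\lambda}w')\Phi_w^{\mathbf{z}^{-1}}\bigr) = \int_{\mathbf{N}^-(F)} \Phi_w^{\mathbf{z}^{-1}}\bigl(s_F(n)\,\varpi^{-\lambda}w'\bigr)\psi(n)^{-1}\,dn$ vanishes unless $\lambda$ is $w'$-almost dominant. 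The key is to understand for which $n \in \mathbf{N}^-(F)$ the argument $s_F(n)\varpi^{-\lambda}w'$ lies in $\widetilde{B}\,w\,J$, since $\Phi_w^{\mathbf{z}^{-1}}$ is supported on that cell by~\eqref{def:Iwahoribasis}.

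The main steps, in order: (i) Using $\mathbf{T}(F)$-equivariance of $s_F$, write $s_F(n)\varpi^{-\lambda} = \varpi^{-\lambda}\, s_F(\varpi^{\lambda} n \varpi^{-\lambda})$, so that conjugating $n$ by $\varpi^{-\lambda}$ only rescales the argument of $s_F$; this is precisely where Proposition~\ref{prop:deligne} is needed, and it lets the whole analysis take place downstairs in $\mathbf{G}(F)$ as in the linear case. (ii) Decompose $\mathbf{N}^-(F) = \prod_{\alpha^\vee \in \Delta_-^\vee}\mathbf{N}_{\alpha^\vee}$ and examine, root subgroup by root subgroup (or more efficiently via the Iwasawa/Bruhat decomposition of $\mathbf{N}^-(F)\varpi^{-\lambda}w'$), the condition that $n \varpi^{-\lambda} w'$ meet the open cell $\mathbf{B}(F)\,w\,J$ under reduction mod $\varpi$. (iii) Show that if $\lambda$ fails to be $w'$-almost dominant, i.e. there is a simple root $\alpha_i$ with $(w')^{-1}\alpha_i \in \Delta^+$ but $\langle \alpha_i^\vee,\lambda\rangle < 0$, or $(w')^{-1}\alpha_i \in \Delta^-$ but $\langle \alpha_i^\vee,\lambda\rangle < -1$, then one can find a one-parameter subgroup $x_{-\alpha^\vee}(u)$ along which the integrand, as a function of $u$, is a nonzero constant multiple of $\psi(x_{-\alpha^\vee}(u))^{-1}$ over a range on which $\psi\circ x_{-\alpha^\vee}$ is a nontrivial additive character — forcing the integral to vanish by integrating out that variable. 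This is the standard ``translate by a unipotent, pick up a nontrivial character, integrate to zero'' argument, and the $w'$-almost dominance inequalities in Definition~\ref{def:almost_dominant} are exactly the thresholds at which $x_{-\alpha_i^\vee}(\mathfrak{o})$ (versus a larger fractional ideal) is forced into $J$ versus not, taking into account the shift by $w'$ and the normalization of $\psi$ (trivial on $\mathfrak{o}$ but no larger ideal along simple roots).

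The hard part will be bookkeeping the interaction of three things simultaneously: the conjugation by $\varpi^{-\lambda}$ (which scales $x_{-\alpha^\vee}(u)$ to $x_{-\alpha^\vee}(\varpi^{-\langle\alpha^\vee,\lambda\rangle}u)$, noting the sign conventions since $\Delta^\vee$ is the root system of $\mathbf{G}$ and $\Delta$ is dual), the twist by the Weyl element $w'$ (which relabels which roots are ``positive'' from the point of view of the cell $\widetilde{B}\,w\,J$), and the metaplectic cocycle (which is handled cleanly by $s_F$ being a genuine homomorphism on $\mathbf{N}^-(F)$, so that $\psi$ really is an honest character and the cocycle contributes nothing to the unipotent integration). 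Fortunately, as the text remarks, the argument is formally identical to Proposition~7.something of~\cite{BBBGIwahori} once Proposition~\ref{prop:deligne} is in hand, so I would reference that and only spell out the one place the cover enters. A clean way to organize the argument is to note that $\Phi_w^{\mathbf{z}^{-1}}(s_F(n)\varpi^{-\lambda}w')\neq 0$ requires $\varpi^{-\lambda}w'$ and $n$ to combine so that, reading the Iwasawa decomposition, the $J$-part lands in the correct Bruhat cell; translating $n \mapsto x_{-\alpha_i^\vee}(u)\,n$ and using left $(\mathbf{N}^-,\psi)$-equivariance together with the support condition yields a functional equation $\int \cdots = \psi(x_{-\alpha_i^\vee}(u))\int\cdots$ whenever $x_{-\alpha_i^\vee}(u)$ stabilizes both the support and the measure, and the failure of $w'$-almost dominance is exactly what makes this hold for a $u$ with $\psi(x_{-\alpha_i^\vee}(u))\neq 1$.
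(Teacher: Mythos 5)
Your proposal is correct in substance, and its engine is the same one the paper uses: pick a simple root $\alpha_i$ violating~\eqref{eq:almost-dominant}, take $u = x_{-\alpha_i}(t)$ with $t \in \varpi^{-1}\mathfrak{o}$ and $\psi(u)\neq 1$, move $u$ across $\varpi^{-\lambda}w'$ (the conjugation scales the argument by $\varpi^{-\langle\alpha_i,\lambda\rangle}$ and $w'$ relabels the root), observe that the failure of $w'$-almost dominance is exactly what forces the resulting element $j = w'^{-1}\varpi^{\lambda}u\varpi^{-\lambda}w' = x_{-w'^{-1}\alpha_i}(\varpi^{-\langle\alpha_i,\lambda\rangle}t)$ into $J$ (argument in $\mathfrak{o}$ when $w'^{-1}\alpha_i\in\Delta^+$, in $\mathfrak{p}$ when $w'^{-1}\alpha_i\in\Delta^-$), and conclude from $\psi(u)\phi = \phi$ that $\phi$ vanishes; Proposition~\ref{prop:deligne} and the splitting over $K$ are what make this manipulation cocycle-free, as you note.

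Where you diverge from the paper is in packaging: you unfold $\mathbf{W}^\chi$ as an integral over $\mathbf{N}^-(F)$, bring in the support cell $\widetilde{B}wJ$ of $\Phi_w^{\mathbf{z}^{-1}}$, propose a root-subgroup/Bruhat-cell analysis of which $n$ contribute, and kill the integral by integrating a nontrivial character. The paper never opens the integral: it works directly with the function $\phi_{\mu,w}(\mathbf{z};\cdot)$ of~\eqref{eq:phi}, using only its left $(\mathbf{N}^-(F),\psi)$-equivariance and right $J$-invariance, so the whole proof is the three-line functional equation $\psi(u)\phi(\varpi^{-\lambda}w') = \phi(u\varpi^{-\lambda}w') = \phi(\varpi^{-\lambda}w'j) = \phi(\varpi^{-\lambda}w')$, following \cite[Lemma~3.5]{BBBGIwahori}. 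Your step (ii) (cell/support analysis) is thus unnecessary for this lemma, and working at the level of the integral obliges you to worry about its validity/convergence, which the invariance-only argument avoids; your closing ``clean way to organize the argument'' paragraph is in effect the paper's proof restated inside the integral, and is the version you should keep. One small point to make explicit if you write this up: when you conjugate by $w'$ the resulting unipotent no longer lies in $\mathbf{N}^-(F)$, so it is the splitting over $K$ (not $s_F$) that guarantees no root of unity appears there; the paper flags both ingredients.
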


\begin{proof}
  The proof follows \cite[Lemma~3.5]{BBBGIwahori} and \cite[Lemma~5.1]{CasselmanShalika}.
  Denote $\phi_{\mu,w}(\mathbf{z}; g)$ by $\phi(g)$. Assume that $\lambda$ is
  not $w'$-almost dominant and let $\alpha_i$ be a simple root which does not
  satisfy~\eqref{eq:almost-dominant}.
  By the construction of the character $\psi$ there exists a $t \in \frac1\varpi \mathfrak{o}$ such $\psi \circ x_{-\alpha_i}(t) \neq 1$.
  Then, since $u := x_{-\alpha_i}(t) \in \mathbf{N}^{-}(F)$,
  \begin{equation}
    \label{eq:almost-dominant-j-invariant}
    \psi(u) \phi(\varpi^{-\lambda} w') = \phi(u \varpi^{-\lambda} w') = \phi(\varpi^{-\lambda} w' j)
  \end{equation}
  where $j = w'^{-1} \varpi^\lambda u \varpi^{-\lambda} w' = x_{-w'^{-1}\alpha_i}(\varpi^{-\langle \alpha_i, \lambda \rangle} t)$. No cocycles appear in this multiplication because of the $\mathbf{T}(F)$-equivariant section on $\mathbf{N}^-(F)$ and the splitting of the cover over $K$.
  We must now analyze two possible cases:
  \begin{itemize}[label=$\boldsymbol{\cdot}$, leftmargin=*]
    \item $w'^{-1}\alpha_i \in \Delta^+$ and by our assumption this means that $\langle \alpha_i, \lambda \rangle < 0$ which implies $\varpi^{-\langle \alpha_i, \lambda \rangle} t \in \mathfrak{o}$,
    \item $w'^{-1}\alpha_i \in \Delta^-$ which by our assumption implies that $\langle \alpha_i, \lambda \rangle < -1$ and $\varpi^{-\langle \alpha_i, \lambda \rangle} t \in \mathfrak{p}$.
  \end{itemize}
  In either case $j \in J$, and since $\phi$ is $J$-invariant, \eqref{eq:almost-dominant-j-invariant} gives that $\psi(u) \phi(\varpi^{-\lambda} w') = \phi(\varpi^{-\lambda} w')$ which must therefore vanish.
\end{proof}

\begin{proposition} \label{prop:rep-base-case} 
  Let $\mu \in \Lambda / \Lambda^{(n)}$, $w \in W$ and $\lambda \in \Lambda$ a $w$-almost dominant weight. Then
\begin{equation}
  \phi_{\mu,w}(\z; \varpi^{-\lambda} w) = 
  \begin{cases*}
    v^{\ell(w)} \mathbf{z}^{\lambda} & if $\mu + \lambda \in \Lambda^{(n)}$ \\
    0 & otherwise,
  \end{cases*}
\end{equation}
where $\ell(w)$ denotes the length of a reduced expression for $w$.
\end{proposition}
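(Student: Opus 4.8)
The plan is to compute the Whittaker function directly from its definition by using the $(\mathbf{N}^-(F),\psi)$-equivariance to move the integral over $\mathbf{N}^-(F)$ to a compact region and the $J$-invariance to conclude that only the ``trivial'' contribution survives. Concretely, $\phi_{\mu,w}(\z;\varpi^{-\lambda}w) = \delta^{1/2}(\varpi^{-\lambda}w)\,\mathcal{L}^{\z^{-1}}_\mu\bigl(\mathbf{W}^{\chi_{\z^{-1}}}(\pi(\varpi^{-\lambda}w)\Phi^{\z^{-1}}_w)\bigr)$, so I must evaluate $\mathbf{W}^{\chi}(\pi(\varpi^{-\lambda}w)\Phi_w)$, an integral $\int_{\mathbf{N}^-(F)} \Phi_w(n\varpi^{-\lambda}w)\psi(n)^{-1}\,dn$ with values in $i(\chi)$. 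Recall $\Phi_w$ is supported on $\Bn w J$ and equals $\Phi_\circ$ on the $\Bn$-coordinate there. The first step is the standard Iwahori–Bruhat bookkeeping: for $n\in\mathbf{N}^-(F)$, the element $n\varpi^{-\lambda}w$ lies in $\Bn wJ$ only for $n$ in a certain compact subgroup $\mathbf{N}^-(F)\cap \varpi^{-\lambda}w J w^{-1}\varpi^{\lambda}$, and on that region $\Phi_w(n\varpi^{-\lambda}w) = \Phi_\circ(\varpi^{-\lambda}w)$ is \emph{independent of $n$}. This is exactly where the hypothesis that $\lambda$ is $w$-almost dominant is used — it guarantees the relevant conjugate of the Iwahori contains $\mathbf{N}^-(\mathfrak o)$ with the correct twist so that the support region is precisely $\mathbf{N}^-(\mathfrak o)$ (or its image under the section), paralleling Lemma~\ref{lem:almost-dominant}.

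**The second step** is to observe that on that support region $\psi$ restricts trivially except on the simple-root directions, and that the measure of $\mathbf{N}^-(\mathfrak o)$ together with the number of cosets gives a factor depending only on $\ell(w)$. More precisely, following the non-metaplectic computation in \cite[Section~3]{BBBGIwahori} and the classical Casselman–Shalika-type argument, the integral collapses to $\operatorname{vol}(\mathbf{N}^-(\mathfrak o))\cdot \delta^{-1/2}(\varpi^{-\lambda}w)\,v^{\ell(w)}\cdot(\varpi^{-\lambda}w)\cdot v_0$ up to the $i(\chi)$-valued bookkeeping; here the power $v^{\ell(w)}$ arises because the relevant root subgroups that contribute are those $\alpha^\vee>0$ with $w^{-1}\alpha^\vee<0$, of which there are $\ell(w)$, each contributing a factor of $v$ (the inverse residue field size). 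The Deligne section (Proposition~\ref{prop:deligne}) ensures no cocycles intervene when conjugating $\mathbf{N}^-(F)$ past $\varpi^{-\lambda}w$, exactly as in the proof of Lemma~\ref{lem:almost-dominant}; this is the key point making the metaplectic case behave like the linear case.

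**The third step** is the metaplectic refinement: evaluate $\mathcal{L}^{\z^{-1}}_\mu$ on $(\varpi^{-\lambda}w)\cdot v_0 \in i(\chi_{\z^{-1}})$. We need $w\cdot v_0$; since we chose Weyl representatives in $K$ and $v_0$ is $T(\mathfrak o)$-fixed, $w\cdot v_0$ is again (up to the genuine twist, which is trivial on $K$) the $T(\mathfrak o)$-fixed vector $v_0$. Then $\varpi^{-\lambda}\cdot v_0$ is a basis vector of the form appearing in \eqref{ichibasis}, and by the normalization \eqref{eq:functional_norm} we get $\mathcal{L}^{\z^{-1}}_\mu(\varpi^{-\lambda}\cdot v_0) = (\z^{-1})^{-\lambda} = \z^{\lambda}$ if $-\lambda - \mu \in \Lambda^{(n)}$ and $0$ otherwise; note $-\lambda - \mu\in\Lambda^{(n)}$ iff $\mu+\lambda\in\Lambda^{(n)}$. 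Combining with the $\delta^{1/2}$ prefactor (which cancels the $\delta^{-1/2}$ from the Iwasawa evaluation) and absorbing the volume normalization into the convention for $dn$, one obtains exactly $v^{\ell(w)}\z^{\lambda}$ when $\mu+\lambda\in\Lambda^{(n)}$ and $0$ otherwise.

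**The main obstacle** I anticipate is the first step: carefully verifying that the support of $n\mapsto \Phi_w(n\varpi^{-\lambda}w)$ is exactly $\mathbf{N}^-(\mathfrak o)$ and that $\Phi_w$ is constant there, including the correct interaction of the $w$-almost-dominance condition with the negative Iwahori $J=J^-$ (as opposed to $J^+$), and tracking that no spurious cocycle or Hilbert-symbol factors appear when the torus element $\varpi^{-\lambda}$ is pushed through. Once the support and constancy are pinned down, the remaining computation is the same Jacquet-integral bookkeeping as in the linear case of \cite{BBBGIwahori}, with the metaplectic content entirely localized in the final evaluation of $\mathcal{L}_\mu$ via Lemma~\ref{lemma:vo} and \eqref{eq:functional_norm}.
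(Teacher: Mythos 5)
Your overall skeleton is the same as the paper's (evaluate the Jacquet integral directly, cut it down by support considerations coming from the Iwahori cell $\Bn w J$, use the Deligne section of Proposition~\ref{prop:deligne} to avoid cocycles, get $v^{\ell(w)}$ from a volume, and finish with the evaluation of $\mathcal{L}^{\z^{-1}}_\mu$ via \eqref{eq:functional_norm}; your step three and the equivalence $-\lambda-\mu\in\Lambda^{(n)}\Leftrightarrow \mu+\lambda\in\Lambda^{(n)}$ are exactly right). However, there is a genuine error in your first step, which you yourself flag as the main obstacle: the support of $n\mapsto \Phi_w^{\z^{-1}}(n\varpi^{-\lambda}w)$ is \emph{not} ``precisely $\mathbf{N}^-(\mathfrak o)$.'' One has $\mathbf{N}^-(F)\cap wJw^{-1}=N_w^-$ as in \eqref{nmindef} (via Iwahori factorization of $wJw^{-1}$), so the support of the unsubstituted integrand is the $\lambda$-dependent set $\varpi^{-\lambda}N_w^-\varpi^{\lambda}=\prod_{\alpha\in\Delta^+}x_{-\alpha}\bigl(\varpi^{\langle\lambda,\alpha\rangle}\mathfrak o\ \text{or}\ \varpi^{\langle\lambda,\alpha\rangle}\mathfrak p\bigr)$. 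Already for $\GL_2$, $w=1$, $\lambda=(m,0)$ with $m>0$ this is $x_{-\alpha}(\varpi^m\mathfrak o)\subsetneq \mathbf{N}^-(\mathfrak o)$, and its volume is $\delta(\varpi^{-\lambda})^{-1}v^{\ell(w)}$, not $v^{\ell(w)}$. What $w$-almost dominance actually buys is the \emph{containment} of this support in the kernel of $\psi$ (its simple-root coordinates land in $\mathfrak o$), not the equality with $\mathbf{N}^-(\mathfrak o)$; you have the inclusion running in the wrong direction.

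This matters for your second step: the $\lambda$-dependence of the support volume is exactly what cancels the two factors $\delta^{1/2}(\varpi^{-\lambda})$ (one from the definition of $\phi_{\mu,w}$, one from evaluating $\Phi_\circ^{\z^{-1}}(\varpi^{-\lambda}w)=\delta^{1/2}(\varpi^{-\lambda})\,\varpi^{-\lambda}\cdot v_0$ — note this is $\delta^{+1/2}$, not the $\delta^{-1/2}$ you attribute to ``the Iwasawa evaluation''), so it cannot be ``absorbed into the convention for $dn$'': it varies with $\lambda$ and is precisely what leaves the clean answer $v^{\ell(w)}\z^{\lambda}$. Your quoted collapsed formula happens to be numerically correct, but your stated reasons (constant support $\mathbf{N}^-(\mathfrak o)$, $\delta^{-1/2}$ from Iwasawa, volume absorbed into the measure) do not produce it. The paper sidesteps this bookkeeping hazard by first substituting $n\mapsto\varpi^{-\lambda}n\varpi^{\lambda}$ (legitimate by the $\mathbf{T}(F)$-equivariant section), which produces the Jacobian $\delta(\varpi^{\lambda})$ explicitly and reduces the support to the $\lambda$-independent set $N_w^-$ of volume $v^{\ell(w)}$, with $w$-almost dominance then used only to check $\psi(\varpi^{-\lambda}n\varpi^{\lambda})=1$ on $N_w^-$. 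To repair your write-up, either adopt that substitution or redo your support and volume computation with $\varpi^{-\lambda}N_w^-\varpi^{\lambda}$ in place of $\mathbf{N}^-(\mathfrak o)$.
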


\begin{proof}
  This proof proceeds similarly to Proposition 3.6 in \cite{BBBGIwahori}, where the cover is trivial.
  By definition
  \begin{equation*}
    \begin{split}
      \phi_{\mu,w} (\mathbf{z}; \varpi^{- \lambda} w) &= 
      \delta^{1/2}(\varpi^{-\lambda}) \Omega^{\mathbf{z}^{-1}}_{\mu}\bigl(\pi(\varpi^{-\lambda}w)\Phi^{\mathbf{z}^{-1}}_w\bigr) \\
      &=
      \delta^{1/2}(\varpi^{-\lambda}) \mathcal{L}^{\mathbf{z}^{-1}}_{\mu} \left(
          \int_{\mathbf{N}^-(F)} \Phi^{\mathbf{z}^{-1}}_w(n \varpi^{-\lambda} w) \psi(n)^{-1} \, dn
        \right).
    \end{split}
  \end{equation*}
   We may apply the variable change $n \mapsto \varpi^{- \lambda} n
  \varpi^{\lambda}$ to the above integral without introducing cocycles, according to the $\mathbf{T}(F)$-equivariant section on $\mathbf{N}^-(F)$ from Proposition~\ref{prop:deligne}. This change of variables does produce a change in measure by $\delta
  (\varpi^{\lambda})$. Since $\Phi_w^{\mathbf{z}^{- 1}} (\varpi^{-
  \lambda} g) = \delta^{1 / 2} (\varpi^{- \lambda}) \mathbf{z}^{\lambda}
  \Phi_w^{\mathbf{z}^{- 1}} (g)$ we may write
  \begin{equation}
  \label{eq:base-case-integral}
    \delta^{1/2}(\varpi^{-\lambda}) \int_{\mathbf{N}^-(F)} \Phi^{\mathbf{z}^{-1}}_w(n \varpi^{-\lambda} w) \psi(n)^{-1} \, dn = 
    \mathbf{z}^{\lambda} \int_{\mathbf{N}^- (F)} \Phi^{\mathbf{z}^{- 1}}_w (n w)
     \psi (\varpi^{- \lambda} n \varpi^{\lambda})^{- 1} d n.
  \end{equation}
  The proof now proceeds as in Proposition~3.6 of~\cite{BBBGIwahori} by use of the Iwahori factorization for $J_w = wJw^{-1}$,
  which allows one to conclude that the integrand is supported on $N_w^-$ where
  \begin{equation}
    \label{nmindef} N_w^- = \prod_{\alpha \in \Delta^+} \left\{
    \begin{array}{ll}
      x_{- \alpha} (\mathfrak{o}) & \text{if $w^{- 1} \alpha \in
      \Delta^+$\;,}\\
      x_{- \alpha} (\mathfrak{p}) & \text{if $w^{- 1} \alpha \in \Delta^-$\;.}
    \end{array} \right.
  \end{equation}
  To complete the proof, we show that the value of the integrand is $\Phi^{\mathbf{z}^{-1}}_\circ(1) = v_0$ so \eqref{eq:base-case-integral} is just $\mathbf{z}^{\lambda}v_0$ times the volume of $N_w^-$. 
  From \eqref{def:Iwahoribasis} it follows that $\Phi_w^{\mathbf{z}^{- 1}} (n w) = \Phi^{\mathbf{z}^{-1}}_\circ(1)$ since the argument of the former is in $w J$. 
  It remains to show that $\varpi^{- \lambda} n \varpi^{\lambda}$ is in the kernel of $\psi$. 
  For this it is sufficient to show that if $\alpha = \alpha_i$ is a
  simple positive root then
  \[ \varpi^{- \lambda} x_{- \alpha_i} (t) \varpi^{\lambda} \in \mathbf{N}^-
     (\mathfrak{o}) \]
  where, using \eqref{nmindef}, we may assume that $t \in \mathfrak{o}$ if $w^{-
  1} (\alpha_i) \in \Delta^+$ and $t \in \mathfrak{p}$ otherwise. Now
  \[ \varpi^{- \lambda} x_{- \alpha_i} (t) \varpi^{\lambda} = x_{- \alpha_i}
     (\varpi^{\langle \lambda, \alpha_i \rangle} t), \]
  and because $\lambda$ is $w$-almost dominant, $\varpi^{\langle \lambda,
  \alpha_i \rangle} t$ is in $\mathfrak{o}$. Again no cocycles appear from our choice of splitting.
  The volume of $N_w^{-}$ is $v^{\ell(w)}$.
  From the definition (\ref{eq:functional_norm}) we then have
  \begin{equation*}
    \phi_{\mu,w} (\mathbf{z}; \varpi^{- \lambda} w) = v^{\ell(w)} \mathcal{L}^{\mathbf{z}^{-1}}_{\mu} ( \mathbf{z}^\lambda v_0 ) = v^{\ell(w)} \mathcal{L}^{\mathbf{z}^{-1}}_{\mu} (\varpi^{-\lambda} \cdot v_0 ) =
    \begin{cases*}
      v^{\ell(w)} \mathbf{z}^{\lambda} & if $\mu + \lambda \in \Lambda^{(n)}$ \\
      0 & otherwise.
    \end{cases*} \qedhere
  \end{equation*}
\end{proof}

\subsection{Intertwining operators and recurrence relations}
\label{sec:recursionrelations}

Define the intertwining operator
$\mathcal{A}_w^\z : I(\z) \to I(w \z)$ as
\begin{equation}
  \label{eq:intertwiner}
  \mathcal{A}_w^\z (f) (g) := \int_{\mathbf{N}(F) \cap w\mathbf{N}^-(F)w^{-1}} f(w^{-1} n g)dn
\end{equation}
when the integral is absolutely convergent, and by analytic continuation
otherwise. Because $\chi_\z$ is unramified, this definition does not depend on
the representative of the Weyl group element.

The intertwining operator can be restricted to the space of Iwahori fixed
vectors, and we can explicitly compute the action of this operator. 

\begin{lemma}
  \label{lem:intertwiner-Phi}
Let $s_i\in W$ be a simple reflection and $w\in W$. Then
\begin{equation}\label{intertwiningactionIwahori}
\mathcal{A}_{s_i}^\z (\Phi^\z_{w}) =      \begin{cases}
       (1-c_{\alpha_i}(s_i \z) )\Phi_w^{s_i \z} + \Phi^{s_i \z}_{s_i w} &\quad\text{if } \ell(s_i w) > \ell(w),\\
       (v - c_{\alpha_i}(s_i \z)) \Phi^{s_i \z}_w+ v \Phi^{s_i \z}_{s_i w} &\quad\text{if } \ell(s_i w) < \ell(w), \\
     \end{cases}
\end{equation}
where
\begin{equation}\label{calphai}
  c_{\alpha_i}(\z) = \frac{1-v \z^{n_i\alpha_i}}{1-\z^{n_i\alpha_i}}.
\end{equation}
Moreover
\begin{equation}
\label{asasconst}
\mathcal{A}_{s_i}^{s_i\z}\mathcal{A}_{s_i}^{\z}={c_{\alpha_i}(\z)c_{-\alpha_i}(\z)}.
\end{equation}
\end{lemma}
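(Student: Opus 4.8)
The plan is to reduce the whole computation to a rank‑one integral. For a simple reflection $\mathbf{N}(F)\cap s_i\mathbf{N}^-(F)s_i^{-1}$ is the one‑parameter root subgroup $\{x_{\alpha_i^\vee}(u):u\in F\}$, so \eqref{eq:intertwiner} becomes the single integral $\mathcal{A}_{s_i}^\z(f)(g)=\int_F f\!\left(s_i^{-1}x_{\alpha_i^\vee}(u)g\right)du$, and all the group theory takes place inside the pinned $\mathbf{SL}_2$‑subgroup $\mathbf{G}_{\alpha_i^\vee}$; convergence and absence of poles are guaranteed by the hypotheses on $\chi_\z$ in Remark~\ref{rem:zassumptions}. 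Since $\mathcal{A}_{s_i}^\z$ is a $\Gn$‑morphism it sends the $J$‑fixed vector $\Phi_w^\z$ to a $J$‑fixed vector, hence $\mathcal{A}_{s_i}^\z(\Phi_w^\z)=\sum_{w'\in W}a_{w'}\,\Phi_{w'}^{s_i\z}$, and the task is to identify the $a_{w'}$. Evaluating at a Weyl representative $w'\in K$: because $T(\mathfrak{o})\subset J$ and $w'$ normalizes $T(\mathfrak{o})$ without cocycle (the cover splits over $K$), the value $\mathcal{A}_{s_i}^\z(\Phi_w^\z)(w')$ is a $T(\mathfrak{o})$‑fixed vector of $i(s_i\chi)$, hence a scalar multiple $a_{w'}v_0$ by Lemma~\ref{lemma:vo}.

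I would then compute $a_{w'}v_0=\int_F \Phi_w^\z\!\left(s_i^{-1}x_{\alpha_i^\vee}(u)w'\right)du$ by slicing $F$ into $\mathfrak{p}$, $\mathfrak{o}^\times$, and the shells $\varpi^{-k}\mathfrak{o}^\times$ ($k\geqslant1$), and inserting the two rank‑one identities $s_i^{-1}x_{\alpha_i^\vee}(u)=x_{-\alpha_i^\vee}(-u)\,\tilde s_i$ and $s_i^{-1}x_{\alpha_i^\vee}(u)=x_{\alpha_i^\vee}(-u^{-1})\,h(u^{-1})\,x_{-\alpha_i^\vee}(u^{-1})$, where $h\colon F^\times\to\mathbf{T}(F)$ is the image of the $\mathbf{SL}_2$ diagonal torus (so $h(\varpi)=\varpi^{\alpha_i}$, $\alpha_i$ the coroot of $\alpha_i^\vee$). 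On $\mathfrak{o}$ one uses the first identity: conjugating $x_{-\alpha_i^\vee}(-u)\in K$ past the Weyl part turns it into $x_{w'^{-1}\alpha_i^\vee}(\pm u)$, which lies in $J$ unless $w'^{-1}\alpha_i^\vee\in\Delta^\vee_+$ and $u\in\mathfrak{o}^\times$ — and in that single sub‑case one more rank‑one Bruhat factorization produces an extra reflection $s_{w'^{-1}\alpha_i^\vee}$, flipping the contributing Weyl index back from $s_iw'$ to $w'$; one checks the integrand is $0$ or $v_0$ on all of $\mathfrak{o}$, contributing $v\,v_0$ from $\mathfrak{p}$ and $(1-v)v_0$ from $\mathfrak{o}^\times$, routed to $a_{w'}$ or $a_{s_iw'}$ according to these sign conditions. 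On the shell $\varpi^{-k}\mathfrak{o}^\times$ one uses the second identity with $u=\varpi^{-k}\epsilon$: $x_{\alpha_i^\vee}(-u^{-1})\in\mathbf{N}(F)$ drops out on the left, $x_{-\alpha_i^\vee}(u^{-1})\in J$ is absorbed on the right, and $h(u^{-1})=h(\varpi^k)h(\epsilon^{-1})$ contributes $\delta^{1/2}(\varpi^{k\alpha_i})\,\bigl(\varpi^{k\alpha_i}\!\cdot v_0\bigr)$ together with a torus $2$‑cocycle $\sigma\!\left(h(\varpi^k),h(\epsilon^{-1})\right)$. By \eqref{bcommrel} and the structure of the torus cocycle the latter is the tame Hilbert symbol $(\varpi,\epsilon^{-1})_n^{\,kQ(\alpha_i)}$ (the relevant invariant being the quadratic form value $Q(\alpha_i)$), which as a character of $\epsilon\in\mathfrak{o}^\times$ is nontrivial unless $n\mid kQ(\alpha_i)$, i.e.\ unless $n_i\mid k$ in the notation of \eqref{eq:nidef}; integrating over $\epsilon$ annihilates the shell off $n_i\mathbb{Z}$, and on it $\varpi^{k\alpha_i}\!\cdot v_0=\z^{k\alpha_i}v_0$ by Lemma~\ref{lemma:vo}. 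With $\delta^{1/2}(\varpi^{k\alpha_i})=v^{k}$ and $\operatorname{vol}(\varpi^{-k}\mathfrak{o}^\times)=(1-v)v^{-k}$ the shell contributes $(1-v)\z^{k\alpha_i}v_0$, and summing over $k=mn_i$ gives $(1-v)\z^{n_i\alpha_i}/(1-\z^{n_i\alpha_i})$. Combining the two parts, and recalling $\ell(s_iw)>\ell(w)\iff w^{-1}\alpha_i^\vee\in\Delta^\vee_+$, one reads off $a_w$ and $a_{s_iw}$ and checks, using \eqref{calphai}, that they are exactly the coefficients in the two cases of \eqref{intertwiningactionIwahori}, all other $a_{w'}$ vanishing.

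Once \eqref{intertwiningactionIwahori} is in hand, \eqref{asasconst} follows formally: apply $\mathcal{A}_{s_i}^{s_i\z}$ to $\mathcal{A}_{s_i}^\z(\Phi_w^\z)$, using \eqref{intertwiningactionIwahori} again with $s_i\z$ in place of $\z$ (and noting that when $\ell(s_iw)>\ell(w)$ the second application falls in the other case, since $\ell(s_i(s_iw))<\ell(s_iw)$). One obtains a combination of $\Phi_w^\z$ and $\Phi_{s_iw}^\z$ whose off‑diagonal coefficient is $(1-c_{\alpha_i}(s_i\z))+(v-c_{\alpha_i}(\z))$; this vanishes because $c_{\alpha_i}(\z)+c_{\alpha_i}(s_i\z)=1+v$, a one‑line identity from \eqref{calphai} using $(s_i\z)^{\alpha_i}=\z^{-\alpha_i}$, while the diagonal coefficient then collapses, using in addition $c_{\alpha_i}(s_i\z)=c_{-\alpha_i}(\z)$, to $c_{\alpha_i}(\z)c_{-\alpha_i}(\z)$; the case $\ell(s_iw)<\ell(w)$ is symmetric. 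Alternatively \eqref{asasconst} is a rank‑one ``$\mathbf{SL}_2$ functional equation'' that can be proved directly and used as a consistency check.

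The step I expect to be the real obstacle is the metaplectic cocycle bookkeeping on the shells $\varpi^{-k}\mathfrak{o}^\times$: one has to pin down precisely that the $\epsilon$‑integral of the tame‑symbol factor vanishes exactly off $n_i\mathbb{Z}$ — this is what forces the exponent $n_i\alpha_i$, rather than $\alpha_i$, in \eqref{calphai} — together with the finicky sign analysis (the sign of $w'^{-1}\alpha_i^\vee$, the behaviour on $\mathfrak{p}$ versus $\mathfrak{o}^\times$, and the resulting placement of the constants $1$ and $v$) that singles out the two stated cases. Throughout, the $\mathbf{T}(F)$‑equivariant section of Proposition~\ref{prop:deligne} and the splitting of the cover over $K$ keep all other cocycles trivial, exactly as in the proofs of Lemma~\ref{lem:almost-dominant} and Proposition~\ref{prop:rep-base-case}.
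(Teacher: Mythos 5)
Your argument is correct, but it takes a genuinely different route from the paper. The paper disposes of the lemma almost entirely by citation: the length-increasing case of \eqref{intertwiningactionIwahori} is quoted from Lemma~6.3 of~\cite{PatnaikPuskasIwahori} (using $1-c_{\alpha_i}(s_i\z)=\frac{1-v}{1-\z^{n_i\alpha_i}}$), the length-decreasing case is obtained from it by swapping $w$ and $s_iw$ plus a little algebra, and \eqref{asasconst} is quoted from Proposition~4.4 of~\cite{McNamaraCS}. You instead redo the rank-one computation from scratch, Casselman-style: expand in the basis $\Phi^{s_i\z}_{w'}$, evaluate at Weyl representatives, split $F$ into $\mathfrak{o}$ and the shells $\varpi^{-k}\mathfrak{o}^\times$, and let the tame Hilbert-symbol character kill the shells with $n_i\nmid k$. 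Your bookkeeping checks out: the shells contribute only to the coefficient of $\Phi^{s_i\z}_w$ and sum to $(1-v)\z^{n_i\alpha_i}/(1-\z^{n_i\alpha_i})=v-c_{\alpha_i}(s_i\z)$, while the $\mathfrak{o}$-part supplies the extra $(1-v)$ on the diagonal exactly when $w^{-1}\alpha_i^\vee\in\Delta^\vee_+$ and the constants $1$ resp.\ $v$ on $\Phi^{s_i\z}_{s_iw}$, reproducing both cases; and your derivation of \eqref{asasconst} from \eqref{intertwiningactionIwahori} via $c_{\alpha_i}(\z)+c_{\alpha_i}(s_i\z)=1+v$ and $c_{\alpha_i}(s_i\z)=c_{-\alpha_i}(\z)$ is a clean replacement for the citation to McNamara. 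What your approach buys is self-containedness and an explicit explanation of where $n_i$ (hence the exponent $n_i\alpha_i$ in \eqref{calphai}) comes from; what it costs is precisely the cocycle bookkeeping you flag --- one must fix a section inside the pinned $\mathbf{SL}_2$ and verify that the torus cocycle on the shell is the tame symbol to the power $kQ(\alpha_i)$ (the naive commutator formula \eqref{bcommrel} gives $2Q(\alpha_i)$, so the exponent really is a property of the chosen cocycle, as in the Kazhdan--Patterson/McNamara rank-one computations), and one should say explicitly that the geometric series is valid in the region of convergence and extends by the analytic continuation built into \eqref{eq:intertwiner}. These are exactly the details the paper avoids by outsourcing the rank-one input, so your proposal is best viewed as reproving that input rather than as a gap.
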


This is a generalization of
Casselman~\cite[Theorem~3.4]{CasselmanSpherical}. It is worth noting
that
\[1-c_{\alpha_i}(s_i\z)=c_{\alpha_i}(\z)-v,\]
so there are different ways of writing~(\ref{intertwiningactionIwahori}).

\begin{proof}[Proof of the Lemma]
The case $\ell(s_iw)>\ell(w)$ of \eqref{intertwiningactionIwahori}
follows from \cite[Lemma~6.3]{PatnaikPuskasIwahori} using
$1-c_{\alpha_i}(s_i\z)=1-c_{-\alpha_i}(\z)=\frac{1-v}{1-\z^{n_i\alpha_i}}$.
The second case of \eqref{intertwiningactionIwahori} follows from the
first case by interchanging the roles of $w$ and $s_iw$, after
some algebra.
For \eqref{asasconst},
see \cite[Proposition~4.4]{McNamaraCS}. 
\end{proof}

The intertwining integrals induce maps $\W_{\z^{-1}} \to \W_{w\z^{-1}}$ that can be expanded as
\begin{equation}
  \label{eq:Omega-intertwining}
  \Omega^{w\z}_{\mu} \circ \mathcal{A}^{\z}_w = \sum_{\nu \in \Lambda / \Lambda^{(n)}} \tau_{\mu,\nu}(w; \mathbf{z}) \Omega^{\z}_{\nu}.
\end{equation}
where $\tau_{\mu,\nu}$ is called the Kazhdan-Patterson scattering matrix that will be given explicitly later.
We will often apply it for a simple reflection $w = s_i$ and will then drop the Weyl group element from the notation $\tau_{\mu,\nu}(s_i; \z) = \tau_{\mu,\nu}(\z)$.
Applying this twice to the composition
$\Omega^\mathbf{z}_\mu\mathcal{A}_{s_i}^{s_i\z}\mathcal{A}_{s_i}^\z$ and using
(\ref{asasconst}) gives
\begin{equation}
  \label{asastau}
  \sum_{\nu \in \Lambda / \Lambda^{(n)}}\tau_{\mu,\nu}(s_i\z)\tau_{\nu,\lambda}(\z)=
  \left\{\begin{array}{cl}c_{\alpha_i}(\z)c_{-\alpha_i}(\z)&
\text{if $\lambda\equiv\mu$ mod $\Lambda^{(n)}$,}\\
0&\text{otherwise}.\end{array}\right.
\end{equation}

The Whittaker functions we study are with respect to the contragredient representation $I(\mathbf{z}^{-1})$, for which equation~\eqref{intertwiningactionIwahori} becomes
\begin{equation}\label{intertwiningactionIwahoriexplicit}
  \mathcal{A}_{s_i}^{\z^{-1}} (\Phi^{\z^{-1}}_{w}) 
  =
  \frac{1}{1-\z^{-n_i\alpha_i}}   
  \begin{cases*}
    (1-v) \Phi_w^{s_i \z^{-1}} + (1-\z^{-n_i\alpha_i})\Phi^{s_i \z^{-1}}_{s_i w} & if $\ell(s_i w) > \ell(w)$,\\
    (1-v) \z^{-n_i \alpha_i}  \Phi^{s_i \z^{-1}}_w+v(1-\z^{-n_i\alpha_i})\Phi^{s_i \z^{-1}}_{s_i w} & if $\ell(s_i w) < \ell(w)$. \\
  \end{cases*}
\end{equation}

Let $g(a), a \in \mathbb{Z}$ be the Gauss sum as defined in~\cite[Section 13]{McNamaraCS}, although with a different normalization.
If $\tilde{g}(a)$ are the Gauss sums in~\cite{McNamaraCS}, then $g(a)=v\tilde{g}(a)$.
For our purposes, it is important to note that the Gauss sums are periodic modulo $n$ and satisfy 
\begin{equation}
\label{eq:padicgaussproperties}
g(0)=-v, \quad \quad g(a)g(n-a)=v \quad \text{for $a \not\equiv 0 \bmod{n}$}
\end{equation}
which is Assumption~\ref{assumptionga}.

The following result was proved for $\GL_r$ by
Kazhdan and Patterson~\cite[Lemma~I.3.3]{KazhdanPatterson},
applied to the metaplectic Casselman-Shalika formula in
\cite{ChintaOffen}, and generalized by \cite{McNamaraCS} to a much larger family of
covers for reductive groups, including those we work with here.

\begin{proposition}[Kazhdan-Patterson, McNamara]
\label{propositionKP}
  For a simple reflection $s_i$ and $\mu, \nu \in \Lambda / \Lambda^{(n)}$, 
  \begin{equation}
    \label{eq:Omega-intertwining-si}
    \Omega^{s_i\z^{-1}}_{\mu} \circ \mathcal{A}^{\z^{-1}}_{s_i} = \sum_{\nu \in \Lambda / \Lambda^{(n)}} \tau_{\mu,\nu}(\mathbf{z}^{-1}) \Omega^{\z^{-1}}_{\nu}
  \end{equation}
  where the coefficients $\tau_{\mu,\nu}$ can be written as 
\[ \tau_{\mu,\nu}  = \tau^1_{\mu,\nu} + \tau^2_{\mu,\nu} \]
and where $\tau^1$ vanishes unless $\nu \sim \mu \bmod \Lambda^{(n)}$ and $\tau^2$ vanishes unless $\nu \sim s_i(\mu) + \alpha_i \bmod \Lambda^{(n)}$. Moreover,
\begin{equation}
  \label{tau1} 
  \tau^1_{\mu, \mu}(\mathbf{z}^{-1}) = \frac{1 - v}{1- \z^{-n_i \alpha_i
}} \, \z^{ - \left( \left\lfloor -\frac{B(\alpha_i, \mu)}{Q(\alpha_i)} \right\rfloor_{\!n_i} \right) \alpha_i}
\end{equation}
where $\floorn{x}$ denotes the least positive residue of $x \bmod{n}$, and
\begin{equation}
  \label{tau2} 
  \tau^2_{s_i\mu+\alpha_i, \mu}(\mathbf{z}^{-1}) = g\bigl(B(\alpha_i, \mu)-Q(\alpha_i)\bigr) \z^{\alpha_i}.
\end{equation}
\end{proposition}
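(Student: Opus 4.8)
This is \cite[Lemma~I.3.3]{KazhdanPatterson} for $\GL_r$, recast through the metaplectic Casselman--Shalika formula in \cite{ChintaOffen}, and \cite{McNamaraCS} in the generality at hand; we indicate the structure of the argument, which also fixes the normalizations used here. The plan is to reduce \eqref{eq:Omega-intertwining-si} to a rank-one calculation over the subgroup attached to $\alpha_i$ and to evaluate the resulting metaplectic $\mathbf{SL}_2$ Whittaker integral.

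First one would unwind the left-hand side. By \eqref{eq:W-chi}, \eqref{mWhittakerdefinition} and \eqref{eq:intertwiner}, for $f \in I(\z^{-1})$,
\[
\Omega^{s_i\z^{-1}}_{\mu}\bigl(\mathcal{A}^{\z^{-1}}_{s_i}f\bigr)
= \mathcal{L}^{s_i\z^{-1}}_{\mu}\!\left(\,\int_{\mathbf{N}^-(F)}\int_{\mathbf{N}(F)\,\cap\, s_i\mathbf{N}^-(F)s_i^{-1}} f\bigl(s_i^{-1}\,n'\,n\bigr)\,\psi(n)^{-1}\,dn'\,dn\right),
\]
and $\mathbf{N}(F)\cap s_i\mathbf{N}^-(F)s_i^{-1}$ is one-dimensional. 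Following the standard reduction (as for Casselman--Shalika and its metaplectic analogues), one uses the $\mathbf{T}(F)$-equivariant section of Proposition~\ref{prop:deligne} to avoid cocycles off the torus, together with the factorization of $\mathbf{N}^-(F)$ separating the $\alpha_i$-direction, to integrate out the directions transverse to the rank-one subgroup $\mathbf{G}_{\alpha_i}$; what remains is an integral on $p^{-1}\bigl(\mathbf{G}_{\alpha_i}(F)\bigr)$. The metaplectic structure along $\alpha_i$ is governed by $Q(\alpha_i)$, with effective cover degree $n_i = n/\gcd\bigl(n,Q(\alpha_i)\bigr)$; this is why $n_i$, not $n$, appears in \eqref{tau1}--\eqref{tau2}.

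Next one would evaluate this rank-one integral, exactly as in \cite{KazhdanPatterson,McNamaraCS}. Parametrizing the domain and applying the $\mathbf{SL}_2$ Bruhat decomposition, the integral breaks into two pieces. The piece supported where $s_i^{-1}n'n$ lies in $\Bn K$ contributes, after moving $\varpi^{\mu}\cdot v_0$ across with the commutation relation \eqref{bcommrel} and Lemma~\ref{lemma:vo} and summing a geometric series, a single term $\tfrac{1-v}{1-\z^{-n_i\alpha_i}}\,\z^{-(\lfloor -B(\alpha_i,\mu)/Q(\alpha_i)\rfloor_{\!n_i})\alpha_i}\,v_0$; by \eqref{eq:functional_norm} this lives in the coset $\nu\equiv\mu$, so it is $\tau^1_{\mu,\mu}$, the floor recording which representative $\varpi^{\lambda}\cdot v_0$ the series produces. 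The complementary piece produces a torus element whose valuation contributes a factor $\varpi^{\alpha_i}$ and whose conjugation carries $\varpi^{\mu}$ to a representative of $s_i\mu$; hence it lives in the coset $\nu\equiv s_i\mu+\alpha_i$, and the $n_i$-periodic summation together with the Hilbert-symbol factor from \eqref{bcommrel} assembles into $g\bigl(B(\alpha_i,\mu)-Q(\alpha_i)\bigr)\,\z^{\alpha_i}$, i.e.\ $\tau^2_{s_i\mu+\alpha_i,\mu}$.

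The remaining step is to reconcile normalizations: the Gauss sums here satisfy $g = v\tilde g$ relative to those of \cite{McNamaraCS}, and $\mathcal{L}^{\z}_\mu$ is normalized by \eqref{eq:functional_norm}, so a short comparison of constants completes the identification of \eqref{eq:Omega-intertwining-si}, \eqref{tau1} and \eqref{tau2}. The main obstacle is the metaplectic bookkeeping in the two Bruhat cells: tracking exactly which representative $\varpi^{\lambda}\cdot v_0$ is produced and with which monomial in $\z$, using only \eqref{bcommrel}, \eqref{eq:functional_norm} and Lemma~\ref{lemma:vo}, and verifying that the restriction of the $n$-cover to $\mathbf{G}_{\alpha_i}$ has effective degree $n_i$ --- this is precisely what forces the shift $\nu \equiv s_i\mu+\alpha_i$, the floor $\lfloor -B(\alpha_i,\mu)/Q(\alpha_i)\rfloor_{\!n_i}$, and the Gauss-sum argument $B(\alpha_i,\mu)-Q(\alpha_i)$.
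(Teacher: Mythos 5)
Your proposal is correct in substance and ultimately rests on the same foundation as the paper's proof: both defer the hard computation to Kazhdan--Patterson and to McNamara's Theorem~13.1 and then adjust for the normalizations used here. The difference is one of emphasis. You sketch the interior of that computation (reduction to the rank-one subgroup attached to $\alpha_i$, the Bruhat-cell decomposition giving the $\tau^1$ and $\tau^2$ pieces, Gauss sums from the cocycle and the effective degree $n_i$), which the paper does not redo at all; the paper instead quotes McNamara's final formula and spends its entire proof on the translation step that you compress into ``a short comparison of constants.'' Be aware that this translation is more than constants: it involves an overall factor $c_{\alpha_i}(\mathbf{z}^{-1})$ from the different normalization of $\mathcal{A}_{s_i}^{\z^{-1}}$, the substitution of McNamara's $x_\alpha$ by $\z^{-\alpha_i}$ because the formula is applied at $\z^{-1}$, a $\z$-exponent shift coming from the normalization \eqref{eq:functional_norm} of $\mathcal{L}_\mu$, and the algebraic conversion using $s_i(\mu)-\mu=-\tfrac{B(\alpha_i,\mu)}{Q(\alpha_i)}\alpha_i$ together with the identity $n\ceil{x/n}-x=\floorn{-x}$ (applied with $n_i$ in place of $n$) that turns the natural ceiling exponent into the floor appearing in \eqref{tau1}. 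If you intend your proof to run through the from-scratch rank-one evaluation rather than through McNamara's stated formula, then the exponent bookkeeping in the geometric-series term --- exactly which representative $\varpi^{\lambda}\cdot v_0$ is produced and with which monomial in $\z$ --- is precisely the step that must be carried out explicitly; as written it is asserted rather than verified, and that is where the stated forms of \eqref{tau1} and \eqref{tau2} actually get pinned down.
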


\begin{proof}
We refer the reader to McNamara~\cite[Theorem~13.1]{McNamaraCS}, but make several comments about necessary modifications in the present set-up.
Our normalization for the Whittaker functional $\Omega_\mu^{\mathbf{z}^{-1}}$ is slightly different accounting for the different $\mathbf{z}$-exponents
in \eqref{tau1} and \eqref{tau2}, and we also use a different normalization for $\mathcal{A}_w^{\mathbf{z}^{-1}}$ explaining an overall factor of $c_{\alpha_i}(\mathbf{z}^{-1})$. Furthermore, we are applying this formula at $\z^{-1}$,
so McNamara's $x_\alpha$ is our $\z^{-\alpha_i}$. Thus
\begin{equation}
  \label{tau1-alt}
  \tau^1_{\mu, \mu}(\mathbf{z}^{-1}) = \frac{1 - v}{1- \z^{-n_i \alpha_i }} \, \z^{-n_i \ceil*{ \frac{B(\alpha_i, \mu)}{n_i Q(\alpha_i)} } \alpha_i - s_i(\mu) + \mu}  
\end{equation}
where $\ceil{x}$ denotes the smallest integer at least $x$.
From the bilinearity and $W$-invariance of $B$ we get that $s_i(\mu)-\mu=-\frac{B(\alpha_i,\mu)}{Q(\alpha_i)}\alpha_i$ which also tells us that $\frac{B(\alpha_i,\mu)}{Q(\alpha_i)}$ is an integer.
Then, using $n\ceil{\frac{x}{n}} - x = \floorn{-x}$ for $x \in \mathbb{Z}$ we obtain (\ref{tau1}).
\end{proof}

Note that~\eqref{tau2} may also be written as
\begin{equation}
  \label{tau2-alt} 
  \tau^2_{\mu, s_i\mu+\alpha_i}(\mathbf{z}^{-1}) = g\bigl(Q(\alpha_i)-B(\alpha_i, \mu)\bigr) \z^{\alpha_i}.
\end{equation}
As seen from~\eqref{eq:Omega-intertwining-si} the coefficients $\tau_{\mu,\nu}$ are independent of the coset representatives for $\mu$ and $\nu$ since $\Omega_\mu^{s_i\mathbf{z}^{-1}}$ and $\Omega_\nu^{\mathbf{z}^{-1}}$ are, which is an advantage of our normalization.
Furthermore, we see directly that the exponent in~\eqref{tau1} is invariant under shifts $\mu \to \mu + \lambda$ with $\lambda \in \Lambda^{(n)}$ since $B(\alpha_i, \lambda) \equiv 0 \bmod{n}$ which one can show is equivalent to $B(\alpha_i, \lambda)/Q(\alpha_i) \equiv 0 \bmod{n_i}$.

Combining the two formulas~\eqref{intertwiningactionIwahoriexplicit} and~\eqref{eq:Omega-intertwining-si} we can prove the following functional equation for metaplectic Iwahori Whittaker functions.
 
\begin{proposition}
  \label{prop:rep-recursion}
  The following functional equation holds:
  \begin{multline}
    \label{eq:rep-recursion}
    \tau^1_{\mu, \mu}(\mathbf{z}^{-1}) \phi_{\mu,w}(\mathbf{z}; g) + \tau^2_{\mu, s_i(\mu)+\alpha_i}(\mathbf{z}^{-1}) \phi_{s_i(\mu)+\alpha_i, w}(\mathbf{z};g)
    = \\
    \frac{1}{1-\z^{-n_i\alpha_i}}
    \begin{cases*}
      (1-v) \phi_{\mu,w}(s_i\mathbf{z};g) + (1-\mathbf{z}^{-n_i\alpha_i}) \phi_{\mu, s_iw}(s_i\mathbf{z}) & if $\ell(s_iw) > \ell(w)$, \\
      (1-v) \mathbf{z}^{-n_i\alpha_i} \phi_{\mu, w}(s_i \mathbf{z}) + v(1-\mathbf{z}^{-n_i\alpha_i}) \phi_{\mu,s_iw}(s_i \mathbf{z}) & if $\ell(s_iw) < \ell(w)$.
    \end{cases*}
  \end{multline}
\end{proposition}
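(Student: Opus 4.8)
The plan is to evaluate the single scalar
\[
  \delta^{1/2}(g)\,\Omega_\mu^{s_i\z^{-1}}\Bigl(\mathcal{A}_{s_i}^{\z^{-1}}\bigl(\pi(g)\Phi_w^{\z^{-1}}\bigr)\Bigr)
\]
in two different ways and to equate the outcomes. Throughout one works generically in $\z$, so that the intertwining integral converges (or is defined by analytic continuation) and $1-\z^{-n_i\alpha_i}$ is invertible, as guaranteed by the hypotheses recalled in Remark~\ref{rem:zassumptions}; the asserted identity~\eqref{eq:rep-recursion} is then an identity of rational functions of $\z$. The factor $\delta^{1/2}(g)$ is attached once and for all so that, after the computation, the terms are exactly the Whittaker functions normalized as in~\eqref{eq:phi}.

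For the first evaluation I would use that $\mathcal{A}_{s_i}^{\z^{-1}}\colon I(\z^{-1})\to I(s_i\z^{-1})$ is a morphism of $\Gn$-modules, hence commutes with $\pi(g)$:
\[
  \Omega_\mu^{s_i\z^{-1}}\Bigl(\mathcal{A}_{s_i}^{\z^{-1}}\bigl(\pi(g)\Phi_w^{\z^{-1}}\bigr)\Bigr)
  = \Omega_\mu^{s_i\z^{-1}}\Bigl(\pi(g)\,\mathcal{A}_{s_i}^{\z^{-1}}\Phi_w^{\z^{-1}}\Bigr).
\]
Then expand $\mathcal{A}_{s_i}^{\z^{-1}}\Phi_w^{\z^{-1}}$ by Lemma~\ref{lem:intertwiner-Phi} in the form~\eqref{intertwiningactionIwahoriexplicit}, as a linear combination of $\Phi_w^{s_i\z^{-1}}$ and $\Phi_{s_iw}^{s_i\z^{-1}}$ with the stated coefficients and the $1/(1-\z^{-n_i\alpha_i})$ prefactor, splitting into the cases $\ell(s_iw)\gtrless\ell(w)$. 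Since $(s_i\z)^{-1}=s_i(\z^{-1})$, each resulting term $\delta^{1/2}(g)\,\Omega_\mu^{s_i\z^{-1}}\bigl(\pi(g)\Phi_{w'}^{s_i\z^{-1}}\bigr)$ is by definition $\phi_{\mu,w'}(s_i\z;g)$, and one obtains precisely the right-hand side of~\eqref{eq:rep-recursion}.

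For the second evaluation I would first rewrite $\Omega_\mu^{s_i\z^{-1}}\circ\mathcal{A}_{s_i}^{\z^{-1}}$ using~\eqref{eq:Omega-intertwining-si} and Proposition~\ref{propositionKP}. Because $\tau_{\mu,\nu}(\z^{-1})=\tau^1_{\mu,\nu}(\z^{-1})+\tau^2_{\mu,\nu}(\z^{-1})$ with $\tau^1$ supported on $\nu\sim\mu$ and $\tau^2$ on $\nu\sim s_i(\mu)+\alpha_i$ modulo $\Lambda^{(n)}$, and since $\Omega_\nu^{\z^{-1}}$ depends only on the class of $\nu$ in $\Lambda/\Lambda^{(n)}$, the sum collapses to two terms
\[
  \tau^1_{\mu,\mu}(\z^{-1})\,\Omega_\mu^{\z^{-1}}\bigl(\pi(g)\Phi_w^{\z^{-1}}\bigr)
  + \tau^2_{\mu,s_i(\mu)+\alpha_i}(\z^{-1})\,\Omega_{s_i(\mu)+\alpha_i}^{\z^{-1}}\bigl(\pi(g)\Phi_w^{\z^{-1}}\bigr).
\]
Multiplying by $\delta^{1/2}(g)$ and applying~\eqref{eq:phi} identifies this with the left-hand side of~\eqref{eq:rep-recursion}. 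Equating the two evaluations finishes the proof.

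I do not expect a genuinely hard step here; the work is in the bookkeeping. The points requiring care are: that $\phi$ is built from the contragredient $I(\z^{-1})$, so that $\mathcal{A}_{s_i}^{\z^{-1}}$ lands in $I(s_i\z^{-1})=I((s_i\z)^{-1})$ and hence outputs $\phi(s_i\z;-)$ rather than $\phi(\z;-)$; that commuting $\pi(g)$ past the genuine intertwiner introduces no cocycle; and that the normalizations in~\eqref{intertwiningactionIwahoriexplicit} and in~\eqref{tau1}--\eqref{tau2} are exactly those adopted here, so that the scalars $(1-v)$, $1-\z^{-n_i\alpha_i}$, $v$, and the prefactor $1/(1-\z^{-n_i\alpha_i})$ line up on both sides without stray factors.
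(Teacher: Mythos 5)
Your proposal is correct and follows essentially the same route as the paper: both evaluate $\Omega_\mu^{s_i\z^{-1}}\circ\mathcal{A}_{s_i}^{\z^{-1}}$ applied to $\pi(g)\Phi_w^{\z^{-1}}$ in two ways, once by commuting the $\Gn$-equivariant intertwiner past $\pi(g)$ and expanding via~\eqref{intertwiningactionIwahoriexplicit}, and once by expanding the composed functional via~\eqref{eq:Omega-intertwining-si}, then identifying terms with~\eqref{eq:phi}. The bookkeeping points you flag (contragredient normalization, $(s_i\z)^{-1}=s_i(\z^{-1})$, matching scalar factors) are exactly the content of the paper's two-line proof.
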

\begin{proof}
The intertwining integral is a $\Gn$-homomorphism and therefore
\[ \Omega_\mu^{s_i \z^{-1}} \!\circ \A_{s_i}^{\z^{-1}} \bigl(\pi(g) \Phi^{\z^{-1}}_{w}\bigr) = \Omega_\mu^{s_i \z^{-1}} \! \bigl(\pi(g)  \A_{s_i}^{\z^{-1}} \Phi^{\z^{-1}}_{w} \bigr).\]
By the definition of $\phi$ in \eqref{eq:phi} the statement now follows by expanding the left- and right-hand sides using~\eqref{intertwiningactionIwahoriexplicit} and~\eqref{eq:Omega-intertwining}. 
\end{proof}

\subsection{Metaplectic Demazure-Whittaker operators}
\label{sec:DW-operators}
Recall that $\mathbb{C}[\Lambda]$ is the ring of Laurent polynomials in $\mathbf{z}^\lambda$ with $\lambda \in \Lambda$ over $\mathbb{C}$ and that $\mathbb{C}(\Lambda)$ is the corresponding field of fractions.
The results of the previous section allow us to compute all
Iwahori Whittaker functions recursively. Let us fix $g\in\widetilde{G}$
and $\mathbf{\z}\in\widehat{\mathbf{T}}(\mathbb{C})$. For fixed
$w\in W$, assemble the values $\boldsymbol{\phi}_w^\gamma \coloneqq \phi_{\gamma,w}(\mathbf{z};g)$
($\gamma\in\Gamma$) into a vector
\[\boldsymbol{\phi}_w=\boldsymbol{\phi}_w(\z;g)=(\boldsymbol{\phi}_w^\gamma)_{\gamma\in\Gamma}\in\mathbb{C}(\Lambda)^{|\Gamma|}.\]

\begin{theorem}
\label{thm:dem_recurse}
Define operators $\mathbf{T}_i$ on $\mathbb{C}(\Lambda)^{|\Gamma|}$
by
      \begin{equation}
        \label{eq:Ti}
        \mathbf{T}_i = D_{n_i}(\mathbf{z}^{-1}) + s_i\boldsymbol{\tau}(\mathbf{z}^{-1}) \qquad D_{n_i}(\mathbf{z}^{-1}) = \frac{1 - v}{\mathbf{z}^{n_i \alpha_i}-1}
      \end{equation}
where, for $\mathbf{f} \in \mathbb{C}(\Lambda)^{|\Gamma|}$, $\boldsymbol{\tau}(\mathbf{z}^{-1})\mathbf{f} \in \mathbb{C}(\Lambda)^{|\Gamma|}$ with components 
      \begin{equation}  
        \label{eq:tau-operartor}
        \bigl(\boldsymbol{\tau}(\mathbf{z}^{-1})\mathbf{f}\bigr)^\mu = \tau^1_{\mu,\mu}(\mathbf{z}^{-1}) \mathbf{f}^\mu + \tau^2_{\mu, s_i(\mu)+\alpha_i}(\mathbf{z}^{-1}) \mathbf{f}^{s_i(\mu)+\alpha_i}.
      \end{equation}
The operator $\mathbf{T}_i$ is invertible and
      \begin{equation}
        \label{eq:Ti-inverse}
        \mathbf{T}_i^{-1} = \frac{1}{v}\Bigl(\mathbf{z}^{n_i \alpha_i} D_{n_i}(\mathbf{z}^{-1}) + s_i\boldsymbol{\tau}(\mathbf{z}^{-1}) \Bigr).
      \end{equation}
Furthermore,
\begin{equation}
   \label{eq:Ti-recursion}
   \boldsymbol{\phi}_{s_iw} = 
   \begin{cases*}
      \mathbf{T}_i \boldsymbol{\phi}_w & if $\ell(s_iw) > \ell(w)$ \\
      \mathbf{T}_i^{-1} \boldsymbol{\phi}_w & if $\ell(s_iw) < \ell(w)$. \\
   \end{cases*}
\end{equation}
\end{theorem}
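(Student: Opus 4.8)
The plan is to first settle the purely algebraic assertions about $\mathbf{T}_i$ --- that it is invertible with inverse \eqref{eq:Ti-inverse} --- and then obtain the recurrence \eqref{eq:Ti-recursion} by rearranging the functional equation of Proposition~\ref{prop:rep-recursion}. Throughout I abbreviate $D = D_{n_i}(\mathbf{z}^{-1})$ and $\boldsymbol{\tau} = \boldsymbol{\tau}(\mathbf{z}^{-1})$; here $D$ and $\mathbf{z}^{n_i\alpha_i}$ act as multiplication by scalar rational functions, hence are $\mathbb{C}(\Lambda)$-linear and commute with the matrix operator $\boldsymbol{\tau}$, while $s_i$ acts diagonally on $\mathbb{C}(\Lambda)^{|\Gamma|}$ and conjugating a scalar multiplication operator by $s_i$ turns the scalar $f$ into $s_i f$. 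The two ingredients I will use are $s_i(\mathbf{z}^{n_i\alpha_i}) = \mathbf{z}^{-n_i\alpha_i}$ (since $s_i\alpha_i = -\alpha_i$) and the quadratic relation \eqref{asastau} for the Kazhdan--Patterson scattering matrices.

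\textbf{Step 1 (invertibility).} I would verify that $A := \tfrac1v\bigl(\mathbf{z}^{n_i\alpha_i}D + s_i\boldsymbol{\tau}\bigr)$ is a right inverse of $\mathbf{T}_i$. Since $\mathbf{T}_i$ and $A$ are linear over the subfield $\mathbb{C}(\Lambda)^{s_i}$, over which $\mathbb{C}(\Lambda)^{|\Gamma|}$ is finite-dimensional, a right inverse is automatically two-sided, which gives \eqref{eq:Ti-inverse}. Expanding $v\,\mathbf{T}_i A = (D + s_i\boldsymbol{\tau})(\mathbf{z}^{n_i\alpha_i}D + s_i\boldsymbol{\tau})$: the pure scalar term is $\mathbf{z}^{n_i\alpha_i}D^2$; the two cross terms cancel, because $D\circ s_i = s_i\circ(-\mathbf{z}^{n_i\alpha_i}D)$ --- which is simply $s_i\bigl(\tfrac{1-v}{\mathbf{z}^{n_i\alpha_i}-1}\bigr) = -\mathbf{z}^{n_i\alpha_i}D$ --- together with the fact that $\mathbf{z}^{n_i\alpha_i}D$ commutes past $\boldsymbol{\tau}$; and the last term $s_i\boldsymbol{\tau}\,s_i\boldsymbol{\tau}$, after moving the inner $s_i$ to the outside, becomes $\boldsymbol{\tau}(s_i\mathbf{z}^{-1})\boldsymbol{\tau}(\mathbf{z}^{-1})$, which by \eqref{asastau} evaluated at $\mathbf{z}^{-1}$ --- with indices running over the transversal $\Gamma$, so that ``$\nu\equiv\mu$'' means $\nu=\mu$ --- equals $c_{\alpha_i}(\mathbf{z}^{-1})c_{-\alpha_i}(\mathbf{z}^{-1})\,\mathrm{Id}$. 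It remains only to check the scalar identity $\mathbf{z}^{n_i\alpha_i}D^2 + c_{\alpha_i}(\mathbf{z}^{-1})c_{-\alpha_i}(\mathbf{z}^{-1}) = v$, which by \eqref{calphai}, with $Z := \mathbf{z}^{n_i\alpha_i}$, is the elementary identity $(1-v)^2 Z - (Z-v)(1-vZ) = v(Z-1)^2$.

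\textbf{Step 2 (recurrence).} Fix $g$ and regard $\boldsymbol{\phi}_w = \boldsymbol{\phi}_w(\mathbf{z};g)$ as a vector of rational functions of $\mathbf{z}$. By \eqref{eq:tau-operartor} the left-hand side of \eqref{eq:rep-recursion} is the $\mu$-component of $\boldsymbol{\tau}(\mathbf{z}^{-1})\boldsymbol{\phi}_w$, while $\phi_{\mu,w}(s_i\mathbf{z};g)$ and $\phi_{\mu,s_iw}(s_i\mathbf{z};g)$ are the $\mu$-components of $s_i\boldsymbol{\phi}_w$ and $s_i\boldsymbol{\phi}_{s_iw}$. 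Hence, when $\ell(s_iw) > \ell(w)$, Proposition~\ref{prop:rep-recursion} is the identity $\boldsymbol{\tau}(\mathbf{z}^{-1})\boldsymbol{\phi}_w = \tfrac{1-v}{1-\mathbf{z}^{-n_i\alpha_i}}\,s_i\boldsymbol{\phi}_w + s_i\boldsymbol{\phi}_{s_iw}$ in $\mathbb{C}(\Lambda)^{|\Gamma|}$; solving for $s_i\boldsymbol{\phi}_{s_iw}$, applying $s_i$, and using $s_i\bigl(\tfrac{1-v}{1-\mathbf{z}^{-n_i\alpha_i}}\bigr) = \tfrac{1-v}{1-\mathbf{z}^{n_i\alpha_i}} = -D$ gives $\boldsymbol{\phi}_{s_iw} = (D + s_i\boldsymbol{\tau})\boldsymbol{\phi}_w = \mathbf{T}_i\boldsymbol{\phi}_w$. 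When $\ell(s_iw) < \ell(w)$, the same proposition reads $\boldsymbol{\tau}(\mathbf{z}^{-1})\boldsymbol{\phi}_w = \tfrac{(1-v)\mathbf{z}^{-n_i\alpha_i}}{1-\mathbf{z}^{-n_i\alpha_i}}\,s_i\boldsymbol{\phi}_w + v\,s_i\boldsymbol{\phi}_{s_iw}$, and the identical manipulation --- now with $s_i\bigl(\tfrac{(1-v)\mathbf{z}^{-n_i\alpha_i}}{1-\mathbf{z}^{-n_i\alpha_i}}\bigr) = -\mathbf{z}^{n_i\alpha_i}D$ --- yields $\boldsymbol{\phi}_{s_iw} = \tfrac1v(\mathbf{z}^{n_i\alpha_i}D + s_i\boldsymbol{\tau})\boldsymbol{\phi}_w = \mathbf{T}_i^{-1}\boldsymbol{\phi}_w$ by Step 1.

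The main obstacle is Step 1: one must organize the operator algebra so that the cross terms visibly cancel and recognize that the final scalar identity is precisely the $\mathbf{z}^{-1}$-specialization of \eqref{asastau} combined with \eqref{calphai}. Step 2 is then formal, the only care being that $s_i$ does not commute with multiplication operators and that indices such as $s_i(\mu)+\alpha_i$ are read modulo $\Lambda^{(n)}$.
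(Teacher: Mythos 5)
Your argument is correct, and both halves check out: the operator bookkeeping in Step 1 (the cancellation of cross terms via $D\circ s_i=s_i\circ(-\mathbf{z}^{n_i\alpha_i}D)$, the identification $s_i\boldsymbol{\tau}(\mathbf{z}^{-1})s_i\boldsymbol{\tau}(\mathbf{z}^{-1})=\boldsymbol{\tau}(s_i\mathbf{z}^{-1})\boldsymbol{\tau}(\mathbf{z}^{-1})$, and the scalar identity $(1-v)^2Z-(Z-v)(1-vZ)=v(Z-1)^2$ coming from \eqref{calphai}) is exactly what is needed, and Step 2 is the same rearrangement of Proposition~\ref{prop:rep-recursion} that the paper invokes in one line for \eqref{eq:Ti-recursion}. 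Where you differ from the paper is in the invertibility statement \eqref{eq:Ti-inverse}: the paper explicitly notes that it ``can be checked by direct computation using \eqref{asastau}'' but chooses not to do so, instead citing forward to Theorem~\ref{thm:hecke} for the quadratic Hecke relation $(\mathbf{T}_i-v)(\mathbf{T}_i+1)=0$, which gives $\mathbf{T}_i^{-1}=\tfrac1v(\mathbf{T}_i+1-v)$, and then uses $D_{n_i}(\mathbf{z}^{-1})+1-v=\mathbf{z}^{n_i\alpha_i}D_{n_i}(\mathbf{z}^{-1})$ to recover \eqref{eq:Ti-inverse}. You carry out the direct verification the paper skips; this makes the theorem self-contained (no forward reference, and no reliance on the Hecke-algebra machinery imported from~\cite{BBBF}), at the cost of the explicit operator computation, whereas the paper's route is shorter given that Theorem~\ref{thm:hecke} is proved anyway and simultaneously yields the quadratic relation. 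Two small remarks: your reduction of ``right inverse implies two-sided'' via $\mathbb{C}(\Lambda)^{s_i}$-linearity and finite-dimensionality over the fixed field is fine (by Artin's theorem $\mathbb{C}(\Lambda)$ has degree $2$ over $\mathbb{C}(\Lambda)^{s_i}$), though the same cancellations also give $A\mathbf{T}_i=\mathrm{Id}$ directly; and your observation that in \eqref{asastau} the congruence $\nu\equiv\mu$ becomes equality on the transversal $\Gamma$, together with reading the index $s_i(\mu)+\alpha_i$ modulo $\Lambda^{(n)}$, correctly handles the only indexing subtlety (including the degenerate case $s_i\mu+\alpha_i\equiv\mu$).
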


\begin{proof}
Both cases of (\ref{eq:Ti-recursion}) follow from
Proposition~\ref{prop:rep-recursion}. That the operator
in \eqref{eq:Ti-inverse} is actually the inverse of 
$\mathbf{T}_i^{-1}$ can be checked by direct computation using~\eqref{asastau}.
However we will instead make use of a fact that we will
prove below in Theorem~\ref{thm:hecke}, namely the Hecke relation
$(\mathbf{T}_i - v)(\mathbf{T}_i + 1) = 0$. This implies
that $\mathbf{T}_i$ is invertible with inverse
$\frac 1v(\mathbf{T}_i+1-v)$. Together with the fact that
$D_{n_i}(\z)+1-v=\z^{n_i\alpha}D_{n_i}(\z)$, this
proves (\ref{eq:Ti-inverse}).
\end{proof}

We will call the operators $\mathbf{T}_i$ \textit{vector metaplectic
Demazure-Whittaker operators}. Now let us consider how these operators
allow us to compute arbitrary Iwahori Whittaker functions on
$\widetilde{G}$. Since a complete set of coset representatives for
$\mathbf{N}^-(F)\backslash\widetilde{G}/J\mu_n$ consists of elements of the
form $g = \varpi^{-\lambda}w'$ with $\lambda\in\Lambda$ and $w'\in
W$ we may restrict to such $g$. Then $\boldsymbol{\phi}_w=0$ unless $\lambda$ is $w'$-almost
dominant. Assuming this, we have the following explicit algorithm.

\begin{corollary}
  \label{cor:evaluate}
  Let $w, w' \in W$ and $\lambda$ a $w'$-almost dominant weight,
  and let $g=\varpi^{-\lambda}w_2.$
  Let $\boldsymbol{\phi}_w = \boldsymbol{\phi}_w(\mathbf{z}; g) \in \mathbb{C}(\Lambda)^{|\Gamma|}$ as above whose components are given by $\boldsymbol{\phi}_w^\gamma = \phi_{\gamma,w}(\mathbf{z}; g)$.
  Consider any path in the Weyl group from $w'$ to $w$ given by simple reflections $s_{i_k}, \ldots, s_{i_1}$ as $w' \to s_{i_1}w' \to \cdots \to s_{i_k} \cdots s_{i_1} w' = w$ and let $e_{i_j} = 1$ if $s_{i_j}$ is an ascent in this path in the Bruhat order, and otherwise let $e_{i_j} = -1$.
  Then,
  \begin{equation}
    \label{eq:evaluate}
    \boldsymbol{\phi}_w(\mathbf{z}; \varpi^{-\lambda}w') = v^{\ell(w')} \mathbf{T}_{i_k}^{e_{i_k}} \cdots   \mathbf{T}_{i_1}^{e_{i_1}} \mathcal{L}(\mathbf{z}^\lambda) ,
  \end{equation}
  where $\mathcal{L}(\mathbf{z}^\lambda) \in \mathbb{C}(\Lambda)^{|\Gamma|}$ with components $\bigl(\mathcal{L}(\mathbf{z}^\lambda)\bigr)^\gamma = \mathcal{L}^{\mathbf{z}^{-1}}_\gamma(\mathbf{z}^\lambda v_0) = \mathcal{L}^{\mathbf{z}^{-1}}_\gamma(\varpi^{-\lambda} \cdot v_0)$.
\end{corollary}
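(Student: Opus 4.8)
The plan is a straightforward induction along the path from $w'$ to $w$, with Proposition~\ref{prop:rep-base-case} supplying the base case and equation~\eqref{eq:Ti-recursion} of Theorem~\ref{thm:dem_recurse} supplying the inductive step; I do not expect any serious obstacle, only bookkeeping that must be done carefully.

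First I would handle the empty path $w=w'$. Here $\lambda$ is assumed $w'$-almost dominant, which is exactly the hypothesis under which $\boldsymbol{\phi}_{w'}(\mathbf{z};\varpi^{-\lambda}w')$ need not vanish (Lemma~\ref{lem:almost-dominant}), and Proposition~\ref{prop:rep-base-case} evaluates each component: $\phi_{\gamma,w'}(\mathbf{z};\varpi^{-\lambda}w')=v^{\ell(w')}\mathbf{z}^{\lambda}$ when $\gamma+\lambda\in\Lambda^{(n)}$ and $0$ otherwise. Comparing this with the defining property~\eqref{eq:functional_norm} of the dual basis element $\mathcal{L}^{\mathbf{z}^{-1}}_\gamma$ evaluated on $\varpi^{-\lambda}\cdot v_0$ shows that $\boldsymbol{\phi}_{w'}(\mathbf{z};\varpi^{-\lambda}w')=v^{\ell(w')}\mathcal{L}(\mathbf{z}^\lambda)$, which is \eqref{eq:evaluate} for the empty path; this is essentially the final display in the proof of Proposition~\ref{prop:rep-base-case}.

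For the inductive step, write the path as $w'\to s_{i_1}w'\to\cdots\to u:=s_{i_{k-1}}\cdots s_{i_1}w'\to s_{i_k}u=w$. Keeping the group element $g=\varpi^{-\lambda}w'$ fixed throughout — the recursion of Theorem~\ref{thm:dem_recurse} alters only the index of the Iwahori fixed vector, not $g$ — equation~\eqref{eq:Ti-recursion} applied with index $u$ and reflection $s_{i_k}$ gives $\boldsymbol{\phi}_{w}=\mathbf{T}_{i_k}\boldsymbol{\phi}_u$ when $\ell(s_{i_k}u)>\ell(u)$ and $\boldsymbol{\phi}_{w}=\mathbf{T}_{i_k}^{-1}\boldsymbol{\phi}_u$ otherwise; by the definition of the exponents $e_{i_j}$ this is precisely $\boldsymbol{\phi}_{w}=\mathbf{T}_{i_k}^{e_{i_k}}\boldsymbol{\phi}_u$. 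Since $\mathbf{T}_{i_k}$ is genuinely invertible (Theorem~\ref{thm:dem_recurse}), the symbol $\mathbf{T}_{i_k}^{e_{i_k}}$ is unambiguous, and applying the inductive hypothesis to the shorter path $w'\to\cdots\to u$ yields $\boldsymbol{\phi}_{w}=v^{\ell(w')}\mathbf{T}_{i_k}^{e_{i_k}}\cdots\mathbf{T}_{i_1}^{e_{i_1}}\mathcal{L}(\mathbf{z}^\lambda)$, completing the induction.

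The one point worth a closing remark is that the right-hand side of~\eqref{eq:evaluate} is written in terms of a chosen path whereas the left-hand side is not, so independence of the path is automatic from the identity; alternatively it reflects the braid relations satisfied by the operators $\mathbf{T}_i$, which follow from the metaplectic Iwahori--Hecke algebra representation of Theorem~\ref{thm:hecke}. I would note this but not make it part of the proof. The only genuinely error-prone step is matching the ascent/descent convention defining $e_{i_j}$ to the two cases of~\eqref{eq:Ti-recursion}, which is why I would write out the single inductive step explicitly rather than leaving it to the reader.
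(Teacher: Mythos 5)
Your proposal is correct and follows essentially the same route as the paper: the base case is Proposition~\ref{prop:rep-base-case} identified with $v^{\ell(w')}\mathcal{L}(\mathbf{z}^\lambda)$ via \eqref{eq:functional_norm}, and the recursion \eqref{eq:Ti-recursion} of Theorem~\ref{thm:dem_recurse} is applied along the chosen path with the exponents $e_{i_j}$ matching ascents and descents. The only difference is that you spell out the induction and the sign convention more explicitly than the paper does, which is harmless.
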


\begin{proof}
  The value of $\boldsymbol{\phi}_{w'}(\mathbf{z}; \varpi^{-\lambda}w')$ is given explicitly by Proposition~\ref{prop:rep-base-case} as $v^{\ell(w')}\mathcal{L}(\mathbf{z}^\lambda)$.
  Then Theorem~\ref{thm:dem_recurse} shows that $\boldsymbol{\phi}_w(\mathbf{z}; \varpi^{-\lambda}w')$ can be obtained recursively from $\boldsymbol{\phi}_{w'}(\mathbf{z}; \varpi^{-\lambda}w')$ by applying operators $\mathbf{T}_i$ and $\mathbf{T}_i^{-1}$ in the stated order.
\end{proof}

Demazure operators, as the term is commonly understood, are ``divided
difference'' operators such as $f \mapsto (1-\z^{-\alpha_i})^{-1}\bigl(f-\z^{-\alpha_i}s_i(f)\bigr)$
that at first glance \textit{appear} to introduce denominators, but actually do not
since the numerator is automatically divisible by the denominator.
In our next proposition we will show that the space
$\bigoplus_{\gamma \in \Gamma} \mathbf{z}^{-\gamma} \mathbb{C}[\Lambda^{(n)}]$
is $\mathbf{T}_i$-invariant meaning that on this space of polynomials $\mathbf{T}_i$ does not introduce denominators.
In particular, since the vector
$\boldsymbol{\phi}_{w'}(\mathbf{z}; \varpi^{-\lambda}w') = v^{\ell(w')}\mathcal{L}(\mathbf{z}^\lambda)$ is in
$\bigoplus_{\gamma \in \Gamma} \mathbf{z}^{-\gamma} \mathbb{C}[\Lambda^{(n)}]$, this means that $\boldsymbol{\phi}_{w}(\mathbf{z}; \varpi^{-\lambda}w') = v^{\ell(w')} \mathbf{T}_{i_k}^{e_{i_k}} \cdots   \mathbf{T}_{i_1}^{e_{i_1}} \mathcal{L}(\mathbf{z}^\lambda)$ from~\eqref{eq:evaluate} also is, as expected from~\eqref{eq:functional_norm} and the definition of
$\phi_{\mu,w}(\mathbf{z};g)$ in~\eqref{eq:phi}.
 
\begin{proposition}
  \label{prop:T-descent}
  Let $R$ be $\mathbb{C}[\Lambda^{(n)}]$ or $\mathbb{C}(\Lambda^{(n)})$.
  The operators $\mathbf{T}_i$ and $\mathbf{T}_i^{-1}$ on $\mathbb{C}(\Lambda)^{|\Gamma|} = \bigoplus_{\gamma \in \Gamma} \mathbb{C}(\Lambda)$ descend to operators on~$\bigoplus_{\gamma \in \Gamma} \mathbf{z}^{-\gamma} R$.
\end{proposition}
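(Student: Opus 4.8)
The plan is to verify directly that $\mathbf{T}_i$ preserves $M:=\bigoplus_{\gamma\in\Gamma}\mathbf{z}^{-\gamma}R$, and then to obtain the statement for $\mathbf{T}_i^{-1}$ from the Hecke relation. By $\mathbb{C}$-linearity it suffices to apply $\mathbf{T}_i$ to a single monomial vector $\mathbf{z}^{-\gamma}p\,\mathbf{e}_\gamma$ with $\gamma\in\Gamma$, $p\in R$, where $\mathbf{e}_\gamma$ is the standard basis vector of the $\gamma$-th summand. Two structural facts will be used throughout. First, $\Lambda^{(n)}$ is stable under $W$ (because $B$ is $W$-invariant), so $s_i$ carries $R$ to $R$ and $\mathbf{z}^{-\gamma}R$ to $\mathbf{z}^{-s_i\gamma}R$. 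Second, for $\nu\in\Lambda^{(n)}$ the integer $\langle\alpha_i^\vee,\nu\rangle=B(\alpha_i,\nu)/Q(\alpha_i)$ is $\equiv 0\bmod n_i$ — this is the equivalence $B(\alpha_i,\nu)\equiv 0\bmod n\Leftrightarrow B(\alpha_i,\nu)/Q(\alpha_i)\equiv 0\bmod n_i$ recorded after Proposition~\ref{propositionKP} — so $s_i(\mathbf{z}^\nu)=\mathbf{z}^\nu(\mathbf{z}^{n_i\alpha_i})^{-\ell}$ with $\ell\in\mathbb{Z}$, and consequently $s_i(p)\equiv p$ modulo the ideal $(1-\mathbf{z}^{n_i\alpha_i})$ of $R$ for every $p$.

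Now unwind the definition \eqref{eq:Ti}. Applying $\boldsymbol{\tau}(\mathbf{z}^{-1})$ to $\mathbf{z}^{-\gamma}p\,\mathbf{e}_\gamma$ produces a vector with (generically) two nonzero entries: the $\gamma$-entry $\tau^1_{\gamma,\gamma}(\mathbf{z}^{-1})\mathbf{z}^{-\gamma}p$, and the entry in the coset of $s_i\gamma+\alpha_i$ equal to $g\bigl(B(\alpha_i,\gamma)-Q(\alpha_i)\bigr)\mathbf{z}^{\alpha_i}\mathbf{z}^{-\gamma}p$; if these two cosets coincide mod $\Lambda^{(n)}$ the two contributions are simply added, and the argument below is unaffected. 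For the off-diagonal entry, $g(\cdot)$ is a scalar and $s_i$ sends $\mathbf{z}^{\alpha_i}\mathbf{z}^{-\gamma}p$ to $\mathbf{z}^{-\alpha_i}\mathbf{z}^{-s_i\gamma}s_i(p)=\mathbf{z}^{-\gamma'}\mathbf{z}^{-\eta}s_i(p)$, where $\gamma'\in\Gamma$ represents $s_i\gamma+\alpha_i$ and $\eta\in\Lambda^{(n)}$; since $\mathbf{z}^{-\eta}s_i(p)\in R$, this entry lies in $\mathbf{z}^{-\gamma'}R$. The diagonal entry is where the divided-difference cancellation occurs. Writing $k:=B(\alpha_i,\gamma)/Q(\alpha_i)\in\mathbb{Z}$, so that $s_i\gamma=\gamma-k\alpha_i$, and $c:=\bigl\lfloor -k\bigr\rfloor_{\!n_i}$ (the exponent occurring in $\tau^1_{\gamma,\gamma}$), we have $c+k\equiv 0\bmod n_i$, say $c+k=n_i m$. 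Then the $s_i$-image of $\tau^1_{\gamma,\gamma}(\mathbf{z}^{-1})\mathbf{z}^{-\gamma}p=\tfrac{1-v}{1-\mathbf{z}^{-n_i\alpha_i}}\mathbf{z}^{-c\alpha_i-\gamma}p$ equals $\tfrac{1-v}{1-\mathbf{z}^{n_i\alpha_i}}\mathbf{z}^{-\gamma}(\mathbf{z}^{n_i\alpha_i})^{m}s_i(p)$, and adding the $D_{n_i}(\mathbf{z}^{-1})$-contribution $-\tfrac{1-v}{1-\mathbf{z}^{n_i\alpha_i}}\mathbf{z}^{-\gamma}p$ gives for the $\gamma$-entry of $\mathbf{T}_i(\mathbf{z}^{-\gamma}p\,\mathbf{e}_\gamma)$ the expression $(1-v)\mathbf{z}^{-\gamma}\cdot\frac{(\mathbf{z}^{n_i\alpha_i})^{m}s_i(p)-p}{1-\mathbf{z}^{n_i\alpha_i}}$. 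By the second structural fact the numerator lies in $(1-\mathbf{z}^{n_i\alpha_i})R$, so the quotient is in $R$; hence $\mathbf{T}_i(M)\subseteq M$.

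For $\mathbf{T}_i^{-1}$ one may either rerun the same computation from the explicit formula \eqref{eq:Ti-inverse} — the only difference is that $p$ is replaced by $\mathbf{z}^{n_i\alpha_i}p\equiv p$ modulo $(1-\mathbf{z}^{n_i\alpha_i})$ in the diagonal cancellation — or, more simply, invoke the Hecke relation $(\mathbf{T}_i-v)(\mathbf{T}_i+1)=0$ of Theorem~\ref{thm:hecke}, which yields $\mathbf{T}_i^{-1}=\tfrac1v\bigl(\mathbf{T}_i+(1-v)\operatorname{Id}\bigr)$; since $1-v$ is a scalar, $\mathbf{T}_i(M)\subseteq M$ forces $\mathbf{T}_i^{-1}(M)\subseteq M$ as well. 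I expect the only delicate point to be the bookkeeping in the diagonal cancellation: one must confirm both that the exponent $c$ built into $\tau^1$ is exactly the residue making $c+k$ a multiple of $n_i$ (so the surviving factor $\mathbf{z}^{(c+k)\alpha_i}$ is a genuine power of $\mathbf{z}^{n_i\alpha_i}$) and that the relevant denominator for the cover is $1-\mathbf{z}^{n_i\alpha_i}$ rather than $1-\mathbf{z}^{n\alpha_i}$; both follow from the equivalence $B(\alpha_i,\nu)\equiv 0\bmod n\Leftrightarrow\langle\alpha_i^\vee,\nu\rangle\equiv 0\bmod n_i$ together with $n_i\alpha_i\in\Delta_n\subseteq\Lambda^{(n)}$.
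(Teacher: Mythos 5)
Your proof is correct and takes essentially the same route as the paper's: reduce to a vector with a single nonzero component, observe that the off-diagonal entry of $\mathbf{T}_i\mathbf{f}$ lies in $\mathbf{z}^{-\gamma'}R$ at once, and for the diagonal entry use that $\langle\alpha_i^\vee,\nu\rangle\equiv 0\bmod{n_i}$ for $\nu\in\Lambda^{(n)}$ (together with $n_i\alpha_i\in\Lambda^{(n)}$) to cancel the denominator $1-\mathbf{z}^{n_i\alpha_i}$ — the paper exhibits the resulting quotient as a geometric series where you argue modulo the ideal $(1-\mathbf{z}^{n_i\alpha_i})$, which amounts to the same thing. Your treatment of $\mathbf{T}_i^{-1}$ is also fine either way: the paper just says that case "follows similarly" (your first option), and invoking the quadratic relation from Theorem~\ref{thm:hecke} is not circular since that theorem's proof is independent of this proposition.
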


Even though our Whittaker functions are polynomial we will still need to
consider $\mathbf{T}_i$ as an operator
on~$\bigoplus_{\gamma \in \Gamma} \mathbf{z}^{-\gamma} R$ with $R
= \mathbb{C}(\Lambda^{(n)})$ in Theorem~\ref{thm:T-average}. Note
that $\mathbf{T}_i$ is not $R$-linear because of the $s_i$ that
appears in the definition~(\ref{eq:Ti}). The integrality proved
in this result allows us to remove the assumption, imposed in
Remark~\ref{rem:zassumptions}, that $\z^{n_\alpha\alpha}\ne 1$
for $\alpha\in\Delta$.

\begin{proof}
  We will prove the statement for $\mathbf{T}_i$; the proof for $\mathbf{T}_i^{-1}$ follows similarly.
  Let $D(\mathbf{z}^{-1}) \coloneqq D_{n_i}(\mathbf{z}^{-1})$ and $\mathbf{f} \in \bigl(\bigoplus_{\gamma \in \Gamma} \mathbf{z}^{-\gamma} R\bigr)$.
  By linearity it is enough to consider $\mathbf{f}$ with a single non-zero component $\gamma$ with $\mathbf{f}^\gamma = \mathbf{z}^{-\gamma} h$ where $h \in \mathbb{C}(\Lambda^{(n)})$ for $R = \mathbb{C}(\Lambda^{(n)})$ and $h = \mathbf{z}^{\lambda^{(n)}}$ with $\lambda^{(n)} \in \Lambda^{(n)}$ for $R = \mathbb{C}[\Lambda^{(n)}]$.
  Then the only non-zero components of $(\mathbf{T}_i\mathbf{f})$ are
  \begin{equation}
    \label{eq:Ti-f-nonzero}
    \begin{split}
      (\mathbf{T}_i \mathbf{f})^\gamma &= D(\mathbf{z}^{-1})\mathbf{f}^\gamma + s_i \tau^1_{\gamma,\gamma}(\mathbf{z}^{-1}) \mathbf{f}^\gamma \\ 
      (\mathbf{T}_i \mathbf{f})^{s_i\gamma + \alpha_i} &= s_i \tau^2_{s_i\gamma + \alpha_i,\gamma}(\mathbf{z}^{-1}) \mathbf{f}^\gamma =
      g(B(\alpha_i, \gamma) - Q(\alpha_i)) \mathbf{z}^{-(s_i\gamma+\alpha_i)} s_ih.
    \end{split}
  \end{equation}
  Since $s_ih \in R$, it remains to show that $(\mathbf{T}_i \mathbf{f})^\gamma \in \mathbf{z}^{-\gamma}R$.
  Using~\eqref{tau1-alt}, we get that
  \begin{equation}
    (\mathbf{T}_i \mathbf{f})^\gamma = \frac{1-v}{\mathbf{z}^{n_i \alpha_i} - 1} \Bigl( 1 - \mathbf{z}^{k n_i \alpha_i + s_i(\gamma) - \gamma} s_i \Bigr) \mathbf{z}^{-\gamma} h = \mathbf{z}^{-\gamma}(v-1) \frac{1 - \mathbf{z}^{k n_i \alpha_i}s_i}{1-\mathbf{z}^{n_i\alpha_i}}h
  \end{equation}
  where $k = \ceil*{ \frac{B(\alpha_i, \gamma)}{n_i Q(\alpha_i)} } \in \mathbb{Z}$.
  For $R = \mathbb{C}(\Lambda^{(n)})$ we see that $(\mathbf{T}_i \mathbf{f})^\gamma$ is clearly in $\mathbf{z}^{-\gamma}R$.
  For $R = \mathbb{C}[\Lambda^{(n)}]$ we could assume that $h = \mathbf{z}^{\lambda^{(n)}}$.
  We have that $s_i \lambda^{(n)} - \lambda^{(n)} = -\bigl(B(\alpha_i, \lambda^{(n)})/Q(\alpha_i)\bigr) \alpha_i$.
  Since $B(\alpha_i, \lambda^{(n)}) \equiv 0 \bmod{n}$ is equivalent to $B(\alpha_i, \lambda^{(n)})/Q(\alpha_i) \equiv 0 \bmod{n_i}$ we get that $s_i \lambda^{(n)} - \lambda^{(n)} = k^{(n)} n_i \alpha_i$ with $k^{(n)} \in \mathbb{Z}$.
  Thus, in this case
  \begin{equation}
    \frac{1 - \mathbf{z}^{k n_i \alpha_i}s_i}{1 - \mathbf{z}^{n_i\alpha_i}}h = \mathbf{z}^{\lambda^{(n)}} \frac{1 - \mathbf{z}^{(k + k^{(n)}) n_i \alpha_i}}{1 - \mathbf{z}^{n_i\alpha_i}}
  \end{equation}
  which is a geometric series in $\mathbb{C}[\Lambda^{(n)}]$.
\end{proof}

The \textit{finite Iwahori Hecke algebra} $\mathcal{H}_v$
is the $\mathbb{C}$-algebra with generators $T_i$ subject to the
quadratic relations $T_i^2=(v-1)T_i+v$ and the braid relations.
The (extended) \textit{affine Iwahori Hecke algebra}
$\widetilde{\mathcal{H}}_v$ adds
an abelian subalgebra spanned by generators $\vartheta_\lambda$
with $\lambda\in\Lambda$ subject to
$\vartheta_\lambda\vartheta_\mu=\vartheta_{\lambda+\mu}$ together with
the \textit{Bernstein relations}
\begin{equation}
  \label{bernstein} \vartheta_{\lambda} T_i - T_i \vartheta_{s_i \lambda} = (v - 1) 
  \frac{\vartheta_{\lambda} - \vartheta_{s_i \lambda}}{1 - \vartheta_{-
  \alpha_i}}.
\end{equation}
The term \text{extended} is used here since, unless $\Lambda$ is the root
lattice, this Hecke algebra is slightly larger than the Hecke algebra of the
usual affine Weyl group, which is a Coxeter group.

As a variant, we may replace $\Lambda$ here by the lattice $\Lambda^{(n)}$.
Then the Bernstein relation must be modified to read
\begin{equation}
  \label{mbernstein} \vartheta_{\lambda} T_i - T_i \vartheta_{s_i \lambda} = (v - 1) 
  \frac{\vartheta_{\lambda} - \vartheta_{s_i \lambda}}{1 - \vartheta_{-
  n_i\alpha_i}}.
\end{equation}
Let $\widetilde{\mathcal{H}}^{(n)}_v$ be the corresponding
(extended) \textit{affine metaplectic Hecke algebra}.

Recall that $v^{-1}$ is the cardinality of the residue field.
For the algebraic group $\mathbf{G}(F)$,
Iwahori and Matsumoto~\cite{IwahoriMatsumoto} showed that
$\widetilde{\mathcal{H}}_{v^{-1}}$ acts on $I(\chi)^J$ by
convolution.
For metaplectic forms and representations,
Savin~\cite{SavinHecke} computed the corresponding genuine Hecke algebra which acts by convolution in a similar manner, and showed that it is isomorphic to the algebra~$\widetilde{\mathcal{H}}^{(n)}_{v^{-1}}$. (See Remark~\ref{rem:dichotomy}.)
This convolution action is
complementary to the action of
$\widetilde{\mathcal{H}}_{v}^{(n)}$ that we are
considering here. For example, both the intertwining
operator action of
$\widetilde{\mathcal{H}}_{v}^{(n)}$ and the convolution action 
$\widetilde{\mathcal{H}}_{v^{-1}}^{(n)}$ appear together
in an action on $\bigoplus_w I(w\z)^J$ in~\cite{BBBF},
and these actions commute with each other. 

The next result gives an action of this Hecke algebra on functions.
It may be extended to an action of the affine
Hecke algebra as in~\cite[Theorem~2.2]{BBBF}. 

\begin{theorem}
  \label{thm:hecke}
   We have an action of the affine metaplectic Hecke algebra $\widetilde{\mathcal{H}}^{(n)}_v$ on $\mathbb{C}(\Lambda)^{|\Gamma|}$ in which the generator $T_i$ acts by the operator $\mathbf{T}_i$ on
   $\mathbb{C}(\Lambda)^{|\Gamma|}$ and the generator $\vartheta_\lambda$ multiplies
   $f\in\mathbb{C}(\Lambda)^{|\Gamma|}$ by the constant $\z^{-\lambda}$.
   Thus the $\mathbf{T}_i$ satisfy the braid relations and the quadratic Hecke relation $(\mathbf{T}_i - v)(\mathbf{T}_i + 1) = 0$.
\end{theorem}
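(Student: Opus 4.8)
The plan is to check that the assignment $T_i\mapsto\mathbf{T}_i$ and $\vartheta_\lambda\mapsto(\text{multiplication by }\mathbf{z}^{-\lambda})$ respects each family of defining relations of $\widetilde{\mathcal{H}}^{(n)}_v$: the commutativity $\vartheta_\lambda\vartheta_\mu=\vartheta_{\lambda+\mu}$, the modified Bernstein relation~\eqref{mbernstein}, the quadratic relation $(\mathbf{T}_i-v)(\mathbf{T}_i+1)=0$, and the braid relations. The first is immediate, since the $\vartheta_\lambda$ act by scalars. Write $D_i$ for multiplication by $D_{n_i}(\mathbf{z}^{-1})=(1-v)(\mathbf{z}^{n_i\alpha_i}-1)^{-1}$, write $\boldsymbol{\tau}_i$ for the matrix operator $\boldsymbol{\tau}(\mathbf{z}^{-1})$ of~\eqref{eq:tau-operartor}, and let $s_i$ also denote the substitution $\mathbf{z}\mapsto s_i\mathbf{z}$, so that $\mathbf{T}_i=D_i+s_i\boldsymbol{\tau}_i$ (and note that the entries of $\boldsymbol{\tau}_i$ lie in $\mathbb{C}(\Lambda)$, hence commute with multiplication by any element of $\mathbb{C}(\Lambda)$). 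For~\eqref{mbernstein}, in $\vartheta_\lambda\mathbf{T}_i-\mathbf{T}_i\vartheta_{s_i\lambda}$ the $s_i\boldsymbol{\tau}_i$-contributions cancel, because multiplication by $\mathbf{z}^{-\lambda}$ commutes with $\boldsymbol{\tau}_i$ while $s_i$ carries multiplication by $\mathbf{z}^{-s_i\lambda}$ to multiplication by $\mathbf{z}^{-\lambda}$; what survives is $(\vartheta_\lambda-\vartheta_{s_i\lambda})D_i$, which is exactly the right-hand side of~\eqref{mbernstein} once one writes $\vartheta_{-n_i\alpha_i}$ as multiplication by $\mathbf{z}^{n_i\alpha_i}$.

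For the quadratic relation, expand $\mathbf{T}_i^2=D_i^2+D_i\,s_i\boldsymbol{\tau}_i+s_i\boldsymbol{\tau}_i\,D_i+(s_i\boldsymbol{\tau}_i)^2$. Since $D_i$ and $\boldsymbol{\tau}_i$ have commuting $\mathbb{C}(\Lambda)$-valued coefficients, $s_i\boldsymbol{\tau}_i\,D_i=(s_iD_i)\,s_i\boldsymbol{\tau}_i$, so the cross terms combine to $(D_i+s_iD_i)\,s_i\boldsymbol{\tau}_i$, and the elementary identity $D_{n_i}(\mathbf{z}^{-1})+s_iD_{n_i}(\mathbf{z}^{-1})=v-1$ turns this into $(v-1)\,s_i\boldsymbol{\tau}_i$. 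Conjugating $s_i$ through, $(s_i\boldsymbol{\tau}_i)^2=\boldsymbol{\tau}(s_i\mathbf{z}^{-1})\,\boldsymbol{\tau}(\mathbf{z}^{-1})$, which by~\eqref{asastau} (with $\mathbf{z}$ replaced by $\mathbf{z}^{-1}$) is the scalar $c_{\alpha_i}(\mathbf{z}^{-1})c_{-\alpha_i}(\mathbf{z}^{-1})$ times the identity. Hence $\mathbf{T}_i^2=\bigl(D_i^2+c_{\alpha_i}(\mathbf{z}^{-1})c_{-\alpha_i}(\mathbf{z}^{-1})\bigr)+(v-1)s_i\boldsymbol{\tau}_i$, and comparing with $(v-1)\mathbf{T}_i+v=\bigl((v-1)D_i+v\bigr)+(v-1)s_i\boldsymbol{\tau}_i$ the relation reduces to the scalar identity $D_i^2+c_{\alpha_i}(\mathbf{z}^{-1})c_{-\alpha_i}(\mathbf{z}^{-1})=(v-1)D_i+v$, a one-line verification after setting $u=\mathbf{z}^{n_i\alpha_i}$. (This derivation uses only~\eqref{asastau}, so there is no circularity with the use of this relation in the proof of Theorem~\ref{thm:dem_recurse}.)

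The braid relations are the substantive point. The idea is to exploit that the $\mathbf{T}_i$ were built from the honest, choice-free Whittaker functions. By Theorem~\ref{thm:dem_recurse} one has $\boldsymbol{\phi}_{s_iw}(\mathbf{z};g)=\mathbf{T}_i\boldsymbol{\phi}_w(\mathbf{z};g)$ whenever $\ell(s_iw)>\ell(w)$, while Proposition~\ref{prop:rep-base-case} gives $\boldsymbol{\phi}_e(\mathbf{z};\varpi^{-\lambda})=\mathcal{L}(\mathbf{z}^\lambda)$ for dominant $\lambda$. Fix $i\neq j$. If $\alpha_i\perp\alpha_j$ then $\mathbf{T}_i$ and $\mathbf{T}_j$ involve independent data and commute, so assume otherwise, and let $u_{ij}\in\langle s_i,s_j\rangle$ be the longest element, with its two reduced words $s_is_j\cdots$ and $s_js_i\cdots$ of length $m_{ij}$; both give all-ascent paths from $e$ to $u_{ij}$. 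Since $\boldsymbol{\phi}_{u_{ij}}(\mathbf{z};\varpi^{-\lambda})$ is well defined by~\eqref{eq:phi}, iterating the recursion along each path gives $\mathbf{T}_i\mathbf{T}_j\cdots\,\mathcal{L}(\mathbf{z}^\lambda)=\mathbf{T}_j\mathbf{T}_i\cdots\,\mathcal{L}(\mathbf{z}^\lambda)$ for every dominant $\lambda$; that is, the operator $B\coloneqq\mathbf{T}_i\mathbf{T}_j\cdots-\mathbf{T}_j\mathbf{T}_i\cdots$ annihilates $\mathcal{L}(\mathbf{z}^\lambda)$ for all dominant $\lambda$.

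The remaining, and I expect the hardest, step is to promote this to the operator identity $B=0$ on all of $\mathbb{C}(\Lambda)^{|\Gamma|}$. Write $B=\sum_{w\in\langle s_i,s_j\rangle}w\circ M_w$ with $M_w$ a $|\Gamma|\times|\Gamma|$ matrix over $\mathbb{C}(\Lambda)$; this decomposition is unique by linear independence of the Weyl-group substitutions over $\mathbb{C}(\Lambda)$-linear operators, so it suffices to show each $M_w=0$. Now $\mathcal{L}(\mathbf{z}^\lambda)=\mathbf{z}^\lambda e_{\gamma(\lambda)}$ is supported on the single coordinate $\gamma(\lambda)\equiv-\lambda\bmod\Lambda^{(n)}$, so $B\,\mathcal{L}(\mathbf{z}^\lambda)=0$ reads $\sum_w\mathbf{z}^{w\lambda}\,w\bigl(M_we_{\gamma(\lambda)}\bigr)=0$. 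Fix a coset $\gamma$ of $\Lambda/\Lambda^{(n)}$ and let $\lambda$ range over the dominant weights congruent to $-\gamma$; this is a full-dimensional subset of the coset (the dominant cone is full-dimensional and $\Lambda^{(n)}$ has finite index in $\Lambda$). As $\lambda$ varies the monomials $\mathbf{z}^{w\lambda}$ are, for generic $\mathbf{z}$, distinct characters of $\Lambda^{(n)}$ in $w$, because $\langle s_i,s_j\rangle$ acts faithfully on $\Lambda^{(n)}\supseteq\mathbb{Z}\,n_i\alpha_i+\mathbb{Z}\,n_j\alpha_j$; a standard Laurent-polynomial vanishing argument then forces $M_we_\gamma=0$ for every $w$ and every $\gamma$, hence $M_w=0$, hence $B=0$ for generic $\mathbf{z}$ and so identically. (Alternatively, one can verify the braid relations by a direct symbolic computation in the four rank-two cases, as was done for the Yang--Baxter equation in Theorem~\ref{thm:ybemc}.) With the commutativity, Bernstein, quadratic, and braid relations established, the assignment extends to an algebra homomorphism $\widetilde{\mathcal{H}}^{(n)}_v\to\operatorname{End}_{\mathbb{C}}\bigl(\mathbb{C}(\Lambda)^{|\Gamma|}\bigr)$, which is the assertion of the theorem; in particular $(\mathbf{T}_i-v)(\mathbf{T}_i+1)=0$ and the $\mathbf{T}_i$ satisfy the braid relations.
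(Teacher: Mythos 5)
Your proposal is correct, but it reaches the theorem by a genuinely different route than the paper. The paper does not verify the Bernstein, quadratic and braid relations by hand: it verifies the two hypotheses of the general Hecke-representation schema of~\cite{BBBF} --- the composition identity~\eqref{asastau} and the braid relations for the scattering matrices $\boldsymbol{\tau}$, the latter coming from the cocycle property $\mathcal{A}_{w_1}^{w_2\z}\circ\mathcal{A}_{w_2}^{\z}=\mathcal{A}_{w_1w_2}^{\z}$ of the intertwining integrals --- and then quotes \cite[Theorems~1.1 and~1.2]{BBBF} to get an action on $\bigoplus_{w}\mathbb{C}(\Lambda)^{|\Gamma|}$, from which the single-copy action by the operators $\mathbf{T}_i$ (and the affine part) is extracted. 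You instead check the Bernstein and quadratic relations directly (your computations are right, and your use of~\eqref{asastau} for the quadratic relation is indeed non-circular), and you obtain the braid relations from the fact that the $\mathbf{T}_i$ implement the ascent-case recursion of Theorem~\ref{thm:dem_recurse} --- which rests only on Proposition~\ref{prop:rep-recursion}, so again no circularity --- together with the base case of Proposition~\ref{prop:rep-base-case}: path-independence of $\boldsymbol{\phi}_{u_{ij}}$ forces the two alternating products to agree on all $\mathcal{L}(\z^{\lambda})$ with $\lambda$ dominant, and your promotion of this to the operator identity (decompose as $\sum_{w}w\circ M_w$, restrict to a coset, and use independence of the characters $\lambda\mapsto\z^{w\lambda}$ on dominant weights in that coset, e.g.\ via translation by a regular dominant element of $\Lambda^{(n)}$ and a Vandermonde argument) is sound, using that $n\Lambda\subseteq\Lambda^{(n)}$ so the dihedral group acts faithfully there. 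What each approach buys: yours is self-contained and explains the braid relations as forced by the well-definedness of the Whittaker functions themselves, while the paper's is shorter and more structural, giving at once the larger module $\bigoplus_w M(w\z)$, its compatibility with the intertwiners, and the affine extension. One small caveat: your parenthetical alternative of verifying the braid relations ``by a direct symbolic computation in the four rank-two cases'' is not obviously a finite check, since the matrices $\boldsymbol{\tau}$ still depend on $n$, the form $B$ and the function $g$ (unlike the change-of-basis reduction used for Theorem~\ref{thm:ybemc}); but this is only an aside and your main argument does not need it.
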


\begin{proof}
We will deduce this from \cite[Theorems~1.1 and~1.2]{BBBF}. These
depend on the choice of the finite Weyl group of a root system
acting on a weight lattice $\Lambda$. We will take the root system
to be $\Delta_n$ and the weight lattice to be $\Lambda^{(n)}$.
To avoid confusion, we will write roots in $\Delta_n$ as
$\widetilde{\alpha}$. Thus $\widetilde{\alpha}=n_\alpha\alpha$
which means more precisely that if
$\widetilde{\z}\in\widehat{\mathbf{T}}'(\mathbb{C})$ and if
$\z$ is chosen so that $\hat p(\z)=\widetilde{\z}$ in terms of the
canonical surjection
$\hat p:\widehat{\mathbf{T}}(\C)\longrightarrow\widehat{\mathbf{T}}'(\C)$,
then $\widetilde{\z}^{\widetilde{\alpha}}=\z^{n_\alpha\alpha}$.

The theorems also require, for each $\widetilde{\z}$ in a
$W$-invariant open subset of the torus (which for us will be
$\widehat{\mathbf{T}}'$) a vector space $M(\widetilde{\z})$.
We also need, for each simple reflection $s_i\in W$, a linear map
$A_i^\z:M(\widetilde{\z})\to M(s_i\widetilde{\z})$.
A general schema to produce Hecke algebra representations
is given in~\cite{BBBF}, subject to a couple of assumptions.

\medbreak\noindent
\emph{Assumption~1 of~\cite{BBBF}}. The composition $A^{s_i\z}_i\circ A^{\z}_i$ is the
\begin{equation}
\label{eq:expl_c}
\left(\frac{1-v\widetilde{\z}^{\widetilde\alpha_i}}{1-\widetilde{\z}^{\widetilde\alpha_i}}\right)
\left(\frac{1-v\widetilde{\z}^{-\widetilde\alpha_i}}{1-\widetilde{\z}^{-\widetilde\alpha_i}}\right).
\end{equation}

\medbreak\noindent
\emph{Assumption~2 of~\cite{BBBF}}. Suppose that $m_{ij}$ is the order of
$s_is_j$. Let $w_{ij}=\cdots s_is_js_i = \cdots s_js_is_j$ be
the longest element of the dihedral group generated by $s_i$
and $s_j$. (There are $m_{ij}$ factors on both sides.)
Then the $A^\z_i$ braid in the sense that if $m_{ij}$ is the
order of $s_is_j$ then
\[\cdots A_i^{s_js_i\z}A_j^{s_i\z}A_{i}^\z =
\cdots A_j^{s_is_j\z}A_i^{s_j\z}A_{j}^\z\]
with $m_{ij}$ factors on both sides as maps $M(\z)\to M(w_{ij}\z)$.

\medbreak
On these two assumptions, \cite[Theorems~1.1 and 1.2]{BBBF} give
a representation of $\widetilde{\mathcal{H}}_v$ on
$\bigoplus_w M(w\z^{-1})$. (We have replaced $\z$ in~\cite{BBBF} by $\z^{-1}$.)
The operator $T_i$ sends $f\in M(w\z)$ into
\[D_i(w\z^{-1})f\,+\, A^{(w\z)^{-1}}_if\quad\in\quad  M(w\z^{-1})\oplus M(s_iw\z^{-1}),\]
where $D_i(\z^{-1})=\frac{1-v}{\z^\alpha_i-1}$.
To apply this, we will use the Hecke algebra corresponding to the
metaplectic L-group, for which $\z$ is replaced by $\widetilde{\z}$ and $\alpha_i$
by $\widetilde\alpha_i$, so
\[D_i(\widetilde{\z}^{-1})=\frac{1-v}{\widetilde{\z}^{\widetilde{\alpha}_i}-1}
=\frac{1-v}{\z^{n_i\alpha_i}-1},\]
which is $D_{n_i}(\z^{-1})$ in the notation (\ref{eq:Ti}).
We take
$M(\widetilde{\z})=\mathbb{C}(\Lambda)^{|\Gamma|}$. The map
$A_i^{\widetilde{\z}}:\mathbb{C}(\Lambda)^{|\Gamma|}\to \mathbb{C}(\Lambda)^{|\Gamma|}$
is given by the
matrix $\tau(\z^{-1})$, where $\z\in\widehat{\mathbf{T}}(\mathbb{C})$
such that $p(\z)=\widetilde{\z}$. We must verify that two
Assumptions are satisfied.

Assumption~1 is satisfied by
(\ref{asastau}), on substituting
$\widetilde{\z}^{\widetilde{\alpha}_i}=\z^{n_i\alpha_i}$ into (\ref{eq:expl_c}); Assumption~2 follows from the
property of the intertwining operators, that $\mathcal{A}_{w_1}^{w_2\z}\circ
\mathcal{A}_{w_2}^\z=\mathcal{A}_{w_1w_2}^\z$ provided $\ell(w_1w_2)=\ell(w_1)+\ell(w_2)$. Theorem~1.1
of~\cite{BBBF} then gives an action on $\mathcal{M}(\z) = \bigoplus_w M(w\z)$.
Following the proof of \cite[Theorem~2.2]{BBBF}, we may extract the action
(\ref{eq:Ti}) on functions $\mathbf{f}$ in a single copy of $\mathbb{C}(\Lambda)^{|\Gamma|}$.
To this end, consider the map
\[\mathbb{C}(\Lambda)^{|\Gamma|}\longrightarrow \bigoplus_{w\in W}\mathbb{C}(\Lambda)^{|\Gamma|},\qquad
\mathbf{f}\mapsto ({^w\mathbf{f}})_{w\in W}.\]
This copy of $\mathbb{C}(\Lambda)^{|\Gamma|}$ is stable under the action of $\mathcal{H}_v$,
and the $T_i$ induce the operators $\mathbf{T}_i$ given by (\ref{eq:Ti}). The extension
to $\widetilde{\mathcal{H}}_v$ follows from \cite[Theorem~1.2]{BBBF}. 
\end{proof}

\subsection{Averaged metaplectic Demazure-Whittaker operators}
\label{sec:scalar-Demazure}
In Theorem~\ref{thm:dem_recurse} we define metaplectic Demazure-Whittaker operators acting on a vector of metaplectic Whittaker functions $\boldsymbol{\phi}_w \in \mathbb{C}(\Lambda)^{|\Gamma|}$ similar to those in~\cite{BBBF}.
Recall that the Whittaker model of $I(\z)$ has dimension equal to the cardinality of $\Gamma$ in the metaplectic setting. Therefore it is natural to consider a vector valued Whittaker function, as this allows us to study all Whittaker functions simultaneously. 
Other literature, such as \cite{ChintaGunnellsPuskas, PatnaikPuskasIwahori,
McNamaraCS, SSV} define metaplectic Demazure-Whittaker operators acting on a
particular \textit{averaged metaplectic Whittaker function}, which we will show is the sum of the components of $\boldsymbol{\phi}_w \in \mathbb{C}(\Lambda)^{|\Gamma|}$.
These averaged Demazure-Whittaker operators are defined in the above literature by (implicitly) making use of the Chinta-Gunnells action introduced in~\cite{ChintaGunnells}, while our operators are derived purely from the representation theory of the non-archimedean group. Indeed, in our case the operators are defined from the recursion relation in Proposition~\ref{prop:rep-recursion}, which was in turn obtained from the intertwining operators and the Kazhdan-Patterson scattering matrix $\tau_{\mu,\nu}$. 
We will in this section relate the two different types of metaplectic Demazure-Whittaker operators and show that the Chinta-Gunnells action arises naturally from our vector Demazure-Whittaker operators.

Another important distinction compared to previous literature is that we do not restrict to arguments $g$ on the torus, but in fact compute the Whittaker functions for all values $g \in \widetilde{G}$.

Recall that $\mathbb{C}(\Lambda^{(n)})$ denotes the field of rational functions in $\mathbf{z}^{\lambda^{(n)}}$ with $\lambda^{(n)} \in \Lambda^{(n)}$ over $\mathbb{C}$ and denote by $\mathbb{C}(\Lambda^{(n)})[\Lambda]$ the ring of polynomials in $\mathbf{z}^\lambda$ with $\lambda \in \Lambda$ over this field.
From our chosen representatives $\Gamma$ of $\Lambda / \Lambda^{(n)}$ we have an isomorphism
\begin{equation}
  \begin{split}
    \varphi : \Bigl(\bigoplus_{\gamma \in \Gamma} \mathbf{z}^{-\gamma} \mathbb{C}(\Lambda^{(n)})\Bigr) &\xlongrightarrow{\sim} \mathbb{C}(\Lambda^{(n)})[\Lambda] 
  \end{split}
\end{equation}
by taking the sum of components.

Then $\mathbf{T}_i$ induces an operator $\mathbb{T}_i$ on $\mathbb{C}(\Lambda^{(n)})[\Lambda]$ such that the following diagram commutes
\begin{equation}
  \label{eq:T-cd}
  \begin{tikzcd}
    \bigl(\bigoplus_{\gamma \in \Gamma} \mathbf{z}^{-\gamma} \mathbb{C}(\Lambda^{(n)})\bigr) \arrow[d, "\varphi" left, "\sim" {right, anchor=south, rotate=-90, inner sep=0.5mm}] \arrow[r, "\mathbf{T}_i"] & 
    \bigl(\bigoplus_{\gamma \in \Gamma} \mathbf{z}^{-\gamma} \mathbb{C}(\Lambda^{(n)})\bigr) \arrow[d, "\varphi" left, "\sim" {right, anchor=south, rotate=-90, inner sep=0.5mm}] \\
    \mathbb{C}(\Lambda^{(n)})[\Lambda] \arrow[r, "\mathbb{T}_i"] & \mathbb{C}(\Lambda^{(n)})[\Lambda]. 
  \end{tikzcd}
\end{equation}
Clearly, the $\mathbb{T}_i$ also satisfy the braid and the quadratic Hecke relations of $\widetilde{\mathcal{H}}_{v}^{(n)}$. 
Recall that Proposition~\ref{prop:T-descent} shows that $\mathbf{T}_i$ descends to an operator on $\bigoplus_{\gamma \in \Gamma} \mathbf{z}^{-\gamma} \mathbb{C}[\Lambda^{(n)}]$.
Furthermore, $\varphi$ restricts to an isomorphism
\begin{equation*}
    \Bigl(\bigoplus_{\gamma \in \Gamma} \mathbf{z}^{-\gamma} \mathbb{C}[\Lambda^{(n)}]\Bigr) \xlongrightarrow{\sim} \mathbb{C}[\Lambda] 
\end{equation*}
Thus, $\mathbb{T}_i$ descends to an operator on $\mathbb{C}[\Lambda]$. 

Let $\phi_{\Sigma, w}(\mathbf{z}; g) := \varphi(\boldsymbol{\phi}_w) = \sum_{\gamma \in \Gamma} \phi_{\gamma, w}(\mathbf{z}; g)$.
This particular metaplectic Iwahori Whittaker function has been studied extensively in the literature including for example \cite{McNamaraCS, PatnaikPuskasIwahori, SSV}.
By Theorem~\ref{thm:dem_recurse} and \eqref{eq:T-cd} it follows that, for a simple reflection $s_i$,
\begin{equation}
  \phi_{\Sigma, s_iw}(\mathbf{z}; g) =
  \begin{cases*}
    \mathbb{T}_i \phi_{\Sigma, w}(\mathbf{z}; g) & if $\ell(s_iw) > \ell(w)$ \\
    \mathbb{T}_i^{-1} \phi_{\Sigma, w}(\mathbf{z}; g) & if $\ell(s_iw) < \ell(w)$.
  \end{cases*}
\end{equation}
With the same setup as in Corollary~\ref{cor:evaluate}, with a path $w' \to s_{i_1}w' \to \cdots \to s_{i_k} \cdots s_{i_1} w' = w$ in the Weyl group, we obtain that
\begin{equation}
  \label{eq:scalar-T-evaluate}
  \phi_{\Sigma, w}(\mathbf{z}; \varpi^{-\lambda}w') = v^{\ell(w')} \mathbb{T}_{i_k}^{e_{i_k}} \cdots \mathbb{T}_{i_1}^{e_{i_1}} \mathbf{z}^\lambda \in \mathbb{C}[\Lambda].
\end{equation}

Chinta and Gunnells~\cite{ChintaGunnells} defined an action of the Weyl group
on (in our notation) $\mathbb{C}(\Lambda^{(n)})[\Lambda]$ that has appeared in
subsequent works such
as~\cite{ChintaGunnellsPuskas,PatnaikPuskasIwahori,PuskasWhittaker,ChintaOffen,McNamaraCS,SSV}.
Furthermore, Chinta, Gunnells, Pusk\'as and Patnaik in several
papers
\cite{ChintaGunnellsPuskas,PuskasWhittaker,PatnaikPuskasIwahori} study
metaplectic Iwahori Whittaker functions through operators that they
call \textit{Demazure-Lusztig operators}, though in this paper we use the term
Demazure-Whittaker operator. The Chinta-Gunnells Weyl group action, and
related Hecke actions are studied in Sahi, Stokman and
Venkateswaran~\cite{SSV}, where they are given a conceptual basis related
to the double affine Hecke algebra.

We will now relate these works to the Hecke algebra action $\mathbb{T}_i$.
Specifically, we will show in our next
Theorem~\ref{thm:T-average} that the operator $\mathbb{T}_i$ can be expressed
in terms of the Chinta-Gunnells action of \cite{ChintaGunnells} and that it
matches the metaplectic Demazure operators defined by
Chinta-Gunnells-Pusk\'as in \cite{ChintaGunnellsPuskas} and further studied
by Patnaik-Pusk\'as in \cite{PatnaikPuskasIwahori}.
Comparing~\eqref{eq:scalar-T-evaluate}
with~\cite[Corollary~5.4]{PatnaikPuskasIwahori}, this means that
$\phi_{\Sigma, w}(\mathbf{z}; \varpi^{-\lambda})$, with $w'=1$, equals the
particular metaplectic Iwahori Whittaker function studied in
Patnaik-Pusk\'as~\cite{PatnaikPuskasIwahori} up to normalization.  From the
isomorphism $\varphi$ it is possible to recover the vector
$\boldsymbol{\phi}_w$ from $\phi_{\Sigma,w}$.  Note however, that our
Corollary~\ref{cor:evaluate}, or indeed~\eqref{eq:scalar-T-evaluate}, allows
us to determine $\phi_{\Sigma, w}(\mathbf{z}; g)$ for all values of $g$, and
not only for $g = \varpi^{-\lambda}$.

For a simple reflection $s_i \in W$ the Chinta-Gunnells action on $\mathbb{C}(\Lambda^{(n)})[\Lambda]$ is linearly extended from the action $s_i \star \mathbf{z}^\lambda h$ where $h \in \mathbb{C}(\Lambda^{(n)})$ and $\lambda \in \Lambda$ which is defined as 
\begin{equation}
  s_i \star \z^\lambda h :=  \frac{(s_ih)\z^{s_i \lambda}}{1-v\z^{-n_i \alpha_i}} \Bigl( (1-v) \z^{\bigl(\floor*{\frac{B(\lambda, \alpha_i)}{Q(\alpha_i)}}_{n_i} \bigr)\alpha_i}  - g(-Q(\alpha_i) - B(\lambda, \alpha_i) ) \z^{(n_i -1)\alpha_i} (1-\z^{-n_{\alpha_i}\alpha_i}) \Bigr).
\end{equation}
To compare with equation (4.7) of~\cite{PatnaikPuskasIwahori}, note that our
Gauss sums are normalized differently: if $\mathfrak{g}_m$ is the Gauss sum
in~\cite{PatnaikPuskasIwahori}, then $v\mathfrak{g}_m = g(-m)$.

Recall from~\eqref{eq:Ti} and~\eqref{calphai} that
\begin{equation}
  D(\mathbf{z}^{-1}) \coloneqq D_{n_i}(\mathbf{z}^{-1}) = \frac{1-v}{\mathbf{z}^{n_i \alpha_i}-1} \qquad c_{\alpha_i}(\mathbf{z}^{-1}) = \frac{1-v \mathbf{z}^{-n_i\alpha_i}}{1 - \mathbf{z}^{-n_i\alpha_i}}.
\end{equation}

\begin{theorem}
  \label{thm:T-average}
  The operators $\mathbb{T}_i$ on $\mathbb{C}(\Lambda^{(n)})[\Lambda]$ defined from  $\mathbf{T}_i$ by~\eqref{eq:T-cd} are given by
  \begin{equation}\label{eq:CGPDemazureoperators}
    \mathbb{T}_i: f \mapsto D(\mathbf{z}^{-1})f - \mathbf{z}^{-n_i\alpha_i} c_{\alpha_i}(\mathbf{z}^{-1}) s_i \star f,
  \end{equation}
  and match the operators in~\cite{ChintaGunnellsPuskas,
  PatnaikPuskasIwahori}, there called Demazure-Lusztig operators. 
\end{theorem}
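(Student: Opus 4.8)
The plan is a direct computation in three stages: transport the vector operator $\mathbf{T}_i$ of Theorem~\ref{thm:dem_recurse} across the isomorphism $\varphi$ of \eqref{eq:T-cd} to get an explicit formula for $\mathbb{T}_i$ on monomials; insert the Kazhdan--Patterson data of Proposition~\ref{propositionKP}; and then match the result both with the right-hand side of \eqref{eq:CGPDemazureoperators} and, via the stated normalization dictionary, with the Demazure--Lusztig operators of \cite{ChintaGunnellsPuskas,PatnaikPuskasIwahori}.

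Since $\varphi$ and the Weyl action are additive, it suffices to evaluate $\mathbb{T}_i$ on a single monomial $\mathbf{z}^{-\gamma}h$ with $\gamma\in\Gamma$ and $h\in\mathbb{C}(\Lambda^{(n)})$; under $\varphi^{-1}$ this corresponds to the vector $\mathbf{f}$ supported in the coordinate $\gamma$ with $\mathbf{f}^\gamma=\mathbf{z}^{-\gamma}h$. By \eqref{eq:Ti} and the coordinate description \eqref{eq:tau-operartor}, the vector $\boldsymbol{\tau}(\mathbf{z}^{-1})\mathbf{f}$ is supported in the two coordinates $\gamma$ and (the $\Gamma$-representative of) $s_i\gamma+\alpha_i$, with values $\tau^1_{\gamma,\gamma}(\mathbf{z}^{-1})\mathbf{z}^{-\gamma}h$ and $\tau^2_{s_i\gamma+\alpha_i,\gamma}(\mathbf{z}^{-1})\mathbf{z}^{-\gamma}h$. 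Applying $s_i$ coordinatewise and then collapsing coordinates with $\varphi$ (which is legitimate: $\varphi\circ s_i=s_i\circ\varphi$ by additivity and $W$-stability of $\Lambda^{(n)}$, and the output lies in $\mathbb{C}(\Lambda^{(n)})[\Lambda]$ by Proposition~\ref{prop:T-descent}) yields
\[\mathbb{T}_i(\mathbf{z}^{-\gamma}h)=D_{n_i}(\mathbf{z}^{-1})\,\mathbf{z}^{-\gamma}h+s_i\!\left[\bigl(\tau^1_{\gamma,\gamma}(\mathbf{z}^{-1})+\tau^2_{s_i\gamma+\alpha_i,\gamma}(\mathbf{z}^{-1})\bigr)\mathbf{z}^{-\gamma}h\right].\]

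Next I would substitute \eqref{tau1} and \eqref{tau2}, using the identity $s_i\gamma-\gamma=-\tfrac{B(\alpha_i,\gamma)}{Q(\alpha_i)}\alpha_i$ and the residue/ceiling manipulations already used in the proof of Proposition~\ref{prop:T-descent}, to rewrite the bracket as an explicit Laurent combination with common factor $\tfrac{1-v}{1-\mathbf{z}^{-n_i\alpha_i}}$. In parallel I would expand the right-hand side of \eqref{eq:CGPDemazureoperators} on $f=\mathbf{z}^{-\gamma}h$ from the displayed definition of the Chinta--Gunnells action $s_i\star$, noting that the prefactor $\mathbf{z}^{-n_i\alpha_i}c_{\alpha_i}(\mathbf{z}^{-1})=\mathbf{z}^{-n_i\alpha_i}\tfrac{1-v\mathbf{z}^{-n_i\alpha_i}}{1-\mathbf{z}^{-n_i\alpha_i}}$ cancels the denominator $1-v\mathbf{z}^{-n_i\alpha_i}$ of $s_i\star$. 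Then I compare term by term: the $D_{n_i}$-terms agree tautologically; the $\tau^1$-contribution matches the $(1-v)\mathbf{z}^{\lfloor B(\lambda,\alpha_i)/Q(\alpha_i)\rfloor\alpha_i}$-term of $s_i\star$ at $\lambda=-\gamma$ once the residue conventions are aligned; and the $\tau^2$-contribution, with Gauss-sum factor $g\bigl(B(\alpha_i,\gamma)-Q(\alpha_i)\bigr)$, matches the $-g\bigl(-Q(\alpha_i)-B(\lambda,\alpha_i)\bigr)\mathbf{z}^{(n_i-1)\alpha_i}(1-\mathbf{z}^{-n_{\alpha_i}\alpha_i})$-term of $s_i\star$ at $\lambda=-\gamma$, using the symmetry of $B$ (so that the arguments of $g$ literally coincide) and the fact that $n_{\alpha_i}=n_i$, so that the factor $1-\mathbf{z}^{-n_i\alpha_i}$ kills the denominator against the denominator-free $\tau^2$ term. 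This proves \eqref{eq:CGPDemazureoperators}. For the identification with \cite{ChintaGunnellsPuskas,PatnaikPuskasIwahori} I would then compare \eqref{eq:CGPDemazureoperators} directly with equation~(4.7) of~\cite{PatnaikPuskasIwahori}, invoking the Gauss-sum dictionary $v\mathfrak{g}_m=g(-m)$ recorded before the theorem together with the identification of $\phi_{\Sigma,w}$ with the averaged Iwahori Whittaker function studied there (one could alternatively appeal to the fact that both operators satisfy the Hecke relations and the same recursion from the same base case, but the formula comparison is cleaner).

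The main obstacle is purely bookkeeping: keeping the many normalization conventions straight --- the passage $\mathbf{z}\mapsto\mathbf{z}^{-1}$, the two Gauss-sum normalizations, the two residue conventions in play for $\lfloor\cdot\rfloor$, and the distinctions among $n$, $n_i=n_{\alpha_i}$ and $Q(\alpha_i)$ --- and, slightly more delicately, tracking the $\Gamma$-representative of $s_i\gamma+\alpha_i$ so that the coordinatewise $s_i$ on vectors is correctly transported by $\varphi$ to the Weyl action on $\mathbb{C}(\Lambda^{(n)})[\Lambda]$. Once the conventions are aligned, every step is a mechanical manipulation of Laurent monomials.
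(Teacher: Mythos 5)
Your proposal is correct and follows essentially the same route as the paper's proof: reduce by linearity to a single monomial, pull back through $\varphi^{-1}$, compute the two nonzero components of $\mathbf{T}_i\mathbf{f}$ from \eqref{eq:Ti}--\eqref{eq:tau-operartor} with the Kazhdan--Patterson coefficients \eqref{tau1}--\eqref{tau2}, sum the components, and simplify (using $B$-symmetry, the residue identities, and the cancellation of $1-v\mathbf{z}^{-n_i\alpha_i}$ against $c_{\alpha_i}(\mathbf{z}^{-1})$) to the Chinta--Gunnells form, then match against the published formula of~\cite{PatnaikPuskasIwahori} via the Gauss-sum dictionary. The only cosmetic difference is that the paper performs the final comparison against~(4.10) of~\cite{PatnaikPuskasIwahori} (the operator itself) rather than~(4.7) (the $\star$-action), which does not affect the argument.
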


\begin{proof}
  By linearity it is enough to consider $f = \mathbf{z}^\lambda h$ with $\lambda \in \Lambda$ and $h \in \mathbb{C}(\Lambda^{(n)})$.
  Let $\gamma \in \Gamma$ such that $-\gamma \equiv \lambda \bmod \Lambda^{(n)}$ and let $\mathbf{f} := \varphi^{-1}(f) \in \bigl(\bigoplus_{\gamma' \in \Gamma} \mathbf{z}^{-\gamma'} \mathbb{C}(\Lambda^{(n)})\bigr)$.
  Then the only non-zero component of $\mathbf{f}$ is $\mathbf{f}^\gamma = \mathbf{z}^\lambda h$.
  Similar to the computation in~\eqref{eq:Ti-f-nonzero}, but here using~\eqref{tau1} for $\tau^1_{\gamma, \gamma}$ instead of~\eqref{tau1-alt} the only non-zero components of $(\mathbf{T}_i \mathbf{f})$ are
  \begin{equation}
    \begin{split}
      (\mathbf{T}_i \mathbf{f})^\gamma &= D(\mathbf{z}^{-1})\mathbf{f}^\gamma + s_i \tau^1_{\gamma,\gamma}(\mathbf{z}^{-1}) \mathbf{f}^\gamma = 
      \Bigl(\!D(\mathbf{z}^{-1}) + \frac{(1-v)}{1-\mathbf{z}^{n_i\alpha_i}}\mathbf{z}^{\bigl(\left\lfloor-\frac{B(\alpha_i,\gamma)}{Q(\alpha_i)}\right\rfloor_{\!n_i}\bigr)\alpha_i} s_i\Bigr) \mathbf{z}^\lambda h \\
      (\mathbf{T}_i \mathbf{f})^{s_i\gamma + \alpha_i} &= s_i \tau^2_{s_i\gamma + \alpha_i,\gamma}(\mathbf{z}^{-1}) \mathbf{f}^\gamma =
      g(B(\alpha_i, \gamma) - Q(\alpha_i)) \mathbf{z}^{-\alpha_i} s_i \mathbf{z}^\lambda h.
    \end{split}
  \end{equation}
  Thus, since $\lfloor{\scriptstyle-\frac{B(\alpha_i,\gamma)}{Q(\alpha_i)}}\rfloor_{\!n_i} = \lfloor{\scriptstyle\frac{B(\alpha_i,\lambda)}{Q(\alpha_i)}}\rfloor_{\!n_i}$ and $g(B(\alpha_i, \gamma) - Q(\alpha_i)) = g(-B(\alpha_i,\lambda) - Q(\alpha_i))$ we have that $\mathbb{T}_i f = \varphi(\mathbf{T}_i \mathbf{f})$ equals
  \begin{equation*}
      \Biggl(D(\mathbf{z}^{-1}) + \frac{1}{1-\mathbf{z}^{n_i \alpha_i}} \Bigl( (1-v) \mathbf{z}^{\bigl(\left\lfloor\frac{B(\alpha_i,\lambda)}{Q(\alpha_i)}\right\rfloor_{\!n_i}\bigr)\alpha_i} + g(-B(\alpha_i, \lambda) - Q(\alpha_i)) (1-\mathbf{z}^{n_i \alpha_i})\mathbf{z}^{-\alpha_i} \Bigr)s_i \Biggr)\mathbf{z}^\lambda h.
  \end{equation*}
  Since $\frac{1-v \mathbf{z}^{-n_i\alpha_i}}{1-\mathbf{z}^{n_i\alpha_i}}= -\mathbf{z}^{-n_i\alpha_i} c_{\alpha_i}(\mathbf{z}^{-1})$ and $(1-\mathbf{z}^{n_i \alpha_i})\mathbf{z}^{-\alpha_i} = -(1-\mathbf{z}^{-n_i\alpha_i})\mathbf{z}^{(n_i-1)\alpha_i}$ we conclude that
  \begin{equation}
    \mathbb{T}_i f = D(\mathbf{z}^{-1})f - \mathbf{z}^{-n_i\alpha_i} c_{\alpha_i}(\mathbf{z}^{-1}) s_i \star f.
  \end{equation}
  This expression agrees with~(4.10) of~\cite{PatnaikPuskasIwahori}.
\end{proof}

We have thus shown that the averaged Demazure-Whittaker operators $\mathbb{T}_i$ that arise from the component sum of the vector Demazure-Whittaker operators $\mathbf{T}_i$, as well as the particular metaplectic Iwahori Whittaker function $\phi_{\Sigma,w}$ agree with those studied in~\cite{McNamaraCS, PatnaikPuskasIwahori, SSV}.
Our equation~\eqref{eq:scalar-T-evaluate} is~\cite[Corollary~5.4]{PatnaikPuskasIwahori} and the fact that $\mathbb{T}_i$ descends to an operator on $\mathbb{C}[\Lambda]$ is~\cite[Claim~4.4]{PatnaikPuskasIwahori}.

The braid relations for $\mathbb{T}_i$ follow from those for $\mathbf{T}_i$ established in Theorem~\ref{thm:hecke} which is based on~\cite{BBBF}.
Independent proofs of this fact can be found in~\cite{ChintaGunnellsPuskas} and~\cite[Appendix~B]{PatnaikPuskasIwahori}, with a more conceptual proof in~\cite{SSV} based on a general construction of Weyl group and Hecke algebra representations.
 
\section{Main theorem and conclusions}
\label{sec:conclusions} 

The main theorem of this paper shows a relation between partition functions for the metaplectic Iwahori ice model described in Section~\ref{sec:lattice} and the metaplectic Iwahori Whittaker functions for $\mathbf{G} = \GL_r$ described for general $\mathbf{G}$ in Section~\ref{sec:whittaker}.
Before we can state the main theorem, Theorem~\ref{thm:main}, in Section~\ref{sec:main-thm-and-proof} we need to specify the particular metaplectic cover we consider as well as some notation for $\mathbf{G} = \GL_r$ in the upcoming section.

\subsection{Metaplectic Iwahori Whittaker functions for \texorpdfstring{$\mathbf{G} = \GL_r$}{G = GLr}}
\label{sec:GLr}

In this section we apply the theory from the previous sections to the case $\mathbf{G} = \GL_r$.
We identify $\Lambda = X_*(T) \iso \mathbb{Z}^r$ and use the standard elementary basis $e_i$, such that $\alpha_i = e_i - e_{i+1}$. 

We choose a cover $\Gn$ such that the bilinear form $B$, which determines the group multiplication on $\Tn$ by~\eqref{bcommrel}, becomes 
\[ B(e_i, e_j):= \delta_{ij}. \] 
which implies that $Q(\alpha_i) = 1$ and $n_i=n$. Such a cover can be obtained
by a particular cocycle as explained in~\cite[Section 5]{BBBF} and involves
the $2n$-power Hilbert symbol, which is one reason we assume that $F$ contains the $2n$-th roots of unity $\mu_{2n}$.

With this bilinear form $\Lambda^{(n)} \iso n\Lambda$ and thus $\Lambda / \Lambda^{(n)} \iso (\mathbb{Z}/n\mathbb{Z})^r$. 
We may choose the set $\Gamma$ of coset representatives to be the set of weights
$\mu \in \Lambda$ such that $\rho - \mu \in \{0,1, \ldots, n-1\}^{\times r}$
where $\rho = (r-1, r-2, \ldots, 0)$.  Letting the least nonnegative integer
residue $\floorn{\cdot}$ modulo $n$ act elementwise we may write this as
\begin{equation}
  \label{eq:mu-theta}
  \mu = \rho - \floorn{\theta}, \qquad \theta \in (\mathbb{Z}/n\mathbb{Z})^r. 
\end{equation}
Note that then $B(\alpha_i, \mu) = \floorn{\theta_{i+1}} - \floorn{\theta_i} + 1$ and that this is a different convention compared to the one used in (11) of~\cite{BBB}.

We will denote the Whittaker function $\phi_{\rho - \floorn{\theta}, w}(\z; g)$ simply by $\phi_{\theta, w}(\z; g)$.
A similar convention will be used for $\Omega_\theta^\mathbf{z}$ and $\tau_{\theta, \theta'}$.
In this setting, Proposition~\ref{prop:rep-base-case}, which computes $\phi_{\theta, w}(\mathbf{z}; \varpi^{-\lambda}w')$ when $w = w'$, becomes the following
\begin{corollary}[Proposition~\ref{prop:rep-base-case}]
  \label{cor:rep-base-case-GLr} 
  For $\mathbf{G} = \GL_r$, where we choose the representatives $\Gamma$ of $\Lambda / \Lambda^{(n)}$ as $\mu = \rho - \floorn{\theta}$ with $\theta \in (\mathbb{Z}/n\mathbb{Z})^r$ we get that
\begin{equation}
  \phi_{\theta,w}(\z; \varpi^{-\lambda} w) = 
  \begin{cases*}
    v^{\ell(w)} \mathbf{z}^{\lambda} & if $\theta \equiv \lambda + \rho \bmod{n}$ \\
    0 & otherwise.
  \end{cases*}
\end{equation}
\end{corollary}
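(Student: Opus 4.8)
The plan is to obtain this corollary as a direct specialization of Proposition~\ref{prop:rep-base-case}, unwinding the $\GL_r$ conventions set up earlier in this subsection. Recall that here $B(e_i,e_j)=\delta_{ij}$, so $Q(\alpha_i)=1$, $n_i=n$, and $\Lambda^{(n)}\iso n\Lambda=n\mathbb{Z}^r$; moreover the chosen coset representatives $\Gamma$ of $\Lambda/\Lambda^{(n)}$ are the weights $\mu=\rho-\floorn{\theta}$ with $\theta\in(\mathbb{Z}/n\mathbb{Z})^r$, and by definition $\phi_{\theta,w}(\z;g)=\phi_{\rho-\floorn{\theta},w}(\z;g)$.

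First I would apply Proposition~\ref{prop:rep-base-case} with the representative $\mu=\rho-\floorn{\theta}$ and the given $w$-almost dominant weight $\lambda$. This yields $\phi_{\rho-\floorn{\theta},w}(\z;\varpi^{-\lambda}w)=v^{\ell(w)}\mathbf{z}^{\lambda}$ when $(\rho-\floorn{\theta})+\lambda\in\Lambda^{(n)}$, and $0$ otherwise.

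Second, I would translate the membership condition. Since $\Lambda^{(n)}=n\Lambda=n\mathbb{Z}^r$, the condition $(\rho-\floorn{\theta})+\lambda\in\Lambda^{(n)}$ is equivalent to $\lambda+\rho\equiv\floorn{\theta}\pmod{n}$, and because $\floorn{\theta}\equiv\theta\pmod{n}$ componentwise this is exactly $\theta\equiv\lambda+\rho\pmod{n}$. Substituting this equivalence into the formula from Proposition~\ref{prop:rep-base-case} gives precisely the asserted statement.

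There is essentially no obstacle: the corollary is a bookkeeping translation of the already-established base case. The only point meriting a moment's care is keeping straight that $\theta$ lives in $(\mathbb{Z}/n\mathbb{Z})^r$ while $\lambda+\rho$ lives in $\Lambda=\mathbb{Z}^r$, so the congruence $\theta\equiv\lambda+\rho\bmod n$ is read as an identity in $(\mathbb{Z}/n\mathbb{Z})^r$, consistent with the identification $\Lambda^{(n)}\iso n\Lambda$.
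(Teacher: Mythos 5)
Your proposal is correct and matches the paper's (implicit) argument: the corollary is obtained exactly as you say, by specializing Proposition~\ref{prop:rep-base-case} to the representative $\mu=\rho-\floorn{\theta}$ and rewriting the condition $\mu+\lambda\in\Lambda^{(n)}=n\mathbb{Z}^r$ as $\theta\equiv\lambda+\rho\bmod n$. Nothing further is needed.
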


We will now rewrite the expressions for $\tau$ more explicitly.
With $\mu = \rho - \floorn{\theta}$ we note for~\eqref{tau2} that $s_i(\rho - \floorn{\theta}) + \alpha_i = \rho - s_i\floorn{\theta}$.
Furthermore, we have that for $x \in \mathbb{Z}/n\mathbb{Z}$, $\floorn{-x} = \ceiln{1-x}-1$, where $\ceiln{x}$ is the least strictly positive integer modulo $n$. 
Hence, $\tau^1$ and $\tau^2$ coefficients can be reduced to the following.
\begin{corollary}[Proposition~\ref{propositionKP}]
  \label{cor:GL-tau}
  For $\mathbf{G} = \GL_r$ and $\theta \in (\mathbb{Z}/n\mathbb{Z})^r$ with components $\theta_i$ the $\tau^1$ and $\tau^2$ coefficients of~\eqref{tau1} and~\eqref{tau2} become
  \begin{equation}\label{eq:tau-theta}
  \begin{split}
    \tau^1_{\theta, \theta} &= \frac{1-v}{1-\z^{-n\alpha_i}} \z^{\alpha_i} \mathbf{z}^{-\ceiln{\theta_i - \theta_{i+1}}\alpha_i}  \\
    \tau^2_{\theta, s_i\theta} &= g(\theta_{i}-\theta_{i+1})\z^{\alpha_i}.
  \end{split}
  \end{equation}
\end{corollary}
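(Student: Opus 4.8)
The plan is to derive Corollary~\ref{cor:GL-tau} directly from Proposition~\ref{propositionKP} by specializing to the cover of $\GL_r$ fixed above; the only work is the bookkeeping of residues modulo $n$. First I would record the consequences of $B(e_i,e_j)=\delta_{ij}$: since $\alpha_i=e_i-e_{i+1}$ we get $Q(\alpha_i)=B(\alpha_i,\alpha_i)/2=1$, hence $n_i=n/\gcd(n,Q(\alpha_i))=n$, and with $\rho=(r-1,\ldots,1,0)$ we have $B(\alpha_i,\rho)=\rho_i-\rho_{i+1}=1$. With these substitutions the prefactor $\frac{1-v}{1-\z^{-n_i\alpha_i}}$ in~\eqref{tau1} is already in the form stated in~\eqref{eq:tau-theta}.

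For $\tau^1_{\theta,\theta}$ I would take $\mu=\rho-\floorn{\theta}$ as in~\eqref{eq:mu-theta} and compute $B(\alpha_i,\mu)=B(\alpha_i,\rho)-B(\alpha_i,\floorn{\theta})=1+\floorn{\theta_{i+1}}-\floorn{\theta_i}$. Substituting into the $\z$-exponent of~\eqref{tau1} (with $Q(\alpha_i)=1$ and $n_i=n$) produces $-\floorn{-B(\alpha_i,\mu)}\,\alpha_i$. Since $B(\alpha_i,\mu)\equiv 1+\theta_{i+1}-\theta_i\pmod n$, the identity $\floorn{-x}=\ceiln{1-x}-1$ recorded just above, applied with $x=B(\alpha_i,\mu)$, turns this exponent into $\bigl(1-\ceiln{\theta_i-\theta_{i+1}}\bigr)\alpha_i$, giving exactly $\z^{\alpha_i}\z^{-\ceiln{\theta_i-\theta_{i+1}}\alpha_i}$.

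For $\tau^2_{\theta,s_i\theta}$ I would first match indices with the form $\tau^2_{s_i\mu+\alpha_i,\mu}$ appearing in Proposition~\ref{propositionKP}: here the relevant $\mu$ is the second index, $\mu=\rho-\floorn{s_i\theta}=\rho-s_i\floorn{\theta}$, and one checks $s_i\mu+\alpha_i=s_i(\rho-s_i\floorn{\theta})+\alpha_i=\rho-\floorn{\theta}$ using $s_i\rho=\rho-\alpha_i$, so the first index is $\rho-\floorn{\theta}$ as the notation $\tau^2_{\theta,s_i\theta}$ requires. Then $B(\alpha_i,\mu)-Q(\alpha_i)=\bigl(1+\floorn{\theta_i}-\floorn{\theta_{i+1}}\bigr)-1\equiv\theta_i-\theta_{i+1}\pmod n$, and since $g$ is periodic modulo $n$ by~\eqref{eq:padicgaussproperties}, the factor $g\bigl(B(\alpha_i,\mu)-Q(\alpha_i)\bigr)$ equals $g(\theta_i-\theta_{i+1})$; the $\z^{\alpha_i}$ is carried over unchanged. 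Assembling these gives both lines of~\eqref{eq:tau-theta}.

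No step goes beyond substitution and elementary manipulation of residues modulo $n$, so there is no genuine obstacle. The one point worth stating explicitly — and the easiest place to slip — is keeping the two index conventions straight: the symmetric occurrence of $\theta$ in $\tau^1_{\theta,\theta}$ versus the asymmetric pairing $(s_i\mu+\alpha_i,\mu)$ behind $\tau^2_{\theta,s_i\theta}$, together with the correct use of $\floorn{\cdot}$ and $\ceiln{\cdot}$ via the quoted identity.
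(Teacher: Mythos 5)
Your proposal is correct and follows essentially the same route as the paper's brief derivation: specialize $Q(\alpha_i)=1$, $n_i=n$ for this cover, substitute $\mu=\rho-\floorn{\theta}$ (resp.\ $\mu=\rho-s_i\floorn{\theta}$ with the index check via $s_i\rho=\rho-\alpha_i$ for $\tau^2$), and convert $\floorn{\cdot}$ into $\ceiln{\cdot}$ using the identity $\floorn{-x}=\ceiln{1-x}-1$ together with the mod-$n$ periodicity of $g$. As a side remark, your computation $B(\alpha_i,\rho-\floorn{\theta})=1+\floorn{\theta_{i+1}}-\floorn{\theta_i}$ is the value consistent with~\eqref{eq:tau-theta}, so the sign of the constant in the paper's preceding in-text remark appears to be a typo rather than an error in your argument.
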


\begin{corollary}[Proposition~\ref{prop:rep-recursion}]
  \label{cor:GL-rep-recursion}
  For $\mathbf{G} = \GL_r$ and $\theta \in (\mathbb{Z}/n\mathbb{Z})^r$ with components $\theta_i$ and $w \in W$ the following functional equation holds:
  \begin{multline}
    \label{eq:GL-rep-recursion}
    \mathbf{z}^{\alpha_i} \Bigl((1-v)\mathbf{z}^{-\ceiln{\theta_i - \theta_{i+1}} \alpha_i} \phi_{\theta,w}(\mathbf{z};g) + g(\theta_i - \theta_{i+1}) (1-\mathbf{z}^{-n\alpha_i}) \phi_{s_i\theta,w}(\mathbf{z};g) \Bigr) = \\
    =\begin{cases*}
      (1-v) \phi_{\theta,w}(s_i\mathbf{z};g) + (1-\mathbf{z}^{-n\alpha_i}) \phi_{\theta, s_iw}(s_i\mathbf{z}) & if $\ell(s_iw) > \ell(w)$, \\
      (1-v) \mathbf{z}^{-n\alpha_i} \phi_{\theta, w}(s_i \mathbf{z}) + v(1-\mathbf{z}^{-n\alpha_i}) \phi_{\theta,s_iw}(s_i \mathbf{z}) & if $\ell(s_iw) < \ell(w)$.
    \end{cases*}
  \end{multline}
\end{corollary}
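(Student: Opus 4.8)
The plan is to obtain Corollary~\ref{cor:GL-rep-recursion} as a direct specialization of the general functional equation of Proposition~\ref{prop:rep-recursion}, feeding in the explicit $\GL_r$ scattering coefficients recorded in Corollary~\ref{cor:GL-tau} together with the dictionary between the general and the $\GL_r$ notation.

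First I would fix the dictionary. Since the chosen cover satisfies $B(e_i,e_j)=\delta_{ij}$, we have $Q(\alpha_i)=1$ and $n_i=n$, so each occurrence of $\mathbf{z}^{-n_i\alpha_i}$ in~\eqref{eq:rep-recursion} becomes $\mathbf{z}^{-n\alpha_i}$. The coset representatives are $\mu=\rho-\floorn{\theta}$ as in~\eqref{eq:mu-theta}, under which $\phi_{\mu,w}$ is by definition $\phi_{\theta,w}$. Using that $s_i$ permutes coordinates and $s_i\rho+\alpha_i=\rho$, one computes $s_i(\mu)+\alpha_i=s_i(\rho-\floorn{\theta})+\alpha_i=\rho-s_i\floorn{\theta}=\rho-\floorn{s_i\theta}$, which represents $s_i\theta$; hence $\phi_{s_i(\mu)+\alpha_i,w}=\phi_{s_i\theta,w}$. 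The right-hand side of~\eqref{eq:rep-recursion} only involves $\phi_{\mu,w}$ and $\phi_{\mu,s_iw}$, so under this dictionary it becomes the case expression on the right of~\eqref{eq:GL-rep-recursion} divided by $1-\mathbf{z}^{-n\alpha_i}$.

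Next I would substitute $\tau^1_{\theta,\theta}$ and $\tau^2_{\theta,s_i\theta}$ from~\eqref{eq:tau-theta} into the left side of~\eqref{eq:rep-recursion}: the $\tau^1$ term contributes $\tfrac{1-v}{1-\mathbf{z}^{-n\alpha_i}}\,\mathbf{z}^{\alpha_i}\mathbf{z}^{-\ceiln{\theta_i-\theta_{i+1}}\alpha_i}\phi_{\theta,w}(\mathbf{z};g)$ and the $\tau^2$ term contributes $g(\theta_i-\theta_{i+1})\,\mathbf{z}^{\alpha_i}\phi_{s_i\theta,w}(\mathbf{z};g)$. Multiplying both sides of~\eqref{eq:rep-recursion} by $1-\mathbf{z}^{-n\alpha_i}$ clears the denominator, and factoring the common $\mathbf{z}^{\alpha_i}$ out of the two left-hand terms yields precisely~\eqref{eq:GL-rep-recursion}.

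Since the argument is a pure substitution, there is no genuine obstacle. The only point requiring care is that the $\tau^1,\tau^2$ formulas quoted from Corollary~\ref{cor:GL-tau} are themselves the correct reductions of the general expressions~\eqref{tau1}--\eqref{tau2}; this is where the bookkeeping between the various floor/ceiling conventions modulo $n$ and the relation~\eqref{eq:mu-theta} between $\mu$, $\theta$ and $\rho$ enters, and it is handled once and for all in the proof of Corollary~\ref{cor:GL-tau}. One should also double-check that the right-hand side of~\eqref{eq:rep-recursion} is unchanged by the specialization apart from the renaming $\mu\to\theta$ and the substitution $n_i\to n$, so that after clearing $1-\mathbf{z}^{-n\alpha_i}$ it matches the right side of~\eqref{eq:GL-rep-recursion} verbatim.
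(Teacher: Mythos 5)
Your proposal is correct and matches the paper's (implicit) argument: Corollary~\ref{cor:GL-rep-recursion} is indeed obtained by specializing Proposition~\ref{prop:rep-recursion} with $n_i=n$, $\mu=\rho-\floorn{\theta}$, the identification $s_i(\mu)+\alpha_i=\rho-s_i\floorn{\theta}\leftrightarrow s_i\theta$, and the explicit coefficients of Corollary~\ref{cor:GL-tau}, then clearing the factor $1-\mathbf{z}^{-n\alpha_i}$ and pulling out $\mathbf{z}^{\alpha_i}$. Your bookkeeping of which terms acquire the factor $(1-\mathbf{z}^{-n\alpha_i})$ is accurate, so nothing further is needed.
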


Lastly, let $\mathbf{f} \in (\mathbb{C}(n\Lambda)[\Lambda])^{n^r}$ with components enumerated by $\theta$.
The operator $\mathbf{T}_i$ defined in~\eqref{eq:Ti} reduces to
\begin{equation}
  \label{eq:GLr-Ti}
  (\mathbf{T}_i \mathbf{f})^\theta = \frac{1}{1-\mathbf{z}^{n\alpha_i}} \Bigl( (1-v) \bigl(\mathbf{z}^{(\ceiln{\theta_i - \theta_{i+1}} - 1)\alpha_i} s_i - 1\bigr) \mathbf{f}^\theta + g(\theta_i - \theta_{i+1}) \mathbf{z}^{-\alpha_i}(1-\mathbf{z}^{n\alpha_i}) s_i\mathbf{f}^{s_i\theta}\Bigr).
\end{equation}

\subsection{The main theorem}
\label{sec:main-thm-and-proof}
We will now combine the results of Sections~\ref{sec:lattice} and~\ref{sec:GLr} for the main theorem of this paper.

\begin{maintheorem} 
  \label{thm:main}
  Let $w \in W$, $\theta \in (\mathbb{Z}/n\mathbb{Z})^r$ and let $\mu\in\mathbb{Z}_{\geqslant0}^r$. Let $w' \in W$ such that the order of the colors for the top boundary edges for the system $\mathfrak{S}_{\mu, \theta, w}$ in the monochrome picture is $w'P$ from left to right where $P = (c_r, c_{r-1}, \ldots, c_1)$. 
  Then, the condition on $\mu$ and $w'$ is equivalent to $\lambda := w' \mu - \rho$ being $w'$-almost dominant and, with the conventions of Section~\ref{sec:GLr},
  \begin{equation}
    Z\bigl(\mathfrak{S}_{\mu, \theta, w}\bigr)(\mathbf{z}) = \mathbf{z}^\rho \phi_{\theta,w}(\mathbf{z},\varpi^{-\lambda}w') .
  \end{equation}
\end{maintheorem}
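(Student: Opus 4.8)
The strategy is exactly the one advertised in the introduction: show that both sides of the claimed identity satisfy the same base case and the same recurrence relations over the Weyl group $W = S_r$, and then invoke uniqueness of the solution to these recurrences. Concretely, I would argue as follows.

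\textbf{Step 1: Reduce to the claimed equivalence and fix the base case.} First I would observe that the equivalence between ``the top colors of $\mathfrak{S}_{\mu,\theta,w}$ are in order $w'P$'' and ``$\lambda := w'\mu - \rho$ is $w'$-almost dominant'' is immediate from Remark~\ref{rem:almost-dominant} (which rests on Proposition~7.2 of~\cite{BBBGIwahori}), and that such a $w'$ exists and is unique. Then I would treat the case $w = w'$ as the base case. Here Lemma~\ref{lem:lattice-ground-state} gives
\[
  Z\bigl(\mathfrak{S}_{\mu,\theta,w'}\bigr)(\mathbf{z}) = \begin{cases} v^{\ell(w')}\mathbf{z}^{\lambda+\rho} & \text{if } \theta \equiv \lambda+\rho \bmod n \\ 0 & \text{otherwise,}\end{cases}
\]
while Corollary~\ref{cor:rep-base-case-GLr} (the $\GL_r$ specialization of Proposition~\ref{prop:rep-base-case}) gives
\[
  \mathbf{z}^\rho \phi_{\theta,w'}(\mathbf{z};\varpi^{-\lambda}w') = \begin{cases} v^{\ell(w')}\mathbf{z}^{\lambda+\rho} & \text{if } \theta \equiv \lambda+\rho \bmod n \\ 0 & \text{otherwise.}\end{cases}
\]
These agree, so the theorem holds when $w = w'$.

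\textbf{Step 2: Match the recurrences.} Next I would compare the lattice-model recursion in Proposition~\ref{prop:lattice-recursion} with the representation-theoretic recursion in Corollary~\ref{cor:GL-rep-recursion}. For a simple reflection $s_i$, Proposition~\ref{prop:lattice-recursion} expresses a linear combination of $Z(\mathfrak{S}_{\mu,\theta,w})$ and $Z(\mathfrak{S}_{\mu,s_i\theta,w})$ in terms of $Z(\mathfrak{S}_{\mu,\theta,w})(s_i\mathbf{z})$ and $Z(\mathfrak{S}_{\mu,\theta,s_iw})(s_i\mathbf{z})$; multiplying Corollary~\ref{cor:GL-rep-recursion} through by $\mathbf{z}^\rho$ and using $s_i\rho = \rho - \alpha_i$ (so that $\mathbf{z}^\rho \cdot (\text{apply } s_i) = \mathbf{z}^{\alpha_i}\,(s_i\mathbf{z})^\rho$) one sees that, after clearing the common factor, the two recursions have literally the same coefficients — including the $g(\theta_i - \theta_{i+1})$, the $\ceiln{\theta_i - \theta_{i+1}}$ exponent, the $(1-v)$ and $v(1-\mathbf{z}^{-n\alpha_i})$ terms, and the split by whether $\ell(s_iw)$ exceeds $\ell(w)$. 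This is a routine but careful bookkeeping comparison; the key point is that with the $\GL_r$ cover $B(e_i,e_j) = \delta_{ij}$ we have $n_i = n$ and $Q(\alpha_i) = 1$, so all the $n_i$-decorations collapse to $n$. I would note in passing that both recursions are ``invertible'': given the two $s_i$-translated systems on one side one can solve for the two untranslated systems on the other, because the $2\times 2$ coefficient matrix (built from the fully-fused R-matrix weights, resp.\ from $\tau^1,\tau^2$) is invertible — this is the content of $\mathbf{T}_i$ being invertible in Theorem~\ref{thm:dem_recurse}.

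\textbf{Step 3: Propagate from the base case over $W$.} With the base case and the matching recurrence in hand, I would fix $\mu$ and $\theta$ (hence $w'$ and $\lambda$), and induct on $\ell(w) - \ell(w')$, or more precisely on the length of a reduced path in the Bruhat order from $w'$ to $w$, exactly as in Corollary~\ref{cor:evaluate}. Given $w$ and a simple reflection $s_i$ with $\ell(s_iw) < \ell(w)$, both $Z(\mathfrak{S}_{\mu,\theta,s_iw})$ and $\mathbf{z}^\rho\phi_{\theta,s_iw}(\mathbf{z};\varpi^{-\lambda}w')$ (for all $\theta$ simultaneously, since the recurrence couples $\theta$ and $s_i\theta$) can be read off from the corresponding quantities at $w$ via the (shared) recurrence. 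Since the theorem holds at $w'$ for all $\theta$, and since each recurrence step is a deterministic map on the pair of $\theta$-vectors, the two sides agree at every $w$ on the $W$-orbit. One subtlety to handle: the recurrence couples the system at $\theta$ with the system at $s_i\theta$, so the induction must carry the statement ``for all $\theta$ at once'' — but this is exactly how Proposition~\ref{prop:lattice-recursion} and Corollary~\ref{cor:GL-rep-recursion} are phrased, so it causes no trouble.

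\textbf{Main obstacle.} The genuinely delicate step is Step~2: verifying that the coefficient in Proposition~\ref{prop:lattice-recursion} (extracted from the fully-fused R-matrix weights in Figure~\ref{fig:rmatrix}, with the $\mathbf{z}^{-\ceiln{\theta_i-\theta_{i+1}}\alpha_i}$ prefactor coming from the two ways of arranging the scolor crossing $x,y$) matches, after the $\mathbf{z}^\rho$-twist and the substitution $n_i = n$, the coefficient in Corollary~\ref{cor:GL-rep-recursion} (extracted from $\tau^1, \tau^2$ and the intertwiner formula~\eqref{intertwiningactionIwahoriexplicit}). The potential pitfalls are: (i) keeping track of the $\mathbf{z}^{\alpha_i}$ that appears on the representation side versus the absence of such a factor on the lattice side, which is precisely absorbed by the $\mathbf{z}^\rho$ prefactor since $\mathbf{z}^\rho (s_i\mathbf{z})^{-\rho} = \mathbf{z}^{\alpha_i}$; (ii) the distinction, emphasized in the proof of Proposition~\ref{prop:lattice-recursion}, between $x \ne y$ and $x = y$ in the scolor boundary — here one checks that the representation-side formula degenerates compatibly (when $\theta_i \equiv \theta_{i+1}$, $g(\theta_i - \theta_{i+1}) = g(0) = -v$ and $\mathfrak{S}_{\mu,s_i\theta,w} = \mathfrak{S}_{\mu,\theta,w}$, matching the collapse on the lattice side); and (iii) the color side $c \ne d$ always, matching that the colors in the model are distinct — this is why only the ``$\ell(s_iw) \gtrless \ell(w)$'' dichotomy appears and there is no ``$c = d$'' degeneration. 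Once these are checked, Steps~1 and~3 are essentially bookkeeping.
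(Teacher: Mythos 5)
Your proposal is correct and follows essentially the same route as the paper's own proof: the equivalence via Remark~\ref{rem:almost-dominant}, the base case $w=w'$ from Lemma~\ref{lem:lattice-ground-state} and Corollary~\ref{cor:rep-base-case-GLr}, and then induction along the Weyl group by matching the Yang--Baxter recursion of Proposition~\ref{prop:lattice-recursion} with the intertwiner recursion of Corollary~\ref{cor:GL-rep-recursion} after the $\mathbf{z}^{\rho}$ twist (the factor $\mathbf{z}^{\alpha_i}$ being absorbed exactly as you say, since $s_i\rho=\rho-\alpha_i$). Your additional remarks on invertibility, the $x=y$ degeneration, and carrying all $\theta$ simultaneously are precisely the points the paper handles via the vector operators $\mathbf{T}_i^{\pm1}$ of Theorem~\ref{thm:dem_recurse}.
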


For the definition of $w'$-almost dominant see Definition~\ref{def:almost_dominant}.
Note that the partition functions determine all values for the metaplectic Whittaker function at any $g \in \Gn$ and that the set of boundary conditions maps bijectively to the set of data that determine all values of the metaplectic Iwahori Whittaker functions as pictured in Figure~\ref{fig:dictionary}.
This combinatorial data consists of $\theta \in (\mathbb{Z}/n\mathbb{Z})^r$ that determines the Whittaker functional $\Omega_\theta$, $w \in W$ which determines the Iwahori fixed vector $\Phi_w$, and the pairs $(w', \lambda)$ where $\lambda \in \Lambda$ and $w' \in W$ such that $\lambda$ is $w'$-almost dominant which determines the argument $g = \varpi^{-\lambda}w'$.
Indeed, recall that $\phi_{\theta, w}(\mathbf{z};g)$ is a genuine function on $\Gn$, invariant under the Iwahori subgroup $J$ on the right, $(\mathbf{N}^-(F),\psi)$-invariant on the left and that the center $Z(\Gn)$ acts by scalars. Thus $\phi_{\theta, w}(\mathbf{z};g)$ is determined by its values on a set of double coset representatives for $\mathbf{N}^-(F)Z(\Gn)\backslash \Gn / J \mu_n$ which we may choose as $g = \varpi^{-\lambda}w'$ with $\lambda \in \Lambda$ and $w' \in W$.
That $\lambda$ is $w'$-almost dominant follows from Lemma~\ref{lem:almost-dominant}.
Using the center, we may assume that $\lambda + \rho$, and therefore also $\mu$ has only non-negative parts.

\begin{figure}[htbp]
    \centering
    \begin{equation*}
  \mathbf{z}^\rho\phi_{\theta, w}(\mathbf{z}; g) = \mathbf{z}^\rho \delta^{1/2}(g) \Omega^{\mathbf{z}^{-1}}_{\tikzmarknode{theta1}{\theta}}\bigl( \pi(\tikzmarknode{g1}{g}) \Phi_{\tikzmarknode{w1}{w}}^{\mathbf{z}^{-1}}\bigr) = Z \left(\hspace{.5em}
\begin{tikzpicture}[remember picture, baseline=0.5cm, scale=0.5, every node/.append style={scale=0.8}]
  
  \draw[selection] (-.5,-.5) rectangle (.5,3.2) coordinate[pos=0] (theta2) {};
  \draw[selection] (6,-.5) rectangle (7,3.2) coordinate[pos=0] (w2) {};
  \draw[selection] (.75,4.5) rectangle (5.75,2.75) coordinate[pos=0] (g2) {};

  \draw[very thick] (0,0) node[very thick, densely dotted, state] {} -- (0.75,0) node[label=right:$z_3$] {};
  \draw[very thick] (0,1) node[very thick, densely dotted, state] {} -- (0.75,1) node[label=right:$z_2$] {};
  \draw[very thick] (0,2) node[very thick, densely dotted, state, label={above:$\theta$}] {} -- (0.75,2) node[label=right:$z_1$] {};
  \foreach \x in {0,...,4}{
    \draw[very thick] (\x+1.25, -1.25) node[state] {$+$} -- (\x+1.25,-.5);
    \draw[very thick] (\x+1.25, 3.25) node[state] {} -- (\x+1.25, 2.5);
  }
  \node at (3.25, 4) {$(w', \lambda)$};
  \draw[very thick] (6.5, 0) node[very thick, green, state] {} -- (5.75, 0);
  \draw[very thick] (6.5, 1) node[very thick, blue, state] {} -- (5.75, 1);
  \draw[very thick] (6.5, 2) node[very thick, red, state, label={above:$w$}] {} -- (5.75, 2);
  \draw[very thick] (0.75,-.5) rectangle (5.75, 2.5);
\end{tikzpicture}
\hspace{.5em}\right)
= Z\bigl(\mathfrak{S}_{w'^{-1}(\lambda + \rho), \theta, w}\bigr)
\begin{tikzpicture}[overlay, remember picture]
  \draw[selection-line] ($(g2) + (1.25,0.1)$) -| ++(0,0.2) -| (g1);
  \draw[selection-line] ($(w1.south) - (0,0.2)$) -| ++(0,-1.5) -| ($(w2) + (0.25,0)$);
  \draw[selection-line] ($(theta1.south) - (0,0.2)$) -| ++(0,-1.8) -| ($(theta2) + (0.25,0)$);
\end{tikzpicture}
\vspace{1em}
\end{equation*}
    \caption{Dictionary between boundary data for lattice model and data determining the values of the metaplectic Iwahori Whittaker function illustrating the equality of Theorem~\ref{thm:main}.}
    \label{fig:dictionary}
\end{figure}

\begin{proof}
That the condition for $\mu$ and $w'$ is equivalent to $\lambda := w'\mu - \rho$ being $w'$-almost dominant follows from Remark~\ref{rem:almost-dominant}.
We prove the remaining statement by induction on the length of $ww'^{-1}$.
The base case $w=w'$ follows immediately from Lemma~\ref{lem:lattice-ground-state} and Corollary~\ref{cor:rep-base-case-GLr}.
Using Theorem~\ref{thm:dem_recurse} we may determine $\phi_{\theta,s_iw}$ from $\phi_{\theta',w}$, and thus it is enough to verify that $\overline{Z}_{\theta, w}(\mathbf{z}) := \mathbf{z}^{-\rho}Z\bigl(\mathfrak{S}_{\mu, \theta, w}\bigr)(\mathbf{z})$ satisfies the same recursion relations.
In fact, it is enough to compare with the recursion relations for $\phi$ in Corollary~\ref{cor:GL-rep-recursion} which determine the operator $\mathbf{T}_i$ in~\eqref{eq:GLr-Ti} for $\GL_r$.

Indeed, from the recursion relations for $Z\bigl(\mathfrak{S}_{\mu, \theta, w}\bigr)(\mathbf{z})$ in Proposition~\ref{prop:lattice-recursion} obtained from the Yang-Baxter equation, we get that 
\begin{multline}
  (1-v) \mathbf{z}^{-\ceiln{\theta_i-\theta_{i+1}}\alpha_i} \overline{Z}_{\theta, w}(\mathbf{z}) + g(\theta_i-\theta_{i+1}) (1 - \mathbf{z}^{-n\alpha_i}) \overline{Z}_{s_i\theta, w}(\mathbf{z}) 
  = \\
  = \mathbf{z}^{-\alpha_i} \begin{cases*}
    (1-v) \overline{Z}_{\theta, w}(s_i\mathbf{z}) + (1 - \mathbf{z}^{-n\alpha_i}) \overline{Z}_{\theta, s_iw}(s_i\mathbf{z}) & if $\ell(s_iw) > \ell(w)$ \\
    (1-v)\mathbf{z}^{-n\alpha_i} \overline{Z}_{\theta, w}(s_i\mathbf{z}) + v (1 - \mathbf{z}^{-n\alpha_i}) \overline{Z}_{\theta, s_i w}(s_i\mathbf{z}) & if $\ell(s_iw) < \ell(w)$ \\
  \end{cases*}
\end{multline}
which matches those for $\phi$ in Corollary~\ref{cor:GL-rep-recursion}.
The factor $\mathbf{z}^{-\alpha_i}$ comes from the fact that $Z\bigl(\mathfrak{S}_{\mu, \theta, w}\bigr)(s_i\mathbf{z}) = s_i\bigl( \mathbf{z}^\rho \overline{Z}_{\theta, w}(\mathbf{z}) \bigr) = \mathbf{z}^{-\alpha_i} \mathbf{z}^\rho \overline{Z}_{\theta, w}(s_i\mathbf{z})$ since $s_i \mathbf{z}^\rho = \mathbf{z}^{\rho - \alpha_i}$.
\end{proof}

\subsection{R-matrices and \texorpdfstring{$p$-adic}{p-adic} intertwiners}
\label{sec:intertwiners-R-matrices}
In this section we will present a dictionary relating the following topics:
\[\mbox{$p$-adic intertwiners} \longleftrightarrow \mbox{quantum group R-matrices} \longleftrightarrow \mbox{lattice models R-matrices}.\]
In Proposition~\ref{prop:Drinfeldtwisting} we connected the R-matrix for the lattice model introduced in this paper to the $U_q(\widehat{\mathfrak{gl}}(r|n))$ R-matrix. 
We shall now refine this connection and show how R-matrices coming from
quantum affine groups relate to non-archimedean local field intertwining integrals.  

Let $q=1/ \sqrt{v}$. For fixed $z \in \mathbb{C}^\times$, let $\mathbb{V}_{r,n}(z)$ be an $r+n$ dimensional complex vector space with basis given by elements $v_i^+(z)$ and $v_j^-(z)$, where $1 \leqslant i \leqslant r$ indexes the set of colors and $1 \leqslant j \leqslant n$ indexes the set of scolors.
The weights of the R-matrix in Figure~\ref{fig:proof-R-matrix} can be used to create a map 
\[\mathbb{R}_q (z_1,z_2): \mathbb{V}_{r,n}(z_1) \otimes \mathbb{V}_{r,n}(z_2) \to \mathbb{V}_{r,n}(z_1) \otimes \mathbb{V}_{r,n}(z_2)\] 
which satisfies the ungraded Yang-Baxter equation:
\[ \mathbb{R}_q (z_1,z_2)_{1,2} \mathbb{R}_q (z_1,z_3)_{1,3} \mathbb{R}_q (z_2,z_3)_{2,3} =  \mathbb{R}_q (z_2,z_3)_{2,3} \mathbb{R}_q (z_1,z_3)_{1,3} \mathbb{R}_q (z_1,z_2)_{1,2}. \]
The above should be thought of as an equality in $\operatorname{End}( \mathbb{V}_{r,n}(z_1) \otimes \mathbb{V}_{r,n}(z_2) \otimes  \mathbb{V}_{r,n}(z_3))$, and the subscripts $_{i,j}$ mean that we apply the map to the $(i,j)$ entry of the tensor product.  

In equations (2.3)-(2.8) of~\cite{Kojima}, Kojima computes the $U_q(\widehat{\mathfrak{gl}}(r|n))$ R-matrix $: V_{r,n}(z_1) \otimes V_{r,n}(z_2) \to V_{r,n}(z_1) \otimes V_{r,n}(z_2)$, where $V_{r,n}(z)$ is the evaluation module of $U_q(\widehat{\mathfrak{gl}}(r|n))$ of dimension $(r|n)$, i.e. its even part has dimension $r$ and its odd part has dimension $n$. 
This R-matrix satisfies the graded Yang-Baxter equation (see equation (2.11) in~\cite{Kojima}). 
We shall denote by 
\[R^{(r|n)}_q(z_1z^{-1}_2):V_{r,n}(z_1) \otimes V_{r,n}(z_2) \to V_{r,n}(z_1) \otimes V_{r,n}(z_2)\]
the (ungraded) Drinfeld twist of Kojima's R-matrix described in the proof of Proposition~\ref{prop:Drinfeldtwisting}. 
It follows by the same proposition that, after identifying $\mathbb{V}_{r,n}(z) \iso V_{r,n}(z)$, the two R-matrices are the same. 
A Drinfeld twist is a process which introduces extra complex factors in the R-matrix.
Here we use a particular Drinfeld twist to introduce Gauss sums in $R^{(r|n)}_q (z_1z_2^{-1})$ which otherwise do not appear naturally in the quantization of a universal enveloping algebra.

Define $\mathbb{V}_{r}(z)$ and $\mathbb{V}_{n}(z)$ to be the subspaces of $\mathbb{V}_{r,n}(z)$ with basis $v_i^+(z)$ and $v_j^-(z)$, respectively. 
We define $V_{r}(z)$ and $V_{n}(z)$ in a similar way, and let $V_r(\z) := V_r(z_1)  \otimes \cdots \otimes V_r(z_r)$, and $V_n(\z) := V_n(z_1)  \otimes \cdots \otimes V_n(z_r)$. 
Denote the restriction of $R^{(r|n)}_q(z_1z^{-1}_2)$ to $V_{r}(z_1) \otimes V_{r}(z_2)$ by $R^r_q(z_1z_2^{-1})$. 
It is a well known fact in quantum group theory that $R^r_q(z_1z_2^{-1})$ is then the $U_q(\widehat{\mathfrak{gl}}(r))$ R-matrix (up to a Drinfeld twist), where one can now think of $V_{r}(z)$ as the evaluation module for $U_q(\widehat{\mathfrak{gl}}(r))$.
Moreover, restricting $R^{(r|n)}_q(z_1z^{-1}_2)$ to $V_{n}(z_1) \otimes V_{n}(z_2)$, produces the $U_{q^{-1}}(\widehat{\mathfrak{gl}}(n))$ R-matrix (again up to a Drinfeld twist), which we will denote by $R^n_{q^{-1}}(z_1z_2^{-1})$. 

Recall the space of Whittaker functionals $\mathcal{W}_\z$ defined in~\eqref{eq:Wz} and the space of Iwahori fixed vectors in the unramified principal series $I(\z)^J$ from~\eqref{def:Iwahoribasis}. 
Intertwiners induce maps on these spaces.
It was shown in~\cite[Theorem 1.1]{BBB} and in~\cite[Theorem 10.5, Remark 10.6]{BBBGIwahori} that there is a vector space embedding $\xi_\z$ and a vector space isomorphism $\eta_\z$  
\begin{equation}
  \begin{matrix}
    \xi_\z: I(\z)^J & \xhookrightarrow{\quad} & V_r(\z^n), \\
                  &                    & \rotatebox[origin=c]{-90}{${}\iso{}$} \\
                  &                    & \mathbb{V}_r(\z^n)
  \end{matrix}
  \hspace{6em}
  \begin{matrix}
    \eta_\z: \W_{\z} & \xlongrightarrow{\sim} & V_n(\z^n) \\
                  &                     & \rotatebox[origin=c]{-90}{${}\iso{}$} \\
                  &                     & \mathbb{V}_n(\z^n)
  \end{matrix}
\end{equation}
such that the following diagrams commute:
\begin{equation}\label{eq:commutativediagrams}
\begin{tikzcd}
  \text{\footnotesize{non-archimedean}} & \text{\footnotesize{quantum group}} & \text{\footnotesize{lattice model}} \\[-1em]
I(\z)^J \arrow[r, "\xi_\z"] \arrow[d, "\mathcal{A}^{\z}_{s_i}"]
& V_r(\z^n) \arrow[r, "\sim"]  \arrow[d, "\tau R^r_{q}(\z^{n\alpha_i})_{i,i+1}"] 
& \mathbb{V}_r(\z^n) \arrow[d, "\tau \mathbb{R}^r_{q}(\z^{n\alpha_i})_{i,i+1}"] 
\\
I(s_i \z)^J \arrow[r, "\xi_{s_i \z}"]
& V_r(s_i \z^n) \arrow[r, "\sim"] 
& \mathbb{V}_r(s_i \z^n)
\\
\mathcal{W}_\z \arrow[r, "\eta_\z"] \arrow[d, "\mathcal{A}^{\z}_{s_i}"]
& V_n(\z^n) \arrow[r, "\sim"]  \arrow[d, "\tau R^n_{q^{-1}}(\z^{n\alpha_i})_{i,i+1}"] 
& \mathbb{V}_n(\z^n) \arrow[d, "\tau \mathbb{R}^n_{q^{-1}}(\z^{n\alpha_i})_{i,i+1}"] 
\\
\mathcal{W}_{s_i \z}  \arrow[r, "\eta_{s_i \z}"]
& V_n(s_i \z^n) \arrow[r, "\sim"]
& \mathbb{V}_n(s_i \z^n)
\end{tikzcd} 
\end{equation} 
Here $\tau$ is the flip map, that is $\tau : v \otimes w \mapsto w \otimes v$, not to be confused with the scattering matrix of Section~\ref{sec:recursionrelations}.
 
Recall from the pictorial description of our main result in Figure~\ref{fig:dictionary} the natural correspondence between the boundary data for the lattice model and the data determining the values of the Whittaker function.
We will now explain, by using this pictorial description, the natural correspondence between the two methods of deducing functional equations for the partition functions on the one side (as in Section~\ref{sec:lattice-recursion}) and for the Whittaker functions on the other (as in Section~\ref{sec:recursionrelations}).
On the lattice model side, we use the Yang-Baxter equation which allows us to \emph{move} an $R$-vertex (essentially the R-matrix) from right to left.
On the non-archimedean representation theory side one uses the fact that the intertwiner is a $\tilde{G}$-homomorphism, which allows us to \emph{move} it from interacting with the Iwahori fixed vector to interacting with the Whittaker functional. 
The two processes are pictured and compared in Figure~\ref{fig:intertwiners-R-matrices}.

\begin{figure}[htpb]
  \centering

\begin{equation*}
  \begin{gathered}
    \tikzmarknode[selection, draw, inner sep=2pt]{A1}{\Omega_\theta^{s_i\mathbf{z}^{-1}} \circ \mathcal{A}_{s_i}^{\mathbf{z}^{-1}}}\hspace{1pt} \bigl( \pi(g) \Phi_w^{\mathbf{z}^{-1}} \bigr) = \Omega_\theta^{s_i\mathbf{z}^{-1}}\bigl( \pi(g) \tikzmarknode[selection, draw, inner sep=2pt]{A2}{\mathcal{A}_{s_i}^{\mathbf{z}^{-1}}\Phi_w^{\mathbf{z}^{-1}}}\hspace{1pt} \bigr) \\[1em]
\begin{tikzpicture}[remember picture, baseline=0.5cm, scale=0.5, every node/.append style={scale=0.8}]
  \begin{scope}

    \draw[selection] (.5,2.5) rectangle (-1.5,.5) coordinate[pos=0] (R1) {}; 
    
    \draw[very thick] (0,0) node[very thick, densely dotted, state] {} -- (0.75,0);
    \draw[very thick] (0,1) node[very thick, densely dotted, state] (A) {} -- (0.75,1);
    \draw[very thick] (0,2) node[very thick, densely dotted, state] (B) {} -- (0.75,2);
    \foreach \x in {0,...,3}{
      \draw[very thick] (\x+1.25, -1.25) node[state] {$+$} -- (\x+1.25,-.5);
      \draw[very thick] (\x+1.25, 3.25) node[state] {} -- (\x+1.25, 2.5);
    }
    \draw[very thick] (5.5, 0) node[very thick, green, state] {} -- (4.75, 0);
    \draw[very thick] (5.5, 1) node[very thick, blue, state] {} -- (4.75, 1);
    \draw[very thick] (5.5, 2) node[very thick, red, state] {} -- (4.75, 2);
    \draw[very thick] (0.75,-.5) rectangle (4.75, 2.5);

    \draw[very thick] (-1,2) node[very thick, densely dotted, state] {} -- (A);
    \draw[very thick] (-1,1) node[very thick, densely dotted, state] {} -- (B);
    \node[scale=1/0.8] at (7.25+0.5,1) {$=\ldots=$};
  \end{scope}

  \begin{scope}[xshift=10cm]

    \draw[selection] (3,2.5) rectangle (5,.5) coordinate[pos=0] (R2) {};

    \draw[selection-line] (4,2.5) -- (4,4.5) node[halo-rect, inner sep=1pt, scale=1/0.8] {$R_q^{(r|n)}$};
    
    \draw[very thick] (0,0) node[very thick, densely dotted, state] {} -- (0.75,0);
    \draw[very thick] (0,1) node[very thick, densely dotted, state] {} -- (0.75,1);
    \draw[very thick] (0,2) node[very thick, densely dotted, state] {} -- (0.75,2);
    \foreach \x in {0,...,1}{
      \draw[very thick] (\x+1.25, -1.25) node[state] {$+$} -- (\x+1.25,-.5);
      \draw[very thick] (\x+1.25, 3.25) node[state] {} -- (\x+1.25, 2.5);
    }
    \draw[very thick] (5.25, 0) -- (2.75, 0);
    \draw[very thick] (3.5, 1) node[very thick, red, state] (G) {} -- (2.75, 1);
    \draw[very thick] (3.5, 2) node[very thick, densely dotted, state] (H) {} -- (2.75, 2);
    \draw[very thick] (0.75,-.5) rectangle (2.75, 2.5);

    \draw[very thick] (5.25,2) -- (4.5,2) node[very thick, red, state] {} -- (G);
    \draw[very thick] (5.25,1) -- (4.5,1) node[very thick, densely dotted, state] {} -- (H);
    
    \draw[very thick] (5.25,-.5) rectangle (7.25, 2.5);
    \foreach \x in {0,...,1}{
      \draw[very thick] (\x+1.25+4.5, -1.25) node[state] {$+$} -- (\x+1.25+4.5,-.5);
      \draw[very thick] (\x+1.25+4.5, 3.25) node[state] {} -- (\x+1.25+4.5, 2.5);
    }

    \draw[very thick] (8, 0) node[very thick, green, state] {} -- (7.25, 0);
    \draw[very thick] (8, 1) node[very thick, blue, state] {} -- (7.25, 1);
    \draw[very thick] (8, 2) node[very thick, red, state] {} -- (7.25, 2);
    \node[scale=1/0.8] at (9.5+0.5,1) {$=\ldots=$};
  \end{scope}
  \begin{scope}[xshift=22cm]

    \draw[selection] (5,2.5) rectangle (7,.5) coordinate[pos=0] (R2) {};
    
    \draw[very thick] (0,0) node[very thick, densely dotted, state] {} -- (0.75,0);
    \draw[very thick] (0,1) node[very thick, densely dotted, state] {} -- (0.75,1);
    \draw[very thick] (0,2) node[very thick, densely dotted, state] {} -- (0.75,2);
    \foreach \x in {0,...,3}{
      \draw[very thick] (\x+1.25, -1.25) node[state] {$+$} -- (\x+1.25,-.5);
      \draw[very thick] (\x+1.25, 3.25) node[state] {} -- (\x+1.25, 2.5);
    }
    \draw[very thick] (5.5, 0) node[very thick, green, state] {} -- (4.75, 0);
    \draw[very thick] (5.5, 1) node[very thick, red, state] (G) {} -- (4.75, 1);
    \draw[very thick] (5.5, 2) node[very thick, blue, state] (H) {} -- (4.75, 2);
    \draw[very thick] (0.75,-.5) rectangle (4.75, 2.5);

    \draw[very thick] (6.5,2) node[very thick, red, state] {} -- (G);
    \draw[very thick] (6.5,1) node[very thick, blue, state] {} -- (H);
  \end{scope}
\end{tikzpicture}
\end{gathered}
\begin{tikzpicture}[overlay, remember picture]
  \draw[selection-line] ($(R1) + (-0.5,0)$) .. controls ($(R1) + (0,1.5)$) and ($(A1) + (-1,-1)$) .. (A1) node[midway, halo-rect, inner sep=1pt] {$R_{q^{-1}}^{n}$};
  \draw[selection-line] ($(R2) + (0.5,0)$) .. controls ($(R2) + (0,1.5)$) and ($(A2) + (1,-1)$) .. (A2) node[midway, halo-rect, inner sep=1pt] {$R_{q}^{r}$};
\end{tikzpicture}
\end{equation*}
  \caption{Comparison between the two process of obtaining functional equations for the partition functions of the lattice model and the metaplectic Iwahori Whittaker functions.
  Different restrictions of the R-matrices are identified with different actions of the intertwiners where $R^r_{q}$ is an R-matrix for $U_{q}(\widehat{\mathfrak{gl}}(r))$, $R^n_{q^{-1}}$ an R-matrix for $U_{q^{-1}}(\widehat{\mathfrak{gl}}(n))$, and $R^{(r|n)}_q$ an R-matrix for $U_q(\widehat{\mathfrak{gl}}(r|n))$.}%
  \label{fig:intertwiners-R-matrices}
\end{figure}

Note that in the non-archimedean representation theory side we \emph{only} see
the $U_q(\widehat{\mathfrak{gl}}(r))$ and $U_{q^{-1}}(\widehat{\mathfrak{gl}}(n))$ R-matrices, which
appear as scattering matrices of intertwining operators on
Iwahori fixed vectors and Whittaker functionals, respectively.
On the lattice model side, one needs the full $U_q(\widehat{\mathfrak{gl}}(r|n))$ super R-matrix to solve the
Yang-Baxter equations for the model. Yet this super R-matrix does not
have any obvious role in the p-adic theory; only the $\mathfrak{gl}(r)$ and $\mathfrak{gl}(n)$ pieces do.
It would be interesting to understand if the full super quantum group also has
a direct role in the non-archimedean representation theory.

The $U_{q^{-1}}(\widehat{\mathfrak{gl}}(n))$ R-matrix also makes an appearance if we consider an alternative way of proving the braid relations for our vector metaplectic Demazure-Whittaker operators $\mathbf{T}_i$.
Recall that in the proof of Theorem~\ref{thm:hecke} we use that $\boldsymbol{\tau}$ of \eqref{eq:tau-operartor} satisfies similar braid relations which is Assumption 2 of~\cite{BBBF}.
As illustrated above, this Kazhdan-Patterson scattering matrix appears as the $U_{q^{-1}}(\widehat{\mathfrak{gl}}(n))$ R-matrix on the lattice model side for $\mathbf{G} = \GL_r$.
The braid relations are then translated to the Yang-Baxter equation illustrated in Figure~\ref{fig:braiding} involving three $R$-vertices (which we shall call the RRR Yang-Baxter equation).

\begin{figure}[hbtp]
  \centering
  \begin{equation*}
\begin{tikzpicture}[baseline={([yshift=-2pt]current bounding box.center)}, scaled, scale=0.6]
  \draw[thick] (-1,-1) node[scolored, spin, label=left:$z_i$] {} to[out=0, in=240] (0,0) to[out=60, in=180] (1,1) to[out=0, in=240] (2,2) to[out=60, in=180] (3,3) -- (5,3) node[scolored, spin, label=right:$z_i$] {};
  \draw[thick] (-1,1) node[scolored, spin, label=left:$z_j$] {} to[out=0, in=120] (0,0) node[halo, inner sep=1pt]{$R$} to[out=-60, in=180] (1,-1) -- (3,-1) to[out=0, in=240] (4,0) to[out=60, in=180] (5,1) node[scolored, spin, label=right:$z_j$] {};
  \draw[thick] (-1,3) node[scolored, spin, label=left:$z_k$] {} -- (1,3) to[out=0, in=120] (2,2) node[halo, inner sep=1pt]{$R$} to[out=-60, in=180] (3,1) to[out=0, in=120] (4,0) node[halo, inner sep=1pt]{$R$} to[out=-60, in=180] (5,-1) node[scolored, spin, label=right:$z_k$] {};
\end{tikzpicture}
\quad = \quad
\begin{tikzpicture}[baseline={([yshift=-2pt]current bounding box.center)}, scaled, scale=0.6, yscale=-1]
  \draw[thick] (-1,-1) node[scolored, spin, label=left:$z_k$] {} to[out=0, in=240] (0,0) to[out=60, in=180] (1,1) to[out=0, in=240] (2,2) to[out=60, in=180] (3,3) -- (5,3) node[scolored, spin, label=right:$z_k$] {};
  \draw[thick] (-1,1) node[scolored, spin, label=left:$z_j$] {} to[out=0, in=120] (0,0) node[halo, inner sep=1pt]{$R$} to[out=-60, in=180] (1,-1) -- (3,-1) to[out=0, in=240] (4,0) to[out=60, in=180] (5,1) node[scolored, spin, label=right:$z_j$] {};
  \draw[thick] (-1,3) node[scolored, spin, label=left:$z_i$] {} -- (1,3) to[out=0, in=120] (2,2) node[halo, inner sep=1pt]{$R$} to[out=-60, in=180] (3,1) to[out=0, in=120] (4,0) node[halo, inner sep=1pt]{$R$} to[out=-60, in=180] (5,-1) node[scolored, spin, label=right:$z_i$] {};
\end{tikzpicture}
\end{equation*}
\caption{The RRR Yang-Baxter equation with three $R$ vertices that can be used to prove the braid relations for the vector metaplectic Demazure-Whittaker operators $\mathbf{T}_i$ when $\mathbf{G} = \GL_r$.}%
  \label{fig:braiding}
\end{figure}

Furthermore, while the proof of Theorem~\ref{thm:hecke} used the fact that the function $g(a)$ appearing in $\boldsymbol{\tau}$ is actually a Gauss sum coming from the metaplectic representation theory, the RRR Yang-Baxter equation holds for any function $g(a)$ satisfying Assumption~\ref{assumptionga}, as the R-matrix comes from the Drinfeld twist of $U_{q^{-1}}(\widehat{\mathfrak{gl}}(n))$.
Thus, at least for $\GL_r$, the Demazure-Whittaker operators $\mathbf{T}_i$ may be defined for these more general parameters $g(a)$ as well, in accord with~\cite{SSV} where they work with the \emph{scalar} Demazure-Whittaker operators in similar generality.

\begin{remark}
In Figure~\ref{fig:corners} we explain how the results of this paper generalize prior work on this topic, highlighting their different underlying quantum groups.
The horizontal arrows in the figure reflect the fact that the models
in the left columns are special cases (with $n=1$) of the models
in the right column. If the boundary spins of the colored system
are restricted to a single color, we recover the Tokuyama or
metaplectic ice models. To look at it another way, the vertical arrows
reflect the fact that the spherical Whittaker function is the sum of Iwahori
Whittaker functions. 
\end{remark}

\begin{figure}[ht]
\newcommand{\textcell}[1]{\text{\makecell{#1}}}
\begin{tikzcd}[row sep = 0pt, nodes in empty cells, execute at end picture = {
\draw[shorten >= -1em, shorten <= -1em] ($ (\tikzcdmatrixname-1-1.north)!0.5!(\tikzcdmatrixname-1-2.north) $) -- ($ (\tikzcdmatrixname-4-1.south)!0.5!(\tikzcdmatrixname-4-2.south) $);
}]
  & \textcell{$\GL_r$ \\ no scolors} & \textcell{metaplectic $n$-cover of $\GL_r$ \\ $n$ scolors (or charges)} \\[0.25em] \hline & & \\ 
  \textcell{spherical vector\\ no colors} & \textcell{Tokuyama ice \cite{hkice} \\ $U_{q}(\widehat{\mathfrak{gl}}(1|1))$} &
  \arrow[l, "\substack{\text{identify}\\\text{scolors}}", start anchor={[xshift=-1.1ex]}] \textcell{Metaplectic ice \cite{mice,BBB} \\ $U_{q}(\widehat{\mathfrak{gl}}(1|n))$} \\[2.5em]
  \textcell{Iwahori fixed vector\\ $r$ colors} & \arrow[u, "\text{identify colors}" left]\textcell{Iwahori ice \cite{BBBGIwahori} \\ $U_{q}(\widehat{\mathfrak{gl}}(r|1))$} &
  \arrow[u, "\text{identify colors}" right] \arrow[l, "\substack{\text{identify}\\\text{scolors}}", end anchor={[xshift=1.275ex]}] \textcell{Metaplectic Iwahori ice \\ $U_{q}(\widehat{\mathfrak{gl}}(r|n))$} \\
  & & \\
\end{tikzcd}
\caption{\label{fig:corners} Summary of the different lattice models for
  non-archimedean Whittaker functions. }
\end{figure}

\begin{remark}
  Sahi, Stokman and Venkateswaran~\cite{SSV} construct the metaplectic Demazure-Whittaker operators $\mathbb{T}_i$ in~\eqref{eq:CGPDemazureoperators} (and therefore the Chinta-Gunnells action) without making use of the representation theory of the $p$-adic group and for general parameters $g(a)$.  
They consider quotients of a representation of affine Hecke algebras induced from the finite Hecke algebra. 
The latter is built by using (finite) R-matrices (see~\cite[Lemma 3.1]{SSV}) which give rise to affine R-matrices via Baxterization.

The operators $\mathbb{T}_i$ that are studied in this paper 
and~\cite{ChintaGunnellsPuskas,PatnaikPuskasIwahori} using non-archimedean
methods can be seen to agree with the metaplectic Demazure-Whittaker
operators of~\cite{SSV} via direct comparison in terms of the Chinta-Gunnells action as in Theorem~\ref{thm:T-average}.
However, it would be desirable to give a conceptual explanation why the construction of~\cite{SSV} (which does not use the $p$-adic group) produce the same operators. Given the appearances of affine R-matrices in both~\cite{SSV} and the current paper, it would be worthwile to understand if their method of constructing \emph{quotients} of induced Hecke representations has an analogue in the non-archimedean world.  
 \end{remark}

\bibliographystyle{habbrv} 
\bibliography{metahori}

\end{document}